\documentclass[10pt]{amsart}
\usepackage{amscd}
\usepackage{amsmath}
\usepackage{mathtools}
\usepackage{mathrsfs}
\usepackage{amssymb}
\usepackage[utf8]{inputenc}
\usepackage[T1]{fontenc}

\usepackage{tikz-cd}
\usepackage{hyperref}
\newtheorem{thm}{Theorem}[section]
\newtheorem{prop}[thm]{Proposition}
\newtheorem{lem}[thm]{Lemma}
\newtheorem{cor}[thm]{Corollary}

\theoremstyle{definition}
\newtheorem{definition}[thm]{Definition}
\newtheorem{example}[thm]{Example}

\newtheorem{fact}[thm]{Fact}

\newtheorem{remark}[thm]{Remark}
\newtheorem{conv}[thm]{Convention}
\newtheorem{notation}[thm]{Notation}

\newtheorem{thmintro}{Theorem}
\newtheorem{questintro}{Question}

\makeatletter
\newcommand{\hocolim@}[2]{%
  \vtop{\m@th\ialign{##\cr
    \hfil$#1\operator@font hocolim$\hfil\cr
    \noalign{\nointerlineskip\kern1.5\ex@}#2\cr
    \noalign{\nointerlineskip\kern-\ex@}\cr}}%
}

\newcommand{\dhocolim}{%
  \mathop{\mathpalette\hocolim@{\rightarrowfill@\textstyle}}\nmlimits@
}

\DeclareMathOperator*{\pp}{\mathfrak{p}}
\DeclareMathOperator*{\oo}{\mathfrak{o}}
\DeclareMathOperator*{\qq}{\mathfrak{q}}
\DeclareMathOperator*{\mm}{\mathfrak{m}}
\DeclareMathOperator*{\nn}{\mathfrak{n}}
\DeclareMathOperator*{\Spec}{Spec}
\DeclareMathOperator*{\Prod}{Prod}
\DeclareMathOperator*{\Zgs}{Zg}
\DeclareMathOperator*{\mSpec}{mSpec}

\DeclareMathOperator*{\Cat}{Cat}

\DeclareMathOperator*{\CAT}{CAT}
\DeclareMathOperator*{\Ab}{\mathbf{Ab}}
\DeclareMathOperator*{\Ann}{Ann}

\DeclareMathOperator*{\Soc}{Soc}
\DeclareMathOperator*{\Cogen}{Cogen}

\DeclareMathOperator*{\holim}{holim}
\DeclareMathOperator*{\hocolim}{hocolim}
\DeclareMathOperator*{\colim}{colim}
\DeclareMathOperator*{\Ext}{Ext}
\DeclareMathOperator*{\Tor}{Tor}

\DeclareMathOperator*{\Ker}{Ker}
\DeclareMathOperator*{\Coker}{Coker}

\DeclareMathOperator*{\Hom}{Hom}
\DeclareMathOperator*{\Rop}{\mathit{R}^{op}}
\DeclareMathOperator*{\RHom}{\mathbf{R}Hom}
\DeclareMathOperator*{\ModR}{Mod-\!\mathit{R}}
\DeclareMathOperator*{\ModRop}{Mod-\!\mathit{R}^{op}}

\DeclareMathOperator*{\ModS}{Mod-\!\mathit{S}}
\DeclareMathOperator*{\ModSn}{Mod-\!\mathit{S}_n}
\DeclareMathOperator*{\ModU}{Mod-\!\mathit{U}}
\DeclareMathOperator*{\ModTc}{Mod-\!\mathcal{T}^c}
\DeclareMathOperator*{\ModRqq}{Mod-\!\mathit{R}_{\qq}}
\DeclareMathOperator*{\ModRpp}{Mod-\!\mathit{R}_{\pp}}
\DeclareMathOperator*{\ModRqqprime}{Mod-\!\mathit{R}_{\qq^{\prime}}}
\DeclareMathOperator*{\ModRqqalpha}{Mod-\!\mathit{R}_{\qq_\alpha}}
\DeclareMathOperator*{\ModRqqalphaprime}{Mod-\!\mathit{R}_{\qq^{\prime}_\alpha}}
\DeclareMathOperator*{\ModRqqpp}{Mod-\!(\mathit{R}_{\qq}/\pp)}
\DeclareMathOperator*{\ModRqqprimeppprime}{Mod-\!\mathit{R}_{\qq^{\prime}}/\pp^{\prime}}
\DeclareMathOperator*{\ModRqqppS}{Mod-\!((\mathit{R}_{\qq}/\pp) \otimes_{\mathit{R}} \mathit{S})}
\DeclareMathOperator*{\Mor}{Mor}

\newcommand{\EMP}{\textbf}
\newcommand*{\Perp}[1]{{}^{\perp_{#1}}}
\DeclareMathOperator*{\Der}{\mathbf{D}}

\begin{document}
\title[Definable coaisles over rings of weak dimension one]{Definable coaisles over rings of weak global dimension at most one}
\author{Silvana Bazzoni}
\address{Dipartimento di Matematica, Universit\`{a} di Padova, via Trieste 63, I-35121 Padova - Italy}
\email{bazzoni@math.unipd.it}

\author{Michal Hrbek}
\address{Institute of Mathematics of the Czech Academy of Sciences, \v{Z}itn\'{a} 25, 115 67 Prague, Czech Republic}
\email{hrbek@math.cas.cz}
\thanks{The first named author was partially supported by grants BIRD163492 and DOR1690814 of Padova University. The second named author was partially supported by the Czech Academy of Sciences Programme for research and mobility support of starting researchers, project MSM100191801.}
\keywords{Derived category, t-structure, homological epimorphism, Telescope Conjecture, cosilting complex}
\subjclass[2010]{Primary 13C05, 13D09; Secondary 18E30, 18A20}

\begin{abstract}
		In the setting of the unbounded derived category $\Der(R)$ of a ring $R$ of weak global dimension at most one we consider t-structures with a definable coaisle. The t-structures among these which are stable (that is, the t-structures which consist of a pair of triangulated subcategories) are precisely the ones associated to a smashing localization of the derived category. In this way, our present results generalize those of \cite{BS} to the non-stable case. As in the stable case \cite{BS}, we confine for the most part to the commutative setting, and give a full classification of definable coaisles in the local case, that is, over valuation domains. It turns out that unlike in the stable case of smashing subcategories, the definable coaisles do not always arise from homological ring epimorphisms. We also consider a non-stable version of the telescope conjecture for t-structures and give a ring-theoretic characterization of the commutative rings of weak global dimension at most one for which it is satisfied.
\end{abstract}

\maketitle
\setcounter{tocdepth}{1} \tableofcontents
\section*{Introduction}
	An extensive effort has been expended on the study of various subcategories of the unbounded derived category $\Der(R)$ of a ring $R$. Since this category is in most cases too complicated to permit any chance of understanding all of its objects, one can instead attempt to study certain kinds of subcategories with good approximation properties. One source of these is provided by Bousfield localizations, or rather, by taking their kernels. Particularly useful are those localizations which commute with coproducts, these are called the \emph{smashing} localizations because of their origins in algebraic topology. Smashing localizations are abundant as any set of compact objects naturally generates one. Since thick subcategories of compact objects often allow for a full classification (e.g. \cite{Ne}, \cite{Tho}), a particularly desirable situation occurs when any smashing localization is compactly generated. This was formulated by Ravenel \cite{R} as the Telescope conjecture in the case of stable homotopy category of spectra. For derived categories, the Telescope conjecture is known to be false in general, see Keller \cite{Ke}. On the other hand, the Telescope Conjecture was settled in the affirmative for large classes of rings. Here we mention the result of Neeman's \cite{Ne} for commutative noetherian rings and of Krause-\v{S}\v{t}ov\'{i}\v{c}ek \cite{KS} for one-sided hereditary rings. In both works, a classification of the compactly generated localizations is given, where in the first case these are parametrized by the specialization closed subsets of the Zariski spectrum, while in the second case the parametrization is by the universal localizations of the ring in the sense of Schofield. Although the failure of the Telescope Conjecture is usually viewed as a pathological behavior, there are rings for which the Telescope conjecture does not hold in general, but still a full classification of smashing localizations is possible, and a simple ring theoretic criterion is available characterizing when the Telescope conjecture is true. This is a result due to the first author and \v{S}\v{t}ov\'{i}\v{c}ek \cite{BS}:
	
	\begin{thmintro}\label{T:BS}\emph{(\cite[Theorem 3.10, Theorem 6.8, Theorem 7.2]{BS})}
		Let $R$ be a ring of weak global dimension at most one. Then there is a bijection between:
			\begin{enumerate}
					\item[(i)] smashing subcategories of $\Der(R)$,
				\item[(ii)] epiclasses of homological ring epimorphism $R \rightarrow S$.
			\end{enumerate}

			Furthermore, if $R$ is commutative, then the following conditions are equivalent:
			\begin{enumerate}
				\item[(i)] the Telescope Conjecture holds in $\Der(R)$,
				\item[(ii)] any homological ring epimorphism $R \rightarrow S$ is flat,
				\item[(iii)] for any prime ideal $\pp$ of $R$, the prime ideal $\pp R_{\pp}$ is idempotent only if it is zero in $R_{\pp}$.
			\end{enumerate}
	\end{thmintro}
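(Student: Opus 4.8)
The plan is to establish the bijection first and then reduce the Telescope-type equivalences to the local case.

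\emph{From homological epimorphisms to smashing subcategories.} Given a homological ring epimorphism $\lambda\colon R\to S$, the functor $S\otimes_R^{\mathbf L}-\colon\Der(R)\to\Der(S)$ preserves coproducts, and the derived restriction $\Der(S)\to\Der(R)$ is fully faithful precisely because $S\otimes_R^{\mathbf L}S\simeq S$. Hence $\mathcal S_\lambda:=\{X\in\Der(R)\mid S\otimes_R^{\mathbf L}X=0\}$ is a smashing subcategory, with $S$ (viewed in $\Der(R)$) the reflection of $R$ into $\mathcal S_\lambda^{\perp}$. This assignment depends only on the epiclass of $\lambda$ and needs no hypothesis on $R$; the content is that it is a bijection.

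\emph{From smashing subcategories to homological epimorphisms.} Let $\mathcal S\subseteq\Der(R)$ be smashing, with localization functor $L$ and unit $\eta\colon R\to LR$. Since $\mathcal S^{\perp}$ is closed under coproducts and shifts, one checks directly that $LR$ is a compact generator of $\mathcal S^{\perp}\simeq\Der(R)/\mathcal S$, and that
\[
\Hom_{\Der(R)}(LR,LR[n])\cong\Hom_{\Der(R)}(R,LR[n])\cong H^n(LR)\qquad(n\in\mathbb Z).
\]
So, once we know $LR$ is a stalk complex, we may put $S:=H^0(LR)=\Hom_{\Der(R)}(LR,LR)$; this is an ordinary ring, $LR\simeq S$ in $\Der(R)$, and Keller's Morita theory for algebraic triangulated categories gives $\mathcal S^{\perp}\simeq\Der(S)$. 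Transporting the equivalence back shows that $\eta$ is induced by a ring homomorphism $\lambda\colon R\to S$ with $S\otimes_R^{\mathbf L}S\simeq S$, i.e.\ a homological epimorphism, and that $\mathcal S=\mathcal S_\lambda$. As $S\simeq LR$ is recovered from $\mathcal S$, the two assignments are mutually inverse on epiclasses, yielding the bijection of (i) and (ii).

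\emph{Why $LR$ is a stalk complex when $\operatorname{w.gl.dim}R\le1$ --- the main obstacle.} This is where the hypothesis is genuinely needed; without it the argument above produces only a differential graded algebra. I would first show that $\mathcal S$ is generated, as a localizing subcategory, by a set of two-term complexes of projective modules, using that over a ring of weak global dimension at most one every module has a flat resolution of length one and every flat module is a directed colimit of finitely generated projectives --- so the cones $\Cone(\eta_X)$ can be approximated, and $\mathcal S$ ``resolved'', by such complexes. Building $\eta\colon R\to LR$ as a transfinite extension of cones of maps into these two-term complexes, one then runs a degreewise cohomology analysis: localizing at each prime reduces matters to a valuation domain, where flat equals torsion-free and the two-term structure is handled explicitly, and gluing over $\Spec R$ gives $H^n(LR)=0$ for $n\ne0$. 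Since the Telescope Conjecture may fail here, one cannot simply invoke compact generation of $\mathcal S$, and this is precisely why obtaining such degreewise control over \emph{arbitrary} smashing subcategories is the crux of the theorem.

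\emph{The commutative and Telescope-Conjecture statement.} A commutative ring has weak global dimension at most one exactly when all its localizations at maximal ideals are valuation domains, and both ``homological epimorphism'' and ``flat'' are local conditions; so (ii) for $R$ is equivalent to (ii) for every $R_\pp$, reducing the equivalence (ii)$\iff$(iii) to a valuation domain $V=R_\pp$. Over $V$ one classifies the homological epimorphisms and checks that a non-flat one exists precisely when $V$ has a nonzero idempotent prime $\qq$: if $\qq=\qq^2\ne0$ then $\qq$ is torsion-free, hence flat over $V$, the map $V\to V/\qq$ is a homological epimorphism by $\Tor_1^V(V/\qq,V/\qq)=\qq/\qq^2=0$, and it is not flat since $V/\qq$ is torsion; conversely, absence of such primes forces every homological epimorphism of $V$ to be flat. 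Globalizing gives (ii)$\iff$(iii). Finally, (i)$\iff$(ii) follows by combining the bijection with the fact that, over a commutative ring of weak global dimension at most one, a smashing subcategory is compactly generated if and only if the corresponding homological epimorphism $\lambda$ is flat --- the flat epimorphisms being exactly the flat homological ones, since $S$ flat forces $S\otimes_R^{\mathbf L}S=S\otimes_RS=S$, and the compactly generated smashing subcategories corresponding, through the thick subcategories of perfect complexes, to the flat localizations of $R$.
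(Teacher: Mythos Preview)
This theorem is not proved in the present paper: it is quoted from \cite{BS} as background, and the paper only generalizes parts of its machinery (notably the K\"unneth-based reduction to cohomology in Theorem~\ref{T:cohomology} and the bireflective description in Theorem~\ref{T:epiclass}) to the non-stable setting. So there is no ``paper's own proof'' to compare against here, only the strategy of \cite{BS} as it is reflected in the current paper.

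With that caveat, your sketch has a real gap exactly where you flag the ``main obstacle.'' You correctly identify that everything hinges on showing $LR$ is a stalk complex, but your proposed mechanism --- generating an arbitrary smashing subcategory by two-term complexes of projectives and then building $\eta$ as a transfinite extension whose cohomology you control prime by prime --- is not a proof. You give no argument that a smashing $\mathcal S$ (which need not be compactly generated) is generated by two-term objects; the fact that modules have flat dimension $\le 1$ does not by itself give you such generators for the \emph{localizing} subcategory, and the transfinite construction you describe does not obviously terminate in a stalk complex. The approach taken in \cite{BS}, and mirrored in this paper, is different in spirit: one uses the K\"unneth formula (cf.\ the proof of Theorem~\ref{T:cohomology} and \cite[\S3]{BS}) to show directly that the local class $\mathcal S^{\perp}$ is determined on cohomology, which forces it to come from a bireflective subcategory of $\ModR$ and hence, via Theorem~\ref{T:epiclass}, from a homological ring epimorphism. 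This bypasses any explicit construction of $LR$ and is where the weak global dimension hypothesis actually enters.

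For the second part, your reduction to valuation domains is the right move and your (iii)$\Rightarrow\neg$(ii) example is correct, but the converse (``absence of nonzero idempotent primes forces every homological epimorphism to be flat'') and the equivalence (i)$\Leftrightarrow$(ii) both presuppose a classification of homological epimorphisms over a valuation domain and an identification of which correspond to compactly generated smashing subcategories. Those are precisely the content of \cite[\S5--7]{BS} (cf.\ Theorem~\ref{T:BSepi} and Proposition~\ref{P:cg} in the present paper), and you cannot simply assert them.
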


A more general supply of subcategories of triangulated categories inducing nice approximations is provided by the notion of a t-structure, as introduced by Be\u{ı}linson, Bernstein, and Deligne in \cite{BBD}. By definition, a t-structure is an orthogonal pair of full subcategories, called usually the aisle and the coaisle, satisfying some axioms ensuring a behavior similar to that of a torsion pair in an abelian category. In case these subcategories are themselves triangulated, the t-structure is called \EMP{stable}, and the aisles of stable t-structures are precisely the kernels of Bousfield localizations. The smashing property generalizes easily to t-structures - it simply requires the coaisle to be closed under coproducts. However, unlike in the case of stable t-structures, the smashing property for t-structures is too weak to allow for a classification even in basic cases; for example, there is a proper class of smashing t-structures over the ring of integers, cf. \cite{GSh} together with \cite[Example 6.2]{SSV}. Instead, more restrictive conditions were considered for t-structures forming the following hierarchy:
	\begin{center}
	\begin{tabular}{c}
		$\{\text{compactly generated t-structures}\}$ \\
		$\rotatebox{90}{$\supseteq$}$ \\
		$\{\text{t-structures with definable coaisle}\}$ \\
		$\rotatebox{90}{$\supseteq$}$ \\
		$\{\text{homotopically smashing t-structures}\}$ \\
		$\rotatebox{90}{$\supseteq$}$ \\
		$\{\text{smashing t-structures}\}$ \\
	\end{tabular}
	\end{center}
The subtlety of this hierarchy is only appreciated in case of non-stable t-structures, as the latter three conditions collapse in the stable case, as shown by Krause \cite{Kr}. Homotopically smashing t-structures were introduced in \cite{SSV}, where the authors prove that the heart of such t-structures has exact direct limits, and is even a Grothendieck category under mild conditions. An \textit{a priori} stronger condition is to require the coaisle to be a definable subcategory. This condition was recently considered in the setting of silting theory in compactly generated triangulated categories, see e.g. \cite{MV}, \cite{AMV2}, \cite{L}. In particular, Laking \cite{L} proved that under mild assumptions, a left non-degenerate t-structure is induced by a pure-injective cosilting object if and only if its coaisle is definable. Therefore, a classification of t-structures with definable coaisles yields a description of pure-injective cosilting objects in $\Der(R)$ up to equivalence. Furthermore, it is also proved in \cite{L} that for left non-degenerate t-structures, the homotopically smashing property is actually equivalent to the coaisle being definable. Not before this paper was submitted, Saor\'{i}n and \v{S}\v{t}ov\'{i}\v{c}ek \cite[Remark 8.9]{SS2} employed a result on cotorsion pairs of \v{S}aroch \cite[Theorem 6.1]{Sar} to show that the coaisle of an arbitrary homotopically smashing t-structure in any algebraic compactly generated triangulated category is definable. In particular, the two middle classes of the hierarchy above collapse in the case of $\Der(R)$, and as a consequence, our results could be formulated equally for homotopically smashing t-structures.

	The following question comes naturally as a strengthening of the Telescope Conjecture from the stable case to general t-structures.
	\begin{questintro}\label{Q:UTC}
			For which rings is it true that every t-structure in $\Der(R)$ with a definable coaisle is compactly generated?
	\end{questintro}
	In particular, an affirmative answer to Question~\ref{Q:UTC} implies, in light of \cite{MV}, that all t-structures induced by bounded cosilting complexes over $R$ are compactly generated, a sort of cofinite type result in silting theory. Very recently, it was shown that commutative noetherian rings \cite{HN} and one-sided hereditary rings \cite[Theorem 3.11]{AH} are among answers to Question~\ref{Q:UTC}, generalizing the two results about Telescope Conjecture cited above. One of the goals of this paper is to consider Question~\ref{Q:UTC} for rings of weak global dimension at most one and give an analog of Theorem~\ref{T:BS} for t-structures which are not necessarily stable. In particular, we prove the following:

\begin{thmintro}{(Theorem~\ref{T:gtc})}
	Let $R$ be a commutative ring of weak global dimension at most one. Then Question~\ref{Q:UTC} has a positive answer for $R$ if and only if the Telescope conjecture holds in $\Der(R)$.
\end{thmintro}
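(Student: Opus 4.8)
The plan is to prove the two implications of the equivalence separately. The implication ``Question~\ref{Q:UTC} has a positive answer $\Rightarrow$ the Telescope Conjecture holds in $\Der(R)$'' is the easy half, and it uses neither commutativity nor Theorem~\ref{T:BS}: given a smashing subcategory $\mathcal{S}$ of $\Der(R)$, regard it as the aisle of a stable t-structure, which is moreover a smashing t-structure since its coaisle $\mathcal{S}^{\perp}$ is closed under coproducts. By the collapse of the lower part of the hierarchy of t-structures in the stable case (Krause, \cite{Kr}), $\mathcal{S}^{\perp}$ is then definable, so by hypothesis the t-structure $(\mathcal{S},\mathcal{S}^{\perp})$ is compactly generated; since $\mathcal{S}$ is triangulated, this forces $\mathcal{S}$ to be the localizing subcategory generated by a set of compact objects, i.e.\ a compactly generated smashing subcategory. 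As $\mathcal{S}$ was arbitrary, the Telescope Conjecture holds.

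For the converse, assume the Telescope Conjecture holds in $\Der(R)$; by Theorem~\ref{T:BS} this is equivalent to the condition $(\ast)$: for every $\pp\in\Spec R$, the ideal $\pp R_\pp$ is idempotent only if it is zero. Let $(\mathcal{U},\mathcal{V})$ be a t-structure in $\Der(R)$ with $\mathcal{V}$ definable; I must show it is compactly generated. I would use the classification of definable coaisles over commutative rings of weak global dimension at most one obtained in the earlier sections — which presents such coaisles by $\mathbb{Z}$-indexed filtration-type data, identifies the compactly generated t-structures among them as precisely those whose datum is a Thomason filtration of $\Spec R$, and pins the possible discrepancy down to extra local data attached to the primes $\pp$ for which $\pp R_\pp$ is a nonzero idempotent ideal. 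In this way the problem reduces to the local case $R=V$ a valuation domain satisfying $(\ast)$. Over such a $V$ the classification exhibits the definable coaisles that are \emph{not} of Thomason type as precisely those assembled, by truncation and gluing, from the stable t-structures attached to the non-flat homological ring epimorphisms $V\to V/\pp$ with $\pp\neq 0$ and $\pp=\pp^{2}$ — the very smashing subcategories witnessing the failure of the Telescope Conjecture in the stable case, cf.~\cite{BS}. Since $(\ast)$ forbids such primes, no such ``extra'' definable coaisle exists over $V$; hence, globally, the datum of $\mathcal{V}$ is a Thomason filtration, and $(\mathcal{U},\mathcal{V})$ is compactly generated.

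I expect the main obstacle to be the local analysis over valuation domains: one must determine exactly which definable coaisles fail to be compactly generated and show that this failure is governed precisely by the presence of a nonzero idempotent prime — the same obstruction to the Telescope Conjecture as in the stable setting of \cite{BS}, but now propagated through all cohomological degrees of a t-structure rather than confined to a single stable localization. A secondary technical point is the local-to-global reduction itself, where one verifies that reassembling compactly generated t-structures over the localizations $R_\pp$ along $\Spec R$ produces a compactly generated t-structure over $R$; here the hypothesis of weak global dimension at most one — ensuring that each $R_\pp$ is a valuation domain and that flat dimensions are bounded by one — is used in an essential way.
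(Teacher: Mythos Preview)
Your overall architecture matches the paper's: the easy direction via Krause's theorem that stable smashing t-structures have definable coaisle, and the hard direction via a local-to-global reduction combined with the classification over valuation domains (Corollary~\ref{C:tcvd}: if $V$ is strongly discrete then the only idempotent prime is $0$, so every admissible system is empty or a singleton $\{[0,\qq]\}$, and every definable coaisle is compactly generated by Proposition~\ref{P:cg}).

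The gap is precisely at the step you flag as a ``secondary technical point'': you do not say \emph{how} to reassemble compact generation globally from the compactly generated localized t-structures $(\mathcal{U}_\pp,\mathcal{V}_\pp)$. The paper's device is Proposition~\ref{P:compinj}, which gives a checkable criterion valid over any commutative ring of weak global dimension at most one: the t-structure is compactly generated if and only if each cohomology class $\mathcal{V}_n\subseteq\ModR$ is closed under injective envelopes. This criterion localizes cleanly: since $(\mathcal{U}_\pp,\mathcal{V}_\pp)$ is compactly generated, each $(\mathcal{V}_\pp)_n$ is closed under injective envelopes in $\ModRpp$; for $M\in\mathcal{V}_n$ one then embeds the injective envelope $E(M)$ into $\prod_{\pp}E(M_\pp)$, each factor lying in $(\mathcal{V}_\pp)_n\subseteq\mathcal{V}_n$ (Lemma~\ref{L:local}), whence $E(M)\in\mathcal{V}_n$. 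Without this injective-envelope criterion or an equivalent, your plan of ``reassembling compactly generated t-structures over the localizations'' is only a hope, not an argument.

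A minor caution: your description of the non-compactly-generated definable coaisles over a valuation domain as those ``assembled from the stable t-structures attached to non-flat homological ring epimorphisms $V\to V/\pp$'' is inaccurate in general --- one of the paper's main points (Theorem~\ref{T:homepi}, Remark~\ref{R:dense}) is that definable coaisles need not come from chains of homological ring epimorphisms at all. This does not hurt you here, since in the strongly discrete case there is no room for any exotic admissible system, but the picture you sketch is not the one the classification actually gives.
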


The first step in this direction will be to prove that any definable subcategory of the derived category of a (not necessarily commutative) ring of weak global dimension at most one is determined on the cohomology (Theorem~\ref{T:cohomology}). This generalizes the results for hereditary and von Neumann regular rings of Garkusha-Prest \cite{GP}. Similarly to the case of smashing subcategories in \cite{BS}, the K\"{u}nneth formula plays an essential role in the proof Theorem~\ref{T:cohomology}. This reduction to cohomology classes will readily allow us to answer Question~\ref{Q:UTC} in the affirmative for not necessarily commutative von Neumann regular rings (Corollary~\ref{C:VNR}; also see the references preceding it).

	As in \cite{BS}, we then switch our focus to the commutative case. The basis for our findings is the structure of compactly generated t-structures which were described in terms of certain filtrations of the Zariski spectrum in \cite{AJS} and this was further generalized to not necessarily noetherian rings in \cite{HCG}; also see \cite{Stl} for a different but related kind of result. The property of being compactly generated localizes well, and this allows us to consider the commutative rings of weak global dimension at most one locally - that is, to confine to valuation domains. As one of the main results of this paper, we give a full classification of t-structures with definable coaisles over valuation domains (Theorem~\ref{T02}) by establishing a bijective correspondence between them and certain invariants defined on the Zariski spectrum which we call ``admissible filtrations''. 

Given a valuation domain $R$, an admissible filtration is an integer-indexed sequence of systems of formal intervals in $\Spec(R)$ satisfying certain axioms. Such systems were already used in the classification of the smashing subcategories in \cite{BS}, as well as in the study of cotilting modules in \cite{B} and \cite{B2}. In the stable case in \cite{BS}, the bijective correspondence was established between smashing localizations and admissible systems satisfying a condition of being ``nowhere dense''. However, as shown in \cite[Example 5.1]{B2}, there are cotilting modules which correspond to an admissible system which is not nowhere dense. Cotilting modules naturally give raise to Happel-Reiten-Smal\o~ t-structures with definable coaisles --- this suggests that the classification for general t-structure should be in terms of sequences of admissible systems satisfying the non-density condition only locally in some sense, with respect to cohomological degrees. This is indeed the case, see Definition~\ref{D:admissiblefiltration}. 

	The possibility of having dense intervals in the members of the admissible filtration is connected to a new phenomenon which was not visible in case of stable t-structures. Given a ring $R$ of weak global dimension at most one, not all t-structures with definable coaisle can be described in terms of homological ring epimorphisms. More precisely, in \cite[\S 5]{AH}, Angeleri H\"{u}gel and the second author establish an \textit{injective} map of the following form (the statement will be made precise in the body of this paper, see \ref{SS:homological}):
		\begin{equation}\label{Easgn} \tag{$\star$}\left \{ \begin{tabular}{ccc} \text{ $\mathbb{Z}$-indexed chains of } \\ \text{homological ring epimorphisms} \\ \text{of $R$ up to equivalence } \end{tabular}\right \}  \xhookrightarrow{}  \left \{ \begin{tabular}{ccc} \text{ t-structures with definable } \\ \text{ coaisles in $\Der(R)$ } \end{tabular}\right \}.\end{equation}

				By Theorem~\ref{T:BS}, the image of the assignment (\ref{Easgn}) always contains all of the t-structures which are in addition stable. However, already over the Kronecker algebra over a field, the assignment is not \textit{surjective}, as it misses precisely all the shifts of the Happel-Reiten-Smal\o~ t-structure associated to the dual of the Lukas tilting module \cite[\S 6.4]{AH}. In this paper, we will demonstrate that over valuation domains, the map (\ref{Easgn}) can potentially miss a lot of t-structures, provided that the Zariski spectrum of the domain is ``topologically rich enough'' to allow density to occur in an admissible filtration, see Remark~\ref{R:dense}. We compile the relations between the various kinds of t-structures and invariants over a valuation domain $R$ established in Sections 8 and 9 in the following commutative diagram of 1-1 correspondences and inclusions.
\begin{center}\tiny
	\begin{tabular}{|ccccc|}
		\hline
		& & & & \\
		$\left \{  \begin{tabular}{c} \text{admissible} \\ \text{filtrations} \\ \text{in $\Spec(R)$} \end{tabular} \right \}$ & $\xleftrightarrow{1-1}$ & $\left \{  \begin{tabular}{c} \text{t-structures with} \\ \text{definable coaisle} \\ \text{in $\Der(R)$} \end{tabular} \right \}$ & & \\
		& & & & \\
		$\rotatebox{90}{$\subseteq$}$	& & $\rotatebox{90}{$\subseteq$}$ & & \\
		& & & & \\
		$\left \{  \begin{tabular}{c} \text{nowhere dense} \\ \text{admissible} \\ \text{filtrations} \end{tabular} \right \}$ & $\xleftrightarrow{1-1}$ & $\left \{  \begin{tabular}{c} \text{t-structures} \\ \text{in the image} \\ \text{of (\ref{Easgn})} \end{tabular} \right \}$ & $\xleftrightarrow{1-1}$ & $\left \{  \begin{tabular}{c} \text{chains of homological} \\ \text{ring epimorphisms} \\ \text{over $R$ up to equiv.} \end{tabular} \right \}$ \\
		& & & & \\
		$\rotatebox{90}{$\subseteq$}$	& & $\rotatebox{90}{$\subseteq$}$ & & $\rotatebox{90}{$\subseteq$}$ \\
		& & & & \\
		$\left \{  \begin{tabular}{c} \text{simple admissible} \\ \text{filtrations of} \\ \text{Proposition~\ref{P:cg}} \end{tabular} \right \}$ & $\xleftrightarrow{1-1}$ & $\left \{  \begin{tabular}{c} \text{compactly} \\ \text{generated} \\ \text{t-structures} \end{tabular} \right \}$ & $\xleftrightarrow{1-1}$ & $\left \{  \begin{tabular}{c} \text{chains of flat} \\ \text{ring epimorphisms} \\ \text{over $R$ up to equiv.} \end{tabular} \right \}$ \\
		& & & & \\
		\hline
	\end{tabular}
\end{center}

	The paper is concluded by discussing the non-degeneracy condition of the classified t-structures. In general, there is the following chain of conditions that one can impose on a definable coaisle in the derived category of any ring $R$:

	\begin{center}
	\begin{tabular}{c}
		$\{\text{co-intermediate definable coaisles}\}$ \\
		$\rotatebox{90}{$\supseteq$}$ \\
		$\{\text{non-degenerate definable coaisle}\}$ \\
		$\rotatebox{90}{$=$}$ \\
		$\{\text{non-degenerate coaisles of homotopically smashing t-structures}\}$ \\
	\end{tabular}
	\end{center}
 
By results of \cite{MV} and \cite{L}, respectively, the smallest class above corresponds to equivalence classes of (bounded) cosilting complexes over $R$, while the two larger classes coincide and correspond to equivalence classes of pure-injective cosilting objects. The usual definition (\cite{WZ}, \cite{MV}) demands the cosilting complex to be a bounded complex of injective $R$-modules, ensuring that the induced t-structure is co-intermediate, while one makes no such assumption when defining a general cosilting object (\cite{NSZ}, \cite{PV}) in a triangulated category. We use our classification over a valuation domain to show that while any pure-injective cosilting object is in this setting cohomologically bounded below (Corollary~\ref{C:bbelow}), the induced t-structure may not be co-intermediate in general (Example~\ref{EX1}), obtaining that the inclusion of the two classes above is strict.

	The paper is organized as follows. Sections 1 and 2 compile the necessary facts and recent results about t-structures, definability in triangulated categories, homotopy colimits, and the cosilting theory. This is done in the generality of triangulated categories which underlie a compactly generated Grothendieck derivator. In Section 3 we study the definable subcategories of the unbounded derived category of a ring of weak global dimension at most one, showing in particular that these are determined on cohomology (Theorem~\ref{T:cohomology}). The definable coaisles are parametrized by certain increasing sequences of definable subcategories of the module category (Proposition~\ref{P:moduletheoretic}). As a consequence, we answer Question~\ref{Q:UTC} in the affirmative for von Neumann regular rings (Corollary~\ref{C:VNR}). After that, we confine to the case of a valuation domain, and give a full classification of the module-theoretic cosilting classes via ``admissible systems'' of intervals in Section 4, mainly Theorem~\ref{T01}. Section 5 introduces the topological notion of (non-)density, computes the cosilting classes by homological formulas (Proposition~\ref{P:tor}), and provides a construction of dense-everywhere cotilting modules (Proposition~\ref{P:cotiltingdense}), which is needed for the sequel. In the next two Sections 6 and 7, the assignments between definable coaisles in the derived category and the ``admissible filtrations'' on the Zariski spectrum are established (Proposition~\ref{P:coaislestointervals} and Proposition~\ref{P:intervalstocoaisles}). In Section 8 we prove that these assignments are mutually inverse, and thus induce the promised bijective correspondence (Theorem~\ref{T02}). Finally, in the last Section 9 we show that the condition of being ``nowhere dense'' of the admissible filtrations corresponds precisely to the t-structure being induced by a chain of homological ring epimorphisms via (\ref{Easgn}) (Theorem~\ref{T:homepi}) and conclude with several examples. 

	\subsection*{Acknowledgement} The authors would like to thank the anonymous referee for many valuable suggestions, one of which helped us discover a mistake in an earlier version of the manuscript. This led us to the correct notion of the degreewise non-density condition (Definition~\ref{D:admissiblefiltration}) which in turn allowed us to describe the definable coaisles as tensor-orthogonal classes (Proposition~\ref{P:givenbytensor}).

	\subsection*{Conventions}Throughout the paper, all subcategories are strict, full and additive, and all functors are additive.

	Unless specified, by a module we always mean a right module over a ring $R$, and the category of right $R$-modules will be denoted as $\ModR$, while the category of abelian groups is denoted as $\Ab$. The chain complexes of $R$-modules are written in the cohomological notation, that is, the degree increases along the differential.
\section{t-structures with definable coaisles}
Let $\mathcal{T}$ be a triangulated category with all small coproducts. We will always denote the suspension functor of $\mathcal{T}$ by $[1]$, and the cosuspension functor by $[-1]$. A \EMP{t-structure} (\cite{BBD}) in $\mathcal{T}$ is a pair $\mathbf{t} = (\mathcal{U},\mathcal{V})$ of subcategories satisfying the following three conditions:
\begin{enumerate}
	\item[(i)] $\Hom_\mathcal{T}(U,V) = 0$ for all $U \in \mathcal{U}$ and $V \in \mathcal{V}$,
	\item[(ii)] for each object $X \in \mathcal{T}$ there is a triangle
			$$U \rightarrow X \rightarrow V \rightarrow U[1]$$
				with $U \in \mathcal{U}$ and $V \in \mathcal{V}$, and
		\item[(iii)] $\mathcal{U}[1] \subseteq \mathcal{U}$, or equivalently, $\mathcal{V}[-1] \subseteq \mathcal{V}$.
\end{enumerate}

The subcategory $\mathcal{U}$ is called the \EMP{aisle} of the t-structure $\mathbf{t}$, and the subcategory $\mathcal{V}$ is the \EMP{coaisle} of $\mathbf{t}$. We will call a subcategory of $\mathcal{T}$ an aisle if it fits as an aisle into a t-structure, and the same custom will be used for coaisles. Given a subcategory $\mathcal{C}$ of $\mathcal{T}$ we adopt the notation
$$\mathcal{C}\Perp{0} = \{X \in \mathcal{T} \mid \Hom{}_\mathcal{T}(C,X) = 0 ~\forall C \in \mathcal{C}\},$$
and
$$\Perp{0}\mathcal{C} = \{X \in \mathcal{T} \mid \Hom{}_\mathcal{T}(X,C) = 0 ~\forall C \in \mathcal{C}\}.$$
It is not hard to see that conditions $(i)$ and $(ii)$ imply that $\mathcal{U} = \Perp{0}\mathcal{V}$ and $\mathcal{V} = \mathcal{U}\Perp{0}$. As a consequence, all aisles and all coaisles are closed under extensions and direct summands in $\mathcal{T}$. Moreover, any aisle is closed under all coproducts in $\mathcal{T}$, and any coaisle is closed under all products existing in $\mathcal{T}$.

The triangle from condition $(ii)$ is unique up to a unique isomorphism of triangles. Indeed, it is always isomorphic to the triangle
			$$\tau_\mathcal{U}(X) \rightarrow X \rightarrow \tau_\mathcal{V}(X) \rightarrow \tau_\mathcal{U}(X)[1],$$
where $\tau_\mathcal{U}$ and $\tau_\mathcal{V}$ are the right and left adjoint to the inclusions $\mathcal{U} \subseteq \mathcal{T}$ and $\mathcal{V} \subseteq \mathcal{T}$, respectively. Moreover, the existence of (any of) these adjoints under $(i)$ and $(iii)$ is equivalent to condition $(ii)$ (see \cite[Proposition 1.1]{KV}). We will call the triangle from $(ii)$ the \EMP{approximation triangle} of $X$ with respect to the t-structure $\mathbf{t}$. We will be especially interested in the reflection functor $\tau_\mathcal{V}$, which will be called the \EMP{coaisle approximation functor}.
\subsection{Definability in compactly generated triangulated categories} Recall that an object $C \in \mathcal{T}$ is \EMP{compact} if the functor $\Hom_\mathcal{T}(C,-)$ sends coproducts in $\mathcal{T}$ to coproducts in $\mathbf{Ab}$, and let $\mathcal{T}^c$ denote the triangulated subcategory of all compact objects of $\mathcal{T}$. From now on we will assume that $\mathcal{T}$ is a \EMP{compactly generated} triangulated category, that is, that $\mathcal{T}$ has small coproducts, $\mathcal{T}^c$ is skeletally small, and that $\Hom_\mathcal{T}(\mathcal{T}^c,X) = 0$ implies $X = 0$ for any $X \in \mathcal{T}$. For any category $\mathcal{C}$, let $\Mor(\mathcal{C})$ denote the morphism category of $\mathcal{C}$.
\begin{definition}
We say that a subcategory $\mathcal{C}$ of $\mathcal{T}$ is \EMP{definable} if there is a subset $\Phi$ of $\Mor(\mathcal{T}^c)$ such that 
	$$\mathcal{C} = \{X \in \mathcal{T} \mid \Hom{}_\mathcal{T}(f,X) \text{ is surjective for all $f \in \Phi$}\}.$$
\end{definition}
	A recent result from \cite{L} shows that, under mild assumptions, definable subcategories can be characterized by their closure properties, in a way analogous to definable subcategories of module categories. Before stating this result, we need to recall the notions of purity in compactly generated triangulated categories, and of derivators and homotopy (co)limits.
\subsection{Purity in compactly generated triangulated categories}
Consider the category $\ModTc$ of all $\mathcal{T}^c$-modules, that is, of all contravariant functors $\mathcal{T}^c \rightarrow \mathbf{Ab}$. We let $\mathbf{y}: \mathcal{T} \rightarrow \ModTc$ be the \EMP{restricted Yoneda functor}, by which we mean the functor defined by restricting the standard Yoneda functor on $\mathcal{T}$ to $\mathcal{T}^c$. Explicitly,
$$\mathbf{y}(+) = \Hom{}_{\mathcal{T}}(-,+)_{\restriction \mathcal{T}^c}.$$
This functor can be used to build a useful theory of purity in $\mathcal{T}$.
\begin{definition}
	A triangle $X \xrightarrow{f} Y \xrightarrow{g} Z \rightarrow X[1]$ in $\mathcal{T}$ is a \EMP{pure triangle} if the induced sequence
		$$0 \rightarrow \mathbf{y}(X) \xrightarrow{\mathbf{y}(f)} \mathbf{y}(Y) \xrightarrow{\mathbf{y}(g)} \mathbf{y}(Z) \rightarrow 0$$
		is exact in $\ModTc$. If this is the case, we call $f$ a \EMP{pure monomorphism} and $g$ a \EMP{pure epimorphism} in $\mathcal{T}$. We remark that, of course, pure monomorphisms in $\mathcal{T}$ will usually not be monomorphisms in the categorical sense, and the same is the case with pure epimorphisms.

		Moreover, we call an object $E \in \mathcal{T}$ \EMP{pure-injective} if any pure monomorphism $E \rightarrow X$ in $\mathcal{T}$ splits.	
\end{definition}
	The purity in $\mathcal{T}$ is closely tied to the definable subcategories in $\mathcal{T}$ via the notion of the Ziegler spectrum. Here, we follow \cite[\S 17]{P}. The \EMP{Ziegler spectrum} $\Zgs(\mathcal{T})$ of $\mathcal{T}$ is the collection of isomorphism classes of all indecomposable pure-injective objects of $\mathcal{T}$. Then $\Zgs(\mathcal{T})$ is always a set, and it is equipped with a topology given as follows: A subset $U$ of $\Zgs(\mathcal{T})$ is closed if and only if there is a definable subcategory $\mathcal{C}$ of $\mathcal{T}$ such that $U = \Zgs(\mathcal{T}) \cap \mathcal{C}$. The following result due to Krause says in particular that every definable subcategory of $\mathcal{T}$ is fully determined by the indecomposable pure-injective objects it contains.
	\begin{thm}\label{T:Ziegler}\emph{(\cite{Kr2})}
		Let $\mathcal{T}$ be a compactly generated triangulated category. Then there is a bijective correspondence:
			$$\left \{ \begin{tabular}{ccc} \text{ closed subsets $U$ of $\Zgs(\mathcal{T})$} \end{tabular}\right \}  \leftrightarrow  \left \{ \begin{tabular}{ccc} \text{ definable subcategories $\mathcal{C}$ of $\mathcal{T}$ }  \end{tabular}\right \}$$
		Both correspondences are given by the mutually inverse assignments
			$$U \mapsto \{X \in \mathcal{T} \mid \text{there is a pure monomorphism $X \rightarrow \prod_{i \in I}P_i$, where $P_i \in U$}\},$$
			$$\mathcal{C} \mapsto \mathcal{C} \cap \Zgs(\mathcal{T}).$$
	\end{thm}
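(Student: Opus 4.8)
The plan is to transfer the statement along the restricted Yoneda functor $\mathbf{y}\colon\mathcal{T}\to\ModTc$ to the functor category $\ModTc$, where it becomes the Ziegler correspondence for a locally coherent Grothendieck category; this is essentially how Krause establishes it in \cite{Kr2} (see also \cite[\S 17]{P}). First I would record the structural facts about $\ModTc$ and $\mathbf{y}$: since $\mathcal{T}^c$ is a skeletally small triangulated category, every morphism in $\mathcal{T}^c$ has a weak kernel, so the finitely presented objects of $\ModTc$ --- the cokernels of morphisms between the representable functors $h_C=\mathbf{y}(C)$, $C\in\mathcal{T}^c$ --- are closed under kernels and $\ModTc$ is locally coherent. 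Moreover $\mathbf{y}$ is homological, preserves products and, by compactness, coproducts, and the representables are projective in $\ModTc$ since evaluation is exact and $\Hom_{\ModTc}(h_C,M)\cong M(C)$.

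Next I would establish the purity dictionary. Chasing the long exact sequence obtained by applying $\mathbf{y}$ to a triangle $X\xrightarrow{f}Y\xrightarrow{g}Z\to X[1]$ shows that the triangle is pure if and only if $\mathbf{y}(f)$ is a monomorphism, if and only if $\mathbf{y}(g)$ is an epimorphism, if and only if the connecting map $\mathbf{y}(Z)\to\mathbf{y}(X[1])$ vanishes; in particular $\mathbf{y}$ both preserves and reflects pure monomorphisms, and it carries pure triangles to pure-exact sequences in $\ModTc$. On the definability side, a definable subcategory $\mathcal{C}=\{X\mid\Hom_\mathcal{T}(\varphi,X)\text{ surjective }\forall\varphi\in\Phi\}$ of $\mathcal{T}$ is exactly the $\mathbf{y}$-preimage of $\{M\in\ModTc\mid M(\varphi)\text{ surjective }\forall\varphi\in\Phi\}$; using that every finitely presented functor is a cokernel of a map of representables and that representables are projective, these subcategories of $\ModTc$ are precisely its definable subcategories in the sense of Crawley--Boevey --- those closed under products, direct limits, and pure subobjects --- and conversely each of the latter pulls back along $\mathbf{y}$ to a definable subcategory of $\mathcal{T}$.

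The decisive input is then Krause's theorem that $\mathbf{y}$ restricts to an equivalence between the full subcategory of pure-injective objects of $\mathcal{T}$ and the full subcategory of injective objects of $\ModTc$; its essential surjectivity rests on $\mathcal{T}$ having enough pure-injectives, i.e. that every object pure-embeds into a product of indecomposable pure-injectives, which is produced via Brown representability. This equivalence preserves indecomposability and arbitrary products, hence induces a bijection between $\Zgs(\mathcal{T})$ and the indecomposable injective objects of $\ModTc$; matching the basic opens (the open ``$\Hom_\mathcal{T}(\varphi,-)$ is not surjective'' on the $\mathcal{T}$-side with the one cut out by the finitely presented functor $\operatorname{coker}\mathbf{y}(\varphi)$ on the $\ModTc$-side) upgrades this to a homeomorphism onto the Ziegler spectrum of $\ModTc$. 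Finally I would feed this into Herzog's bijection between the definable subcategories of $\ModTc$ and the closed subsets of its Ziegler spectrum, in which a definable subcategory is reconstructed from the indecomposable injectives it contains as the class of objects pure-embedding into a product of them; transporting this bijection back along $\mathbf{y}$, using that $\mathbf{y}$ preserves products, reflects pure monomorphisms, and carries pure-injectives of $\mathcal{T}$ to injectives of $\ModTc$, delivers precisely the asserted bijection $\mathcal{C}\mapsto\mathcal{C}\cap\Zgs(\mathcal{T})$ together with its displayed inverse.

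The main obstacle is the step cited from \cite{Kr2}: the equivalence between pure-injective objects of $\mathcal{T}$ and injective objects of $\ModTc$, together with the verification that the induced bijection of Ziegler spectra is a homeomorphism. This is where the compact generation of $\mathcal{T}$ is genuinely used --- both to guarantee a sufficient supply of pure-injectives and to control which positive-primitive conditions are realised by indecomposable pure-injectives --- and it subsumes the more delicate half of the classical module-theoretic theory (Ziegler's theorem that closed sets of indecomposable pure-injectives suffice to recover a definable subcategory). The remaining ingredients --- local coherence of $\ModTc$, the purity dictionary, and the translation of Crawley--Boevey's closure-theoretic description of definable subcategories along $\mathbf{y}$ --- are routine once this is granted.
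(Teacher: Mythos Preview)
The paper does not prove this statement at all: it is quoted as a result of Krause \cite{Kr2} (with a pointer to \cite[\S 17]{P}) and no argument is given beyond the citation. Your outline is a faithful summary of Krause's original approach via the restricted Yoneda functor into the locally coherent category $\ModTc$ and Herzog's Ziegler correspondence there, so there is nothing to compare against in the present paper.
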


\subsection{Derivators and homotopy (co)limits}
Triangulated categories usually do not have many useful limits and colimits apart from products and coproducts. A way to remedy this is to introduce an additional structure on them and compute the homotopy (co)limits instead. In our case, this extra structure comes from assuming that $\mathcal{T}$ is the underlying category of a strong and stable derivator. Since we will very soon restrict ourselves to the case of derived categories, we omit most of the details on derivators, and refer the reader to \cite{L} and references therein for an exposition of the theory well-suited for our application.

A derivator is a contravariant 2-functor $\mathbb{D}: \Cat^{op} \rightarrow \CAT$ from the category of small categories to the category of all categories, satisfying certain conditions. We denote by $\star$ the category consisting of a single object and a single map. The category $\mathbb{D}(\star)$ is called the \EMP{underlying category} of the derivator $\mathbb{D}$. For every small category $I$, we consider the unique functor $\pi_I: I \rightarrow \star$. The definition of a derivator implies that the functor $\mathbb{D}(\pi_I): \mathbb{D}(\star) \rightarrow \mathbb{D}(I)$ admits both the right and the left adjoint functor. We denote the right adjoint by $\holim: \mathbb{D}(I) \rightarrow \mathbb{D}(\star)$ and the left adjoint by $\hocolim: \mathbb{D}(I) \rightarrow \mathbb{D}(\star)$. We omit the definition of a \EMP{strong and stable} derivator, but we remark that amongst the consequences of these properties is that the category $\mathbb{D}(I)$ is triangulated for all $I \in \Cat$.

Given a small category $I$ and an object $i \in I$, let $i$ also denote the functor $i:\star \rightarrow I$ sending the unique object of $\star$ onto $i$. Then we have the induced functor $\mathbb{D}(i): \mathbb{D}(I) \rightarrow \mathbb{D}(\star)$. For any $\mathscr{X} \in \mathbb{D}(I)$ we denote $\mathscr{X}_i = \mathbb{D}(i)(\mathscr{X}) \in \mathbb{D}(\star)$ and call it the \EMP{$i$-th component of $\mathscr{X}$}. Together, the component functors induce the \EMP{diagram functor} $d_I: \mathbb{D}(I) \rightarrow \mathbb{D}(\star)^I$. The objects of $\mathbb{D}(I)$ are called the \EMP{coherent diagrams} in the underlying category of shape $I$. Via the diagram functor, any coherent diagram can be interpreted as a usual (or \EMP{incoherent}) diagram in the underlying category.

\subsection{Standard derivator of a module category} Here we follow \cite[\S 5]{St}. Let $R$ be a ring, and let $\ModR$ be the abelian category of all right $R$-modules. For any small category $I \in \Cat$, we let $(\ModR)^I$ be the category of all $I$-shaped diagrams in $\ModR$, that is, the abelian category of all functors $I \rightarrow \ModR$. Let $\Der((\ModR)^I)$ denote the unbounded derived category of $(\ModR)^I$. Recall that there is a natural equivalence between the category of chain complexes of objects in $(\ModR)^I$, and the $I$-shaped diagrams of chain complexes of $R$-modules. Therefore, $\Der((\ModR)^I)$ can be considered as the Verdier localization of the category of $I$-shaped diagrams of chain complexes. There is the \EMP{standard derivator} associated to $\ModR$ which assigns to any small category $I \in \Cat$ the triangulated category $\Der((\ModR)^I)$. The underlying category $\Der(\ModR)$ will be denoted simply by $\Der(R)$. This assignment defines a strong and stable derivator, and the homotopy limit and colimit functors can be in this case described by derived functors in the following way. Let $I \in \Cat$, then we define the \EMP{homotopy colimit} functor $\hocolim_{i \in I}: \Der((\ModR)^I) \rightarrow \Der(R)$ to be the left derived functor $\mathbf{L}\colim_{i \in I}$ of the usual colimit functor $\colim_{i \in I}: (\ModR)^I \rightarrow \ModR$. Dually, we define the \EMP{homotopy limit} functor as $\holim_{i \in I} := \mathbf{R} \lim_{i \in I}: \Der((\ModR)^I) \rightarrow \Der(R)$.

 The objects of $\Der((\ModR)^I)$, that is, the coherent diagrams of shape $I$, are all represented by diagrams of chain complexes of $R$-modules. Let $\mathscr{X} \in \Der((\ModR)^I)$ be represented by a diagram $(X_i \mid i \in I)$ of chain complexes. Then clearly, $\mathscr{X}_i \simeq X_i$ as objects of $\Der(R)$ for any $i \in I$.

We will be especially interested in the homotopy colimit construction in the case when the small category $I$ is directed. In this situation, we call $\hocolim_{i \in I}$ a \EMP{directed homotopy colimit}. Because the direct limit functor $\varinjlim_{i \in I} = \colim_{i \in I}$ on the category of chain complexes of $R$-modules is exact, we have for each object $\mathscr{X} \in \Der((\ModR)^I)$, represented by a diagram $(X_i \mid i \in I)$ of chain complexes, the isomorphism $\hocolim_{i \in I} \mathscr{X} \simeq \varinjlim_{i \in I} X_i$ in $\Der(R)$. In particular, we have for any $n \in \mathbb{Z}$ the following isomorphism on cohomologies
$$H^n(\hocolim_{i \in I} \mathscr{X}) \simeq \varinjlim_{i \in I}H^n(X_i).$$ 
For more details, we refer to \cite[Proposition 6.6]{St}.
\subsection{Definable coaisles}
We now assume that $\mathcal{T}$ is an underlying subcategory of a \EMP{compactly generated derivator}, that is, a strong and stable derivator $\mathbb{D}$ such that the underlying category $\mathbb{D}(\star)$ (and by \cite[Lemma 3.2]{L}, consequently also any of the categories $\mathbb{D}(I)$ for any small category $I$) is compactly generated. This implies that $\mathcal{T}$ is a compactly generated triangulated category, in which we can compute homotopy colimits and limits. \begin{definition}
We say that a subcategory $\mathcal{C}$ of $\mathcal{T}$ is 
		\begin{itemize}
				\item \EMP{closed under directed homotopy colimits} if for any directed small category $I$ and any coherent diagram $\mathscr{X} \in \mathbb{D}(I)$ such that $\mathscr{X}_i \in \mathcal{C}$ for all $i \in I$ we have $\hocolim_{i \in I} \mathscr{X} \in \mathcal{C}$,
				\item \EMP{closed under pure monomorphisms} if for any pure monomorphism $Y \rightarrow X$ such that $X \in \mathcal{C}$ we have $Y \in \mathcal{C}$.
		\end{itemize}
		Following \cite{SSV}, we call a t-structure $(\mathcal{U},\mathcal{V})$ \EMP{homotopically smashing} if the coaisle $\mathcal{V}$ is closed under directed homotopy colimits. We point out here that any aisle is closed under arbitrary homotopy colimits, and any coaisle is closed under arbitrary homotopy limits, this is \cite[Proposition 4.2]{SSV}.
\end{definition}

We are now ready to state the result from \cite{L} characterizing definable subcategories of $\mathcal{T}$ by their closure properties. 
\begin{thm}\emph{(\cite[Theorem 3.11]{L})}\label{T:rosie1}
		A subcategory $\mathcal{C}$ of $\mathcal{T}$ is definable if and only if $\mathcal{C}$ is closed under products, directed homotopy colimits, and pure monomorphisms.
	\end{thm}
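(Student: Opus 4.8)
The plan is to prove the two implications separately, the forward one being essentially formal and the converse carrying all the difficulty.

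\emph{Definable implies closed under products, directed homotopy colimits, and pure monomorphisms.} Write $\mathcal{C}=\{X\mid\Hom_\mathcal{T}(f,X)\text{ surjective for all }f\in\Phi\}$ with $\Phi\subseteq\Mor(\mathcal{T}^c)$. Closure under products is immediate, since $\Hom_\mathcal{T}(f,-)$ commutes with products and a product of epimorphisms in $\mathbf{Ab}$ is an epimorphism. For directed homotopy colimits one invokes the basic feature of compactly generated derivators that $\Hom_\mathcal{T}(C,-)$ commutes with $\hocolim_{i\in I}$ for $C\in\mathcal{T}^c$ and $I$ directed; applied to the domain and codomain of each $f\in\Phi$, together with the fact that a directed colimit of epimorphisms in $\mathbf{Ab}$ is an epimorphism, this gives the claim. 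For pure monomorphisms, a pure triangle $Y\to X\to Z\to Y[1]$ with $X\in\mathcal{C}$ yields a short exact sequence $0\to\mathbf{y}(Y)\to\mathbf{y}(X)\to\mathbf{y}(Z)\to0$ in $\ModTc$; since the connecting maps $\Hom_\mathcal{T}(C,Z)\to\Hom_\mathcal{T}(C,Y[1])$ vanish for compact $C$, a short diagram chase shows that surjectivity of $\Hom_\mathcal{T}(f,X)$ propagates to $\Hom_\mathcal{T}(f,Y)$ for every $f\in\Phi$, so $Y\in\mathcal{C}$.

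\emph{The closure properties imply definability.} Assume $\mathcal{C}$ is closed under products, directed homotopy colimits, and pure monomorphisms. As $\mathcal{T}^c$ is skeletally small, fix a set $S$ of representatives of the isomorphism classes in $\Mor(\mathcal{T}^c)$, put $\Phi=\{f\in S\mid\Hom_\mathcal{T}(f,C)\text{ surjective for all }C\in\mathcal{C}\}$, and let $\mathcal{D}$ be the definable subcategory it defines. Then $\mathcal{C}\subseteq\mathcal{D}$ by construction, and it remains to prove $\mathcal{D}\subseteq\mathcal{C}$. Since a split monomorphism is a pure monomorphism, $\mathcal{C}$ is closed under direct summands; combining this with closure under products and pure monomorphisms and with the Ziegler correspondence of Theorem~\ref{T:Ziegler} applied to $\mathcal{D}$, it suffices to show that every indecomposable pure-injective $P\in\mathcal{D}\cap\Zgs(\mathcal{T})$ lies in $\mathcal{C}$: an arbitrary object of $\mathcal{D}$ purely embeds into a product of such $P$'s, which lies in $\mathcal{C}$ by closure under products, and hence the object itself lies in $\mathcal{C}$ by closure under pure monomorphisms.

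\emph{The realization step.} Given an indecomposable pure-injective $P\in\mathcal{D}$, the core of the argument is to construct a directed small category $I$ and a coherent diagram $\mathscr{X}\in\mathbb{D}(I)$ with all components $\mathscr{X}_i$ in $\mathcal{C}$, together with a morphism $P\to\hocolim_{i\in I}\mathscr{X}=:N$ that is a pure monomorphism. One builds $\mathscr{X}$ by transfinite recursion, at each stage using the hypothesis $P\in\mathcal{D}$ — that every $f\in\Phi$ acts surjectively on $P$ — to factor the relevant compact approximations of the partially constructed object through objects of $\mathcal{C}$; this is the triangulated avatar of the fact that the pp-type in $\mathbf{y}(P)$ of any compact test object is finitely satisfiable in $\mathcal{C}$. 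Coherence of the diagram, which is what permits forming its homotopy colimit rather than a mere incoherent colimit, is supplied by the derivator structure. Once $N$ is obtained, $N\in\mathcal{C}$ by closure under directed homotopy colimits, and since $P$ is pure-injective the pure monomorphism $P\to N$ splits, exhibiting $P$ as a direct summand of $N$; hence $P\in\mathcal{C}$, completing the proof. I expect this realization step to be the main obstacle: producing a genuinely coherent directed diagram with terms in $\mathcal{C}$ whose homotopy colimit purely contains the prescribed $P$, while tracking the restricted Yoneda images throughout so as to certify purity of $P\to N$, and running the transfinite recursion inside the derivator so that each newly added morphism respects all earlier choices, is where the technical weight lies; the reduction to indecomposable pure-injectives and the forward implication are comparatively routine.
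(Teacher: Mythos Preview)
The paper does not prove this theorem: it is quoted verbatim as \cite[Theorem 3.11]{L} and no argument is given. So there is no proof in the paper to compare your proposal against.

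That said, a brief comment on your sketch. The forward direction is essentially right in spirit, though your diagram chase for closure under pure monomorphisms is not quite as immediate as you suggest: from the two short exact sequences $0\to\Hom(E,Y)\to\Hom(E,X)\to\Hom(E,Z)\to0$ for $E\in\{C,D\}$ and the surjectivity of the middle vertical map, the snake lemma only gives an exact sequence $\Ker\Hom(f,Z)\to\Coker\Hom(f,Y)\to0$, so one needs an extra observation (e.g.\ that the relevant functors on $\ModTc$ are coherent and that vanishing on $\mathbf{y}(X)$ forces vanishing on the pure subobject $\mathbf{y}(Y)$) rather than a bare chase. For the converse, your outline is honest about where the difficulty lies; the actual proof in \cite{L} does indeed pass through the functor category $\ModTc$ and the theory of coherent functors/definable subcategories there, combined with the derivator machinery to realize directed systems coherently, but the ``realization step'' you describe is genuinely the substance of Laking's argument and would need the details from that paper to be made rigorous.
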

	We will be especially interested in the situation when a coaisle of a t-structure is a definable subcategory. The following result shows that in this case the existence of the triangles from condition (ii) is automatic. 	
	\begin{thm}\emph{(\cite[Proposition 4.5]{AMV2})}
			Let $\mathcal{V}$ be a definable subcategory of $\mathcal{T}$ closed under extensions and cosuspensions. Then the pair $(\Perp{0}\mathcal{V},\mathcal{V})$ is a t-structure.
	\end{thm}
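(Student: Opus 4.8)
Write $\mathcal U:=\Perp{0}\mathcal V$. Two of the three t-structure axioms come essentially for free: axiom (i) is the definition of $\Perp{0}\mathcal V$, and axiom (iii) is the hypothesis that $\mathcal V$ is closed under cosuspensions — equivalently that $\mathcal U$ is closed under suspensions, since $\Hom_\mathcal T(U[1],V)=\Hom_\mathcal T(U,V[-1])=0$ for $U\in\mathcal U$ and $V\in\mathcal V$. So the whole content is axiom (ii): every $X\in\mathcal T$ must fit into a triangle $U\to X\to V\to U[1]$ with $U\in\mathcal U$ and $V\in\mathcal V$. By \cite[Proposition 1.1]{KV} this amounts, under (i) and (iii), to showing that the inclusion $\mathcal V\hookrightarrow\mathcal T$ admits a left adjoint; concretely, the plan is to produce for each $X$ a \emph{special} $\mathcal V$-preenvelope $\eta\colon X\to V$, meaning $V\in\mathcal V$ and $\Cone(\eta)\in\Perp{0}\mathcal V$ (so that $\Cone(\eta)[-1]\in\mathcal U$), after which the rotated triangle $\Cone(\eta)[-1]\to X\xrightarrow{\eta}V\to\Cone(\eta)$ is exactly the triangle required by (ii).

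The natural tool is a small-object argument inside the compactly generated derivator $\mathbb D$. Since $\mathcal V$ is definable, fix a set $\Phi\subseteq\Mor(\mathcal T^c)$ with $\mathcal V=\{X\mid\Hom_\mathcal T(f,X)\text{ surjective }\forall f\in\Phi\}$, and for $f\colon A_f\to B_f$ in $\Phi$ write $c_f:=\Cone(f)$. Starting from $X=X_0$, build a sequence $X_0\to X_1\to X_2\to\cdots$ where $X_{n+1}$ is the homotopy pushout of $X_n$ along the coproduct, over all pairs $(f,\sigma)$ with $f\in\Phi$ and $\sigma\colon A_f\to X_n$ (a set, because the $A_f$ are compact), of the maps $f$; then $\Cone(X_n\to X_{n+1})$ is a coproduct of copies of the $c_f$. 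Put $V:=\hocolim_n X_n$ with structure map $\eta\colon X\to V$. Compactness of the $A_f$ forces $\Hom_\mathcal T(f,V)$ to be surjective for every $f\in\Phi$, so $V\in\mathcal V$; and for $M\in\mathcal V$ every composite $A_f\xrightarrow{\sigma}X_n\to M$ factors through $f$, which kills the relevant obstruction and shows the transition maps of the tower $(\Hom_\mathcal T(X_n,M))_n$ are surjective, so a Milnor-sequence argument gives that $\Hom_\mathcal T(\eta,M)$ is surjective — thus $\eta$ is a $\mathcal V$-preenvelope. Moreover $\Cone(\eta)=\hocolim_n\Cone(X_0\to X_n)$, and by octahedra each $\Cone(X_0\to X_n)$ is a finite iterated extension of coproducts of the $c_f$, so $\Cone(\eta)$ lies in the closure of $\{c_f\mid f\in\Phi\}$ under coproducts, extensions, and directed homotopy colimits.

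The crux — and the step I expect to be the real obstacle — is upgrading this to $\Cone(\eta)\in\Perp{0}\mathcal V$, i.e. turning the plain $\mathcal V$-preenvelope into a special one. The difficulty is that the cones $c_f$ themselves need not lie in $\Perp{0}\mathcal V$: the long exact $\Hom_\mathcal T(-,M)$-sequence of $A_f\xrightarrow{f}B_f\to c_f\to A_f[1]$ shows, using closure of $\mathcal V$ under cosuspensions (so that $\Hom_\mathcal T(f,M[-1])$ is surjective), that the connecting map out of $\Hom_\mathcal T(A_f[1],M)$ vanishes, but it only yields $\Hom_\mathcal T(c_f,M)\cong\ker\bigl(\Hom_\mathcal T(f,M)\bigr)$, which is in general nonzero. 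So the cell-attachment has to be iterated further — in general transfinitely, with homotopy colimits at limit stages — designed not merely to make the maps of $\Phi$ surjective on $\Hom$-groups into $\mathcal V$ but also to annihilate these residual kernels against $\mathcal V$; it is precisely here that closure of $\mathcal V$ under extensions is used, to keep the partially built objects $\mathcal V$-preenvelopes all along the transfinite tower, so that the resulting cofiber becomes genuinely right-orthogonal to $\mathcal V$. (An alternative route sidestepping the transfinite bookkeeping would be via purity: by Theorem~\ref{T:Ziegler}, $\mathcal V$ is determined by the set $\mathcal V\cap\Zgs(\mathcal T)$, and one would check that closure of $\mathcal V$ under extensions and cosuspensions makes $C:=\prod_{P\in\mathcal V\cap\Zgs(\mathcal T)}P\in\mathcal V$ a pure-injective cosilting object with $\mathcal V=\{X\mid\Hom_\mathcal T(X,C[i])=0\ \forall i>0\}$, a coaisle whose aisle is $\Perp{0}\mathcal V$.) Either way, converting the preenvelope into a special one is the main point; the rest is a routine adaptation of the small-object argument and of standard Milnor-sequence manipulations to the derivator setting.
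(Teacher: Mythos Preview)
The paper does not supply a proof here; the statement is quoted from \cite[Proposition~4.5]{AMV2}. Your reduction to constructing a special $\mathcal{V}$-preenvelope is correct, and the countable small-object tower you describe does yield a genuine $\mathcal{V}$-preenvelope $\eta\colon X\to V$ with $V\in\mathcal{V}$. However, the step you yourself flag as ``the crux'' --- upgrading $\eta$ to a \emph{special} preenvelope with $\Cone(\eta)[-1]\in\Perp{0}\mathcal{V}$ --- is a genuine gap, and neither of your two suggested routes closes it.

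The transfinite-iteration suggestion is not an argument: you say the process should be ``designed\ldots\ to annihilate these residual kernels,'' but you never specify which cells to attach or why the resulting cofiber lands in $\Perp{0}\mathcal{V}$. Your tower consists of iterated extensions of $X$ by coproducts of the cones $c_f$, and closure of $\mathcal{V}$ under extensions says nothing about such objects --- the $c_f$ lie neither in $\mathcal{V}$ nor in $\Perp{0}\mathcal{V}$ in general, so the phrase ``keep the partially built objects $\mathcal{V}$-preenvelopes'' has no evident content for your construction. The cosilting route is circular: to verify that $C=\prod_{P\in\mathcal{V}\cap\Zgs(\mathcal{T})}P$ is cosilting one must show that $(\Perp{\leq 0}C,\Perp{>0}C)$ is a t-structure, which already requires the approximation triangles under construction; and even granting the identification $\mathcal{V}=\Perp{>0}C$, note that $\Hom_{\mathcal{T}}(U,P)=0$ for all $P\in\mathcal{V}\cap\Zgs(\mathcal{T})$ does not obviously force $\Hom_{\mathcal{T}}(U,M)=0$ for arbitrary $M\in\mathcal{V}$, since the pure monomorphism from $M$ into a product of such $P$'s is not a categorical monomorphism in $\mathcal{T}$. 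The missing mechanism --- how exactly extension-closure of $\mathcal{V}$ converts a preenvelope into a special one --- is precisely the content of the proof in \cite{AMV2}, and your sketch does not supply it.
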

	Putting the last two results together, we have a nice intrinsic characterization of the notion of a definable coaisle.
	\begin{cor}\label{C:closure}
			A subcategory $\mathcal{V}$ of $\mathcal{T}$ is a definable coaisle if and only if $\mathcal{V}$ is closed in $\mathcal{T}$ under extensions, cosuspensions, products, directed homotopy colimits, and pure monomorphisms.
	\end{cor}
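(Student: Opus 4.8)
The plan is to obtain the corollary as a formal consequence of the two theorems just quoted, namely Theorem~\ref{T:rosie1} (Laking's closure-theoretic characterization of definable subcategories) and \cite[Proposition 4.5]{AMV2}, together with the elementary closure properties of coaisles recorded at the beginning of this section. No genuinely new argument is needed; the work is entirely in assembling the pieces correctly.

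For the forward implication I would start from a definable coaisle $\mathcal{V}$. From the axioms of a t-structure recalled above, every coaisle is closed under extensions and direct summands, and axiom (iii) says precisely that $\mathcal{V}[-1] \subseteq \mathcal{V}$, i.e. $\mathcal{V}$ is closed under cosuspensions; moreover any coaisle is closed under all products existing in $\mathcal{T}$, and since $\mathcal{T}$ is compactly generated all small products exist. Since $\mathcal{V}$ is in addition definable, Theorem~\ref{T:rosie1} gives closure under products, directed homotopy colimits, and pure monomorphisms. Collecting these, $\mathcal{V}$ has all five listed closure properties.

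For the converse I would assume that $\mathcal{V}$ is closed under extensions, cosuspensions, products, directed homotopy colimits, and pure monomorphisms. Closure under products, directed homotopy colimits, and pure monomorphisms is exactly the hypothesis of Theorem~\ref{T:rosie1}, so $\mathcal{V}$ is definable. Being also closed under extensions and cosuspensions, $\mathcal{V}$ meets the hypotheses of \cite[Proposition 4.5]{AMV2}, so $(\Perp{0}\mathcal{V},\mathcal{V})$ is a t-structure; in particular $\mathcal{V}$ is a coaisle, and it was just shown to be definable, hence it is a definable coaisle.

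I do not expect any real obstacle: the statement is a direct corollary. The only point deserving a moment of care is to confirm that the "elementary" closure properties of coaisles used in the forward direction — closure under extensions, cosuspensions, and (small) products — are available in the present generality of a compactly generated triangulated category underlying a compactly generated derivator; but these were already noted in the discussion following the definition of a t-structure, so the corollary follows with no further input.
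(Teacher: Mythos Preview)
Your proposal is correct and matches the paper's own approach exactly: the paper presents this corollary without an explicit proof, simply noting that it follows from ``putting the last two results together,'' which is precisely the combination of Theorem~\ref{T:rosie1} and \cite[Proposition 4.5]{AMV2} together with the elementary closure properties of coaisles that you invoke.
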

\section{Cosilting objects and t-structures induced by them}\label{S:cosilting}
In this section we recall the results of \cite{MV} and \cite{L}, which show that definability of coaisles is closely related to cosilting theory. For any object $C \in \mathcal{T}$, we define the following two subcategories of $\mathcal{T}$:
$$\Perp{\leq 0}C= \{X \in \mathcal{T} \mid \Hom{}_\mathcal{T}(X,C[i]) = 0 ~\forall i \leq 0 \}\text{, and}$$
$$\Perp{>0}C=\{X \in \mathcal{T} \mid \Hom{}_\mathcal{T}(X,C[i]) = 0 ~\forall i > 0 \}.$$
An object $C$ of a triangulated category $\mathcal{T}$ is \EMP{cosilting} provided that the pair $\mathbf{t}=(\Perp{\leq 0}C,\Perp{>0}C)$ forms a t-structure in $\mathcal{T}$. In this situation, we say that the t-structure $\mathbf{t}$ is a \EMP{cosilting t-structure}, and it is \EMP{induced} by the cosilting object $C$. Among the consequences of the definition (see \cite[Proposition 4.3]{PV}) is that any cosilting object $C$ is a (weak) cogenerator in $\mathcal{T}$, that $C \in \Perp{>0}C$, and that any cosilting t-structure is non-degenerate in the following sense: .

\begin{definition}
		A $(\mathcal{U},\mathcal{V})$ t-structure in a triangulated category $\mathcal{T}$ is called \EMP{non-degenerate} provided that $\bigcap_{n \in \mathbb{Z}}\mathcal{U}[n] = 0$ and $\bigcap_{n \in \mathbb{Z}}\mathcal{V}[n] = 0$.
\end{definition}

Now we are ready to state the following result due to Laking.
\begin{thm}\emph{(\cite[Theorem 4.6]{L})}\label{T:rosie2}
		Let $\mathcal{T}$ be an underlying triangulated category of a compactly generated derivator, and consider a non-degenerate t-structure $\mathbf{t} = (\mathcal{U},\mathcal{V})$ in $\mathcal{T}$. Then the following conditions are equivalent:
		\begin{enumerate}
			\item[(i)] $\mathbf{t}$ is induced by a pure-injective cosilting object $C$,
			\item[(ii)] the subcategory $\mathcal{V}$ is definable,
			\item[(iii)] the t-structure $\mathbf{t}$ is homotopically smashing,
			\item[(iv)] the subcategory $\mathcal{V}$ is closed under coproducts, i.e. $\mathbf{t}$ is a smashing t-structure, and the \EMP{heart} $\mathcal{H} := \mathcal{U} \cap \mathcal{V}[1]$ is a Grothendieck category.
		\end{enumerate}
\end{thm}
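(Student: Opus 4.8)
The plan is to establish the cycle of implications $(i)\Rightarrow(ii)\Rightarrow(iii)\Rightarrow(iv)\Rightarrow(i)$. Among these, $(ii)\Rightarrow(iii)$ is immediate from Theorem~\ref{T:rosie1}, since a definable subcategory is in particular closed under directed homotopy colimits, which is exactly the homotopically smashing condition for $\mathcal{V}$. For $(i)\Rightarrow(ii)$ the idea is to exploit purity: if $C$ is pure-injective then the canonical map $\Hom_{\mathcal{T}}(X,C)\to\Hom_{\ModTc}(\mathbf{y}(X),\mathbf{y}(C))$ is an isomorphism, $\mathbf{y}(C)$ is injective in $\ModTc$, and $\mathbf{y}$ sends products to products and directed homotopy colimits to filtered colimits (the latter because compact objects commute with directed homotopy colimits). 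Since every shift $C[i]$ is again pure-injective, these facts let one verify by hand that $\Perp{>0}C$ is closed under products (any coaisle is), under pure monomorphisms (a pure mono $Y\to X$ gives a monomorphism $\mathbf{y}(Y)\to\mathbf{y}(X)$, so $\Hom_{\mathcal{T}}(Y,C[i])$ is a quotient of $\Hom_{\mathcal{T}}(X,C[i])=0$ by injectivity of $\mathbf{y}(C[i])$), and under directed homotopy colimits ($\Hom_{\mathcal{T}}(\hocolim_{i}\mathscr{X},C[i])\cong\varprojlim_{i}\Hom_{\mathcal{T}}(\mathscr{X}_i,C[i])=0$). Theorem~\ref{T:rosie1} then gives that $\mathcal{V}$ is definable.

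For $(iii)\Rightarrow(iv)$: closure of $\mathcal{V}$ under coproducts follows because an arbitrary coproduct $\coprod_{j\in J}X_j$ is the directed homotopy colimit of its finite sub-coproducts $\coprod_{j\in F}X_j$ (over finite $F\subseteq J$), and finite coproducts are finite products, under which every coaisle is closed; thus $\mathbf{t}$ is smashing. That the heart $\mathcal{H}=\mathcal{U}\cap\mathcal{V}[1]$ is a Grothendieck category is the content of the results of \cite{SSV} on homotopically smashing t-structures under the running hypotheses (non-degeneracy together with $\mathcal{T}$ underlying a compactly generated derivator): there the AB5 property of $\mathcal{H}$ comes from the compatibility of the truncation functors with directed homotopy colimits, and a generator is extracted from the compact generators of $\mathcal{T}$ using non-degeneracy.

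The heart of the matter is $(iv)\Rightarrow(i)$: the construction of a pure-injective cosilting object inducing $\mathbf{t}$. Let $E$ be an injective cogenerator of the Grothendieck category $\mathcal{H}$, and let $H^{n}_{\mathbf{t}}\colon\mathcal{T}\to\mathcal{H}$ be the cohomological functors of $\mathbf{t}$. Since $\mathbf{t}$ is smashing, $H^{1}_{\mathbf{t}}$ preserves coproducts, so $X\mapsto\Hom_{\mathcal{H}}(H^{1}_{\mathbf{t}}(X),E)$ is a cohomological functor $\mathcal{T}^{op}\to\Ab$ sending coproducts to products; by the Brown representability theorem (valid since $\mathcal{T}$ is compactly generated) it is represented by an object $C\in\mathcal{T}$, giving a natural isomorphism $\Hom_{\mathcal{T}}(X,C[i])\cong\Hom_{\mathcal{H}}(H^{1-i}_{\mathbf{t}}(X),E)$. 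Using that $E$ is a cogenerator and that non-degeneracy yields the characterizations $X\in\mathcal{V}\iff H^{n}_{\mathbf{t}}(X)=0\ \forall n\leq 0$ and $X\in\mathcal{U}\iff H^{n}_{\mathbf{t}}(X)=0\ \forall n\geq 1$, one reads off $\Perp{>0}C=\mathcal{V}$ and $\Perp{\leq 0}C=\mathcal{U}$; in particular $(\Perp{\leq 0}C,\Perp{>0}C)=\mathbf{t}$ is a t-structure, so $C$ is cosilting and induces $\mathbf{t}$. What remains is to see that $C$ is pure-injective, and this is where I expect the real obstacle to lie: the argument should follow from the shape of the representing functor, namely that $\mathbf{y}(C)=\Hom_{\mathcal{H}}(H^{1}_{\mathbf{t}}(-),E)\restriction_{\mathcal{T}^{c}}$ is an injective object of $\ModTc$ (the functor $\Hom_{\mathcal{H}}(-,E)$ being exact), whence $C$ is pure-injective by the standard description of pure-injectivity via the restricted Yoneda functor (cf. \cite[\S 17]{P}, \cite{Kr2}). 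Pinning down this last point — identifying the Brown-representing object and checking its pure-injectivity — is the technical crux; everything else is either formal manipulation of t-structures or a citation of \cite{SSV}, \cite{L}, \cite{PV}.
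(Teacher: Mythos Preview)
The paper does not prove this theorem; it is stated as a preliminary result quoted verbatim from \cite[Theorem 4.6]{L}, so there is no in-paper proof to compare against. Your sketch is a reasonable outline of the argument in \cite{L} (and the companion results in \cite{SSV}, \cite{PV}): the cycle $(i)\Rightarrow(ii)\Rightarrow(iii)\Rightarrow(iv)\Rightarrow(i)$, the use of Brown representability for $(iv)\Rightarrow(i)$, and the identification of pure-injectivity of the representing object as the technical crux are all in line with how Laking proceeds.
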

Since any t-structure induced by a cosilting object is non-degenerate, we have also the following reformulation:
\begin{cor}\label{C:rosie3}
		Let $\mathcal{T}$ be an underlying triangulated category of a compactly generated derivator, and let $\mathbf{t} = (\mathcal{U},\mathcal{V})$ in $\mathcal{T}$. Then the following conditions are equivalent:
		\begin{enumerate}
			\item[(i)] $\mathbf{t}$ is induced by a pure-injective cosilting object $C$,
			\item[(iii)] the t-structure $\mathbf{t}$ is non-degenerate and $\mathcal{V}$ is definable.
		\end{enumerate}
\end{cor}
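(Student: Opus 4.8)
The plan is to deduce this corollary directly from Theorem~\ref{T:rosie2}. The only subtlety is that Theorem~\ref{T:rosie2} carries non-degeneracy of $\mathbf{t}$ as a \emph{standing hypothesis}, so in the implication $(i)\Rightarrow(iii)$ one cannot apply it verbatim: one must first observe that a t-structure induced by a cosilting object is automatically non-degenerate. This last fact is exactly one of the consequences of the definition of a cosilting object recorded above (via \cite[Proposition 4.3]{PV}), so it is already available.

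For $(i)\Rightarrow(iii)$: assume $\mathbf{t}=(\mathcal{U},\mathcal{V})=(\Perp{\leq 0}C,\Perp{>0}C)$ is induced by a pure-injective cosilting object $C$. First I would invoke the fact just quoted to conclude that $\mathbf{t}$ is non-degenerate. Having thereby secured the hypothesis of Theorem~\ref{T:rosie2}, its implication $(i)\Rightarrow(ii)$ gives at once that $\mathcal{V}$ is definable, and together with non-degeneracy this is precisely $(iii)$.

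For $(iii)\Rightarrow(i)$: assume $\mathbf{t}$ is non-degenerate and $\mathcal{V}$ is definable. Then $\mathbf{t}$ satisfies the standing hypothesis of Theorem~\ref{T:rosie2}, and the implication $(ii)\Rightarrow(i)$ of that theorem produces a pure-injective cosilting object $C$ with $\mathbf{t}=(\Perp{\leq 0}C,\Perp{>0}C)$, which is $(i)$.

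Since both directions merely chain together statements already in hand, there is no real obstacle here; the one point that needs care is exactly the ordering in the forward direction, namely that non-degeneracy must be \emph{extracted} from the cosilting hypothesis before Theorem~\ref{T:rosie2} is brought to bear, rather than assumed. It may also be worth remarking that the label ``$(iii)$'' in the statement is deliberately kept to match the numbering of Theorem~\ref{T:rosie2}: once non-degeneracy is in force, conditions $(ii)$, $(iii)$ and $(iv)$ of that theorem are all subsumed, so no information is lost by recording only $(i)$ and $(iii)$ here.
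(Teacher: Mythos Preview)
Your proposal is correct and matches the paper's approach exactly: the paper presents this corollary as an immediate reformulation of Theorem~\ref{T:rosie2}, noting just before it that ``any t-structure induced by a cosilting object is non-degenerate,'' which is precisely the extra observation you identify as needed for $(i)\Rightarrow(iii)$. Your final remark about the label ``$(iii)$'' is speculation and likely off---the condition you prove corresponds to Theorem~\ref{T:rosie2}(ii), not (iii), so the numbering in the corollary is more plausibly a typographical artifact than a deliberate cross-reference---but this does not affect the mathematics.
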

Now we confine to the case of $\mathcal{T} = \Der(R)$ for a ring $R$. We say that a subcategory $\mathcal{V}$ of $\Der(R)$  is \EMP{co-intermediate} if there are integers $m \leq n$ such that $\Der^{\geq n} \subseteq \mathcal{V} \subseteq \Der^{\geq m}$, where 
$$\Der{}^{\geq k} = \{X \in \Der(R) \mid H^l(X) = 0 ~\forall l<k\}.$$
We say that a cosilting object $C \in \Der(R)$ is a \EMP{bounded cosilting complex} if $C$ is isomorphic to a bounded complex of injective $R$-modules in $\Der(R)$. As an example, any large $n$-cotilting $R$-module (in the sense of \cite[\S 15]{GT}) is a bounded cosilting complex when considered as an object of $\Der(R)$. Then \cite{MV} shows that any bounded cosilting complex of $\Der(R)$ is pure-injective, and we have the two following characterization of the t-structures induced by bounded cosilting complexes:
\begin{thm}\emph{(\cite[Theorem 3.14]{MV})}\label{T:MV}
		Let $R$ be a ring, and $\mathcal{V}$ a subcategory of $\Der(R)$. Then the following conditions are equivalent:
		\begin{enumerate}
				\item[(i)] 	$\mathcal{V}$ is the coaisle of a t-structure induced by a bounded cosilting complex,
			\item[(ii)] $\mathcal{V}$ is definable, co-intermediate, and closed under extensions and cosuspensions.
		\end{enumerate}
\end{thm}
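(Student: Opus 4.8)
The plan is to run both implications through Corollary~\ref{C:rosie3}, the only genuinely new ingredient being how to extract, from co-intermediacy, enough homological control to replace an abstract cosilting object by a bounded complex of injectives. Throughout I write $\Der^{\leq k}$ for the objects whose cohomology is concentrated in degrees $\leq k$.

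To see that (i) implies (ii), fix a bounded complex $C$ of injective modules concentrated in degrees $[a,b]$ whose induced t-structure has coaisle $\mathcal V = \Perp{>0}C$. Closure under extensions and cosuspensions is automatic for any coaisle, and definability follows from Corollary~\ref{C:rosie3} once we recall from \cite{MV} that a bounded complex of injectives is pure-injective and that any cosilting t-structure is non-degenerate. For co-intermediacy I would use that $C$ is K-injective, so $\Hom_{\Der(R)}(X,C[i]) = \Hom_{K(R)}(X,C[i])$ for all $X$; a bookkeeping of bidegrees, applied to a representative of $X \in \Der^{\geq b}$ living in degrees $\geq b$, shows $\Der^{\geq b}\subseteq\Perp{>0}C = \mathcal V$, and the symmetric computation gives $\Der^{\leq a-1}\subseteq\Perp{\leq 0}C = \Perp{0}\mathcal V$. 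Hence $\mathcal V = (\Perp{0}\mathcal V)^{\perp_0}\subseteq (\Der^{\leq a-1})^{\perp_0}$, and since for any $Y$ with $H^j(Y)\neq 0$ for some $j\leq a-1$ the canonical truncation map $\tau^{\leq j}Y\to Y$ is a nonzero morphism out of $\Der^{\leq a-1}$, we get $(\Der^{\leq a-1})^{\perp_0}\subseteq\Der^{\geq a}$; thus $\Der^{\geq b}\subseteq\mathcal V\subseteq\Der^{\geq a}$.

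For the converse, the existence of the t-structure $\mathbf t = (\Perp{0}\mathcal V,\mathcal V)$ is immediate from \cite[Proposition~4.5]{AMV2}. Co-intermediacy $\Der^{\geq n}\subseteq\mathcal V\subseteq\Der^{\geq m}$ forces non-degeneracy: $\bigcap_k\mathcal V[k]\subseteq\bigcap_j\Der^{\geq j} = 0$, and from $\Der^{\geq n}\subseteq\mathcal V$ the same truncation-map argument (now on the left-hand side, using $X\to\tau^{\geq j}X$) gives $\Perp{0}\mathcal V\subseteq\Der^{\leq n-1}$, whence $\bigcap_k(\Perp{0}\mathcal V)[k] = 0$. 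Corollary~\ref{C:rosie3} then provides a pure-injective cosilting object $C$ with $\mathcal V = \Perp{>0}C$, and by definition $C\in\Perp{>0}C = \mathcal V\subseteq\Der^{\geq m}$, so the cohomology of $C$ vanishes below $m$. On the other hand $\Der^{\geq n}\subseteq\mathcal V = \Perp{>0}C$ applied to the stalk complexes $N[-n]$, $N$ a module, yields $\Hom_{\Der(R)}(N,C[j]) = 0$ for all $j>n$ and all $N$; taking $N = R$ shows the cohomology of $C$ lies in $[m,n]$. Now pick an injective coresolution $C\simeq I^\bullet$ with $I^j = 0$ for $j<m$; the good truncation $J^\bullet := \tau^{\leq n}I^\bullet$ is then quasi-isomorphic to $C$, has injective terms in degrees $[m,n-1]$, and has $Z^n := \Ker(I^n\to I^{n+1})$ in degree $n$. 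Since $H^{n+1}(C) = 0$, the sequence $0\to Z^n\to I^n\to Z^{n+1}\to 0$ is exact, and a single dimension-shift along it identifies $\Ext^1_R(N,Z^n)$ with $\Hom_{\Der(R)}(N,C[n+1]) = 0$ for every module $N$, so $Z^n$ is injective and $J^\bullet$ is the desired bounded cosilting complex with coaisle $\mathcal V$.

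The formal part of the argument is the shuffling of truncations and perpendicularities; the crux is the final step of the converse, namely promoting the abstract cosilting object furnished by Corollary~\ref{C:rosie3} to an honest bounded complex of injectives. This is precisely where co-intermediacy pays off: the bound $\mathcal V\subseteq\Der^{\geq m}$ gives cohomological boundedness below, while $\Der^{\geq n}\subseteq\mathcal V$ provides the vanishing of $\Hom_{\Der(R)}(-,C[j])$ for $j>n$ that makes the good truncation $\tau^{\leq n}$ terminate in an injective module.
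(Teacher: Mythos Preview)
The paper does not actually prove this statement: Theorem~\ref{T:MV} is quoted verbatim from \cite[Theorem~3.14]{MV} and is immediately followed by the next subsection, with no proof in between. So there is nothing to compare your argument against here.

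That said, your proof is correct and self-contained within the framework the paper sets up. Both implications are handled cleanly: for $(i)\Rightarrow(ii)$ the degree bookkeeping with the K-injective representative is sound, and the inclusion $(\Der^{\leq a-1})^{\perp_0}\subseteq\Der^{\geq a}$ via the truncation map is the standard trick. For $(ii)\Rightarrow(i)$, the passage through Corollary~\ref{C:rosie3} to obtain a pure-injective cosilting object $C$, followed by the cohomological bounding of $C$ in $[m,n]$, is exactly right. The only place worth a second look is the final dimension-shift: you identify $\Ext^1_R(N,Z^n)$ with $\Hom_{\Der(R)}(N,C[n+1])$. This is correct, and follows by computing $\Hom_{K(R)}(N,I^\bullet[n+1])$ directly as the cokernel of $\Hom_R(N,I^n)\to\Hom_R(N,Z^{n+1})$ and comparing with the long exact sequence of $0\to Z^n\to I^n\to Z^{n+1}\to 0$; you might want to spell this out in one line rather than leave it as ``a single dimension-shift'', since it is the step that actually converts the abstract cosilting object into a bounded complex of injectives.
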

\subsection{Module-theoretic cosilting torsion-free classes} 
We start by a well-known construction due to Happel, Reiten, and Smal\o. To do this, we must first recall the notion of a torsion pair in a module category. Let $R$ be a ring. A \EMP{torsion pair} in $\ModR$ is a pair $(\mathcal{T},\mathcal{F})$ of subcategories of $\ModR$ such that $\Hom_R(\mathcal{T},\mathcal{F})=0$ and both the subcategories are maximal with respect to this property. We call $\mathcal{T}$ a \EMP{torsion class} and $\mathcal{F}$ a \EMP{torsion-free class}. It is well-known that a subcategory $\mathcal{T}$ of $\ModR$ is a torsion class (belonging to some torsion pair) if and only if $\mathcal{T}$ is closed under extensions, coproducts, and epimorphic images. Dually, torsion-free classes are characterized by the closure under extensions, products, and submodules. Finally, a torsion pair is \EMP{hereditary} if $\mathcal{T}$ is closed under submodules, or equivalently, $\mathcal{F}$ is closed under taking injective envelopes. We call a subcategory of $\ModR$ \EMP{definable} if it is closed under products, pure submodules, and direct limits. In particular, a torsion-free class is definable if and only if it is closed under direct limits. We refer the reader to \cite{P} as a main reference for the theory of definable subcategories in the setting of a module category. 

Then we define the \EMP{Happel-Reiten-Smal\o ~t-structure} $(\mathcal{U},\mathcal{V})$ induced by the torsion pair $(\mathcal{T},\mathcal{V})$ to be the pair of subcategories of $\Der(R)$ given as
$$\mathcal{U} = \{X \in \Der(R) \mid H^n(X) = 0 ~\forall n>0 \text{ and } H^0(X) \in \mathcal{T}\},$$
and 
$$\mathcal{V} = \{X \in \Der(R) \mid H^n(X) = 0 ~\forall n<0 \text{ and } H^0(X) \in \mathcal{F}\}.$$
By \cite{HRS}, this construction induces an injective assignment from the class of torsion pairs in $\ModR$ to the class of t-structures in $\Der(R)$. Clearly, the coaisle $\mathcal{V}$ of any Happel-Reiten-Smal\o ~t-structure satisfies $\Der^{\geq 1} \subseteq \mathcal{V} \subseteq \Der^{\geq 0}$. Conversely, any t-structure with coaisle satisfying the latter property is Happel-Reiten-Smal\o, see \cite[Lemma 1.1.2]{Po}. It is an easy task to characterize the Happel-Reiten-Smal\o ~t-structures which are induced by a cosilting object. 
\begin{lem}\label{L:hrs}
		Let $R$ be a ring and $(\mathcal{U},\mathcal{V})$ a t-structure in $\Der(R)$. Then the following conditions are equivalent:
		\begin{enumerate}
			\item[(i)] $(\mathcal{U},\mathcal{V})$ is a cosilting t-structure such that $\Der^{\geq 1} \subseteq \mathcal{V} \subseteq \Der^{\geq 0}$,
			\item[(ii)] $(\mathcal{U},\mathcal{V})$ is a Happel-Reiten-Smal\o ~t-structure induced by a torsion pair $(\mathcal{T},\mathcal{F})$ in $\ModR$ such that $\mathcal{F}$ is closed under direct limits.
		\end{enumerate}
\end{lem}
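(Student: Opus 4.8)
The plan is to derive both conditions from Laking's characterization of definable coaisles (Corollary~\ref{C:rosie3}), using the standard translation of \cite[Lemma 1.1.2]{Po} between t-structures whose coaisle lies between $\Der^{\geq 1}$ and $\Der^{\geq 0}$ and Happel--Reiten--Smal\o{} t-structures. Two facts will be used throughout: any such t-structure is non-degenerate (from $\mathcal{V}\subseteq\Der^{\geq 0}$ one gets $\bigcap_{n}\mathcal{V}[n]=0$, and from $\Der^{\geq 1}\subseteq\mathcal{V}$ one gets $\mathcal{U}=\Perp{0}\mathcal{V}\subseteq\Der^{\leq 0}$, hence $\bigcap_{n}\mathcal{U}[n]=0$); and every coaisle is automatically closed under products, extensions and cosuspensions, so in applying Corollary~\ref{C:closure} only directed homotopy colimits and pure monomorphisms need to be checked.

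For the implication (ii)$\Rightarrow$(i), I would first show that the coaisle $\mathcal{V}$ of the Happel--Reiten--Smal\o{} t-structure attached to $(\mathcal{T},\mathcal{F})$ is definable. Closure under directed homotopy colimits: if $\mathscr{X}\in\mathbb{D}(I)$ is a coherent diagram over a directed $I$ with all components in $\mathcal{V}$, represented by a direct system of complexes $(X_i\mid i\in I)$, then $H^n(\hocolim_{i\in I}\mathscr{X})\cong\varinjlim_{i\in I}H^n(X_i)$; the negative cohomologies vanish as direct limits of zero modules, and $H^0$ is a direct limit of modules in $\mathcal{F}$, hence in $\mathcal{F}$ by hypothesis, so $\hocolim_{i\in I}\mathscr{X}\in\mathcal{V}$. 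Closure under pure monomorphisms: given a pure monomorphism $Y\to X$ with $X\in\mathcal{V}$, complete it to a pure triangle $Y\to X\to Z\to Y[1]$ and evaluate the resulting short exact sequence of $\mathcal{T}^c$-modules at the compact objects $R[-n]$; since $\Hom_{\Der(R)}(R[-n],W)\cong H^n(W)$, this gives exact sequences of $R$-modules $0\to H^n(Y)\to H^n(X)\to H^n(Z)\to 0$ for all $n$, so $H^n(Y)=0$ for $n<0$ while $H^0(Y)$ embeds into $H^0(X)\in\mathcal{F}$, whence $H^0(Y)\in\mathcal{F}$ because torsion-free classes are closed under submodules, i.e.\ $Y\in\mathcal{V}$. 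Thus $\mathcal{V}$ is definable; since the t-structure is non-degenerate, Corollary~\ref{C:rosie3} shows it is a cosilting t-structure, and it satisfies $\Der^{\geq 1}\subseteq\mathcal{V}\subseteq\Der^{\geq 0}$ by construction, which is (i).

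For the implication (i)$\Rightarrow$(ii), by \cite[Lemma 1.1.2]{Po} the t-structure is Happel--Reiten--Smal\o{}, induced via the injective assignment of \cite{HRS} by a uniquely determined torsion pair $(\mathcal{T},\mathcal{F})$ with $\mathcal{F}=\{M\in\ModR\mid M\in\mathcal{V}\}$, so it remains to show $\mathcal{F}$ is closed under direct limits. It is enough that $\mathcal{V}$ be definable: then for a direct system $(M_\alpha)$ in $\mathcal{F}$ its colimit in $\ModR$ agrees with its directed homotopy colimit in $\Der(R)$ (the colimit functor on chain complexes being exact), so $\varinjlim M_\alpha\in\mathcal{V}$, and being concentrated in degree $0$ it lies in $\mathcal{F}$. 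To see $\mathcal{V}$ is definable: it is co-intermediate (with $m=0$, $n=1$), so by the results of \cite{MV} the t-structure $\mathbf{t}$ is induced by a bounded cosilting complex, which is moreover pure-injective by \cite{MV}; hence by Theorem~\ref{T:rosie2}, applied to the non-degenerate t-structure $\mathbf{t}$, the coaisle $\mathcal{V}$ is definable.

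The one place where the argument leaves routine bookkeeping is this last step: showing that the coaisle of a co-intermediate cosilting t-structure is definable. This is not established by hand, and indeed cannot be --- a general cosilting t-structure need not have a coaisle closed under directed homotopy colimits --- so one must invoke the cited facts that co-intermediate cosilting objects are, up to equivalence, bounded cosilting complexes and that such complexes are pure-injective, after which Laking's theorem supplies definability. Everything else reduces to checking the closure conditions of Corollary~\ref{C:closure} by means of the cohomological descriptions of homotopy colimits and of pure triangles recorded in Section~1.
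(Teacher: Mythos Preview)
Your proof is correct and follows essentially the same strategy as the paper's: both arguments pivot on the equivalence, for a Happel--Reiten--Smal\o{} t-structure, between definability of the coaisle $\mathcal{V}$ and closure of $\mathcal{F}$ under direct limits, and both invoke Theorem~\ref{T:rosie2} (via non-degeneracy) together with the results of \cite{MV} to pass between ``cosilting with co-intermediate coaisle'' and ``definable coaisle''. The only notable difference is that in (ii)$\Rightarrow$(i) you verify closure under pure monomorphisms by hand, whereas the paper avoids this by observing that non-degeneracy already makes ``homotopically smashing'' equivalent to ``definable'' (Theorem~\ref{T:rosie2}), so only the directed homotopy colimit check is needed; your extra verification is correct but redundant given that you appeal to Corollary~\ref{C:rosie3} anyway.
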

		\begin{proof}
				In view of Theorem~\ref{T:MV}, the only thing we need to check is that if $(\mathcal{U},\mathcal{V})$ is a Happel-Reiten-Smal\o ~t-structure induced by a torsion pair $(\mathcal{T},\mathcal{F})$, then $\mathcal{V}$ is definable in $\Der(R)$ if and only if $\mathcal{F}$ is closed under direct limits in $\ModR$. By the definition of $\mathcal{V}$ we have that $X \in \mathcal{V}$ if and only if $H^0(X) \in \mathcal{F}$ for any $X \in \Der^{\geq 0}$. If $\mathscr{X} \in \Der((\ModR)^I)$ for some directed diagram $I$ with $\mathscr{X}_i \in \mathcal{V}$ for all $i \in I$, then we have $H^0(\hocolim_{i \in I} \mathscr{X}_i) \simeq \varinjlim_{i \in I}H^0(\mathscr{X}_i)$. Therefore, $\hocolim_{i \in I} \mathscr{X}_i \in \mathcal{V}$ provided that $\mathcal{F}$ is closed under direct limits. On the other hand, any $I$-shaped directed system of modules in $\mathcal{F}$ can be regarded as a coherent diagram in $\Der((\ModR)^I)$ with coordinates being stalk complexes from $\mathcal{V}$. Therefore, if $\mathcal{V}$ is closed under directed homotopy colimits then $\mathcal{F}$ is closed under direct limits. Finally, since $(\mathcal{U},\mathcal{V})$ is non-degenerate, $\mathcal{V}$ is definable if and only if $\mathcal{V}$ is closed under directed homotopy colimits by Theorem~\ref{T:rosie2}.
		\end{proof}
		Finally, we discuss the connection to the cosilting and cotilting modules. Following \cite{AMV} and \cite{BP}, an $R$-module $T$ is \EMP{cosilting} if there is an injective copresentation 
$$0 \rightarrow T \rightarrow Q_0 \xrightarrow{\sigma} Q_1,$$
such that $\mathcal{C}_\sigma = \Cogen(T),$
where $\mathcal{C}_\sigma = \{M \in \ModR \mid \Hom_R(M,\sigma) \text{ is surjective}\}$. A class $\mathcal{C} \subseteq \ModR$ is called \EMP{cosilting} if there is a cosilting module $T$ such that $\mathcal{C} = \Cogen(T)$. It is easy to infer from a result due to Breaz-\v{Z}emli\v{c}ka and Wei-Zhang that cosilting classes are precisely the torsion-free classes closed under direct limits, in other words, the definable torsion-free classes in $\ModR$.
\begin{thm}\EMP{(\cite{BZ},\cite{WZ})}\label{T:BZ}
	A class $\mathcal{C} \subseteq \ModR$ is cosilting if and only if $\mathcal{C}$ is a definable torsion-free class.
\end{thm}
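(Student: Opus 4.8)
The plan is to prove the two implications separately. As already observed in the discussion preceding the statement, a torsion-free class is definable if and only if it is closed under direct limits (torsion-free classes are closed under products and submodules, hence under pure submodules, so direct-limit closure is the only additional clause). Hence it suffices to show that cosilting classes coincide with the torsion-free classes that are closed under direct limits. For the implication ``$\mathcal{C}$ cosilting $\Rightarrow$ $\mathcal{C}$ definable torsion-free'', fix a cosilting module $T$ with injective copresentation $0 \to T \to Q_0 \xrightarrow{\sigma} Q_1$, so that $\mathcal{C} = \Cogen(T) = \mathcal{C}_\sigma$. The class $\Cogen(T)$ is automatically closed under submodules and products (and $\mathcal{C}_\sigma$ is closed under submodules directly, using injectivity of $Q_1$ and Baer's criterion), so it remains to check closure under extensions and under direct limits. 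For extensions, one applies the exact functors $\Hom_R(-,Q_0)$ and $\Hom_R(-,Q_1)$ to a short exact sequence $0 \to A \to B \to C \to 0$ with $A,C \in \mathcal{C}_\sigma$, obtaining a morphism of short exact sequences; the snake lemma then forces the middle cokernel to vanish since the two outer ones do, i.e.\ $\Hom_R(B,\sigma)$ is surjective, so $B \in \mathcal{C}_\sigma$.

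For closure under direct limits I would first note that $\mathcal{C}_\sigma$ is closed under coproducts, because $\Hom_R\bigl(\bigoplus_j M_j,\sigma\bigr) \cong \prod_j \Hom_R(M_j,\sigma)$ is a product of surjections. Given a direct system $(M_j)$ in $\mathcal{C}_\sigma$, consider the canonical pure exact sequence $0 \to K \to \bigoplus_j M_j \xrightarrow{\pi} \varinjlim_j M_j \to 0$; then $K \in \mathcal{C}_\sigma$, being a submodule of $\bigoplus_j M_j \in \mathcal{C}_\sigma$. To lift an arbitrary morphism $g\colon \varinjlim_j M_j \to Q_1$ along $\sigma$, lift $g\pi$ to some $h_0\colon \bigoplus_j M_j \to Q_0$; since the composite $K \to \bigoplus_j M_j \xrightarrow{h_0} Q_0 \xrightarrow{\sigma} Q_1$ equals $g\pi$ restricted to $K$, hence is zero, the restriction of $h_0$ to $K$ factors through $\ker\sigma \cong T$. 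As $T$ is pure-injective and $K \hookrightarrow \bigoplus_j M_j$ is a pure monomorphism, this restriction extends to a morphism $e\colon \bigoplus_j M_j \to T$; then $h_0 - e$ vanishes on $K$, so factors through $\pi$ as $h\pi$ for a unique $h\colon \varinjlim_j M_j \to Q_0$, and $\sigma h = g$ because $\pi$ is an epimorphism and $\sigma$ kills $T$. The input needed here --- that cosilting modules are pure-injective --- is supplied by \cite{WZ} (see also \cite{BP}). This shows $\mathcal{C}$ is a definable torsion-free class.

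For the converse, ``$\mathcal{C}$ definable torsion-free $\Rightarrow$ $\mathcal{C}$ cosilting'', one must produce, from the torsion pair $(\mathcal{T},\mathcal{C})$, a cosilting module $T$ with $\Cogen(T) = \mathcal{C}$. One can first record that $\mathcal{C} = \Cogen(W)$ for a pure-injective $W$ --- for instance the product of the indecomposable pure-injectives lying in $\mathcal{C}$ --- using that definable subcategories are determined by their points of the Ziegler spectrum (Theorem~\ref{T:Ziegler}); but the step of upgrading such a $W$, or a suitable modification of it built from an injective cogenerator of $\ModR$ together with the torsion radical of $(\mathcal{T},\mathcal{C})$, to a module carrying an injective copresentation $0 \to T \to Q_0 \xrightarrow{\sigma} Q_1$ with $\mathcal{C}_\sigma = \Cogen(T) = \mathcal{C}$ is exactly the construction carried out by Breaz--\v{Z}emli\v{c}ka \cite{BZ} and Wei--Zhang \cite{WZ}, which I would cite; the verification that $\mathcal{C}_\sigma$ comes out equal to $\mathcal{C}$ is where closure of $\mathcal{C}$ under direct limits enters essentially.

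The main obstacle is precisely this converse direction: the forward implication is a chain of standard homological manipulations once pure-injectivity of cosilting modules is granted, but actually \emph{constructing} a cosilting module out of a definable torsion-free class is not formal, and it is this content that is imported from \cite{BZ} and \cite{WZ} --- which is what makes the statement ``easy to infer'' rather than immediate.
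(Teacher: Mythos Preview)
Your proof is correct. The forward implication you give is a genuine, self-contained argument (modulo pure-injectivity of cosilting modules), whereas the paper simply cites \cite[Corollary 4.8]{BP} for the statement that any cosilting class is definable. Your route has the advantage of making visible exactly how pure-injectivity of $T$ is the mechanism behind direct-limit closure of $\mathcal{C}_\sigma$; the paper's route has the advantage of brevity.

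For the converse the two treatments also differ in emphasis. The paper pinpoints a single intermediate characterization from \cite{WZ}: a torsion-free class is cosilting if and only if it is \emph{covering}. Since every definable subcategory of $\ModR$ is covering, the backward implication follows in one line. Your version defers to \cite{BZ} and \cite{WZ} more diffusely, without isolating this covering criterion. Both are valid, but the paper's observation that ``definable $\Rightarrow$ covering'' is the sole missing ingredient is cleaner and worth knowing: it replaces the need to reproduce or even sketch the construction of the cosilting module.
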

		\begin{proof}
				In \cite{WZ} it is proved that a torsion-free class is cosilting if and only if it is a covering class. Any definable subcategory is covering, and \cite[Corollary 4.8]{BP} shows that any cosilting class is definable.
		\end{proof}
	Cosilting modules are precisely the module-theoretic shadows of 2-term cosilting complexes. A cosilting complex is \EMP{2-term} if it can be represented by a complex of injective $R$-modules concentrated in degrees 0 and 1. We say that two cosilting objects are \EMP{equivalent} if they induce the same t-structure, and that two cosilting modules are \EMP{equivalent} if they cogenerate the same cosilting class. 
	\begin{thm}\emph{(\cite[Theorem 4.19]{WZ})}\label{T:WZ}
	Let $R$ be a ring. Then there are bijections between the following sets:
		\begin{enumerate}
			\item[(i)] equivalence classes of 2-term cosilting complexes $C$,
			\item[(iii)] equivalence classes of cosilting $R$-modules $T = \Ker(Q_0 \rightarrow Q_1)$, where $Q_0, Q_1$ are injective $R$-modules.
		\end{enumerate}
		The bijection composes of two mutually inverse assignments
		$$C \mapsto H^0(C)\text{, and}$$
		$$T \mapsto (\cdots\rightarrow 0 \rightarrow Q_0 \xrightarrow{\sigma} Q_1 \rightarrow 0 \rightarrow  \cdots).$$
\end{thm}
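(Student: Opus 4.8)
The plan is to bridge $\Der(R)$ and $\ModR$ via the Happel--Reiten--Smal\o{} construction, using the cohomology computations a $2$-term complex of injectives admits. Fix the convention that a $2$-term cosilting complex $C$ is represented by a map $\sigma\colon Q_0\to Q_1$ of injectives in cohomological degrees $0$ and $1$, so that $H^0(C)=\Ker\sigma$, and write $\mathbf{t}=(\Perp{\leq0}C,\Perp{>0}C)$ for its cosilting t-structure. Since a bounded complex of injectives is K-injective, $\Hom_{\Der(R)}(X,C[i])=\Hom_{\mathbf{K}(R)}(X,C[i])$ always, and a routine cochain-and-homotopy computation gives, for every module $M$: $\Hom_{\Der(R)}(M,C[i])=0$ for $i\notin\{0,1\}$ by degree reasons; $\Hom_{\Der(R)}(M,C)\cong\Hom_R(M,\Ker\sigma)$ naturally; and $\Hom_{\Der(R)}(M,C[1])\cong\Coker\Hom_R(M,\sigma)$. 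In particular $M[0]\in\Perp{>0}C$ if and only if $M\in\mathcal{C}_\sigma$. The same bookkeeping shows $\Der^{\leq-1}\subseteq\Perp{\leq0}C$ and $\Der^{\geq1}\subseteq\Perp{>0}C$; since for a t-structure the coaisle is the right orthogonal of the aisle, and $\Der^{\geq0}$ is the right orthogonal of $\Der^{\leq-1}$, the first inclusion forces $\Perp{>0}C\subseteq\Der^{\geq0}$.

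Thus $\Der^{\geq1}\subseteq\Perp{>0}C\subseteq\Der^{\geq0}$, so by Lemma~\ref{L:hrs} the t-structure $\mathbf{t}$ is the Happel--Reiten--Smal\o{} t-structure of a torsion pair $(\mathcal{T},\mathcal{F})$ with $\mathcal{F}$ closed under direct limits, and the computation identifies $\mathcal{F}=\Perp{>0}C\cap\ModR$ with $\mathcal{C}_\sigma$, a definable torsion-free class. Rotating the truncation triangle $H^0(C)[0]\to C\to H^1(C)[-1]\to H^0(C)[1]$ presents $H^0(C)[0]$ as an extension of $C\in\Perp{>0}C$ by $H^1(C)[-2]\in\Der^{\geq2}\subseteq\Perp{>0}C$, so $H^0(C)[0]\in\Perp{>0}C$ and $\Ker\sigma\in\mathcal{F}$; as $\mathcal{F}$ is a torsion-free class (closed under products and submodules) this gives $\Cogen(\Ker\sigma)\subseteq\mathcal{F}$. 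For the reverse inclusion one uses that $C$ is a weak cogenerator of $\Der(R)$ --- which by the computation says exactly that a module in $\mathcal{C}_\sigma$ with no nonzero maps to $\Ker\sigma$ must vanish --- together with injectivity of $Q_0$ in a rejection argument: for $M\in\mathcal{C}_\sigma$ with rejection submodule $N=\bigcap_{g\colon M\to\Ker\sigma}\Ker g$, any $h\colon N\to\Ker\sigma$ extends to $\tilde h\colon M\to Q_0$, then $\sigma\tilde h$ vanishes on $N$ hence factors through $M/N\in\mathcal{C}_\sigma$, so $\tilde h$ can be corrected by an element of $\Hom_R(M,\Ker\sigma)$ to land in $\Ker\sigma$, which forces $h=0$; hence $N\in\mathcal{C}_\sigma$ has no nonzero maps to $\Ker\sigma$, so $N=0$ and $M\in\Cogen(\Ker\sigma)$. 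Therefore $\mathcal{C}_\sigma=\Cogen(\Ker\sigma)$, i.e. $H^0(C)$ is a cosilting module, and $\Cogen H^0(C)=\mathcal{F}$ depends only on $\mathbf{t}$; so $C\mapsto H^0(C)$ descends to equivalence classes and is injective there (equal cosilting classes give equal Happel--Reiten--Smal\o{} torsion pairs, hence equal t-structures). Conversely, for a cosilting module $T=\Ker(\sigma\colon Q_0\to Q_1)$ with $\mathcal{C}_\sigma=\Cogen T=:\mathcal{F}$, one runs the analogous computations for $C_T=(\cdots0\to Q_0\xrightarrow{\sigma}Q_1\to0\cdots)$: they show $\Perp{>0}C_T$ coincides with the Happel--Reiten--Smal\o{} coaisle $\mathcal{V}$ of $(\mathcal{T},\mathcal{F})$, which by Lemma~\ref{L:hrs} and Theorem~\ref{T:MV} is the coaisle of a t-structure induced by a bounded cosilting complex; checking that this complex may be taken to be $C_T$ itself yields that $C_T$ is a $2$-term cosilting complex with $H^0(C_T)=T$. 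The two assignments are then mutually inverse on equivalence classes.

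The step I expect to be the main obstacle is the verification, in the reverse direction, that $C_T$ is genuinely cosilting --- equivalently, that $\Perp{>0}C_T$ equals the entire Happel--Reiten--Smal\o{} coaisle $\mathcal{V}$ and $\Perp{\leq0}C_T$ its aisle, not merely that its module part and its degree bounds are correct. The delicate point is the cohomological boundedness $\Perp{>0}C_T\subseteq\Der^{\geq0}$, which for $C_T$ cannot be obtained from the orthogonality trick used in the first paragraph (that trick presupposes the t-structure) and instead requires the honest cochain-level analysis of $\Hom_{\mathbf{K}(R)}(X,C_T[i])$ for unbounded $X$ --- the same kind of bookkeeping carried out in \cite{BS} and \cite{MV}. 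Alternatively, the whole statement is available as \cite[Theorem 4.19]{WZ}, on which one may simply rely once the conventions above are matched.
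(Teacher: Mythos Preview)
The paper does not give its own proof of this statement: it is recorded verbatim as a citation of \cite[Theorem 4.19]{WZ} and used thereafter as a black box (see the unnumbered Proposition immediately following it, which invokes Theorem~\ref{T:WZ} to conclude that the injective copresentation $\sigma$ of a cosilting module is a 2-term cosilting complex). Your closing remark --- that one may simply rely on \cite[Theorem 4.19]{WZ} once the conventions are matched --- is therefore precisely the paper's ``proof''.

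That said, your sketch goes well beyond the paper, and the forward direction is sound: the $\Hom$-computations for a module against a two-term complex of injectives are correct, the squeeze $\Der^{\geq 1}\subseteq\Perp{>0}C\subseteq\Der^{\geq 0}$ is obtained legitimately (the second inclusion via orthogonality, using that $C$ is already assumed cosilting), and the rejection argument showing $\mathcal{C}_\sigma=\Cogen(\Ker\sigma)$ is clean --- the key inputs being injectivity of $Q_0$, the inclusion $\Cogen(\Ker\sigma)\subseteq\mathcal{C}_\sigma$ applied to $M/N$, and the weak-cogenerator property of a cosilting object to kill $N$. You also correctly isolate the genuine difficulty in the reverse direction: one must establish that $(\Perp{\leq 0}C_T,\Perp{>0}C_T)$ is a t-structure without already knowing $C_T$ is cosilting, and in particular the containment $\Perp{>0}C_T\subseteq\Der^{\geq 0}$ cannot be obtained by the orthogonality trick. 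One way to close this (which you gesture at) is to verify $\mathcal{V}\subseteq\Perp{>0}C_T$ and $\mathcal{U}\subseteq\Perp{\leq 0}C_T$ via truncation triangles for the HRS t-structure $(\mathcal{U},\mathcal{V})$ of $(\mathcal{T},\mathcal{F})$, and then argue that these inclusions are equalities; but making that last step rigorous is exactly the content of \cite[Theorem 4.19]{WZ}, so your decision to defer to that reference there is the prudent one.
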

\begin{prop}
		Let $R$ be a ring and $\mathbf{t}=(\mathcal{U},\mathcal{V})$ a t-structure. Then the following conditions are equivalent:
		\begin{enumerate}
				\item[(i)] $\mathbf{t}$ is induced by a 2-term cosilting complex $C$,
				\item[(ii)] $\mathbf{t}$ is a Happel-Reiten-Smal\o ~t-structure induced by a torsion pair $(\mathcal{T},\mathcal{F})$, where $\mathcal{F}$ is a cosilting class.
		\end{enumerate}
		Furthermore, the cosilting class $\mathcal{F}$ is cogenerated by the cosilting module $H^0(C)$ for any choice of the equivalence representative $C$.
\end{prop}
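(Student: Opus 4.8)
The plan is to read off the statement from the dictionary already assembled between cosilting objects, Happel-Reiten-Smal\o~t-structures, and cosilting modules --- namely Lemma~\ref{L:hrs}, Theorem~\ref{T:BZ}, and the bijection of Theorem~\ref{T:WZ} --- the only genuinely new ingredient being a short homotopy-category computation identifying the torsion-free class attached to a $2$-term cosilting complex.

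For $(i) \Rightarrow (ii)$, I fix a representative $C = (\cdots \rightarrow 0 \rightarrow Q_0 \xrightarrow{\sigma} Q_1 \rightarrow 0 \rightarrow \cdots)$ concentrated in degrees $0$ and $1$ with $Q_0, Q_1$ injective. A routine degree computation shows $\Der^{\geq 1} \subseteq \Perp{>0}C = \mathcal{V}$ (for $X \in \Der^{\geq 1}$ there are no nonzero maps into $C[i]$ with $i>0$, since those complexes are concentrated in nonpositive degrees) and dually $\Der^{\leq -1} \subseteq \Perp{\leq 0}C = \mathcal{U}$, whence $\mathcal{V} = \mathcal{U}\Perp{0} \subseteq (\Der^{\leq -1})\Perp{0} = \Der^{\geq 0}$. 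Given $\Der^{\geq 1} \subseteq \mathcal{V} \subseteq \Der^{\geq 0}$, by \cite[Lemma 1.1.2]{Po} the t-structure $\mathbf{t}$ is Happel-Reiten-Smal\o~for a uniquely determined torsion pair $(\mathcal{T}, \mathcal{F})$, with $\mathcal{F} = \{M \in \ModR \mid M \in \mathcal{V}\}$. Since $\mathbf{t}$ is moreover a cosilting t-structure, Lemma~\ref{L:hrs} makes $\mathcal{F}$ closed under direct limits, so that $\mathcal{F}$ is a definable torsion-free class and hence a cosilting class by Theorem~\ref{T:BZ}.

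The crux of $(ii) \Rightarrow (i)$ and of the ``furthermore'' clause is to compute $\mathcal{F} = \{M \in \ModR \mid M \in \mathcal{V}\}$ for $C$ as above. A module $M$ lies in $\mathcal{V}$ iff $\Hom_{\Der(R)}(M, C[i]) = 0$ for all $i>0$; the terms with $i \geq 2$ vanish for degree reasons, and for $i=1$, computing in the homotopy category (legitimate since $C[1]$ is a bounded complex of injectives) yields $\Hom_{\Der(R)}(M, C[1]) \cong \Coker(\Hom_R(M,\sigma))$, so that $\mathcal{F} = \mathcal{C}_\sigma$. Now for $(ii) \Rightarrow (i)$ I write $\mathcal{F} = \Cogen(T)$ with $T = \Ker(Q_0 \xrightarrow{\sigma} Q_1)$ a cosilting module chosen so that $\mathcal{C}_\sigma = \Cogen(T)$, form the $2$-term cosilting complex $C = (\cdots \rightarrow 0 \rightarrow Q_0 \xrightarrow{\sigma} Q_1 \rightarrow 0 \rightarrow \cdots)$ via Theorem~\ref{T:WZ}, and note that by $(i)\Rightarrow(ii)$ applied to $C$ together with the above computation, $\mathbf{t}_C$ is Happel-Reiten-Smal\o~for the torsion pair with torsion-free class $\mathcal{C}_\sigma = \mathcal{F}$; as a torsion pair is determined by its torsion-free class and the Happel-Reiten-Smal\o~assignment is injective, $\mathbf{t}_C = \mathbf{t}$. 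For the ``furthermore'' clause, given any equivalence representative $C$ of $\mathbf{t}$, Theorem~\ref{T:WZ} shows $H^0(C) = \Ker(\sigma)$ is a cosilting module; replacing $C$ by the $2$-term cosilting complex built from a good injective copresentation $\sigma'$ of $H^0(C)$ --- which is equivalent to $C$ by Theorem~\ref{T:WZ} since it has the same $0$-th cohomology --- the same computation gives $\mathcal{F} = \mathcal{C}_{\sigma'} = \Cogen(H^0(C))$, and this is independent of the representative because $\mathcal{F}$ depends only on $\mathbf{t}$.

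I expect the main obstacle to be the bookkeeping around injective copresentations: the differential of a complex handed to us in (i) need not be a good copresentation of its $0$-th cohomology, so the identity $\mathcal{C}_\sigma = \Cogen(H^0(C))$ has to be routed through the bijection of Theorem~\ref{T:WZ} by passing to an equivalent representative. The only real calculation is the homotopy-category identification $\Hom_{\Der(R)}(M, C[1]) \cong \Coker(\Hom_R(M,\sigma))$, which is straightforward.
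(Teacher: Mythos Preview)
Your proof is correct and follows the same overall architecture as the paper's argument: both directions go through Lemma~\ref{L:hrs}, Theorem~\ref{T:BZ}, and Theorem~\ref{T:WZ}. The one substantive difference is that where the paper invokes \cite[Proposition 2.16]{Pop} as a black box to identify the torsion-free class $\mathcal{F}$ with $\Cogen(H^0(C))$, you instead carry out the homotopy-category computation $\Hom_{\Der(R)}(M,C[1]) \cong \Coker(\Hom_R(M,\sigma))$ by hand to get $\mathcal{F} = \mathcal{C}_\sigma$, and then route the ``furthermore'' clause through Theorem~\ref{T:WZ} by switching to an equivalent representative built from a good copresentation of $H^0(C)$. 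Your approach is more self-contained but pays for it with the extra bookkeeping you flagged around injective copresentations; the paper's citation to \cite{Pop} absorbs exactly that bookkeeping and yields $\mathcal{F} = \Cogen(H^0(C))$ directly for any representative.
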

\begin{proof}
		If $(i)$ holds, then the coaisle of the cosilting t-structure clearly squeezes between $\Der^{\geq 1}$ and $\Der^{\geq 0}$, and therefore is Happel-Reiten-Smal\o ~by Lemma~\ref{L:hrs}. Furthermore, the torsion pair inducing this t-structure necessary has the torsion-free class cogenerated by the cosilting module $H^0(C)$ by \cite[Proposition 2.16]{Pop}. Conversely, if $\mathbf{t}$ is Happel-Reiten-Smal\o ~induced by a torsion pair $(\mathcal{T},\mathcal{F})$ with $\mathcal{F}$ cosilting, then $\mathbf{t}$ is a cosilting t-structure by Theorem~\ref{T:BZ} and Lemma~\ref{L:hrs}. Let $T$ be a cosilting module cogenerating $\mathcal{F}$, and let $\sigma: Q_0 \rightarrow Q_1$ be a map witnessing that $T$ is a cosilting module. Then $\sigma$ is a 2-term cosilting complex by Theorem~\ref{T:WZ}, and $\sigma$ induces $\mathbf{t}$ by \cite[Proposition 2.16]{Pop}.
\end{proof}

	As a summary, studying the Happel-Reiten-Smal\o ~t-structures induced by a cosilting object in $\Der(R)$ boils down to studying definable torsion-free classes in $\ModR$. 

	\subsection{Cotilting modules}\label{SS:cotilting} We also need to recall the basics on (large) 1-cotilting modules. Let $\mathcal{C}$ be a subcategory of $\ModR$. We will use the notation
	$$\Perp{}\mathcal{C} = \{M \in \ModR \mid \Ext{}_R^1(M,C) = 0 ~\forall C \in \mathcal{C}\}\text{, and}$$
	$$\mathcal{C}\Perp{} = \{M \in \ModR \mid \Ext{}_R^1(C,M) = 0 ~\forall C \in \mathcal{C}\},$$
	and if $\mathcal{C} = \{C\}$ is a singleton, we will drop the curly brackets. An $R$-module $C$ is called \EMP{(1-)cotilting} provided $C$ has injective dimension at most one and $\Perp{}C = \Cogen(C)$. It is easily seen that any 1-cotilting module is a cosilting module, and this is witnessed by any injective coresolution $0 \rightarrow C \rightarrow Q_0 \xrightarrow{\sigma} Q_1 \rightarrow 0$. The cosilting class $\Cogen(C)$ is in this case called a \EMP{(1-)cotilting class} induced by $C$. Clearly, any 1-cotilting class contains all projective $R$-modules. Conversely, any cosilting class containing $R$ is a cotilting class by \cite[Proposition 3.14]{APST}. 
\section{Definable subcategories in the derived category of rings of weak global dimension at most one}
	Recall that a ring $R$ is of \EMP{weak global dimension at most one} if any submodule of a flat $R$-module is flat, or equivalently, that $\Tor_2^R(-,-)$ is a zero functor $\ModR \times \ModRop \rightarrow \Ab$, which also demonstrates that this is a left-right symmetric property of a ring.
	
	The main aim of this section is to use the K\"{u}nneth formula to prove that definable subcategories in the derived category of a ring of weak global dimension at most one are fully determined by cohomology. We start with a reformulation of the definition of a definable subcategory in the derived category of a ring. Given a ring $R$, let $\Rop$ be the opposite ring, so that $\ModRop$ is identified with the category of all \EMP{left} $R$-modules. 	
	\subsection{Determination on cohomology}\label{SS:determined}
	\begin{lem}\label{L:definable}
		Let $R$ be a ring, and let $\mathcal{C}$ be a subcategory of $\Der(R)$. Then the following are equivalent:
			\begin{enumerate}
					\item[(i)] $\mathcal{C}$ is definable in $\Der(R)$,
				\item[(ii)] there is a set $\Phi \subseteq \Mor(\Der(R)^c)$ such that 
						$$\mathcal{C} = \{ X \in \Der(R) \mid \Hom{}_{\Der(R)}(f,X) \text{ is injective for each $f \in \Phi$}\},$$
				\item[(iii)] there is a set $\Phi \subseteq \Mor(\Der(R)^c)$ such that 
						$$\mathcal{C} = \{ X \in \Der(R) \mid \Hom{}_{\Der(R)}(f,X) \text{ is zero for each $f \in \Phi$}\},$$
				\item[(iv)] there is a set $\Phi \subseteq \Mor(\Der(\Rop)^c)$ such that 
						$$\mathcal{C} = \{ X \in \Der(R) \mid H^0(X \otimes_R^\mathbf{L} f) \text{ is zero for each $f \in \Phi$}\}.$$
			\end{enumerate}
	\end{lem}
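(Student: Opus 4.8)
The plan is to prove a cycle of implications. The equivalence $(ii) \Leftrightarrow (iii)$ is purely formal: since $\Der(R)^c$ is closed under shifts, any $f \colon A \to B$ in $\Mor(\Der(R)^c)$ can be completed to a triangle $A \xrightarrow{f} B \to C_f \to A[1]$ with $C_f$ compact, and the long exact sequence obtained by applying $\Hom_{\Der(R)}(-,X)$ shows that $\Hom_{\Der(R)}(f,X)$ is injective if and only if the connecting map $\Hom_{\Der(R)}(C_f[-1],X) \to \Hom_{\Der(R)}(A,X)$ is zero, i.e. if and only if $\Hom_{\Der(R)}(g,X) = 0$ for the map $g \colon A \to C_f[-1]$; running this in reverse with the rotated triangle handles the other direction. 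So replacing $\Phi$ by the set of such connecting morphisms (and conversely) identifies $(ii)$ and $(iii)$. Similarly, $(i) \Leftrightarrow (iii)$: given $f$, surjectivity of $\Hom_{\Der(R)}(f,X)$ is equivalent to the vanishing of $\Hom_{\Der(R)}(h,X)$ for the map $h \colon C_f \to B$ in the triangle (again by the long exact sequence), and vice versa, so the two defining conditions describe the same subcategories after translating $\Phi$ through the (co)cones. This uses only that $\Der(R)^c$ is a triangulated subcategory and that $\Hom_{\Der(R)}(-,X)$ is cohomological.

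The substantive content is $(iii) \Leftrightarrow (iv)$, and here I would invoke the standard tensor-hom adjunction at the level of the derived category. For a compact object $C \in \Der(R)^c$ one has $\Hom_{\Der(R)}(C,X) \cong H^0(\RHom_R(C,X))$, and $\Der(R)^c$ consists exactly of the perfect complexes, each of which is (up to isomorphism) a bounded complex of finitely generated projective modules; dualizing such a complex termwise produces a perfect complex $C^\vee \in \Der(\Rop)^c$, and this duality $C \mapsto C^\vee$ is a contravariant equivalence $\Der(R)^c \xrightarrow{\sim} \Der(\Rop)^c$. The key identity is the natural isomorphism
$$\RHom_R(C,X) \simeq X \otimes_R^{\mathbf{L}} C^\vee$$
in $\Der(\Ab)$ (or $\Der(R)$, as appropriate), valid when $C$ is perfect; taking $H^0$ gives $\Hom_{\Der(R)}(C,X) \cong H^0(X \otimes_R^{\mathbf{L}} C^\vee)$. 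A morphism $f \colon A \to B$ in $\Der(R)^c$ thus corresponds under $(-)^\vee$ to a morphism $f^\vee \colon B^\vee \to A^\vee$ in $\Der(\Rop)^c$, and $\Hom_{\Der(R)}(f,X)$ is zero if and only if $H^0(X \otimes_R^{\mathbf{L}} f^\vee)$ is zero. Hence a subcategory defined by a set $\Phi$ of morphisms of compact objects as in $(iii)$ is defined by the set $\Phi^\vee = \{f^\vee \mid f \in \Phi\} \subseteq \Mor(\Der(\Rop)^c)$ as in $(iv)$, and conversely every set of morphisms of perfect left-module complexes arises this way. This chain of equivalences establishes the lemma.

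The main obstacle is bookkeeping around the duality and the compatibility of $H^0(- \otimes_R^{\mathbf{L}} -)$ with morphisms: one must check that the isomorphism $\RHom_R(C,X) \simeq X \otimes_R^{\mathbf{L}} C^\vee$ is natural in $C$ (contravariantly) and in $X$, so that it intertwines $\Hom_{\Der(R)}(f,X)$ with $H^0(X \otimes_R^{\mathbf{L}} f^\vee)$ rather than merely matching the two groups abstractly. This is a routine but slightly delicate verification; it is cleanest to fix strict representatives — $C$ a bounded complex of finitely generated projectives, $C^\vee$ its termwise $\Hom_R(-,R)$ — and observe that at the level of honest complexes the tensor-hom isomorphism $\Hom_R(P, R) \otimes_R X \cong \Hom_R(P, X)$ for $P$ finitely generated projective is already natural, then pass to the derived category. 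One should also note that $\mathbb{Z}$-graded vanishing, i.e. vanishing of all $\Hom_{\Der(R)}(f[n],X)$, is subsumed by allowing shifts in $\Phi$, so there is no loss in phrasing everything with $H^0$ and a single set $\Phi$.
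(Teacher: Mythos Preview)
Your approach is essentially identical to the paper's: both use triangle completion and the long exact sequence of $\Hom_{\Der(R)}(-,X)$ to pass between surjectivity, injectivity, and vanishing for $(i)$--$(iii)$, and both invoke the perfect-complex duality $\RHom_R(-,R)\colon \Der(R)^c \xrightarrow{\sim} \Der(\Rop)^c$ together with the natural isomorphism $\RHom_R(C,X) \simeq X \otimes_R^{\mathbf{L}} C^\vee$ for $(iii)\Leftrightarrow(iv)$.

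One bookkeeping issue to clean up: in your treatment of $(i)\Leftrightarrow(iii)$ and $(ii)\Leftrightarrow(iii)$ several of the named maps point the wrong way. In the triangle $A \xrightarrow{f} B \xrightarrow{g} C_f \xrightarrow{h} A[1]$ there is no map $C_f \to B$ or $A \to C_f[-1]$; rather, surjectivity of $\Hom(f,X)$ is equivalent to vanishing of $\Hom(h[-1],X)$ for $h[-1]\colon C_f[-1]\to A$, and injectivity of $\Hom(f,X)$ is equivalent to vanishing of $\Hom(g,X)$ for $g\colon B \to C_f$. Once the arrows are corrected the argument goes through exactly as you describe, and matches the paper's proof.
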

	\begin{proof}
			Let $f: C \rightarrow D$ be a map in $\Der(R)^c$, and consider the induced triangle
			$$C \xrightarrow{f} D \xrightarrow{g} E \xrightarrow{h} C[1],$$
			in the triangulated category $\Der^c(R)$. Applying $\Hom_{\Der(R)}(-,X)$, we obtain an exact sequence 
			$$\Hom{}_{\Der(R)}(D,X) \xrightarrow{\Hom{}_{\Der(R)}(f,X)}  \Hom{}_{\Der(R)}(C,X) \xrightarrow{\Hom{}_{\Der(R)}(h[-1],X)} $$  
			$$\xrightarrow{\Hom{}_{\Der(R)}(h[-1],X)} \Hom{}_{\Der(R)}(E[-1],X) \xrightarrow{\Hom{}_{\Der(R)}(g[-1],X)} \Hom{}_{\Der(R)}(D[-1],X)$$
			of abelian groups. It follows that $\Hom{}_{\Der(R)}(f,X)$ is surjective if and only if $\Hom{}_{\Der(R)}(h[-1],X)$ is zero if and only if $\Hom{}_{\Der(R)}(g[-1],X)$ is injective. This establishes the equivalence of $(i) - (iii)$.

			Suppose that $\Phi$ is a set of maps between objects from $\Der^c(R)$ such that 
			$$\mathcal{C} = \{ X \in \Der(R) \mid \Hom{}_{\Der(R)}(f,X) \text{ is zero for each $f \in \Phi$}\}.$$ 
			We define the set $\Phi^*$ of maps in $\Der(\Rop)$ as follows:
			$$\Phi^* = \{\RHom{}_R(f,R) \mid f \in \Phi\}.$$ 
			Recalling that $\RHom_R(-,R)$ induces an equivalence $\Der^c(R) \rightarrow \Der^c(\Rop)$, we have that $\Phi^*$ is actually a set of maps between objects from $\Der^c(\Rop)$. Then the equivalence of $(iii)$ and $(iv)$ comes from the following standard isomorphism in $\Der(\Ab)$, natural in $C \in \Der^c(R)$:
			$$\RHom(C,X) \simeq X \otimes^\mathbf{L}_R \RHom{}_R(C,R),$$
			which implies that for any $f \in \Phi$ we have the following isomorphism of maps in $\Ab$:
			$$\Hom{}_{\Der(R)}(f,X) \simeq H^0\RHom(f,X) \simeq H^0(X \otimes^\mathbf{L}_R \RHom{}_R(f,R)).$$
	\end{proof}

		\begin{definition}
		Let $\mathcal{V}$ be a subcategory of $\Der(R)$. We say that the subcategory $\mathcal{V}$ is \EMP{determined on cohomology} if the following equivalence holds for each $X \in \Der(R)$:
		$$X \in \mathcal{V} \iff H^n(X)[-n] \in \mathcal{V} ~\forall n \in \mathbb{Z}.$$
	\end{definition}
	The characterization (iv) of Lemma~\ref{L:definable} of definable subcategories using tensor product will be useful here, and as in the proof of an analogous statement for localizing pairs in \cite[\S 3]{BS}, the K\"{u}nneth's theorem will play a crucial role. 
	\begin{lem}\label{L:kunneth}
			Let $R$ be a ring of weak global dimension at most one, let $X$ be any object in $\Der(R)$, and let
			$$E \xrightarrow{h} C \xrightarrow{f} D \xrightarrow{g} E[1]$$
			be any triangle in $\Der(R)$. Then the following conditions are equivalent:
			\begin{enumerate}
				\item[(i)] $H^n(X \otimes_R^\mathbf{L} f)$ is a zero map in $\Ab$, 
				\item[(ii)] the following two conditions hold:
					\begin{itemize} 
						\item for all $p+q = n$ the map $H^p(X) \otimes_R H^q(f)$ is zero in $\Ab$, and
						\item for all $p+q = n+1$ the map $\Tor{}_1^R(H^p(X),H^q(h))$ is surjective.
					\end{itemize}
			\end{enumerate}
	\end{lem}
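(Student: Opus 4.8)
The plan is to route the whole argument through the Künneth short exact sequence, which degenerates precisely because $R$ has weak global dimension at most one. First I would record the degenerate Künneth sequence: since $\Tor_2^R(-,-)$ is the zero functor, the hyper-Tor spectral sequence computing $H^\ast(X\otimes_R^{\mathbf{L}}Y)$ has only two nonzero columns (those coming from $\Tor_0$ and $\Tor_1$), so it collapses and yields, for every $Y\in\Der(\Rop)$ and every $n\in\mathbb{Z}$, a short exact sequence
\[0\to\bigoplus_{p+q=n}H^p(X)\otimes_RH^q(Y)\xrightarrow{\alpha_Y}H^n(X\otimes_R^{\mathbf{L}}Y)\xrightarrow{\beta_Y}\bigoplus_{p+q=n+1}\Tor_1^R(H^p(X),H^q(Y))\to0,\]
which is moreover natural in $Y$ (here the triangle $E\to C\to D\to E[1]$ is understood as living in $\Der(\Rop)$, so that $X\otimes_R^{\mathbf{L}}(-)$ applies to it). Write $K_n(Y)$ and $L_{n+1}(Y)$ for the two outer terms and $K_n(\varphi)$, $L_{n+1}(\varphi)$ for the induced maps; since $K_n(\varphi)=\bigoplus_{p+q=n}H^p(X)\otimes_RH^q(\varphi)$ and $L_{n+1}(\varphi)=\bigoplus_{p+q=n+1}\Tor_1^R(H^p(X),H^q(\varphi))$, condition (ii) is precisely the statement that $K_n(f)=0$ and $L_{n+1}(h)$ is surjective. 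This is the reformulation I would use throughout.

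For $(i)\Rightarrow(ii)$: applying $X\otimes_R^{\mathbf{L}}(-)$ to the triangle and taking cohomology gives a long exact sequence, and $H^n(X\otimes_R^{\mathbf{L}}f)=0$ forces $H^n(X\otimes_R^{\mathbf{L}}h)$ to be surjective. Naturality of $\alpha$ gives $\alpha_D\circ K_n(f)=H^n(X\otimes_R^{\mathbf{L}}f)\circ\alpha_C=0$, and $\alpha_D$ is a monomorphism, so $K_n(f)=0$. Naturality of $\beta$ gives $\beta_C\circ H^n(X\otimes_R^{\mathbf{L}}h)=L_{n+1}(h)\circ\beta_E$, whose left-hand side is a composite of epimorphisms, so $L_{n+1}(h)$ is an epimorphism.

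For $(ii)\Rightarrow(i)$, which is the heart of the matter, the idea is to avoid computing a snake-lemma connecting homomorphism and instead run a short chase on the Künneth sequences. Let $\xi\in H^n(X\otimes_R^{\mathbf{L}}C)$ be arbitrary. Since $L_{n+1}(h)$ is surjective, write $\beta_C(\xi)=L_{n+1}(h)(\eta)$ with $\eta\in L_{n+1}(E)$, and lift $\eta$ along the epimorphism $\beta_E$ to some $\tilde\eta\in H^n(X\otimes_R^{\mathbf{L}}E)$. Naturality of $\beta$ gives $\beta_C\bigl(H^n(X\otimes_R^{\mathbf{L}}h)(\tilde\eta)\bigr)=L_{n+1}(h)(\eta)=\beta_C(\xi)$, hence $\xi-H^n(X\otimes_R^{\mathbf{L}}h)(\tilde\eta)\in\ker\beta_C=\operatorname{im}\alpha_C$, say it equals $\alpha_C(k)$ for some $k\in K_n(C)$. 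Now apply $H^n(X\otimes_R^{\mathbf{L}}f)$: the term $H^n(X\otimes_R^{\mathbf{L}}f)\circ H^n(X\otimes_R^{\mathbf{L}}h)=H^n\bigl(X\otimes_R^{\mathbf{L}}(fh)\bigr)$ is zero because $fh=0$ in the triangle, and $H^n(X\otimes_R^{\mathbf{L}}f)(\alpha_C(k))=\alpha_D(K_n(f)(k))=0$ because $K_n(f)=0$. Thus $H^n(X\otimes_R^{\mathbf{L}}f)(\xi)=0$, and since $\xi$ was arbitrary, $H^n(X\otimes_R^{\mathbf{L}}f)=0$.

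The only genuinely non-formal ingredient, and the step I expect to require the most care, is setting up the degenerate Künneth sequence together with its naturality in the second variable for unbounded complexes — this is exactly where the hypothesis $\Tor^R_{\ge2}=0$ is used, and it is the same device that underlies the localizing-pair analogue in \cite[\S3]{BS}. Once that is available, both implications are the short formal arguments above; the remaining bookkeeping is only the degree shift in the $\Tor_1$ term and the harmless reading of the triangle in $\Der(\Rop)$.
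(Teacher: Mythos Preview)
Your proof is correct and follows essentially the same approach as the paper: set up the natural K\"unneth short exact sequences (the paper does this by replacing $X$ with a $K$-projective complex $P$ and citing the classical K\"unneth formula from Cartan--Eilenberg, while you invoke the degenerate hyper-Tor spectral sequence), then run the same diagram chase. For $(ii)\Rightarrow(i)$ the paper factors $H^n(P\otimes_R f)$ through $\beta_C$ and kills the factor map using the surjectivity of $L_{n+1}(h)\circ\beta_E$, which is the morphism-level version of your element chase; both rely on $H^n(X\otimes_R^{\mathbf L}f)\circ H^n(X\otimes_R^{\mathbf L}h)=0$, which you note explicitly as $fh=0$ in the triangle and the paper uses via exactness of the middle column.
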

	\begin{proof}
			We start by making the assumption that $f: C \rightarrow D$ and $h: E \rightarrow C$ are actually (represented by) maps of chain complexes. This is a harmless assumption, as we can for example replace $C$ by a quasi-isomorphic $K$-projective replacement $C'$ (these always exist by \cite[Corollary 2.8]{S}), then replace $f$ by its image $f'$ in the isomorphism $\Hom_{\Der(R)}(C,D) \simeq \Hom_{\mathbf{K}(R)}(C',D)$, and finally replace $h$ by the mapping cocone of $f'$.
			
			Next, let $P$ be a $K$-projective complex of right $R$-modules which is quasi-isomorphic to $X$. Then the components of $P$ are projective $R$-modules, and since $R$ has weak global dimension at most one, all coboundary and cocycle modules of $P$ are flat $R$-modules. Therefore, we can use the K\"{u}nneth's formula \cite[\S VI Theorem 3.1]{CE}, and its naturality \cite[\S IV Theorem 8.1]{CE}, in degree $n$ for the chain maps $P \otimes_R f: P \otimes_R C \rightarrow P \otimes_R D$ and $P \otimes_R h: P \otimes_R E \rightarrow P \otimes_R C$, to obtain the following commutative diagram
			{\small
$$
\begin{tikzcd}[column sep=tiny]
		0 \arrow{r} & \displaystyle\bigoplus_{p + q = n}H^p(P) \otimes{}_R H^q(E) \arrow{d}{\bigoplus_{}H^p(P) \otimes{}_R H^q(h)}\arrow{r} & H^{n}(P \otimes{}_R E) \arrow{d}{H^{n}(P \otimes{}_R h)} \arrow{r}{\pi_E} & \displaystyle\bigoplus_{p+q = n+ 1} \Tor{}_1^R(H^p(P),H^q(E)) \arrow{d}{\bigoplus_{} \Tor{}_1^R(H^p(P),H^q(h))} \arrow{r} & 0 \\
		0 \arrow{r} & \displaystyle\bigoplus_{p + q = n}H^p(P) \otimes{}_R H^q(C) \arrow{d}{\bigoplus_{}H^p(P) \otimes{}_R H^q(f)}\arrow{r} & H^n(P \otimes{}_R C) \arrow{d}{H^n(P \otimes{}_R f)} \arrow{r}{\pi_C} & \displaystyle\bigoplus_{p+q = n+1} \Tor{}_1^R(H^p(P),H^q(C)) \arrow{d}{\bigoplus_{} \Tor{}_1^R(H^p(P),H^q(f))} \arrow{r} & 0. \\
		0 \arrow{r} & \displaystyle\bigoplus_{p + q = n}H^p(P) \otimes{}_R H^q(D) \arrow{r} & H^n(P \otimes{}_R D) \arrow{r} & \displaystyle\bigoplus_{p+q = n+1} \Tor{}_1^R(H^p(P),H^q(D)) \arrow{r} & 0 \\
\end{tikzcd}
$$
			}
			with rows being the short exact sequences provided by the K\"{u}nneth formula. Also, the middle column of the diagram is exact, because it is a part of the long exact sequence on cohomologies induced by the triangle
			$$P \otimes_R^\mathbf{L} E \xrightarrow{P \otimes_R^\mathbf{L} h}P \otimes_R^\mathbf{L} C \xrightarrow{P \otimes_R^\mathbf{L} f}P \otimes_R^\mathbf{L} D \xrightarrow{P \otimes_R^\mathbf{L} g} P \otimes_R^\mathbf{L} E[1],$$
			and by the fact that $P \otimes_R^\mathbf{L} -: \Der(\Rop) \rightarrow \Der(\Ab)$ is represented by the ordinary tensor product $P \otimes_R -$, because $P$ is $K$-projective.

			Assume first that $H^n(X \otimes_R^\mathbf{L} f)$ is a zero map. Since $P$ is the $K$-projective replacement of $X$, we have an isomorphism of maps $H^n(X \otimes_R^\mathbf{L} f) \simeq H^n(P \otimes_R f)$. Then $H^n(P \otimes_R f)$ is a zero map, and the exactness of the rows and commutativity of the diagram implies that $\bigoplus_{} H^p(P) \otimes_R H^q(f)$ and $\bigoplus_{}\Tor{}_1^R(H^p(P),H^q(f))$ are zero maps in $\Ab$, and therefore all of their direct sum components are zero maps. By the exactness of the middle column, $H^n(P \otimes_R f)$ being zero forces $H^n(P \otimes_R h)$ to be surjective. The commutativity of the upper right square then implies that the map $\bigoplus_{}\Tor{}_1^R(H^p(P),H^q(h))$ is surjective, and therefore the component maps are surjective as well. Because $H^p(P) \simeq H^p(X)$ for all $p \in \mathbb{Z}$, we have proved $(i) \implies (ii)$.

			Now suppose that $(ii)$ holds. Then $\bigoplus_{} H^p(P) \otimes_R H^q(f)$ is a zero map, and therefore $H^n(P \otimes_R f)$ factors through the epimorphism $\pi_C: H^n(P \otimes_R C) \rightarrow \bigoplus_{p + q = n+1} \Tor_1^R(H^p(P),H^q(C))$, say 
			$$H^n(P \otimes_R f) = \varphi \circ \pi_C$$
			for some map $\varphi: \bigoplus_{p + q = n+1} \Tor_1^R(H^p(P),H^q(C)) \rightarrow H^n(P \otimes_R D)$. Using the commutativity of the diagram, we can compute the composition of maps as follows
			$$\varphi \circ \bigoplus_{}\Tor{}_1^R(H^p(P),H^q(h)) \circ \pi_E = \varphi \circ \pi_C \circ H^n(P \otimes_R h) = $$ 
			$$= H^n(P \otimes_R f) \circ H^n(P \otimes_R h) = 0.$$
			
			But by $(ii)$, the map $\bigoplus_{}\Tor{}_1^R(H^p(P),H^q(h))$ is an epimorphism, and so is $\pi_E$. Therefore, $\varphi = 0$, and thus $H^n(P \otimes_R f)$ is a zero map. Again, as $P$ is the $K$-projective replacement of $X$, this means that $H^n(X \otimes^\mathbf{L}_R f)$ is a zero map, proving the implication $(ii) \implies (i)$.
	\end{proof}
	\begin{thm}\label{T:cohomology}
		Let $R$ be a ring of weak global dimension at most one, and let $\mathcal{V}$ be a definable subcategory in $\Der(R)$. Then $\mathcal{V}$ is determined on cohomology. 
	\end{thm}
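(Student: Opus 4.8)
The plan is to reduce everything to the Künneth criterion of Lemma~\ref{L:kunneth}, exploiting the fact that that criterion only sees the cohomology modules of the tested object, one degree at a time. First I would use Lemma~\ref{L:definable}(iv) to fix a set $\Phi \subseteq \Mor(\Der(\Rop)^c)$ with
$$\mathcal{V} = \{ X \in \Der(R) \mid H^0(X \otimes_R^{\mathbf{L}} f) = 0 \text{ for all } f \in \Phi \},$$
and for each $f \colon C_f \to D_f$ in $\Phi$ I would complete $f$ to a triangle $E_f \xrightarrow{h_f} C_f \xrightarrow{f} D_f \xrightarrow{g_f} E_f[1]$ in $\Der(\Rop)$ (the rotation of the cone of $f$). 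Observe that for $X \in \Der(R)$ the modules $H^p(X)$ are right $R$-modules while the $H^q(f)$ and $H^q(h_f)$ are left $R$-modules, so all the tensor products and $\Tor_1^R$-groups appearing below are legitimate.

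Applying Lemma~\ref{L:kunneth} in degree $n = 0$ to each of these triangles, the membership $X \in \mathcal{V}$ becomes equivalent to the following family of conditions holding for every $f \in \Phi$ and every $p \in \mathbb{Z}$: the map $H^p(X) \otimes_R H^{-p}(f)$ is zero in $\Ab$, and the map $\Tor_1^R(H^p(X), H^{1-p}(h_f))$ is surjective. The crucial observation is that the condition indexed by a given pair $(f,p)$ depends on $X$ only through the single module $H^p(X)$. Hence, for the stalk complex $Y_n := H^n(X)[-n]$ --- whose cohomology equals $H^n(X)$ in degree $n$ and is zero elsewhere --- the conditions indexed by $(f,p)$ with $p \neq n$ are satisfied automatically (since $0 \otimes_R - = 0$ and $\Tor_1^R(0,-)$ is the zero map between zero groups), while the conditions indexed by $(f,n)$ are exactly $H^n(X) \otimes_R H^{-n}(f) = 0$ and $\Tor_1^R(H^n(X), H^{1-n}(h_f))$ surjective. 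Comparing, $X$ satisfies the whole family of conditions if and only if every $Y_n$ satisfies its (sub)family, that is, $X \in \mathcal{V}$ if and only if $H^n(X)[-n] \in \mathcal{V}$ for all $n \in \mathbb{Z}$, which is precisely the assertion.

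The genuine content of this argument is entirely contained in Lemma~\ref{L:kunneth}, i.e. in the naturality of the Künneth formula together with the fact that over a ring of weak global dimension at most one every cocycle and coboundary of a $K$-projective complex is flat; this input is already available. Given that, the proof of the theorem is bookkeeping, and the only points that deserve a moment's care are matching the degree shifts correctly when translating membership via Lemma~\ref{L:kunneth}, and checking that the $\Tor_1$-surjectivity clauses become vacuous on the stalk complexes $Y_n$ --- both of which are immediate.
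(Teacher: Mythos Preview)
Your proposal is correct and follows essentially the same approach as the paper: both reduce via Lemma~\ref{L:definable}(iv) to the tensor characterization, then invoke Lemma~\ref{L:kunneth} to see that membership in $\mathcal{V}$ is governed entirely by degreewise conditions on the cohomology modules $H^p(X)$. The only cosmetic difference is that the paper phrases the conclusion by first noting $X \in \mathcal{V} \iff \prod_{n \in \mathbb{Z}} H^n(X)[-n] \in \mathcal{V}$ and then appealing to closure under products and summands, whereas you argue the equivalence for each stalk $H^n(X)[-n]$ directly; your route is marginally more streamlined but the content is identical.
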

	\begin{proof}
			Since $\mathcal{V}$ is definable, there is by Lemma~\ref{L:definable} a set $\Phi$ of maps from $\Der^c(\Rop)$ such that $\mathcal{V} = \{X \in \Der(R) \mid H^0(X \otimes^\mathbf{L}_R f) \text{ is zero for each $f \in \Phi$}\}$. For each $f \in \Phi$, let $f' \in \Der^c(\Rop)$ be a map such that there is a triangle of the form
			$$E \xrightarrow{f'} C \xrightarrow{f} D \rightarrow E[1]$$
			in $\Der^c(\Rop)$. By Lemma~\ref{L:kunneth}, we have for any $X \in \Der(R)$ and any $f \in \Phi$ the equivalence
			$$H^0(X \otimes^\mathbf{L}_R f) \text{ is a zero map} \iff$$ 
			$$ \iff H^n(X) \otimes_R H^{-n}(f) \text{ is zero and }\Tor{}_1^R(H^n(X),H^{1-n}(f')) \text{ is surjective} ~\forall n \in \mathbb{Z}.$$ 
			Since the latter condition is formulated just by means of the cohomology modules of $X$, we see that for any $X \in \Der(R)$ we have the equivalence
			$$X \in \mathcal{V} \iff \prod_{n \in \mathbb{Z}} H^n(X)[-n] \in \mathcal{V}.$$
			As $\mathcal{V}$ is closed under products and direct summands, it follows that $\mathcal{V}$ is determined on cohomology.
	\end{proof}
	\begin{prop}\label{P:definablecorr}
		Let $R$ be a ring of weak global dimension at most one. Then there is a 1-1 correspondence:
		$$\left \{ \begin{tabular}{ccc} \text{ definable subcategories $\mathcal{V}$} \\ \text{in $\Der(R)$} \end{tabular}\right \}  \leftrightarrow  \left \{ \begin{tabular}{ccc} \text{ collections $\{\mathcal{V}_n \mid n \in \mathbb{Z}\}$ of } \\ \text{ definable subcategories of $\ModR$} \end{tabular}\right \}.$$
			The correspondence is given by assignments
			$$\mathcal{V} \mapsto \mathcal{V}_n = \{H^n(X) \mid X \in \mathcal{V}\} ~\forall n \in \mathbb{Z},$$
			$$\{\mathcal{V}_n \mid n \in \mathbb{Z}\} \mapsto \mathcal{V} = \{X \in \Der(R) \mid H^n(X) \in \mathcal{V}_n ~\forall n \in \mathbb{Z}\}.$$
	\end{prop}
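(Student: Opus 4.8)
The plan is to leverage Theorem~\ref{T:cohomology}: since a definable $\mathcal{V} \subseteq \Der(R)$ is determined on cohomology, the class $\mathcal{V}_n = \{H^n(X) \mid X \in \mathcal{V}\}$ coincides with $\{M \in \ModR \mid M[-n] \in \mathcal{V}\}$ (for ``$\supseteq$'' use $H^n(M[-n]) = M$; ``$\subseteq$'' is precisely the implication $X \in \mathcal{V} \Rightarrow H^n(X)[-n] \in \mathcal{V}$). Almost everything then reduces to transporting closure properties along the stalk functor $M \mapsto M[-n]$ and along the cohomology functors $H^n$, with the definability criterion of Theorem~\ref{T:rosie1} doing the rest on the derived side.

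First I would show the assignment $\mathcal{V} \mapsto \{\mathcal{V}_n\}_n$ lands where it should, i.e. each $\mathcal{V}_n$ is a definable subcategory of $\ModR$ (closed under products, pure submodules, direct limits). Using the identification above, $\mathcal{V}_n$ is the preimage under the stalk functor of the definable class $\mathcal{V}$. The stalk functor commutes with products and with directed colimits --- a directed system of stalk complexes is a coherent diagram whose homotopy colimit is its termwise direct limit, because direct limits of complexes are exact --- and it sends pure monomorphisms of modules to pure monomorphisms in $\Der(R)$, since a pure-exact sequence of modules is a directed colimit of split-exact ones and so its image is a directed homotopy colimit of split triangles, hence a pure triangle. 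Pulling back the closure of $\mathcal{V}$ under products, directed homotopy colimits, and pure monomorphisms (Theorem~\ref{T:rosie1}) then yields the three required closure properties of $\mathcal{V}_n$.

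Second, for a family $\{\mathcal{V}_n\}_n$ of definable subcategories of $\ModR$ I would verify, via Theorem~\ref{T:rosie1}, that $\mathcal{V} = \{X \in \Der(R) \mid H^n(X) \in \mathcal{V}_n~\forall n\}$ is definable. This rests on three properties of the functors $H^n$ on $\Der(R)$: they commute with products (products of complexes are termwise and $\ModR$ has exact products), they commute with directed homotopy colimits (the isomorphism $H^n(\hocolim_i \mathscr{X}) \simeq \varinjlim_i H^n(\mathscr{X}_i)$ recalled above), and they send pure monomorphisms to pure monomorphisms of modules (a pure triangle being a directed homotopy colimit of split triangles, $H^n$ turns it into a directed colimit of split short exact sequences, which is pure-exact). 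Each of these, combined with the corresponding closure of every $\mathcal{V}_n$, yields closure of $\mathcal{V}$ under products, directed homotopy colimits, and pure monomorphisms respectively.

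Finally I would check the two assignments are mutually inverse. For $\mathcal{V}$ definable, $X \in \mathcal{V}$ iff $H^n(X)[-n] \in \mathcal{V}$ for all $n$ (Theorem~\ref{T:cohomology}) iff $H^n(X) \in \mathcal{V}_n$ for all $n$, so reconstructing from $\{\mathcal{V}_n\}_n$ returns $\mathcal{V}$. For a family $\{\mathcal{V}_n\}_n$, passing to $\mathcal{V}$ and back, the inclusion $\{H^n(X) \mid X \in \mathcal{V}\} \subseteq \mathcal{V}_n$ is immediate, and for the reverse inclusion any $M \in \mathcal{V}_n$ is realized as $H^n(M[-n])$ with $M[-n] \in \mathcal{V}$, because its degree-$n$ cohomology is $M \in \mathcal{V}_n$ and its other cohomologies are $0$, which lies in each $\mathcal{V}_k$ since subcategories are additive. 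The one genuinely non-formal ingredient, which I expect to be the crux, is the two-way compatibility of purity between $\ModR$ and $\Der(R)$ used above --- stalks of pure-exact sequences are pure triangles, and cohomology of a pure triangle is pure-exact; both follow from the description of pure triangles in a compactly generated triangulated category as directed homotopy colimits of split triangles together with the exactness of direct limits, but it is the step that demands care rather than mere bookkeeping.
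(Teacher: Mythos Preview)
Your proposal is correct and follows essentially the same approach as the paper's proof: both directions rest on Theorem~\ref{T:cohomology} and the closure criterion of Theorem~\ref{T:rosie1}, and the only nontrivial point is the two-way compatibility of purity between $\ModR$ and $\Der(R)$, which you identify correctly. The paper handles this compatibility by citing the characterization of pure triangles from \cite[Proposition~3.7]{L} together with \cite[Theorem~16.1.16]{P} for one direction, and \cite[17.3.17]{P} for the fact that $H^n$ carries pure monomorphisms in $\Der(R)$ to pure monomorphisms of modules; your argument via ``pure triangles are directed homotopy colimits of split triangles, hence $H^n$ yields a directed colimit of split short exact sequences'' is precisely an unpacking of those references and is equally valid.
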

	\begin{proof}
			By Theorem~\ref{T:cohomology}, any definable subcategory $\mathcal{V}$ is determined on cohomology, and thus $\mathcal{V}$ is uniquely determined by a collection of subcategories $\mathcal{V}_n = \{H^n(X) \mid X \in \mathcal{V}\}$, $n \in \mathbb{Z}$. Also, since $\mathcal{V}_n[-n] \subseteq \mathcal{V}$ for all $n \in \mathbb{Z}$, then clearly the classes $\mathcal{V}_n$ are closed under direct products and direct limits by the closure properties of $\mathcal{V}$. Recall that any pure-exact sequence in $\ModR$ becomes a pure triangle in $\Der(R)$, this follows e.g. from the characterization \cite[Proposition 3.7]{L} of pure triangles together with \cite[Theorem 16.1.16]{P}. Thus, since $\mathcal{V}$ is closed under pure monomorphisms in $\Der(R)$, it follows that $\mathcal{V}_n$ is closed under pure submodules in $\ModR$. Therefore, $\mathcal{V}_n$ is a definable subcategory of $\ModR$ for each $n \in \mathbb{Z}$. 
			
			Conversely, let $\{\mathcal{V}_n \mid n \in \mathbb{Z}\}$ be any collection of definable subcategories of $\ModR$ and let us prove that $\mathcal{V} = \{X \in \Der(R) \mid H^n(X) \in \mathcal{V}_n ~\forall n \in \mathbb{Z}\}$ is a definable subcategory of $\Der(R)$. Let $\mathscr{X} \in \Der((\ModR)^I)$ be a coherent diagram of a directed shape $I$ such that $\mathscr{X}_i \in \mathcal{V}$ for all $i \in I$. In particular, $H^n(\mathscr{X}_i) \in \mathcal{V}_n$ for all $n \in \mathbb{Z}$. Then $H^n(\hocolim_{i \in I} \mathscr{X}) \simeq \varinjlim_{i \in I}H^n(\mathscr{X}_i) \in \mathcal{V}_n$ for all $n \in \mathbb{Z}$, and thus $\hocolim_{i \in I} \mathscr{X} \in \mathcal{V}$. Similar argument shows that $\mathcal{V}$ is closed under products. Finally, consider a pure monomorphism $f: Y \rightarrow X$ in $\Der(R)$ with $X \in \mathcal{V}$. For each $n \in \mathbb{Z}$ we have that $\Hom_R(R[-n],f) \simeq H^n(f)$ is a pure monomorphism of $R$-modules by \cite[17.3.17]{P}. Therefore, $H^n(Y) \in \mathcal{V}_n$ for all $n \in \mathbb{Z}$, and therefore $Y \in \mathcal{V}$. Using Theorem~\ref{T:rosie1} we conclude that $\mathcal{V}$ is a definable subcategory. This establishes the correspondence.
	\end{proof}
	\subsection{Ziegler spectra}
	We can reformulate Proposition~\ref{P:definablecorr} using the Ziegler spectra of the derived category and of the module category. We refer to \cite{P} for the theory of Ziegler spectra of module categories. If $R$ is a ring, the natural embedding $\ModR[-n] \subseteq \Der(R)$ for some $n \in \mathbb{Z}$ induces a closed embedding $\Zgs(R)[-n] \rightarrow \Zgs(\Der(R))$. Clearly, $\bigcup_{n \in \mathbb{Z}}\Zgs(R)[-n]$ forms a disjoint union inside $\Zgs(\Der(R))$. One can ask for which rings it is true that $\Zgs(\Der(R)) = \bigcup_{n \in \mathbb{Z}}\Zgs(R)[-n]$. Equivalently, for which rings is it true that every indecomposable pure-injective object inside $\Der(R)$ is quasi-isomorphic to a stalk complex. This is not true in general, but it is known to hold for example for right hereditary or von Neumann regular rings, see \cite[17.3.22 and 17.3.23]{P}. The following provides a common generalization for those two results.
	\begin{cor}
		Let $R$ be a ring of weak global dimension at most one. Then we have $\Zgs(\Der(R)) = \bigcup_{n \in \mathbb{Z}} \Zgs(R)[-n]$.
	\end{cor}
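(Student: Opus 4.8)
The plan is to establish the nontrivial inclusion $\Zgs(\Der(R)) \subseteq \bigcup_{n \in \mathbb{Z}} \Zgs(R)[-n]$ by showing that every indecomposable pure-injective object $M$ of $\Der(R)$ is isomorphic to a stalk complex; the reverse inclusion is the closed-embedding statement recorded above. Fix $M \in \Zgs(\Der(R))$ and let $\mathcal{V}$ be the smallest definable subcategory of $\Der(R)$ containing $M$ — this exists since an intersection of definable subcategories is again definable. By Theorem~\ref{T:cohomology} the subcategory $\mathcal{V}$ is determined on cohomology; combining this with Proposition~\ref{P:definablecorr} one checks that $\mathcal{V}$ corresponds to the collection $\{\mathcal{V}_n \mid n \in \mathbb{Z}\}$ in which $\mathcal{V}_n$ is the smallest definable subcategory of $\ModR$ containing $H^n(M)$; in particular $\mathcal{V}_n = 0$ exactly when $H^n(M) = 0$.

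Next I would run this through the Ziegler correspondence on both sides. Chaining the Ziegler correspondence for $\Der(R)$ (Theorem~\ref{T:Ziegler}), Proposition~\ref{P:definablecorr}, and the Ziegler correspondence for the module category $\ModR$ produces an order- and (finite-)union-preserving bijection between the closed subsets of $\Zgs(\Der(R))$ and the $\mathbb{Z}$-indexed collections of closed subsets of $\Zgs(R)$; explicitly, a closed set $V$ is sent to $(V \cap \Zgs(R)[-n])_{n}$, the verification that this is the composite bijection using once more that definable subcategories of $\Der(R)$ are determined on cohomology. Applying this to $V = \mathcal{V} \cap \Zgs(\Der(R))$ and using the description of $\mathcal{V}$ obtained above, the collection attached to $\mathcal{V} \cap \Zgs(\Der(R))$ is $(U_n)_n$ with $U_n = \mathcal{V}_n \cap \Zgs(R)$; since a nonzero module has nonempty Ziegler support, $U_n \neq \emptyset$ precisely for those $n$ with $H^n(M) \neq 0$.

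The argument closes topologically. As $\mathcal{V}$ is the smallest definable subcategory containing $M$, the set $\mathcal{V} \cap \Zgs(\Der(R))$ is the closure of the point $M$, hence is irreducible (the closure of a single point is always irreducible). Suppose $M$ had two nonzero cohomology modules, say $H^a(M) \neq 0 \neq H^b(M)$ with $a \neq b$. Split the collection $(U_n)_n$ as $(U_n^{(1)})_n \cup (U_n^{(2)})_n$, where $U^{(1)}_a = \emptyset$ and $U^{(1)}_n = U_n$ for $n \neq a$, while $U^{(2)}_a = U_a$ and $U^{(2)}_n = \emptyset$ for $n \neq a$. Both summands are proper sub-collections — the first differs from $(U_n)_n$ in slot $a$, the second in slot $b$ — so pulling this decomposition back along the bijection above expresses $\mathcal{V} \cap \Zgs(\Der(R))$ as a union of two proper closed subsets, contradicting irreducibility. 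Hence $M$ has a single nonzero cohomology, i.e. $M \simeq H^{n_0}(M)[-n_0]$ for a unique $n_0 \in \mathbb{Z}$; and a stalk complex in degree $n_0$ that is indecomposable pure-injective in $\Der(R)$ corresponds, via the closed embedding $\ModR[-n_0] \hookrightarrow \Der(R)$ (which reflects indecomposability and pure-injectivity), to an indecomposable pure-injective $R$-module, so $M \in \Zgs(R)[-n_0]$.

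The main obstacle I anticipate lies in the bookkeeping of the first two paragraphs: pinning down that the smallest definable subcategory of $\Der(R)$ containing $M$ corresponds under Proposition~\ref{P:definablecorr} exactly to the collection of smallest definable subcategories containing the $H^n(M)$, and that the composite of Proposition~\ref{P:definablecorr} with the two Ziegler correspondences is precisely the map $V \mapsto (V \cap \Zgs(R)[-n])_n$ (so that it is monotone and visibly compatible with finite unions). Both are formal consequences of the monotonicity of the bijections involved together with the "determined on cohomology" property, but since the entire proof hinges on them they should be spelled out with care.
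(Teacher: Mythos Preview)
Your argument is correct, but it takes a considerably more elaborate route than the paper's. The paper argues globally: it lets $U = \bigcup_{n \in \mathbb{Z}} \Zgs(R)[-n]$, takes the corresponding definable subcategory $\mathcal{V}$, observes that $\ModR[-n] \subseteq \mathcal{V}$ for every $n$ (since $\Zgs(R)[-n] \subseteq U$), so that $\mathcal{V}_n = \ModR$ for all $n$, and then invokes Proposition~\ref{P:definablecorr} to conclude $\mathcal{V} = \Der(R)$ and hence $U = \Zgs(\Der(R))$. That is the entire proof --- four lines.

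Your approach is pointwise: you fix an indecomposable pure-injective $M$, identify the definable closure of $M$ with the product (over $n$) of the definable closures of the $H^n(M)$ via the order-isomorphism of Proposition~\ref{P:definablecorr}, and then use irreducibility of $\overline{\{M\}}$ to force all but one $H^n(M)$ to vanish. This works, and the lattice-theoretic bookkeeping you flag as a potential obstacle is genuinely routine: Proposition~\ref{P:definablecorr} is visibly an order-isomorphism onto a product lattice, so it preserves meets and finite joins, which is all you use. What your approach buys is slightly finer information --- you actually describe the Ziegler closure of each point of $\Zgs(\Der(R))$ in terms of module-theoretic data --- at the cost of a much longer argument. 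The paper's proof sidesteps all of this by never looking at an individual pure-injective and instead exploiting that, once one knows definable subcategories are determined on cohomology, the statement is equivalent to the triviality ``$\mathcal{V}_n = \ModR$ for all $n$ implies $\mathcal{V} = \Der(R)$''.
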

	\begin{proof}
			Let $\mathcal{V}$ be the definable subcategory of $\Der(R)$ corresponding to the closed subset $U = \bigcup_{n \in \mathbb{Z}} \Zgs(R)[-n]$ of $\Zgs(\Der(R))$ (cf. \cite[Theorem 17.3.20]{P}). As $\Zgs(R)[-n] \subseteq U$, we see that $\ModR[-n] \subseteq \mathcal{V}$ for all $n \in \mathbb{Z}$. But this means that $\mathcal{V}_n = \{H^n(X) \mid X \in \mathcal{V}\} = \ModR$ for all $n \in \mathbb{Z}$, which in turns means that $\mathcal{V} = \Der(R)$ by Proposition~\ref{P:definablecorr}, and therefore $U = \Zgs(\Der(R))$.
	\end{proof}
	\subsection{Definable coaisles}
	In the rest of the paper, we will be concerned with the definable subcategories which are coaisles of t-structures, that is, in view of Corollary~\ref{C:closure}, definable subcategories of $\Der(R)$ closed under extensions and cosuspensions. We therefore restrict the correspondence of Proposition~\ref{P:definablecorr} to such definable subcategories.
	\begin{prop}\label{P:moduletheoretic}
			Let $R$ be a ring of weak global dimension at most one. The 1-1 correspondence of Proposition~\ref{P:definablecorr} restricts to another 1-1 correspondence between the following collections:
			\begin{enumerate}
				\item[(i)] definable coaisles $\mathcal{V}$ in $\Der(R)$,
				\item[(ii)] increasing sequences $\cdots \mathcal{V}_n \subseteq \mathcal{V}_{n+1} \subseteq \cdots$ of definable subcategories closed under extensions in $\ModR$ indexed by $n \in \mathbb{Z}$, satisfying the following condition: Whenever $f: V_n \rightarrow V_{n+1}$ is a map with $V_n \in \mathcal{V}_n$ and $V_{n+1} \in \mathcal{V}_{n+1}$ for some $n \in \mathbb{Z}$, then $\Ker(f) \in \mathcal{V}_n$ and $\Coker(f) \in \mathcal{V}_{n+1}$.
			\end{enumerate}
			\begin{proof}
	Let $\mathcal{V}$ be a definable coaisle and let $\{\mathcal{V}_n \mid n \in \mathbb{Z}\}$ be the sequence of definable subcategories of modules corresponding to $\mathcal{V}$ via Proposition~\ref{P:definablecorr}. Since $\mathcal{V}_n[-n] = \mathcal{V} \cap \ModR[-n]$ by Theorem~\ref{T:cohomology}, this already implies that $\mathcal{V}_n$ is closed under extensions, and that $\mathcal{V}_n \subseteq \mathcal{V}_{n+1}$ for each $n \in \mathbb{Z}$. Suppose now that $f: V_n \rightarrow V_{n+1}$ is a map as in the condition $(ii)$. Then $f$ induces a triangle
					$$V_n[-n-1] \xrightarrow{f[-n-1]} V_{n+1}[-n-1] \rightarrow Z \rightarrow V_n[-n]$$
					in $\Der(R)$. Since $V_{n} \in \mathcal{V}_n$ and $V_{n+1} \in \mathcal{V}_{n+1}$, we have $V_n[-n],V_{n+1}[-n-1] \in \mathcal{V}$, and thus $Z \in \mathcal{V}$. Consider the following part of the long exact sequence of cohomologies induces by the triangle:
					$$0 \rightarrow H^n(Z) \rightarrow V_n \xrightarrow{f} V_{n+1} \rightarrow H^{n+1}(Z) \rightarrow 0.$$
					The leftmost and the rightmost term are zero, because they are equal to the cohomologies of the stalk complexes --- namely, $H^n(V_{n+1}[-n-1]), H^{n+2}(V_n[-n-1])$. Since $Z \in \mathcal{V}$, then $\Ker(f) \simeq H^n(Z) \in \mathcal{V}_n$ and $\Coker(f) \simeq H^{n+1}(Z) \in \mathcal{V}_{n+1}$, showing that the condition $(ii)$ is satisfied.

					Suppose now that $\{\mathcal{V}_n \mid n \in \mathbb{Z}\}$ is a collection of definable subcategories of $R$-modules satisfying all of the conditions in $(ii)$, and let us show that $\mathcal{V} = \{ X \in \Der(R) \mid H^n(X) \in \mathcal{V}_n ~\forall n \in \mathbb{Z}\}$ is a definable coaisle. We already know by Proposition~\ref{P:definablecorr} that $\mathcal{V}$ is definable. By Corollary~\ref{C:closure}, it is enough to check that $\mathcal{V}$ is closed under cosuspensions, and extensions. The closure under cosuspensions clearly follows from $\mathcal{V}_n \subseteq \mathcal{V}_{n+1}$ for each $n \in \mathbb{Z}$. Next, suppose that 
					$$X \rightarrow Y \rightarrow Z \rightarrow X[1]$$
					is a triangle with $X,Z \in \mathcal{V}$, and consider the long exact sequence on cohomologies:
					$$\cdots H^{n-1}(Z) \xrightarrow{f} H^n(X) \rightarrow H^n(Y) \rightarrow H^n(Z) \xrightarrow{g} H^{n+1}(X) \rightarrow \cdots$$
				By the assumption from $(ii)$, we have that $\Ker(g) \in \mathcal{V}_n$, and $\Coker(f) \in \mathcal{V}_n$. Because $\mathcal{V}_n$ is closed under extensions, this implies that $H^n(Y) \in \mathcal{V}_n$ using the short exact sequence
					$$0 \rightarrow \Ker(g) \rightarrow H^n(Y) \rightarrow \Coker(f) \rightarrow 0.$$
					Therefore, $H^n(Y) \in \mathcal{V}_n$ for all $n \in \mathbb{Z}$, and thus $Y \in \mathcal{V}$.
			\end{proof}
	\end{prop}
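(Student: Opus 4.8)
The plan is to read the statement through the lens of Theorem~\ref{T:cohomology} and Corollary~\ref{C:closure}. By Proposition~\ref{P:definablecorr} a definable subcategory $\mathcal{V}$ of $\Der(R)$ corresponds to the collection $\{\mathcal{V}_n \mid n \in \mathbb{Z}\}$ with $\mathcal{V}_n = \{H^n(X) \mid X \in \mathcal{V}\}$, and since $\mathcal{V}$ is determined on cohomology we have $\mathcal{V}_n[-n] = \mathcal{V} \cap \ModR[-n]$ for each $n$. By Corollary~\ref{C:closure}, $\mathcal{V}$ is a definable coaisle precisely when it is a definable subcategory that is in addition closed under extensions and under cosuspensions. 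So the whole proposition is the assertion that, under the correspondence of Proposition~\ref{P:definablecorr}, these two extra closure properties of $\mathcal{V}$ match exactly the conditions on $\{\mathcal{V}_n\}$ listed in (ii); I would prove the two implications separately.

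For the forward direction, assume $\mathcal{V}$ is a definable coaisle. Closure under cosuspension gives $\mathcal{V}_n \subseteq \mathcal{V}_{n+1}$ at once: if $V \in \mathcal{V}_n$ then $V[-n] \in \mathcal{V}$, hence $V[-n-1] \in \mathcal{V}$, i.e. $V \in \mathcal{V}_{n+1}$. Closure of each $\mathcal{V}_n$ under extensions follows because $\ModR[-n]$ is extension-closed in $\Der(R)$ (a triangle with two stalk vertices in degree $n$ has its third vertex a stalk in degree $n$, as one sees from the long exact cohomology sequence), so $\mathcal{V}_n[-n] = \mathcal{V} \cap \ModR[-n]$ is an intersection of two extension-closed subcategories. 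For the kernel/cokernel condition, given $f \colon V_n \to V_{n+1}$ with $V_n \in \mathcal{V}_n$, $V_{n+1} \in \mathcal{V}_{n+1}$, I would regard $f$ as a morphism of stalk complexes and complete its shift $f[-n-1] \colon V_n[-n-1] \to V_{n+1}[-n-1]$ to a triangle with third vertex $Z$. Rotating it, $Z$ sits in a triangle exhibiting it as an extension of $V_n[-n]$ by $V_{n+1}[-n-1]$; both of these lie in $\mathcal{V}$ (here one uses $V_n \in \mathcal{V}_n$ so $V_n[-n] \in \mathcal{V}$, and $V_{n+1} \in \mathcal{V}_{n+1}$ so $V_{n+1}[-n-1] \in \mathcal{V}$), and extension-closure of $\mathcal{V}$ gives $Z \in \mathcal{V}$. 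The long exact cohomology sequence of the defining triangle of $Z$ then yields $H^n(Z) \cong \Ker(f)$, $H^{n+1}(Z) \cong \Coker(f)$, and $H^k(Z) = 0$ otherwise; since $\mathcal{V}$ is determined on cohomology this forces $\Ker(f) \in \mathcal{V}_n$ and $\Coker(f) \in \mathcal{V}_{n+1}$.

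For the converse, assume $\{\mathcal{V}_n\}$ satisfies (ii) and set $\mathcal{V} = \{X \in \Der(R) \mid H^n(X) \in \mathcal{V}_n \ \forall n\}$. By Proposition~\ref{P:definablecorr} this is definable, so by Corollary~\ref{C:closure} it suffices to check closure under cosuspensions and extensions. Cosuspension is immediate from $\mathcal{V}_{n-1} \subseteq \mathcal{V}_n$ since $H^n(X[-1]) = H^{n-1}(X)$. For extensions, take a triangle $X \to Y \to Z \to X[1]$ with $X, Z \in \mathcal{V}$ and examine, in the long exact cohomology sequence, the connecting maps $\partial \colon H^{n-1}(Z) \to H^n(X)$ and $\partial \colon H^n(Z) \to H^{n+1}(X)$. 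The first is a morphism from an object of $\mathcal{V}_{n-1}$ to an object of $\mathcal{V}_n$, so by (ii) its cokernel is in $\mathcal{V}_n$; the second is a morphism from $\mathcal{V}_n$ to $\mathcal{V}_{n+1}$, so by (ii) its kernel is in $\mathcal{V}_n$. Exactness places $H^n(Y)$ in a short exact sequence $0 \to \Coker(\partial) \to H^n(Y) \to \Ker(\partial) \to 0$ with both outer terms in $\mathcal{V}_n$, and extension-closure of $\mathcal{V}_n$ gives $H^n(Y) \in \mathcal{V}_n$; hence $Y \in \mathcal{V}$. The only real point of care --- more a bookkeeping hazard than a genuine obstacle --- is the handling of shifts: a coaisle is closed under cosuspension but not suspension, so one must arrange the auxiliary triangle so that the two controlled vertices are exactly $V_n[-n]$ and $V_{n+1}[-n-1]$ and so that $Z$ occurs as an honest extension of objects of $\mathcal{V}$; with the degrees pinned down, the rest is a routine chase through the long exact cohomology sequence.
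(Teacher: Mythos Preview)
Your proof is correct and follows essentially the same approach as the paper's: the same auxiliary triangle $V_n[-n-1] \to V_{n+1}[-n-1] \to Z \to V_n[-n]$ for the forward direction, and the same long exact cohomology sequence argument for the converse. If anything, your version is slightly more careful: you write the short exact sequence for $H^n(Y)$ in the correct order $0 \to \Coker(\partial) \to H^n(Y) \to \Ker(\partial) \to 0$, whereas the paper has the two outer terms swapped (harmless, since both lie in $\mathcal{V}_n$ anyway).
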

	\begin{conv}
			Given a coaisle $\mathcal{V}$ in $\Der(R)$, we will from now on always implicitly use the notation $\mathcal{V}_n = \{H^n(X) \mid X \in \mathcal{V}\}$ for the essential image of the $n$-th cohomology functor of $\mathcal{V}$ in $\ModR$.
	\end{conv}
	\begin{remark}
		A similar condition to (ii) of Proposition~\ref{P:moduletheoretic} appears in a slightly different formulation in \cite{AR}, where sequences of subcategories of the module category determining a coaisle of a t-structure over a hereditary ring are called ``reflective co-narrow sequences''. In our setting, the reflectivity is ensured by the definability of the members of the sequence.
	\end{remark}
	\subsection{Compactly generated coaisles}
	Under some extra conditions, we are also able to prove a useful criterion for deciding whether a definable coaisle is compactly generated, meaning by this that the associated t-structure is compactly generated.
	\begin{prop}\label{P:compinj}
			Let $R$ be a ring of weak global dimension at most one, and let $\mathcal{V}$ be a definable coaisle in $\Der(R)$. Consider the two following conditions:
			\begin{enumerate}
				\item[(i)] $\mathcal{V}$ is compactly generated,
				\item[(ii)] $\mathcal{V}_n$ is closed under taking injective envelopes for all $n \in \mathbb{Z}$.
			\end{enumerate}
			If $R$ is commutative, then $(i) \iff (ii)$. If $R$ is right semihereditary, then $(ii) \implies (i)$.
	\end{prop}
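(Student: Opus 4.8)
The plan is to argue through the module-theoretic translation furnished by Proposition~\ref{P:moduletheoretic}: write $\mathcal{V}\leftrightarrow(\mathcal{V}_n)_{n\in\mathbb{Z}}$, recalling $\mathcal{V}_n=\{M\in\ModR\mid M[-n]\in\mathcal{V}\}$ by Theorem~\ref{T:cohomology}. Compact generation of the t-structure $(\Perp{0}\mathcal{V},\mathcal{V})$ means $\mathcal{V}=\mathcal{S}\Perp{0}$ for a suspension-closed set $\mathcal{S}\subseteq\Der(R)^c$, and since $\mathcal{V}$ is determined on cohomology this amounts to a cohomological condition on the chain $(\mathcal{V}_n)$. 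I would first render this explicit: combining the isomorphism $\RHom_R(S,X)\simeq X\otimes_R^{\mathbf{L}}\RHom_R(S,R)$ used in Lemma~\ref{L:definable} with the K\"{u}nneth argument from the proof of Theorem~\ref{T:cohomology}, each compact $S$ contributes to $\mathcal{V}_n$ a conjunction of vanishing conditions $M\otimes_R A=0$ and $\Tor_1^R(M,B)=0$ for finitely presented left $R$-modules $A,B$ (cohomologies of $\RHom_R(S,R)$). In the commutative case one uses in parallel the classification of compactly generated t-structures by filtrations $\Phi$ of $\Spec(R)$ by specialization-closed subsets (\cite{AJS}, \cite{HCG}); because $R$ has weak global dimension at most one the relevant finitely generated ideals are flat and the associated compact generators (the $R/I$, or Koszul complexes on generators of $I$) have projective presentations of finite length, which lets one describe $\mathcal{V}_n$ as an intersection of a torsion-free class of a hereditary, finite-type torsion pair (the ``$\Hom_R(R/I,-)=0$'' conditions, with $V(I)\subseteq\Phi(n)$) with a class cut out by vanishing of $\Ext^1_R(R/I,-)$ and higher Koszul cohomology (with $V(I)$ in the later terms of $\Phi$).

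For the implication $(i)\Rightarrow(ii)$ in the commutative case this description settles matters immediately: a hereditary torsion-free class is closed under injective envelopes, while the classes defined by vanishing of $\Ext^1_R(R/I,-)$ (or of the higher cohomology) contain every injective $R$-module --- injectives are acyclic for all the relevant functors apart from the torsion radical --- hence are trivially closed under injective envelopes; and an intersection of subcategories closed under injective envelopes is again closed under injective envelopes. Thus every $\mathcal{V}_n$ is closed under injective envelopes.

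For $(ii)\Rightarrow(i)$ --- asserted both for commutative $R$ and for right semihereditary $R$ --- one runs the construction backwards: starting from a chain $(\mathcal{V}_n)$ with each $\mathcal{V}_n$ closed under injective envelopes, one reconstructs a set of compact generators. In the commutative case this is the filtration $\Phi$, recovered by letting $\mathcal{A}_n$ be the set of finitely generated (right) ideals $I$ with $R/I[-n]\in\Perp{0}\mathcal{V}$; one checks, using definability and the injective-envelope condition on $\mathcal{V}_n$ together with the linking condition of Proposition~\ref{P:moduletheoretic}(ii), that the $\mathcal{A}_n$ form a decreasing chain of finite-type Gabriel topologies, and then verifies degreewise on cohomology --- via Theorem~\ref{T:cohomology} --- that the compactly generated coaisle whose aisle is generated by $\{R/I[-m]\mid I\in\mathcal{A}_m,\ m\in\mathbb{Z}\}$ coincides with $\mathcal{V}$. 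When $R$ is right semihereditary the same generators may be used without any recourse to commutativity, the essential point being that $R/I$ is automatically a compact object of $\Der(R)$ whenever $I$ is a finitely generated right ideal (then $I$ is projective, so $\pd_R R/I\le 1$).

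The step I expect to be the main obstacle is this last reconstruction in $(ii)\Rightarrow(i)$: one must show that closure of each $\mathcal{V}_n$ under injective envelopes really does force it into the shape above, i.e. an intersection of a Gabriel-topology torsion-free class with a class of $\Ext^1$-acyclic modules. A priori a definable, extension-closed subcategory of $\ModR$ occurring as some $\mathcal{V}_n$ need not even be closed under submodules, hence need not be a torsion-free class, so one has to exploit the interplay between $\mathcal{V}_n$, $\mathcal{V}_{n+1}$ and the injective-envelope hypothesis simultaneously to pin down its structure. This is exactly where commutativity or right semiheredity is indispensable, and where the statement is expected to fail for general rings of weak global dimension at most one.
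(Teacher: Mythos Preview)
Your plan is broadly in line with the paper's, and your treatment of $(i)\Rightarrow(ii)$ in the commutative case is essentially a spelled-out version of what the paper obtains by a direct citation of \cite[Lemma 3.3]{HCG}. The substantive difference is in $(ii)\Rightarrow(i)$, where you correctly flag the verification step as the main obstacle but do not resolve it; the paper supplies a short, clean argument you are missing.

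Rather than working from the aisle side via $\mathcal{A}_n=\{I\text{ f.g.}\mid R/I[-n]\in{}^{\perp_0}\mathcal{V}\}$, the paper works from the coaisle side: let $\mathcal{C}_n$ be the closure of $\mathcal{V}_n$ under submodules. One checks (as in \cite[Lemma 5.6]{HS}) that $\mathcal{C}_n$ is a definable hereditary torsion-free class, so by \cite[Lemma 2.4]{H} there is a set $\mathcal{I}_n$ of finitely generated ideals with $\mathcal{C}_n=\{R/I\mid I\in\mathcal{I}_n\}^{\perp_0}$. One then forms the candidate compactly generated coaisle $\mathcal{V}'=\{R/I[-n]\mid I\in\mathcal{I}_n,\,n\in\mathbb{Z}\}^{\perp_0}$ (using Koszul complexes in the commutative case, and the compactness of $R/I$ directly in the right semihereditary case). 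The point of going through $\mathcal{C}_n$ is that it makes the following transparent: the injective modules in $\mathcal{V}_n$ and in $\mathcal{V}'_n$ coincide, since both equal the injectives in $\mathcal{C}_n$. Now the verification $\mathcal{V}_n=\mathcal{V}'_n$ is a two-line argument. Given $M\in\mathcal{V}_n$, take its minimal injective coresolution $0\to M\to E_0\to E_1\to\cdots$; the injective-envelope hypothesis together with the linking condition of Proposition~\ref{P:moduletheoretic}(ii) gives $E_k\in\mathcal{V}_{n+k}$ by induction, hence $E_k\in\mathcal{V}'_{n+k}$ since they are injective, and then $M\in\mathcal{V}'_n$ follows because $\mathcal{V}'$ is a coaisle. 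The reverse inclusion is symmetric, as $\mathcal{V}'_n$ is also closed under injective envelopes.

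Your aisle-side description via $\mathcal{A}_n$ could in principle be made to work, but the condition $R/I[-n]\in{}^{\perp_0}\mathcal{V}$ mixes a $\Hom$-vanishing on $\mathcal{V}_n$ with an $\Ext^1$-vanishing on $\mathcal{V}_{n-1}$, so extracting a Gabriel filter and then matching the resulting $\mathcal{V}'$ back with $\mathcal{V}$ is less direct. The paper's coresolution trick sidesteps this entirely and is the missing idea in your sketch.
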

	\begin{proof}
			Let us start by assuming $(ii)$. For each $n \in \mathbb{Z}$, let $\mathcal{C}_n$ be the closure of the class $\mathcal{V}_n$ under submodules. One can argue the same way as in \cite[Lemma 5.6]{HS} that $\mathcal{C}_n$ is a definable torsion-free class closed under injective envelopes. In particular, there is a hereditary torsion pair $(\mathcal{T}_n,\mathcal{C}_n)$ for each $n \in \mathbb{Z}$. Using \cite[Lemma 2.4]{H}, it follows that there is for each $n \in \mathbb{Z}$ a set $\mathcal{I}_n$ of finitely generated ideals such that $\mathcal{C}_n = \{R/I \mid I \in \mathcal{I}_n\}\Perp{0}$. Put $\mathcal{V}' = \{R/I[-n] \mid ~\forall I \in \mathcal{I}_n ~\forall n \in \mathbb{Z}\}\Perp{0}$, which defines a coaisle of a t-structure. If $R$ is right semihereditary, then $R/I[-n]$ is a compact object in $\Der(R)$ for any finitely generated ideal $I$. If $R$ is commutative, then $\mathcal{V}'$ can be written as a right orthogonal to a set of suspensions of Koszul complexes (see \cite[Lemma 5.4]{HCG}). In both cases, $\mathcal{V}'$ is a compactly generated coaisle, and therefore is determined on cohomology by Theorem~\ref{T:cohomology}. Let $(\mathcal{V}'_n \mid n \in \mathbb{Z})$ be the sequence of definable subcategories of $\ModR$ associated to $\mathcal{V}'$ via Proposition~\ref{P:moduletheoretic}. We will show that $\mathcal{V}_n = \mathcal{V}'_n$ for all $n \in \mathbb{Z}$. The subcategory $\mathcal{V}'_n = \bigcap_{i \geq n} \bigcap_{I \in \mathcal{I}_n} \Ker \Ext_R^{i-n}(R/I,-)$ of $\ModR$ is closed under injective envelopes for all $n \in \mathbb{Z}$. Therefore, both the subcategories $\mathcal{V}_n$ and $\mathcal{V}'_n$ are closed under injective envelopes, and by the construction they contain the same injective objects. For any module $M \in \mathcal{V}_n$, we consider the minimal injective coresolution 
			$$0 \rightarrow M \rightarrow E_0 \rightarrow E_1 \rightarrow E_2 \rightarrow \cdots.$$
			By induction, it follows that $E_k \in \mathcal{V}_{n+k}$ for all $k \geq 0$, and therefore $E_k \in \mathcal{V}'_{n+k}$ for all $k \geq 0$. But that implies $M \in \mathcal{V}'_n$, and thus $\mathcal{V}_n \subseteq \mathcal{V}'_n$. A symmetrical argument shows that $\mathcal{V}'_n \subseteq \mathcal{V}_n$ for all $n \in \mathbb{Z}$, proving that $\mathcal{V} = \mathcal{V}'$ by Proposition~\ref{P:moduletheoretic}.

			Finally, suppose that $R$ is commutative, and that $\mathcal{V}$ is a compactly generated coaisle. Because $\mathcal{V}_n[-n] \subseteq \mathcal{V}$, we have that $\mathcal{V}_n$ is closed under injective envelopes by \cite[Lemma 3.3]{HCG}.
	\end{proof}
		\begin{remark}
			The semiheredity imposed on the ring $R$ in the last part of Proposition~\ref{P:compinj} can be weakened to the following condition: $R$ is right coherent and any finitely presented cyclic $R$-module has a finite projective dimension. Indeed, this is enough to ensure that any finitely presented cyclic module $R/I$ is compact as an object of the derived category.
	\end{remark}
	
	At this point we are ready to answer Question~\ref{Q:UTC} in the affirmative for any (not necessarily commutative) von Neumann regular ring. In particular, the telescope conjecture holds for these rings, generalizing \cite[Theorem 4.21]{GS} and the corresponding result in \cite{BS}.
	\begin{cor}\label{C:VNR}
		Let $R$ be a von Neumann regular ring. Then any definable coaisle in $\Der(R)$ is compactly generated. In particular, the Telescope Conjecture holds for $R$.
	\end{cor}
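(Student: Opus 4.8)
The plan is to reduce everything to the second implication of Proposition~\ref{P:compinj}. First note that a von Neumann regular ring has weak global dimension $0$, hence at most one, so all results of this section apply; moreover $R$ is right semihereditary, since any finitely generated right ideal of a von Neumann regular ring is generated by an idempotent, hence is a direct summand of $R_R$ and in particular projective. Now let $\mathcal{V}$ be a definable coaisle in $\Der(R)$ and let $(\mathcal{V}_n \mid n \in \mathbb{Z})$ be the associated sequence of definable subcategories of $\ModR$ furnished by Proposition~\ref{P:moduletheoretic}. By Proposition~\ref{P:compinj} (the case of $R$ right semihereditary) it suffices to verify that each $\mathcal{V}_n$ is closed under injective envelopes, and in fact I will show that \emph{every} definable subcategory $\mathcal{D}$ of $\ModR$ has this property whenever $R$ is von Neumann regular.

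The key observation is that over a von Neumann regular ring the injective envelope of a module coincides with its pure-injective hull. Indeed, since $R$ has weak global dimension $0$, every finitely presented $R$-module is projective, so $\Hom_R(F,-)$ is exact for every finitely presented $F$, and therefore every short exact sequence of $R$-modules is pure-exact. In particular, for any module $M$ the inclusion $M \hookrightarrow E(M)$ into its injective envelope is a pure monomorphism into a pure-injective module; consequently the pure-injective hull $H(M)$ is (isomorphic to) a direct summand of $E(M)$ containing $M$, and being also essential in $E(M)$ it must be all of $E(M)$. On the other hand, definable subcategories of $\ModR$ are always closed under pure-injective hulls: if $M \in \mathcal{D}$, then $M$ purely embeds into a product $\prod_i P_i$ of indecomposable pure-injective modules belonging to $\mathcal{D}$ (by the general theory of definable subcategories, see \cite{P}), and since $\prod_i P_i$ is pure-injective, $H(M)$ is a direct summand of $\prod_i P_i$, hence a pure submodule of it, so $H(M) \in \mathcal{D}$. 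Combining the two facts, $\mathcal{D}$ is closed under injective envelopes.

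Applying this to each $\mathcal{V}_n$ and invoking Proposition~\ref{P:compinj} yields that $\mathcal{V}$ is compactly generated, which is the first assertion. For the Telescope Conjecture, recall that a smashing subcategory of $\Der(R)$ is the aisle of a stable t-structure whose coaisle is closed under coproducts; by \cite{Kr} such a coaisle is definable, hence compactly generated by what has just been shown, which says precisely that the smashing subcategory is compactly generated. The heart of the matter is thus the identification of injective envelopes with pure-injective hulls over a von Neumann regular ring; once that is granted, the statement is a formal consequence of Proposition~\ref{P:compinj} and the standard closure properties of definable subcategories, so I do not expect any serious obstacle beyond making that observation precise.
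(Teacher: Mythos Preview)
Your proposal is correct and follows essentially the same approach as the paper's proof: both reduce to Proposition~\ref{P:compinj} via the observation that over a von Neumann regular ring injective envelopes coincide with pure-injective hulls, so that every definable subcategory of $\ModR$ is closed under injective envelopes. You simply spell out in more detail what the paper states in one line (citing \cite[Theorem 3.4.8]{P}), and you add an explicit argument for the Telescope Conjecture via \cite{Kr}, which the paper leaves implicit.
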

	\begin{proof}
		Recall that over any von Neumann regular ring, the injective envelopes coincide with the pure-injective envelopes. Therefore, any definable subcategory of $\ModR$ is closed under injective envelopes by \cite[Theorem 3.4.8]{P}. Since $R$ is semihereditary, the rest follows from Proposition~\ref{P:compinj}.
	\end{proof}
	\subsection{Definable coaisles induced by homological epimorphisms}\label{SS:homological}
	There is a general construction described in \cite[\S 5]{AH} which assigns a definable coaisle to a double-infinite chain of homological ring epimorphisms based in a ring of weak global dimension at most one. We refer the reader to \textit{loc. cit.} for more details. Recall that a \EMP{homological ring epimorphism} is an epimorphism $\lambda: R \rightarrow S$ in the category of rings, such that $\Tor_i^R(S,S) = 0$ for all $i>0$. Equivalently, this means that the forgetful functor $\ModS \rightarrow \ModR$ induces a fully faithful functor $\Der(S) \rightarrow \Der(R)$. By a \EMP{chain of homological ring epimorphisms} we mean an $\mathbb{Z}$-indexed chain
	\begin{equation}\label{E:chain}\cdots \leftarrow S_{n-1} \xleftarrow{\mu_{n-1}} S_{n} \xleftarrow{\mu_n} S_{n+1} \leftarrow \cdots\end{equation}
			of ring epimorphisms such that there are homological ring epimorphisms $\lambda_n: R \rightarrow S_n$, and such that $\mu_{n-1} \lambda_n = \lambda_{n-1}$ for all $n \in \mathbb{Z}$. This in particular implies that $\mu_n: S_{n+1} \rightarrow S_{n}$ is a homological epimorphism for each $n \in \mathbb{Z}$. A subcategory $\mathcal{B}$ of $\ModR$ is called \EMP{bireflective} if it is closed under products, coproducts, kernels, and cokernels. Equivalently, it is a subcategory $\mathcal{B}$ of $\ModR$ such that the inclusion $\mathcal{B} \subseteq \ModR$ admits both the left and the right adjoint, called the \EMP{reflection} and \EMP{coreflection}, respectively. Recall that two ring epimorphisms $\lambda: R \rightarrow S$ and $\sigma: R \rightarrow S'$ are in the same \EMP{epiclass} if there is a ring isomorphism $\iota: S \rightarrow S'$ such that $\sigma = \iota\lambda$. Then we have the following result:
	\begin{thm}\emph{(\cite[Proposition 4.2]{BS})}\label{T:epiclass}
			Let $R$ be a ring of weak global dimension at most one. Then the assignment 
			$$(\lambda: R \rightarrow S) \mapsto \ModS \simeq \operatorname{Im}(- \otimes_R S) \subseteq \ModR$$ 
			induces a bijection between the following sets: 
			\begin{enumerate}
				\item[(i)] epiclasses of homological ring epimorphisms $\lambda: R \rightarrow S$, 
				\item[(ii)] extension-closed bireflective subcategories $\mathcal{B}$ of $\ModR$.
			\end{enumerate}
	\end{thm}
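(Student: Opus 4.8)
The plan is to reduce the statement to the classical correspondence between ring epimorphisms and bireflective subcategories, isolate what the weak global dimension hypothesis contributes, and then handle what is left. Concretely, I would first invoke the well-known fact (due to Gabriel--de la Pe\~{n}a and to Geigle--Lenzing) that for an \emph{arbitrary} ring $R$ the assignment $(\lambda\colon R\to S)\mapsto\ModS\simeq\operatorname{Im}(-\otimes_R S)$ is a bijection between epiclasses of ring epimorphisms of $R$ and bireflective subcategories of $\ModR$: starting from $\lambda$, the subcategory $\ModS$ is reflective via $-\otimes_R S$ and coreflective via $\Hom_R(S,-)$; conversely a bireflective $\mathcal B$ equals $\ModS$ for $S:=\ell(R)$, where $\ell$ is the reflection and the ring structure on $S$ together with the ring epimorphism $R\to S$ is read off from the adjunction unit, and the two constructions are mutually inverse and respect epiclasses. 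Since over a ring of weak global dimension at most one $\Tor_i^R(-,-)=0$ for $i\ge 2$, being homological is equivalent to $\Tor_1^R(S,S)=0$, so it remains to prove: \emph{for any ring epimorphism $\lambda\colon R\to S$, the subcategory $\ModS$ is closed under extensions in $\ModR$ if and only if $\Tor_1^R(S,S)=0$}. (This last equivalence needs no hypothesis on $R$; the weak dimension bound enters only through the reduction just made.)

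For the implication $\Tor_1^R(S,S)=0\Rightarrow\ModS$ extension-closed, I would first note that then $\Tor_1^R(N,S)=0$ for every $N\in\ModS$, using the base-change identity $N\otimes_R^{\mathbf L}S\simeq N\otimes_S^{\mathbf L}(S\otimes_R^{\mathbf L}S)$ together with the spectral sequence $\Tor_p^S(N,\Tor_q^R(S,S))\Rightarrow\Tor_{p+q}^R(N,S)$, whose only possible contributions in total degree $1$, namely $\Tor_1^S(N,S)$ and $N\otimes_S\Tor_1^R(S,S)$, both vanish. Then, given a short exact sequence $0\to M\to E\to N\to0$ in $\ModR$ with $M,N\in\ModS$, tensoring over $R$ with $S$ preserves exactness, and in the resulting ladder the unit maps $\eta_M\colon M\to M\otimes_R S$ and $\eta_N$ are isomorphisms; the short five lemma forces $\eta_E$ to be an isomorphism, hence $E\in\ModS$.

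The converse is the crux and, I expect, the main obstacle. Put $T:=\Tor_1^R(S,S)$, which is naturally an $S$-module. \textbf{Step 1.} Extension-closure of $\ModS$ gives $\Ext^1_R(S,N)=0$ for all $N\in\ModS$: any $R$-extension of $S$ by $N$ lies in $\ModS$, is therefore (restriction along $\lambda$ being full) an extension of $S$-modules, and splits since $S_S$ is projective. \textbf{Step 2.} Choose a free presentation $0\to\Omega\to F\to S\to0$ in $\ModR$; applying $\Hom_R(-,N)$, the adjunction isomorphism $\Hom_R(\Omega,N)\cong\Hom_S(\Omega\otimes_R S,N)$, and Step 1 produces a short exact sequence $0\to N\to\Hom_R(F,N)\to\Hom_S(\Omega\otimes_R S,N)\to0$. \textbf{Step 3.} Applying the reflection $-\otimes_R S$ to the same presentation and using $S\otimes_R S=S$ gives a four-term exact sequence $0\to T\to\Omega\otimes_R S\to F\otimes_R S\to S\to0$; as $S_S$ is projective the surjection $F\otimes_R S\to S$ splits, so $P:=\operatorname{im}(\Omega\otimes_R S\to F\otimes_R S)$ is a projective $S$-module and $\Omega\otimes_R S\cong T\oplus P$. \textbf{Step 4.} Tracing the surjection $\Hom_R(F,N)\twoheadrightarrow\Hom_S(\Omega\otimes_R S,N)$ of Step 2 through the splittings of Step 3, one verifies that its image is exactly the summand $\Hom_S(P,N)$ of $\Hom_S(\Omega\otimes_R S,N)=\Hom_S(T,N)\oplus\Hom_S(P,N)$; since the map is surjective this forces $\Hom_S(T,N)=0$. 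As $N$ ranges over all of $\ModS$ --- in particular over an injective cogenerator --- we conclude $T=0$, i.e. $\lambda$ is homological. The one genuinely delicate point is the bookkeeping of Step 4, namely checking that the image of the comparison map coincides with the projective summand; everything else is either the cited classical input or routine homological algebra.
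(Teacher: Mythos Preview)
The paper does not supply its own proof of this statement; it is quoted verbatim from \cite[Proposition~4.2]{BS}. Your argument is correct and follows the standard route: reduce via the Gabriel--de la Pe\~na/Geigle--Lenzing bijection to the characterisation of extension-closure, then show that for any ring epimorphism $R\to S$ the subcategory $\ModS$ is closed under extensions if and only if $\Tor_1^R(S,S)=0$.

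Both directions are sound. For the implication $\Tor_1^R(S,S)=0\Rightarrow$ extension-closed, your spectral-sequence argument is fine; note that it works because each $S\otimes_R P_i$ is flat as a left $S$-module (since $M\otimes_S(S\otimes_R P_i)\cong M\otimes_R P_i$ is exact in $M$), so the hyper-Tor spectral sequence applies and the only contributions in total degree~$1$ vanish. For the converse, Step~4 is indeed the only point requiring care, and it checks out: under the adjunction the map $\Hom_R(F,N)\to\Hom_S(\Omega\otimes_R S,N)$ is identified with precomposition by $\Omega\otimes_R S\to F\otimes_R S$, which kills the summand $T$ and maps onto $\Hom_S(P,N)$ because $P$ is a direct summand of $F\otimes_R S$. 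Surjectivity then forces $\Hom_S(T,N)=0$ for all $N\in\ModS$, whence $T=0$.

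Your parenthetical remark that the equivalence needs no hypothesis on $R$ is correct; the weak global dimension bound is used only to pass from $\Tor_1^R(S,S)=0$ to $\lambda$ being homological.
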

	Using this correspondence, it is not hard to see that if $R$ is of weak global dimension at most one, then chains of homological ring epimorphisms as in (\ref{E:chain}), up to a choice of epiclass representatives, correspond bijectively to chains
	$$\cdots \mathcal{B}_{n-1} \subseteq \mathcal{B}_n \subseteq \mathcal{B}_{n+1} \subseteq \cdots$$
	of extension-closed bireflective subcategories of $\ModR$, via the assignment $S_n \mapsto \mathcal{B}_n := \ModSn \simeq \operatorname{Im}(- \otimes_R S_n) \subseteq \ModR$. To this data, we assign a subcategory $\mathcal{V}$ of $\Der(R)$ as follows:
	$$\mathcal{V} = \{X \in \Der(R) \mid H^n(X) \in \Cogen(\mathcal{B}_n) \cap \mathcal{B}_{n+1} ~\forall n \in \mathbb{Z}\}.$$
	\begin{prop}\emph{(\cite[Proposition 5.4]{AH})}\label{P:construction}
			Let $R$ be a ring of weak global dimension at most one. Then for any chain of homological ring epimorphisms over $R$, the subcategory $\mathcal{V}$ of $\Der(R)$ defined above is a definable coaisle.
	\end{prop}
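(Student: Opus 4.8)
The plan is to reduce everything to Proposition~\ref{P:moduletheoretic}, which classifies definable coaisles in $\Der(R)$ by increasing ``co-narrow'' sequences of definable subcategories of $\ModR$. Writing $\mathcal{B}_k = \ModSn \subseteq \ModR$ (with $k$ in place of $n$) for the extension-closed bireflective subcategory attached to $\lambda_k\colon R \to S_k$ via Theorem~\ref{T:epiclass}, I set $\mathcal{V}_n := \Cogen(\mathcal{B}_n) \cap \mathcal{B}_{n+1}$, so that the subcategory $\mathcal{V}$ of the statement is exactly $\{X \in \Der(R) \mid H^n(X) \in \mathcal{V}_n\ \forall n \in \mathbb{Z}\}$. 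It then suffices to verify that $(\mathcal{V}_n)_{n \in \mathbb{Z}}$ is an increasing chain of definable subcategories of $\ModR$, each closed under extensions, satisfying the kernel--cokernel condition of Proposition~\ref{P:moduletheoretic}(ii); Proposition~\ref{P:moduletheoretic} then delivers the conclusion.

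First I would collect a few facts about each $\mathcal{B}_k$. The restriction functor $\ModSn \to \ModR$ is fully faithful and has both a left adjoint (the reflection $\ell_k = - \otimes_R S_k$, with unit $\eta^k_M\colon M \to M \otimes_R S_k$) and a right adjoint, so $\mathcal{B}_k$ is closed under products, coproducts, kernels and cokernels, and it is definable (cf.\ \cite[\S 5]{AH}). Since $\lambda_k$ is homological, base change along it yields $\Tor_i^R(X, S_k) = 0$ for all $i > 0$ whenever $X \in \mathcal{B}_k$; and an $R$-submodule of an object of $\mathcal{B}_k$ which itself lies in $\mathcal{B}_k$ is automatically an $S_k$-submodule, by full faithfulness of restriction. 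Because $\mathcal{B}_k$ is closed under products we have $\Cogen(\mathcal{B}_k) = \{M \mid \eta^k_M \text{ is injective}\}$, which is clearly closed under submodules and products, and -- direct limits being exact and $\mathcal{B}_k$ closed under them -- also under direct limits. The first nontrivial point is that $\Cogen(\mathcal{B}_k)$ is \emph{closed under extensions}: given $0 \to A \to C \to B \to 0$ with $\eta^k_A, \eta^k_B$ injective, the hypothesis $\Tor_2^R(-,-) = 0$ together with $\Tor_1^R(X,S_k) = 0$ for $X \in \mathcal{B}_k$ forces $\Tor_1^R(B, S_k) = 0$ (pick an embedding $B \hookrightarrow X \in \mathcal B_k$ and use the long exact sequence of $0 \to B \to X \to X/B \to 0$), hence $A \otimes_R S_k \to C \otimes_R S_k$ is injective, and a diagram chase then gives $\eta^k_C$ injective. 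Thus $\Cogen(\mathcal{B}_k)$ is a definable torsion-free class, and consequently each $\mathcal{V}_n = \Cogen(\mathcal{B}_n) \cap \mathcal{B}_{n+1}$ is an intersection of two definable extension-closed subcategories, hence definable and extension-closed; monotonicity of $\cdots \subseteq \mathcal{B}_n \subseteq \mathcal{B}_{n+1} \subseteq \cdots$ gives $\Cogen(\mathcal{B}_n) \subseteq \Cogen(\mathcal{B}_{n+1})$ and $\mathcal{B}_{n+1} \subseteq \mathcal{B}_{n+2}$, so $\mathcal{V}_n \subseteq \mathcal{V}_{n+1}$.

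The crux is the kernel--cokernel condition: for $f\colon V_n \to V_{n+1}$ with $V_n \in \mathcal{V}_n$, $V_{n+1} \in \mathcal{V}_{n+1}$, I must show $\Ker f \in \mathcal{V}_n$ and $\Coker f \in \mathcal{V}_{n+1}$. Since $\Ker f \subseteq V_n \in \Cogen(\mathcal{B}_n)$, we get $\Ker f \in \Cogen(\mathcal{B}_n)$; and as $\eta^{n+1}_{V_{n+1}}$ is injective, $f$ and $\eta^{n+1}_{V_{n+1}} \circ f$ have the same kernel, while the latter map factors through $\ell_{n+1} V_n \cong V_n$ (using $V_n \in \mathcal{B}_{n+1}$) and is therefore, up to isomorphism, a morphism in $\mathcal{B}_{n+1}$, whose kernel lies in $\mathcal{B}_{n+1}$; hence $\Ker f \in \mathcal{V}_n$. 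For the cokernel, $V_n \in \mathcal{B}_{n+1} \subseteq \mathcal{B}_{n+2}$ and $V_{n+1} \in \mathcal{B}_{n+2}$, so $f$ is a morphism in $\mathcal{B}_{n+2}$ and $\Coker f \in \mathcal{B}_{n+2}$; moreover $\operatorname{im} f = \Coker(\Ker f \hookrightarrow V_n)$ is the cokernel of a morphism in $\mathcal{B}_{n+1}$ (both objects lie in $\mathcal{B}_{n+1}$ by the previous step), so $\operatorname{im} f \in \mathcal{B}_{n+1}$; choosing an embedding $V_{n+1} \hookrightarrow B$ with $B \in \mathcal{B}_{n+1}$, the submodule $\operatorname{im} f$ is in fact an $S_{n+1}$-submodule of $B$, so $B/\operatorname{im} f \in \mathcal{B}_{n+1}$, and $\Coker f = V_{n+1}/\operatorname{im} f$ embeds into $B/\operatorname{im} f$; thus $\Coker f \in \Cogen(\mathcal{B}_{n+1}) \cap \mathcal{B}_{n+2} = \mathcal{V}_{n+1}$. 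Applying Proposition~\ref{P:moduletheoretic} to the sequence $(\mathcal{V}_n)_{n \in \mathbb{Z}}$ finishes the proof.

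The two places where genuine work is concentrated -- and which I would isolate as lemmas -- are (i) the extension-closure of $\Cogen(\mathcal{B}_k)$, where \emph{both} hypotheses on $R$ (weak global dimension at most one, and $\lambda_k$ homological) are used in tandem to kill the relevant $\Tor$'s, and (ii) the cokernel half of the compatibility condition, which hinges on the passage $\operatorname{im} f \in \mathcal{B}_{n+1}$ and the rigidity fact turning an $R$-submodule into an $S_{n+1}$-submodule. Everything else -- bireflectivity, definability of $\mathcal{B}_k$, and the exactness arguments for direct limits and products -- is routine, and the final step is a direct appeal to Proposition~\ref{P:moduletheoretic}.
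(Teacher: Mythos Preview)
Your proof is correct and follows the same overall strategy as the paper's: reduce to Proposition~\ref{P:moduletheoretic} and verify the conditions for the sequence $\mathcal{V}_n = \Cogen(\mathcal{B}_n)\cap\mathcal{B}_{n+1}$. The only differences are in two of the sub-arguments.

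For the extension closure of $\Cogen(\mathcal{B}_k)$, the paper argues via Ext: the character dual $E_k = \Hom_\mathbb{Z}(S_k,\mathbb{Q}/\mathbb{Z})$ is an injective cogenerator of $\mathcal{B}_k$ and has injective dimension at most one over $R$, so $\Cogen(\mathcal{B}_k)=\Cogen(E_k)\subseteq {}^\perp E_k$ is closed under extensions. Your Tor-based argument (kill $\Tor_1^R(B,S_k)$ via an embedding $B\hookrightarrow X\in\mathcal{B}_k$, then diagram chase) is the dual of this and equally valid; it has the small advantage of being self-contained rather than invoking the injective-dimension fact.

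For showing $\Coker f\in\Cogen(\mathcal{B}_{n+1})$, the paper applies the Four Lemma to the reflection map $\Coker f\to (\Coker f)_{\mathcal{B}_{n+1}}$, while you embed $V_{n+1}$ into some $B\in\mathcal{B}_{n+1}$ and observe that $\Coker f\hookrightarrow B/\operatorname{im}f\in\mathcal{B}_{n+1}$. Both routes rely on first establishing $\operatorname{im}f\in\mathcal{B}_{n+1}$ and are essentially equivalent.
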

	\begin{proof}
			We include a sketch of the proof from \cite{AH} here for convenience. It is enough to check the conditions of Proposition~\ref{P:moduletheoretic} for the chain of subcategories $\mathcal{V}_n := \Cogen(\mathcal{B}_n) \cap \mathcal{B}_{n+1}$. First recall that any bireflective subcategory of $\ModR$ is definable. By \cite[3.4.15]{P}, also $\Cogen(\mathcal{B}_n)$ is definable for any $n \in \mathbb{Z}$, and therefore $\mathcal{V}_n$ is definable for any $n \in \mathbb{Z}$. Let $\lambda_n: R \rightarrow S_n$ be a homological ring epimorphism corresponding to the bireflective subcategory $\mathcal{B}_n$. Since $R$ is of weak global dimension at most one, the character dual $E_n := \Hom_{\mathbb{Z}}(S_n,\mathbb{Q}/\mathbb{Z})$ of $S_n$ as a right $R$-module is of injective dimension at most one. Since $E_n$ is an injective cogenerator of the category $\mathcal{B}_n$, it follows that $\Ext_R^1(M,E_n) = 0$ for any $M \in \Cogen(\mathcal{B}_n)$. Therefore, the class $\Cogen(\mathcal{B}_n) = \Cogen(E_n)$ is closed under extensions. Together, $\mathcal{V}_n$ is a definable subcategory of $\ModR$ closed under extensions.

			Let now $f: M \rightarrow N$ be a map such that $M \in \mathcal{V}_n$ and $N \in \mathcal{V}_{n+1}$, and let us show that $\Ker(f) \in \mathcal{V}_n$ and $\Coker(f) \in \mathcal{V}_{n+1}$. Consider the induced exact sequences:
			$$0 \rightarrow K \rightarrow M \rightarrow I \rightarrow 0, \text{ and}$$
			$$0 \rightarrow I \rightarrow N \rightarrow C \rightarrow 0,$$
			where $K = \Ker(f)$, $C = \Coker(f)$, and $I = \operatorname{Im}(f)$. Then clearly $K \in \Cogen(\mathcal{B}_n)$, and $I \in \Cogen(\mathcal{B}_{n+1})$. Also, as $I$ is an epimorphic image of $M \in \mathcal{B}_{n+1}$, it follows by a diagram chasing argument that the reflection $I \rightarrow I_{\mathcal{B}_{n+1}} \in \mathcal{B}_{n+1}$ of $I$ with respect to the subcategory $\mathcal{B}_{n+1}$ is an isomorphism, and thus $I \in \mathcal{B}_{n+1}$. Since $K$ is the kernel of the morphism $M \rightarrow I$  between two objects in $\mathcal{B}_{n+1}$, then $K \in \mathcal{B}_{n+1}$. Thus, $K \in \mathcal{V}_n$. The Four Lemma implies that the reflection $C \rightarrow C_{\mathcal{B}_{n+1}}$ is a monomorphism, and therefore $C \in \Cogen(\mathcal{B}_{n+1})$. Finally, as $C$ is an epimorphic image of $N \in \mathcal{B}_{n+2}$, it follows again that $C \in \mathcal{B}_{n+2}$.
	\end{proof}
	\begin{remark}\label{R:homepi}In other words, we have an assignment from the chains of homological epimorphisms over $R$ to definable coaisles in $\Der(R)$. It is straightforward to extend the notion of epiclass to introduce an equivalence relation of chains of epimorphisms, and then the induced assignment is easily checked to be injective. In general however, this assignment is not surjective, and there are definable coaisles which do not arise in this way. For the case of valuation domains, this will be discussed in Section~\ref{S:nondense}.
	\end{remark}
\section{Valuation domains and the module-theoretic cosilting classes}
	From now on we will focus on commutative rings of weak global dimension at most one. We will do most of the investigation in the local case, that is, over a valuation domain. \emph{A posteriori}, this will be enough to fully answer Question~\ref{Q:UTC} even in the global case. In this section, we start by studying the definable coaisles in the Happel-Reiten-Smal\o ~situation, which in the light of Section~\ref{S:cosilting} amounts to studying the cosilting classes in the module category. The main aim of this section is to build on the results from \cite{B} and \cite{B2} and establish for any valuation domain a bijective correspondence between cosilting classes and certain systems of formal intervals in the Zariski spectrum.
\subsection{Valuation domains} 
	A commutative domain $R$ is a \EMP{valuation domain} if the ideals of $R$ are totally ordered. We gather some basic properties of valuation domains which we will use freely throughout the paper. Given a prime ideal $\qq$ of a commutative ring $R$, we let $R_{\qq}$ denote the localization of $R$ at $\qq$, and more generally, $M_{\qq} = M \otimes_R R_{\qq}$ the localization of an $R$-module $M$ at $\qq$.
	\begin{lem}\label{L:VD}
		\begin{enumerate}
				\item[(i)] Valuation domains are precisely the local commutative rings of weak global dimension at most one.
				\item[(ii)] Any idempotent ideal in a valuation domain is a prime ideal.
				\item[(iii)] If $\pp \subseteq \qq$ are primes of a valuation domain $R$, then $\pp$ is an $R_{\qq}$-module.
				\item[(iv)] Whenever $S \subseteq \Spec(R)$ is a non-empty subset with no maximal element with respect to $\subseteq$, then $\bigcup S$ is an idempotent prime.
				\item[(v)] For any prime $\pp \in \Spec(R)$, either $\pp$ is idempotent or $\pp R_{\pp}$ is a principal ideal in $R_{\pp}$.
		\end{enumerate}
	\end{lem}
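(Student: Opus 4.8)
The plan is to prove the five parts separately, using throughout only that a valuation domain $R$ is a domain whose ideals form a chain, together with one homological fact for (i). The computational heart of (ii)--(v) is a single observation: if $x$ lies in an ideal $I$ of $R$ and $t\notin I$, then the ideals $xR$ and $tR$ are comparable, and $tR\subseteq xR$ would force $t\in I$, so $xR\subseteq tR$, that is, $x=ty$ for some $y\in R$; the factor $y$ is then pinned down by primeness, and dividing by a nonzero element is legitimate since $R$ is a domain.

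For (i), the implication ``valuation domain $\Rightarrow$ weak global dimension at most one'' is routine, since a finitely generated ideal of a valuation domain is principal, hence isomorphic to $R$ or $0$, hence flat, and weak global dimension at most one is equivalent (a standard fact) to flatness of all finitely generated ideals. For the converse, let $R$ be local commutative with $\Tor_2^R(-,-)=0$. Given $0\neq a\in R$, the exact sequence $0\to aR\to R\to R/aR\to 0$ and the vanishing of $\Tor_2^R$ yield $\Tor_1^R(aR,-)\cong\Tor_2^R(R/aR,-)=0$, so $aR\cong R/\Ann(a)$ is flat; a finitely generated flat module over a local ring is free, and $R/\Ann(a)$ is cyclic and nonzero, so $R/\Ann(a)\cong R$ and $\Ann(a)=0$. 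Thus $R$ is a domain. Likewise every finitely generated ideal $I$ is flat and finitely generated over a local ring, hence free, hence --- being a free ideal of a domain, so $0$ or of rank one --- principal, say $I=(c)$; writing $c=\sum x_ia_i$ for generators $a_i$ of $I$ and $a_i=cu_i$, cancellation of $c\neq 0$ gives $\sum x_iu_i=1$, so some $u_i$ is a unit and $I=(a_i)$. In particular $a\mid b$ or $b\mid a$ for all $a,b\in R$, from which it follows at once that the ideals of $R$ are totally ordered; hence $R$ is a valuation domain.

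For (ii), if $I=I^2$ is a proper ideal and $ab\in I$ with $a\notin I$, comparability gives $I\subseteq aR$, so $I=I^2\subseteq aI\subseteq I$, i.e.\ $I=aI$; as $ab\in aI$ and $a\neq 0$, cancellation puts $b\in I$, so $I$ is prime. For (iii), let $\pp\subseteq\qq$ and $s\in R\setminus\qq$ (hence $s\notin\pp$); the core observation writes any $x\in\pp$ as $x=sy$, and since $\pp$ is prime and $s\notin\pp$ we get $y\in\pp$, whence $s\pp=\pp$; as multiplication by $s$ on $\pp$ is also injective, it is an automorphism for every $s\in R\setminus\qq$, which is exactly the statement that $\pp$ is an $R_{\qq}$-module. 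For (iv), $\bigcup S$ is an increasing union of a chain of proper prime ideals, hence a proper prime ideal; given $x\in\bigcup S$, choose $\pp\in S$ with $x\in\pp$, then $\pp'\in S$ with $\pp\subsetneq\pp'$ (possible as $S$ has no maximal element), and $t\in\pp'\setminus\pp$; the core observation gives $x=ty$ with $t\in\pp'$ and, by primeness of $\pp$, $y\in\pp$, so $x\in(\bigcup S)^2$, and $\bigcup S$ is idempotent.

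For (v), applying (iii) with $\qq=\pp$ gives $s\pp=\pp$, hence also $s\pp^2=\pp^2$, for every $s\in R\setminus\pp$; thus $\pp$ and $\pp^2$ are already $R_{\pp}$-submodules of $R_{\pp}$, so $\pp R_{\pp}=\pp$ and $\pp^2R_{\pp}=\pp^2$, and consequently $\pp R_{\pp}$ is idempotent if and only if $\pp$ is. It remains to check that a \emph{non-idempotent} maximal ideal $\mm$ of a valuation domain is principal: pick $x\in\mm\setminus\mm^2$; for $y\in\mm$, comparability gives $y\in xR$ or $x=yz$, and in the latter case $z\notin\mm$ (else $x=yz\in\mm^2$), so $z$ is a unit and again $y\in xR$; hence $\mm=xR$. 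Applying this to the maximal ideal $\pp R_{\pp}$ of the valuation domain $R_{\pp}$ finishes (v). The two spots needing a little care are, in (i), making sure the modules fed into ``finitely generated flat over a local ring is free'' are genuinely finitely generated, and, in (v), the translation between idempotence of $\pp$ and of $\pp R_{\pp}$, which is precisely the bookkeeping that (iii) is designed to trivialize; everything else is the one cancellation observation, used four times.
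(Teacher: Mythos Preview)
Your proof is correct. The paper itself does not prove this lemma at all: it simply cites \cite[Corollary 4.2.6]{Gl} for (i), declares (ii) and (iii) ``obvious'', and refers to \cite[Lemma 5.3]{BS} and \cite[\S II, Lemma 4.3]{FS} for (iv) and (v) respectively. So there is no approach to compare against --- you have supplied a self-contained argument where the paper only points to the literature. Your unifying ``core observation'' (that $x\in I$ and $t\notin I$ force $x\in tR$) is exactly the right organizing principle for (ii)--(v), and the bookkeeping in (v) translating idempotence of $\pp$ to idempotence of $\pp R_{\pp}$ via (iii) is clean. The homological argument in (i) for the converse direction --- bootstrapping from $\Tor_2=0$ to ``domain'' and then to ``all finitely generated ideals are principal'' via the fact that finitely generated flat modules over local rings are free --- is the standard route and is carried out correctly.
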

	\begin{proof}
		\begin{enumerate}
			\item[(i)] See \cite[Corollary 4.2.6]{Gl}. 
			\item[(ii)] Obvious.
			\item[(iii)] Obvious.
			\item[(iv)] This is \cite[Lemma 5.3]{BS}.
			\item[(v)] See \cite[\S II, Lemma 4.3 (iv) and (d), p.69]{FS}.
		\end{enumerate}
	\end{proof}
\subsection{Torsion, annihilators, divisibility, and socle}
 Let $R$ be a commutative ring and $\qq$ a prime in $\Spec(R)$. For any $R$-module $M$ and an element $m \in M$, let $\Ann_R(m) = \{r \in R \mid rm = 0\}$, and similarly we put $\Ann_R(M) = \{r \in R \mid rM = 0\}$, both these define ideals of $R$. There is a torsion pair $(\mathcal{T}_{\qq},\mathcal{F}_{\qq})$ in $\ModR$, where $\mathcal{T}_{\qq}$ consists of all modules $M$ such that $\Ann_R(m)$ contains an element from $R \setminus \qq$ for any $m \in M$. This torsion pair is hereditary, that is $\mathcal{T}_{\qq}$ is closed under submodules and $\mathcal{F}_{\qq}$ is closed under taking injective envelopes. Modules from $\mathcal{T}_{\qq}$ will be called \EMP{$\qq$-torsion}, and modules from $\mathcal{F}_{\qq}$ are \EMP{$\qq$-torsion-free}. We denote the torsion functor induced by this torsion pair by $\Gamma_{\qq}: \ModR \rightarrow \mathcal{T}_{\qq}$, and the torsion-free counterpart by $F_{\qq}: \ModR \rightarrow \mathcal{F}_{\qq}$. Recall that $\Gamma_{\qq}$ is a left exact functor, while $F_{\qq}$ preserves monomorphisms and epimorphisms, a fact which we will use freely throughout the paper. We call an $R$-module $M$ \EMP{$\qq$-divisible} provided that $M=sM$ for all $s \in R \setminus \qq$. Recall that an $R$-module $M$ is an $R_{\qq}$-module if $M$ is both $\qq$-torsion-free and $\qq$-divisible.

 It will be useful to recall that given an $R$-module $M$ and a prime ideal $\qq \in \Spec(R)$, we have the natural identifications $\Gamma_{\qq}(M) = \Ker(M \xrightarrow{\text{can}} M \otimes_R R_{\qq})$ and $F_{\qq}(M) \simeq \operatorname{Im}(M \xrightarrow{\text{can}} M \otimes_R R_{\qq})$. Also, note that $\Coker(M \xrightarrow{\text{can}} M \otimes_R R_{\qq}) = 0$ if and only if $F_{\qq}(M)$ is $\qq$-divisible if and only if $F_{\qq}(M) \in \ModRqq$.

 Given a prime ideal $\pp$ and a module $M$, we define the \EMP{$\pp$-socle} of $M$ to be the submodule $\Soc_{\pp}(M) = \{m \in M \mid rm = 0 ~\forall r \in \pp\}$ of $M$.
\subsection{Systems of intervals of $\Spec(R)$} 
Let $R$ be a valuation domain. By an \EMP{interval} in $\Spec(R)$ we mean a formal interval $\chi=[\pp_\chi,\qq_\chi]$, where $\pp_\chi \subseteq \qq_\chi$ are primes from $\Spec(R)$. We consider intervals together with a partial order $<$ defined as follows: for intervals $\chi=[\pp_\chi,\qq_\chi]$ and $\xi=[\pp_\xi,\qq_\xi]$ we have $\chi<\xi$ if and only if $\qq_\chi \subsetneq \pp_\xi$. Any interval denoted by a greek letter will have boundaries denoted like above, e.g. $\theta = [\pp_\theta,\qq_\theta]$ etc. In other occasions, we will denote intervals just by their boundaries, that is, by writing just $[\pp,\qq]$ for a couple of primes $\pp \subseteq \qq$ of $\Spec(R)$.
\begin{definition}\label{D:admissiblesystem}
Following \cite{B2}, we impose the following conditions on a set $\mathcal{X}$ of intervals of $\Spec(R)$:
\begin{enumerate}
		\item[(i)] (\EMP{disjointness}) The system is disjoint, that is, whenever $\chi,\xi \in \mathcal{X}$ are two distinct intervals such that $\pp_\chi \subseteq \pp_\xi$ then $\qq_\chi \subsetneq \pp_\xi$.
		\item[(ii)] (\EMP{idempotency}) For any $\chi \in \mathcal{X}$ we have $\pp_\chi = \pp_\chi^2$.
		\item[(iii)] (\EMP{completeness}) For any non-empty subset $\mathcal{Y} \subseteq \mathcal{X}$, there is an interval $\mu \in \mathcal{X}$ such that $\pp_\mu = \bigcup_{\chi \in \mathcal{Y}}\pp_\chi$, and there is an interval $\nu \in \mathcal{X}$ such that $\qq_\nu = \bigcap_{\chi \in \mathcal{Y}}\qq_\chi$.
\end{enumerate}
Let us call a system of intervals satisfying these conditions an \EMP{admissible system}. We remark that as a consequence of the definition, any admissible system $\mathcal{X}$ together with the above defined partial order $<$ forms a totally ordered set $(\mathcal{X},<)$ such that any non-empty subset $\mathcal{Y}$ of $\mathcal{X}$ has a supremum and an infimum.
\end{definition}

\subsection{From intervals to cosilting classes}
		Recall that given an ideal $I$, we define the prime ideal \EMP{attached} to $I$ as $I^\#=\{r \in R \mid rI \subsetneq I\}$. By $Q$ we always denote the quotient field $Q(R)$ of the valuation domain $R$. It will be also useful to extend the definition of an attached prime to any submodule of $Q$. If $J \subseteq Q$ is an $R$-submodule, then $J^\# = \{r \in R \mid rJ \neq J\} = \bigcup_{r \in Q \setminus J}r^{-1}J$.
		\begin{notation}	For any interval $\chi$ in $\Spec(R)$, we write $\langle \chi \rangle = \langle \pp_\chi, \qq_\chi \rangle$ for the set of all ideals $I$ of $R$ satisfying $\pp_\chi \subseteq I \subseteq I^\# \subseteq \qq_\chi$. \end{notation}
\begin{lem}\label{L00}
		Let $\mathcal{X}$ be an admissible system, let $\Lambda$ be a cardinal and let $\{I_\lambda, \lambda \in \Lambda\}$ be a set of ideals such that for each $\lambda \in \Lambda$ there is $\chi_\lambda \in \mathcal{X}$ with $I_\lambda \in \langle \chi_\lambda \rangle$. Then:
	\begin{itemize}
			\item[(i)] there is $\xi \in \mathcal{X}$ such that $\bigcap_{\lambda \in \Lambda} I_\lambda \in \langle \xi \rangle$,
			\item[(ii)] there is $\xi \in \mathcal{X}$ such that $\bigcup_{\lambda \in \Lambda} I_\lambda \in \langle \xi \rangle$.
	\end{itemize}
\end{lem}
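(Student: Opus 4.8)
The statement is a closure property of the admissible system $\mathcal{X}$ under arbitrary intersections and unions of ideals, provided each ideal lies in some interval-window $\langle\chi_\lambda\rangle$. The plan is to extract from each $I_\lambda$ the attached prime data, use the completeness axiom (iii) of Definition~\ref{D:admissiblesystem} to produce candidate intervals $\mu,\nu\in\mathcal{X}$ from the boundary primes, and then verify that $\bigcap_\lambda I_\lambda$ (respectively $\bigcup_\lambda I_\lambda$) actually sits inside $\langle\mu,\qq_\mu\rangle$-type windows. Concretely, for (ii): set $J=\bigcup_{\lambda\in\Lambda}I_\lambda$; since the ideals of $R$ are totally ordered, $J$ is an ideal, and I would first argue that $J^{\#}=\bigcup_{\lambda}I_\lambda^{\#}$ or at least that $J^{\#}\subseteq\bigcup_\lambda\qq_{\chi_\lambda}$ using $I_\lambda^{\#}\subseteq\qq_{\chi_\lambda}$. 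Dually for (i) with $J=\bigcap_\lambda I_\lambda$, using $\pp_{\chi_\lambda}\subseteq I_\lambda$ so that $\bigcap_\lambda\pp_{\chi_\lambda}\subseteq J$.

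The key technical point to nail down is the behavior of the attached prime under unions and intersections. I would record: if $I\subseteq I'$ are ideals then $I^{\#}\subseteq I'^{\#}$ need \emph{not} hold in general, but when $I\in\langle\chi\rangle$ we have the clean sandwich $\pp_\chi\subseteq I\subseteq I^{\#}\subseteq\qq_\chi$. For the union $J=\bigcup_\lambda I_\lambda$: pick $r\in J^{\#}$, so $rJ\subsetneq J$; then there is $s\in J\setminus rJ$, hence $s\in I_\lambda$ for some $\lambda$, and since $s\notin rJ\supseteq rI_\lambda$ we get $rI_\lambda\subsetneq I_\lambda$, i.e. $r\in I_\lambda^{\#}\subseteq\qq_{\chi_\lambda}$. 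Thus $J^{\#}\subseteq\bigcup_\lambda\qq_{\chi_\lambda}$. Meanwhile $J\supseteq I_{\lambda_0}\supseteq\pp_{\chi_{\lambda_0}}$ for any fixed $\lambda_0$. Now apply completeness (iii) to the subsystem $\mathcal{Y}=\{\chi_\lambda\mid\lambda\in\Lambda\}\subseteq\mathcal{X}$: there is $\mu\in\mathcal{X}$ with $\pp_\mu=\bigcup_\lambda\pp_{\chi_\lambda}$ and $\nu\in\mathcal{X}$ with $\qq_\nu=\bigcap_\lambda\qq_{\chi_\lambda}$. One should take $\xi$ to be the interval of $\mathcal{X}$ with $\pp_\xi=\bigcup_\lambda\pp_{\chi_\lambda}$ (this is $\mu$); by disjointness this $\mu$ is unique, and I must check $\bigcup_\lambda\pp_{\chi_\lambda}\subseteq J\subseteq J^{\#}\subseteq\qq_\mu$. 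The inclusion $\bigcup_\lambda\pp_{\chi_\lambda}\subseteq J$ follows from $\pp_{\chi_\lambda}\subseteq I_\lambda\subseteq J$. The inclusion $J^{\#}\subseteq\qq_\mu$ is the delicate one: we have $J^{\#}\subseteq\bigcup_\lambda\qq_{\chi_\lambda}$, so it suffices to see $\qq_{\chi_\lambda}\subseteq\qq_\mu$ for all $\lambda$, equivalently that $\mu=\sup\mathcal{Y}$ in $(\mathcal{X},<)$ dominates each $\chi_\lambda$; this follows because $\pp_\mu=\bigcup\pp_{\chi_\lambda}\supseteq\pp_{\chi_\lambda}$ forces $\mu\not<\chi_\lambda$ for the relevant $\lambda$, and total ordering of $\mathcal{X}$ plus disjointness then gives $\chi_\lambda\leq\mu$ hence $\qq_{\chi_\lambda}\subseteq\qq_\mu$.

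The intersection case (i) is dual. Set $J=\bigcap_\lambda I_\lambda$, an ideal of $R$. Then $J\supseteq\pp_{\chi_\lambda}$ for every $\lambda$ is false in general, but $J=\bigcap I_\lambda\supseteq\bigcap\pp_{\chi_\lambda}$, and by completeness there is $\xi\in\mathcal{X}$ with $\qq_\xi=\bigcap_\lambda\qq_{\chi_\lambda}$; I then want $\pp_\xi\subseteq J\subseteq J^{\#}\subseteq\qq_\xi$. The outer-right inclusion $J^{\#}\subseteq\qq_\xi=\bigcap\qq_{\chi_\lambda}$: if $r\notin\qq_{\chi_\lambda}$ for some $\lambda$ then $r\notin I_\lambda^{\#}$, so $rI_\lambda=I_\lambda$; one shows $rJ=J$ using that $R$ is a valuation domain (either $J=I_\lambda$ or $J\subsetneq I_\lambda$ in which case a cofinality argument among the totally ordered $\{I_\mu\}$ with $I_\mu\subseteq I_\lambda$ and the idempotency/principality dichotomy of Lemma~\ref{L:VD}(v) is used), hence $r\notin J^{\#}$. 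The inclusion $\pp_\xi\subseteq J$ requires $\pp_\xi\subseteq I_\lambda$ for all $\lambda$, i.e. $\xi\leq\chi_\lambda$, i.e. $\xi=\inf\mathcal{Y}$; this follows from $\qq_\xi=\bigcap\qq_{\chi_\lambda}\subseteq\qq_{\chi_\lambda}$ forcing $\chi_\lambda\not<\xi$ and using disjointness and total ordering as before. The main obstacle I anticipate is precisely the verification that $J^{\#}\subseteq\qq_\xi$ in the intersection case: controlling the attached prime of an infinite intersection of ideals requires combining the total order on ideals with the structural dichotomy in Lemma~\ref{L:VD}(v) (idempotent versus principal-after-localization), and this is where the valuation-domain hypothesis is genuinely used rather than just the axioms of an admissible system. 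Everything else reduces to bookkeeping with the order $<$ on $\mathcal{X}$ and the completeness axiom.
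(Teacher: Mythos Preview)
Your treatment of (ii) is correct and complete: the computation $J^{\#}\subseteq\bigcup_\lambda\qq_{\chi_\lambda}$ via a witness $s\in I_\lambda\setminus rI_\lambda$ is exactly right, and the order argument showing $\chi_\lambda\leq\mu$ (hence $\qq_{\chi_\lambda}\subseteq\qq_\mu$) is fine. The paper simply labels this case ``completely analogous''.

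For (i) you have the right target interval $\xi$ (with $\qq_\xi=\bigcap_\lambda\qq_{\chi_\lambda}$) and the inclusion $\pp_\xi\subseteq J$ is argued correctly. The gap is in the step $J^{\#}\subseteq\qq_\xi$. You correctly flag this as the crux, but your proposed fix is off: from $r\notin\qq_{\chi_{\lambda_0}}$ for \emph{one} $\lambda_0$ you only get $rI_{\lambda_0}=I_{\lambda_0}$, and your suggestion to push this to $rJ=J$ via ``a cofinality argument among the totally ordered $\{I_\mu\}$ with $I_\mu\subseteq I_\lambda$'' together with Lemma~\ref{L:VD}(v) does not work---the idempotent/principal dichotomy plays no role here, and comparability of the ideals $I_\mu$ alone says nothing about whether $r$ acts invertibly on them. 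What actually closes the gap is an argument on the intervals, not on the ideals: any $\lambda$ with $\chi_\lambda>\chi_{\lambda_0}$ satisfies $I_\lambda\supseteq\pp_{\chi_\lambda}\supsetneq\qq_{\chi_{\lambda_0}}\supseteq I_{\lambda_0}$ and is therefore redundant in the intersection, while any $\lambda$ with $\chi_\lambda\leq\chi_{\lambda_0}$ has $\qq_{\chi_\lambda}\subseteq\qq_{\chi_{\lambda_0}}$, hence $r\notin\qq_{\chi_\lambda}$ and $rI_\lambda=I_\lambda$; this yields $rJ=J$.

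The paper organizes (i) by a two-case split on whether $\qq_\xi$ is attained as some $\qq_{\chi_\lambda}$. If it is, one reduces without loss of generality to all $I_\lambda\in\langle\xi\rangle$, and then $r\notin\qq_\xi$ gives $r^{-1}i\in I_\lambda$ for every $\lambda$ directly. If it is not, disjointness forces $\bigcap_\lambda\pp_{\chi_\lambda}=\bigcap_\lambda\qq_{\chi_\lambda}$, so $J=\qq_\xi$ is prime and $J^{\#}=J$. Either route avoids Lemma~\ref{L:VD}(v) entirely.
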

\begin{proof}
		$(i)$ Denote $I = \bigcap_{\lambda \in \Lambda}I_\lambda$. Obviously, we have $\bigcap_{\lambda \in \Lambda}\pp_{\chi_\lambda} \subseteq I  \subseteq\bigcap_{\lambda \in \Lambda}\qq_{\chi_\lambda}$. By the completeness, there is $\xi \in \mathcal{X}$ with $\qq_\xi = \bigcap_{\lambda \in \Lambda} \qq_{\chi_\lambda}$. It is then enough to prove that $\pp_\xi \subseteq I$ and $I^\# \subseteq \qq_\xi$, which we do by distinguishing two cases: 
		\begin{enumerate}
				\item[Case I:] There is $\lambda \in \Lambda$ such that $\qq_\xi = \qq_{\chi_\lambda}$. Then we have $\pp_\xi \subseteq I \subseteq \qq_\xi$, and we are left to show that $I^\# \subseteq \qq_\xi$. Then we can assume without loss of generality that $I_\lambda \in \langle \xi \rangle = \langle \pp_\xi,\qq_\xi \rangle$ for all $\lambda \in \Lambda$. Therefore, for any $r \in R \setminus \qq_\xi$ and any $i \in I$, $r^{-1}i \in I_\lambda$ for all $\lambda \in \Lambda$. It follows that $r^{-1}i \in I$ for any $i \in I$, and thus $r \not\in I^\#$ for all $r \in R \setminus \qq_\xi$, proving that $I \in \langle \xi \rangle$.
				\item[Case II:] There is no $\lambda \in \Lambda$ such that $\qq_\xi = \qq_{\chi_\lambda}$. By the disjointness of $\mathcal{X}$, we have that necessarily
						$$\bigcap_{\lambda \in \Lambda}\pp{}_{\chi_\lambda} = I = \bigcap_{\lambda \in \Lambda} \qq{}_{\chi_\lambda},$$
						and thus, in particular, $I$ is a prime ideal, and whence $I = I^\#$ by \cite[p. 70]{FS}, which establishes that $I \in \langle \pp_\xi, \qq_\xi \rangle$.
		\end{enumerate}
	$(ii)$ Completely analogous. 
\end{proof}
Next, we explain what exactly it means for an ideal $I$ that $I^\# \subseteq \qq$ for some prime $\qq$.
\begin{lem}\label{L23}
	Let $I$ be a proper ideal of $R$. Then $I^\#$ is a prime ideal and the following conditions are equivalent for any $\qq \in \Spec(R)$:
\begin{enumerate}
	\item[(i)] $I^\# \subseteq \qq$,
	\item[(ii)] $I$ is an $R_{\qq}$-module,
	\item[(iii)] $R/I$ is a $\qq$-torsion-free $R$-module, i.e. $\Gamma_{\qq}(R/I) = 0$.
\end{enumerate}
\end{lem}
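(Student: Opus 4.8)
The plan is the following. The case $I = 0$ is trivial (then $R/I = R$ is $\qq$-torsion-free since $R$ is a domain, and $I = 0$ is an $R_\qq$-module, for every $\qq$), so I assume $I$ is a nonzero proper ideal. I would first record that $I^\#$ is prime. For any $r \in R$ the set $rI$ is an ideal of $R$ contained in $I$, so $r \notin I^\#$ if and only if $rI = I$. Hence $I^\#$ is closed under multiplication by $R$ (if $r \in I^\#$ and $a \in R$ then $(ar)I = a(rI) \subseteq rI \subsetneq I$) and closed under addition, using the total ordering of the ideals of $R$ (given $r, s \in I^\#$ with, say, $s \in rR$, one has $(r+s)I \subseteq rI \subsetneq I$). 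Primeness follows from the multiplicative description: if $rs \in I^\#$ and $r \notin I^\#$, then $sI = (rs)I \subsetneq I$; and $1 \notin I^\#$ since $I$ is proper.

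The crux is a purely local statement: for every nonzero $s \in R$, the conditions $sI = I$, $\;(I:s) = I\;$ (where $(I:s) := \{r \in R \mid rs \in I\}$), and $s \notin I^\#$ are pairwise equivalent. The equivalence of the first and the third is the observation above. If $sI = I$ and $rs \in I = sI$, then $rs = si$ for some $i \in I$, so $r = i \in I$ because $R$ is a domain, proving $sI = I \Rightarrow (I:s) = I$. Conversely, from $(I:s) = I$ and the identity $s \cdot (I:s) = sR \cap I$ we get $sI = sR \cap I$; since $sR$ and $I$ are comparable, either $I \subseteq sR$, giving $sI = sR \cap I = I$, or $I \not\subseteq sR$, so $sR \cap I = sR$ and thus $sI = sR$, which forces $I = R$, contradicting properness. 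Hence $(I:s) = I \Rightarrow sI = I$.

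Granting the local statement, the stated equivalences follow. As $R$ is a domain and $\qq$ is proper, $R$ is $\qq$-torsion-free, hence so is its submodule $I$; therefore $I$ is an $R_\qq$-module iff it is $\qq$-divisible, i.e. $sI = I$ for all $s \in R \setminus \qq$, i.e. $s \notin I^\#$ for all $s \in R \setminus \qq$, i.e. $I^\# \subseteq \qq$; this settles $(i) \Leftrightarrow (ii)$. For $(i) \Leftrightarrow (iii)$ I would use $\Gamma_\qq(R/I) = \Ker(R/I \to (R/I) \otimes_R R_\qq)$ together with $(R/I) \otimes_R R_\qq \cong R_\qq/IR_\qq$ (flatness of $R_\qq$): a class $\bar r$ lies in $\Gamma_\qq(R/I)$ precisely when $r/1 \in IR_\qq$, that is, when $sr \in I$ for some $s \in R \setminus \qq$. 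Thus $\Gamma_\qq(R/I) = 0$ iff $(I:s) = I$ for every $s \in R \setminus \qq$, which by the local statement is again equivalent to $I^\# \subseteq \qq$.

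The bookkeeping above is routine valuation-domain manipulation; the only step that requires genuine care is the converse half of the local statement, $(I:s) = I \Rightarrow sI = I$ --- this is precisely where properness of $I$ and the total order on the ideals of $R$ both enter (concretely, one must exclude the possibility $sR \subsetneq I$, equivalently note that $s \in I$ already forces $(I:s) = R \neq I$). I do not expect any other obstacle.
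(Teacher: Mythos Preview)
Your proof is correct and follows essentially the same route as the paper's: both hinge on the observation that for $s \in R \setminus \qq$ one has $sI = I \Leftrightarrow (I:s) = I$, with the forward implication being immediate and the converse using properness of $I$ together with the total order on ideals to force $I \subseteq sR$. Your organization (isolating the ``local statement'' for a single $s$ before quantifying) is slightly more streamlined than the paper's cycle $(i)\Leftrightarrow(ii)\Rightarrow(iii)\Rightarrow(i)$, and you give more detail on why $I^\#$ is prime, but the mathematical content is the same.
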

\begin{proof}
	That $I^\#$ is a prime ideal is clear.
	
	$(i) \Leftrightarrow (ii)$: For a given $\qq \in \Spec(R)$, the canonical map $f: I \rightarrow I_{\qq}$ is injective since it is the restriction of the canonical map $R \rightarrow R_{\qq}$. Therefore $I$ is an $R_{\qq}$-module if and only if $f$ is surjective, which amounts to say that for each $y \in I$ and each $s \in R \setminus \qq$, there exists a $y' \in I$ such that $sy' = y$. That is, if and only if the equality $sI = I$ holds, for all $s \in R \setminus \qq$, if and only if $I^\# \subseteq \qq$.

	$(ii) \Rightarrow (iii)$: Clear.

	$(iii) \Rightarrow (i)$: The canonical map $R/I \rightarrow (R/I)_{\qq}$ is injective since its kernel is $\Gamma_{\qq}(R/I) = 0$. This amount to say that $(I : s) = I$, for all $s \in R \setminus \qq$, where $(I : s) = \{a \in R: sa \in I\}$. It follows that $R \setminus \qq \subseteq R \setminus I$ and so $Rs \not\subseteq I$, which implies that $I \subseteq Rs$ due to the totally ordered condition of the lattice of ideals of $R$. If now $y \in I$ and we write $y = sa$, with $a \in R$, then $a \in (I : s) = I$ and so $I = sI$. That is, we have $s \in R \setminus I^\#$ for all $s \in R \setminus \qq$, and hence $I^\# \subseteq \qq$.
	\end{proof}

\begin{lem}\label{L01}
		Let $\mathcal{X}$ be an admissible system of intervals on $\Spec(R)$. Then the class
		$$\mathcal{C}_\mathcal{X} = \{M \in \ModR \mid \forall 0 \neq m \in M ~\exists \chi \in \mathcal{X}: \Ann{}_R(m) \in \langle \chi \rangle\}$$
		is cosilting.
	
	Furthermore, an $R$-module $M$ belongs to $\mathcal{C}_\mathcal{X}$ if and only if for each non-zero element $m$ of $M$ there is $\chi \in \mathcal{X}$ such that $\pp_{\chi}m = 0$ and $mR$ is $\qq_{\chi}$-torsion-free. In particular, if $I$ is a proper ideal of $R$ then $R/I \in \mathcal{C}_\mathcal{X}$ if and only if $I \in \langle \chi \rangle$ for some $\chi \in \mathcal{X}$.
\end{lem}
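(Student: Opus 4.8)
The plan is to first establish the ``furthermore'' equivalence, since the definitions unpack directly: for a non-zero $m \in M$ with $I := \Ann_R(m)$, the cyclic module $mR \simeq R/I$, and by Lemma~\ref{L23} the condition $I^\# \subseteq \qq_\chi$ is equivalent to $mR$ being $\qq_\chi$-torsion-free, while $\pp_\chi \subseteq I$ says exactly $\pp_\chi m = 0$. So $I \in \langle \chi \rangle = \langle \pp_\chi,\qq_\chi\rangle$ if and only if $\pp_\chi m = 0$ and $mR$ is $\qq_\chi$-torsion-free, giving the reformulation; the special case $M = R/I$ is then immediate since the annihilator of the generator $1 + I$ is $I$ itself (and any non-zero cyclic submodule of $R/I$ has annihilator $J \supseteq I$ with $J \in \langle \chi \rangle$ forced by disjointness once $I$ is).

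For the main assertion that $\mathcal{C}_\mathcal{X}$ is cosilting, by Theorem~\ref{T:BZ} it suffices to show $\mathcal{C}_\mathcal{X}$ is a definable torsion-free class. For the \emph{torsion-free} part I would check closure under submodules (immediate from the definition, which quantifies over elements), under extensions (given $0 \to A \to M \to B \to 0$ with $A, B \in \mathcal{C}_\mathcal{X}$ and $0 \neq m \in M$: if $m \in A$ use the interval for $A$; otherwise its image $\bar m \in B$ has $\Ann_R(\bar m) \in \langle \xi \rangle$ for some $\xi$, and $\pp_\xi m \subseteq A$, so $\pp_\xi m$ is generated by finitely many — in fact, by the valuation domain structure, the submodule $\pp_\xi m$ of $A$ is again handled via an interval of $\mathcal{X}$ applied to elements of $A$, and one combines the two intervals using disjointness/completeness to land $\Ann_R(m)$ in some $\langle \chi \rangle$), and under products (given $(m_\lambda)_\lambda$ in $\prod M_\lambda$ with each $\Ann_R(m_\lambda) = I_\lambda \in \langle \chi_\lambda \rangle$, the annihilator of the tuple is $\bigcap_\lambda I_\lambda$, which lies in some $\langle \xi \rangle$ by Lemma~\ref{L00}(i)).

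For \emph{definability} it remains to verify closure under direct limits and pure submodules; pure submodules are covered by closure under submodules. For direct limits, suppose $M = \varinjlim M_\lambda$ with each $M_\lambda \in \mathcal{C}_\mathcal{X}$ and $0 \neq m \in M$; lift $m$ to some $m_\lambda \in M_\lambda$ with non-zero image, let $I_\lambda = \Ann_R(m_\lambda) \in \langle \chi_\lambda \rangle$, and observe $\Ann_R(m) = \bigcup_{\mu \geq \lambda} (I_\mu \text{ pulled along the structure maps})$, an increasing union of ideals each lying in some $\langle \chi_\mu \rangle$; Lemma~\ref{L00}(ii) then places this union in some $\langle \xi \rangle$, so $m$ witnesses membership in $\mathcal{C}_\mathcal{X}$. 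The main obstacle I anticipate is the extension-closure argument: one must be careful that when $\pp_\xi m \subseteq A$ is non-zero, applying the $A$-interval to a \emph{generator} of $\pp_\xi m$ (rather than to $m$ itself) and then combining with $\xi$ using the axioms of an admissible system actually yields a single interval $\chi \in \mathcal{X}$ with $\Ann_R(m) \in \langle \chi \rangle$ — this is where the disjointness and completeness of $\mathcal{X}$, together with Lemma~\ref{L00}, do the real work, and the totally ordered ideal lattice of $R$ keeps the bookkeeping manageable.
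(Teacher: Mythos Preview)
Your proposal is correct and follows essentially the same approach as the paper. The only notable differences are cosmetic: for direct limits the paper first reduces to closure under pure epimorphic images (using that direct sums are already covered by submodules and products) and then runs a factorization-through-finitely-presented argument, whereas you lift an element and take the union of annihilators along the system directly; both routes terminate in Lemma~\ref{L00}(ii). For extensions the paper does exactly what you anticipate: it restricts to the cyclic $yR$, obtains the short exact sequence $0 \to J/I \to R/I \to R/J \to 0$ with $J = \Ann_R(\bar y)$, sets $K = \Ann_R(J/I)$, places $J$ and $K$ in intervals via Lemma~\ref{L00}(i), and then argues (using idempotency of $\pp_\xi$ and a torsion-freeness check) that $I \in \langle \xi \rangle$ --- so your identification of this as the step requiring real work with the admissibility axioms is on target.
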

\begin{proof}
	First, we remark that the equivalence of the two descriptions of the class $\mathcal{C}_\mathcal{X}$ in the statement follows from Lemma~\ref{L23}.

	We will check that $\mathcal{C} = \mathcal{C}_\mathcal{X}$ is closed under subobjects, direct products, extensions, and direct limits.
	
	\begin{enumerate}
			\item[(a)] Subobjects: Obvious.
			\item[(b)] Products: Follows from Lemma~\ref{L00}(i).
			\item[(c)] Extensions: Suppose that 
			$$0 \rightarrow X \rightarrow Y \xrightarrow{\pi} Z \rightarrow 0$$
					is an exact sequence with $X,Z \in \mathcal{C}$, and let $y \in Y$ be a non-zero element, and let $I = \Ann_R(y)$. Restricting $\pi$ to the cyclic submodule $yR$ yields an exact sequence of the form
					\begin{equation}\label{E12}
							0 \rightarrow J/I \rightarrow R/I \rightarrow R/J \rightarrow 0,
					\end{equation}
					where $J/I, R/J \in \mathcal{C}$. Let $K = \Ann_R(J/I) = \bigcap_{m \in J/I} \Ann_R(m)$. By the definition of $\mathcal{C}$ and by Lemma~\ref{L00}(i), there are $\chi$ and $\xi$ such that $J \in \langle \chi \rangle$ and $K \in \langle \xi \rangle$. We show that necessarily $\xi \leq \chi$. Indeed, $K = \{r \in R \mid rJ \subseteq I\} \subseteq J^\# \subseteq \qq_\chi$, and since $K \subseteq \qq_\xi$, we have the desired inequality. Since $\pp_\xi = \pp_\xi^2 \subseteq JK \subseteq I \subseteq J \cap K \subseteq \qq_\xi$, it is enough to show that $I^\# \subseteq \qq_\xi$. In view of Lemma~\ref{L23}, it is enough to show that $R/I$ is $\qq_{\xi}$-torsion-free. If $\Gamma_{\qq_\xi}(R/I)$ of $R/I$ is non-zero, than by uniseriality of $R/I$ it has to intersect $J/I$ non-trivially, and thus $\Gamma_{\qq_\xi}(J/I) \neq 0$. Since $J/I$ is uniserial, it can be written as a directed union $\bigcup_{\lambda \in \Lambda}R/K_\lambda$ of cyclic submodules, in particular, $K = \bigcap_{\lambda \in \Lambda}K_\lambda$. Since $\Gamma_{\qq_\xi}(J/I) \neq 0$, there is an $s \in (R \setminus \qq_\xi)$ such that $R/K_\lambda$ contains a non-zero element killed by  $s$ for any $\lambda$ from a cofinite subset of $\Lambda$. As $s \not\in \qq_\xi$, and $K = \bigcap_{\lambda \in \Lambda}K_\lambda$, and $J/I \in \mathcal{C}$, there is $\lambda \in \Lambda$ such that $K_\lambda \subseteq K_\lambda^\# \subseteq sR$, and thus $R/K_\lambda$ cannot contain a non-zero element killed by $s$ by Lemma~\ref{L23}, a contradiction. Therefore, $I^\# \subseteq \qq_\xi$, and $I \in \langle \xi \rangle$ as desired.

			\item[(d)] Direct limits: We already know that $\mathcal{C}$ is closed under submodules and products, and thus $\mathcal{C}$ is closed under direct sums. It is then enough to show that $\mathcal{C}$ is closed under pure epimorphic images. Let $\pi: N \twoheadrightarrow_* M$ be a pure epimorphism with $N \in \mathcal{C}$, and let $m \in M$ be non-zero element with annihilator $I$. For each $i \in I$, there is the natural surjection $\sigma_i: R/iR \rightarrow R/I$. As $R/iR$ is finitely presented, the composition $\pi\sigma_i: R/iR \rightarrow M$ admits a factorization through $\pi$:
$$
		\begin{tikzcd}
			N\arrow{r}{\pi} & M \\
			R/iR \arrow{r}{\sigma_i}\arrow[dotted]{u} & R/I \arrow{u}{\subseteq} \\
		\end{tikzcd}
$$
					Therefore, for each $i \in I$ there is an ideal $J_i$ such that $J_i$ is an annihilator of an element of $N$, and $iR \subseteq J_i \subseteq I$. Then $I = \bigcup_{i \in I} J_i$, and because $J_i \in \langle \chi_i \rangle$ for some $\chi_i \in \mathcal{X}$ by the definition of $\mathcal{C}$, Lemma~\ref{L00}(ii) implies that $I \in \langle \mu \rangle$ for some $\mu \in \mathcal{X}$.
	\end{enumerate}
\end{proof}
\subsection{From cosilting classes to intervals}\label{SS:phipsi}
Here we follow \cite{B} and \cite{B2}. We start with a cosilting class $\mathcal{C}$ and assign to it the set $\mathcal{G}=\{I \text{ ideal of $R$} \mid R/I \in \mathcal{C}\}$ of all possible annihilators of elements of modules in $\mathcal{C}$. We put $\mathcal{K} = \mathcal{G} \cap \Spec(R)$. Note that since $\mathcal{C}$ is closed under submodules and direct limits, we can rewrite $\mathcal{K} = \{\pp \in \Spec(R) \mid \kappa(\pp) \in \mathcal{C}\}$, where $\kappa(\pp) = R_{\pp}/\pp$ is the residue field of $R$ at $\pp$. Then we define two functions $\varphi$ and $\psi$ by putting for any $\pp \in \Spec(R)$:
$$\varphi(\pp) = \inf \{\qq \in \mathcal{K} \mid R_{\qq}/\pp \in \mathcal{C}\},$$
$$\psi(\pp) = \sup \{\qq \in \mathcal{K} \mid R_{\varphi(\pp)}/\qq \in \mathcal{C}\}.$$
Since $\kappa(\pp) \in \mathcal{C}$ for any $\pp \in \mathcal{K}$, and $R_{\varphi(\pp)}/\pp \in \mathcal{C}$ by the closure of $\mathcal{C}$ under direct limits, it is easily seen that $\varphi(\pp) \subseteq \pp \subseteq \psi(\pp)$. 

	Finally, we assign to the cosilting class $\mathcal{C}$ a system of intervals defined as follows: $\mathcal{X}_\mathcal{C} = \{[\varphi(\pp), \psi(\pp)] \mid \pp \in \mathcal{K}\}$.
\begin{remark}
	For clarity we rephrase the definition of the admissible system $\mathcal{X}_{\mathcal{C}}$ associated to a cosilting class $\mathcal{C}$ in perhaps a less opaque way (but relying on the results of Section~\ref{S:coaisletoint}). We think of the subset
	$$\mathcal{K} = \{\pp \in \Spec(R) \mid \kappa(\pp) \in \mathcal{C} \} = \{\pp \in \Spec(R) \mid R/\pp \in \mathcal{C} \}$$
	of $\Spec(R)$ as the \textit{support} of the admissible system. Since $\mathcal{C}$ is closed under products and direct limits, it is clear that $\mathcal{K}$ is closed under intersections and unions of non-empty subsets. However, the support $\mathcal{K}$ does not contain sufficient information about $\mathcal{C}$ because it is unable to recover which cyclic modules belong to $\mathcal{C}$ in general. Nevertheless, it will turn out that it is enough to consider which uniserial modules of the form $R_{\pp}/\qq$, for prime ideals $\pp \subseteq \qq$, belong to $\mathcal{C}$. In this light, $\mathcal{X}_{\mathcal{C}}$ can equivalently be defined as follows. Given prime ideals $\qq,\pp \in \mathcal{K}$, we define an equivalence relation $\sim$ on $\mathcal{K}$ by setting $\pp \sim \qq$ (and $\qq \sim \pp$) if and only if $\pp \subseteq \qq$ and $R_{\pp}/\qq \in \mathcal{C}$. Then it follows from Lemma~\ref{L:mc} (see also Proposition~\ref{P:00}) that the intervals of $\mathcal{X}_{\mathcal{C}}$, viewed as closed intervals in the totally ordered set $(\mathcal{K},\subseteq)$, are precisely the equivalence classes of $\mathcal{K}$ with respect to $\sim$.
\end{remark}

\begin{prop}\label{P:00}
	The system of intervals $\mathcal{X}_\mathcal{C}$ is an admissible system.
\end{prop}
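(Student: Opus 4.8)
The plan is to check, one at a time, the three requirements of Definition~\ref{D:admissiblesystem} for the system $\mathcal{X}_\mathcal{C}=\{[\varphi(\pp),\psi(\pp)]\mid\pp\in\mathcal{K}\}$ --- disjointness, idempotency, and completeness --- following the strategy of \cite{B} and \cite{B2} and using throughout that, by Theorem~\ref{T:BZ}, a cosilting class $\mathcal{C}$ is a definable torsion-free class, hence closed under submodules, products, extensions and direct limits; the other ingredients are the structure theory of valuation domains in Lemma~\ref{L:VD} and the arithmetic of attached primes of Lemmas~\ref{L23} and~\ref{L00}. I would begin by recording the basic behaviour of $\varphi$ and $\psi$. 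The support $\mathcal{K}=\{\pp\in\Spec(R)\mid R/\pp\in\mathcal{C}\}$ is closed under taking non-empty intersections and unions of families of primes: for intersections because $R/\bigcap_i\pp_i$ embeds into $\prod_i R/\pp_i$ and $\mathcal{C}$ is product- and submodule-closed, and for unions by Lemma~\ref{L:VD}(iv) (so that the union of a chain of primes is again prime) together with closure of $\mathcal{C}$ under direct limits. Consequently $\varphi(\pp),\psi(\pp)\in\mathcal{K}$ and $\varphi(\pp)\subseteq\pp\subseteq\psi(\pp)$, and --- since the infimum and supremum defining $\varphi(\pp)$ and $\psi(\pp)$ are realized on the module level as directed intersections and unions --- the ``extremal'' uniserial module attached to the interval $[\varphi(\pp),\psi(\pp)]$ still lies in $\mathcal{C}$.

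For idempotency I must prove that every left endpoint $\pp_\chi=\varphi(\pp)$ satisfies $\pp_\chi=\pp_\chi^2$. Suppose $\varphi(\pp)$ is not idempotent; then by Lemma~\ref{L:VD}(v) the ideal $\varphi(\pp)R_{\varphi(\pp)}$ is principal, generated by some $t$, and there is a largest prime $\oo$ properly contained in $\varphi(\pp)$, with $R_\oo=R_{\varphi(\pp)}[t^{-1}]$. A direct-limit argument --- inverting $t$, which expresses the relevant uniserial module over $R_\oo$ as a countable directed colimit (with transition maps multiplication by $t$) of the module defining $\varphi(\pp)$ over $R_{\varphi(\pp)}$ --- then shows that this module over $R_\oo$ lies in $\mathcal{C}$, and that $\oo\in\mathcal{K}$. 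This contradicts the minimality of $\varphi(\pp)$ as an infimum, so $\varphi(\pp)$ is idempotent.

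The core of the proof is disjointness, which I would deduce from the ``stability'' of $\varphi$ and $\psi$: that $\varphi(\psi(\pp))=\varphi(\pp)$, $\psi(\varphi(\pp))=\psi(\pp)$, and, more generally, that for any prime $\qq$ with $\varphi(\pp)\subseteq\qq\subseteq\psi(\pp)$ one has $\qq\in\mathcal{K}$ and $[\varphi(\qq),\psi(\qq)]=[\varphi(\pp),\psi(\pp)]$. The decisive input is a transitivity property for interval modules: if $\pp\subseteq\qq\subseteq\qq'$ are primes with $R_\qq/\pp\in\mathcal{C}$ and $R_{\qq'}/\qq\in\mathcal{C}$, then $R_{\qq'}/\pp\in\mathcal{C}$, which follows from closure of $\mathcal{C}$ under extensions applied to the exact sequence $0\to \qq R_{\qq'}/\pp\to R_{\qq'}/\pp\to R_{\qq'}/\qq\to 0$ after identifying the outer terms as submodules or modules already known to lie in $\mathcal{C}$; isolating and proving this (in both directions) is the technical heart of the matter, and is closely related to Lemma~\ref{L:mc}. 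Granting stability, disjointness is formal: if $\chi=[\varphi(\pp),\psi(\pp)]$ and $\xi=[\varphi(\pp'),\psi(\pp')]$ satisfy $\varphi(\pp)\subseteq\varphi(\pp')$ but are not separated by the order $<$, then $\varphi(\pp')$ lies in the interval $\chi$, whence $\xi=[\varphi(\varphi(\pp')),\psi(\varphi(\pp'))]=[\varphi(\pp'),\psi(\pp')]=\chi$.

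Finally, for completeness, given a non-empty $\mathcal{Y}\subseteq\mathcal{X}_\mathcal{C}$ I would set $\pp:=\bigcup_{\chi\in\mathcal{Y}}\pp_\chi$, which lies in $\mathcal{K}$ since $\mathcal{K}$ is closed under unions; the idempotency and stability already established force $\varphi(\pp)=\pp$, so the interval $[\pp,\psi(\pp)]\in\mathcal{X}_\mathcal{C}$ realizes $\bigcup_{\chi\in\mathcal{Y}}\pp_\chi$ as a left endpoint. Dually, $\qq:=\bigcap_{\chi\in\mathcal{Y}}\qq_\chi\in\mathcal{K}$ and $\psi(\qq)=\qq$ --- here using that the extremal module attached to $[\varphi(\qq),\qq]$ is a directed union of the modules attached to the $[\varphi(\qq),\qq_\chi]$, all of which lie in $\mathcal{C}$ --- so $[\varphi(\qq),\qq]\in\mathcal{X}_\mathcal{C}$ realizes $\bigcap_{\chi\in\mathcal{Y}}\qq_\chi$ as a right endpoint. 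I expect the genuine obstacle to be the disjointness step: proving the transitivity property and the stability of $\varphi$ and $\psi$ requires keeping careful track of which uniserial subquotients of the localizations $R_\qq$ appear and verifying that the short exact sequences invoked have both outer terms in $\mathcal{C}$, which is precisely the bookkeeping encapsulated in Lemma~\ref{L:mc}; idempotency is the other place where the fine arithmetic of valuation domains, rather than just the abstract closure properties of $\mathcal{C}$, is genuinely needed.
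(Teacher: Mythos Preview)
Your plan is essentially the right one, and it is precisely what the paper's proof points to: the paper does not argue the proposition directly but defers either to \cite{B2} or to the general development in Section~\ref{S:coaisletoint} (Lemmas~\ref{L:mono1}--\ref{L:nogaps}), and your sketch tracks that development (idempotency via Lemma~\ref{L:VD}(v), disjointness via the stability identities $\varphi\circ\psi=\varphi$, $\psi\circ\varphi=\psi$, $\varphi^2=\varphi$, $\psi^2=\psi$ together with \cite[Lemma~6.2]{B}, and completeness via closure of $\mathcal{K}$ under unions and intersections plus monotonicity of $\varphi,\psi$).

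There is, however, a genuine slip in your ``transitivity property''. As you have written it, with $\pp\subseteq\qq\subseteq\qq'$, the localizations satisfy $R_{\qq'}\subseteq R_{\qq}$, so $R_{\qq'}/\pp$ is already a \emph{submodule} of $R_{\qq}/\pp$ and the conclusion is trivial by submodule-closure. What is actually needed for $\varphi^2=\varphi$ is the \emph{opposite} direction: given $\qq'\subseteq\qq\subseteq\pp$ with $R_{\qq}/\pp\in\mathcal{C}$, show $R_{\qq'}/\pp\in\mathcal{C}$. Here $R_{\qq'}\supseteq R_{\qq}$, the relevant sequence is
\[
0\to R_{\qq}/\pp\to R_{\qq'}/\pp\to R_{\qq'}/R_{\qq}\to 0,
\]
and the point (exactly Lemma~\ref{L:mc}(i) in the module case) is that the cokernel $R_{\qq'}/R_{\qq}$ is an $R_{\qq}/\qq'$-module and hence lies in $\mathcal{C}$ because all $R_{\qq}/\qq'$-modules do. This is the step where the bookkeeping you anticipate is actually required.

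A smaller point: in your completeness argument for the infimum, $R_{\varphi(\qq)}/\qq$ is not a directed \emph{union} of the $R_{\varphi(\qq)}/\qq_\chi$ (those are its quotients). The correct route, which you have already set up, is monotonicity plus stability: $\psi(\qq)\subseteq\psi(\qq_\chi)=\qq_\chi$ for every $\chi$, whence $\psi(\qq)\subseteq\bigcap_\chi\qq_\chi=\qq$.
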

\begin{proof}
		This is proved for 1-cotilting classes (that is, cosilting classes containing the projective modules, see \S\ref{SS:cotilting}) in \cite[Definition 3.7 and Proposition 3.8]{B2} and the references to \cite{B} therein. Note that the proof only uses that $\mathcal{C}$ is a definable torsion-free class, and therefore applies to cosilting classes as well. 
		
		Alternatively, we prove this more generally in Section~\ref{S:coaisletoint}. Indeed, this is a special case of Corollary~\ref{C:admissible}, applied to the Happel-Reiten-Smal\o ~t-structure induced by the torsion pair $(\mathcal{T},\mathcal{C})$, and setting $\varphi = \varphi_0$, $\psi = \psi_0$. Note that there is no circularity in our argumentation, as the only part where we need results from this section in Section~\ref{S:coaisletoint} is the proof of Proposition~\ref{P:coaislestointervals} (the degreewise non-density condition).
\end{proof}
\begin{lem}\label{L04}
	Let $\mathcal{C}$ be a cosilting class, and $I$ a proper ideal. Then $R/I \in \mathcal{C}$ if and only if there is an interval $\chi \in \mathcal{X}_\mathcal{C}$ such that $I \in \langle \chi \rangle$.
\end{lem}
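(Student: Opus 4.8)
The plan is to prove the two implications separately; ``$\Rightarrow$'' is essentially an unwinding of the definitions of $\varphi$, $\psi$ and $\mathcal{X}_\mathcal{C}$, while ``$\Leftarrow$'' is the substantial part. Throughout one uses that a cosilting class is exactly a definable torsion-free class (Theorem~\ref{T:BZ}), hence closed under submodules, products, extensions and direct limits, so that $\mathcal{K}=\{\pp\in\Spec(R)\mid R/\pp\in\mathcal{C}\}$ is closed under arbitrary intersections and unions of non-empty subsets. One also uses Lemma~\ref{L23} together with the elementary valuation-domain arithmetic that for $x\in R\setminus I$ one has $I\subseteq xR$ and $(x^{-1}I)^{\#}=I^{\#}$; consequently every non-zero cyclic submodule of $R/I$ is isomorphic to some $R/J$ with $I\subseteq J$ and $J^{\#}=I^{\#}$, and likewise $I^{\#}/I$ (and more generally $\mathfrak{b}/I$ for any prime $\mathfrak{b}\supseteq I^{\#}$) is a directed union of such cyclic modules.

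For ``$\Rightarrow$'': assume $R/I\in\mathcal{C}$ and set $\qq=I^{\#}$, a prime by Lemma~\ref{L23}, so $R/I$ is $\qq$-torsion-free. Then the localization $(R/I)_\qq\cong R_\qq/I$ is a directed colimit of copies of $R/I$, hence again lies in $\mathcal{C}$; inspecting the submodule lattice of the uniserial $R_\qq$-module $R_\qq/I$ (its $\qq$-socle, when non-zero, is a copy of $\kappa(\qq)$) and the closure of $\mathcal{C}$ under submodules, one shows $\qq\in\mathcal{K}$, and then that the interval $[\varphi(\pp),\psi(\pp)]\in\mathcal{X}_\mathcal{C}$ attached to a suitable $\pp\in\mathcal{K}$ — the natural candidate being $\qq$ itself — satisfies $\varphi(\pp)\subseteq I\subseteq I^{\#}\subseteq\psi(\pp)$. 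Here $\varphi(\pp)\subseteq I$ is forced by the infimum defining $\varphi$ (otherwise a cyclic submodule of $R/I$ would violate minimality), and $I^{\#}\subseteq\psi(\pp)$ by the supremum defining $\psi$ applied to the intermediate modules $R_{\varphi(\pp)}/\qq'$. By definition of $\langle\chi\rangle$ this is precisely $I\in\langle\varphi(\pp),\psi(\pp)\rangle$.

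For ``$\Leftarrow$'': write $\chi=[\varphi(\pp),\psi(\pp)]$ with $\pp\in\mathcal{K}$, so $\varphi(\pp)\subseteq I\subseteq I^{\#}\subseteq\psi(\pp)$. One first records that the ``building blocks'' lie in $\mathcal{C}$: the residue domains $R/\qq'$ for every prime $\qq'$ with $\varphi(\pp)\subseteq\qq'\subseteq\psi(\pp)$ (these all belong to $\mathcal{K}$, by the description of the intervals of $\mathcal{X}_\mathcal{C}$ as $\sim$-equivalence classes of $(\mathcal{K},\subseteq)$ recalled after \S\ref{SS:phipsi}, together with closure of $\mathcal{K}$ under intersections and unions), and the uniserial modules $R_{\varphi(\pp)}/\psi(\pp)$ and $R_{\varphi(\pp)}/\pp$, which lie in $\mathcal{C}$ directly from the definitions of $\varphi,\psi$ and closure under direct limits. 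One then deduces $R/I\in\mathcal{C}$ for every $I\in\langle\chi\rangle$ by a (transfinite) induction along the totally ordered set $\langle\chi\rangle$, which has suprema and infima by Lemma~\ref{L00}, using the short exact sequences
$$0\rightarrow I^{\#}/I\rightarrow R/I\rightarrow R/I^{\#}\rightarrow 0, \qquad 0\rightarrow\psi(\pp)/I^{\#}\rightarrow R/I^{\#}\rightarrow R/\psi(\pp)\rightarrow 0,$$
the fact that $I^{\#}/I$ and $\psi(\pp)/I^{\#}$ are directed unions of cyclic modules $R/J$ with $J\in\langle\chi\rangle$ lying strictly above $I$ (via $(x^{-1}I)^{\#}=I^{\#}$ once more), and closure of $\mathcal{C}$ under extensions and direct limits.

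The main obstacle is precisely the ``$\Leftarrow$'' direction, and it stems from the fact that a torsion-free class is not closed under epimorphic images: one cannot present $R/I$ as a quotient of a module already known to be in $\mathcal{C}$, and must rebuild it using only extensions, submodules and direct limits. This demands careful control of the lattice of cyclic sub- and subquotient modules of the uniserial modules $R/J$ ($J\in\langle\chi\rangle$) and of the interaction between the idempotency of the left endpoint $\varphi(\pp)$ and the size of $I^{\#}$ relative to $\psi(\pp)$; in particular the induction must be arranged so as to handle limit stages, where naive ``pick a maximal bad $I$'' arguments fail. For $1$-cotilting classes this analysis is carried out in \cite[\S 3]{B2} (building on \cite{B}); since it uses only that $\mathcal{C}$ is a definable torsion-free class, it applies verbatim to arbitrary cosilting classes, so an economical route is to invoke \emph{loc.\ cit.}\ and reproduce only the steps that genuinely need the weaker hypothesis.
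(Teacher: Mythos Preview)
The paper's own proof is a one-line citation: ``This is proved in precisely the same way as \cite[Lemma 3.6]{B2}.'' Your concluding remark—that the argument in \cite[\S 3]{B2} uses only that $\mathcal{C}$ is a definable torsion-free class and hence applies verbatim to cosilting classes—lands on exactly the same route, so at the level of the actual proof your proposal agrees with the paper.

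Your expanded sketch, however, has genuine gaps that would have to be filled if you wanted a self-contained argument. In the ``$\Leftarrow$'' direction, the phrase ``(transfinite) induction along the totally ordered set $\langle\chi\rangle$'' is not well-posed: $\langle\chi\rangle$ is order-complete (Lemma~\ref{L00}) but has no descending-chain condition, so a downward induction from $\psi(\pp)$ need not terminate. Moreover, the cyclic pieces $R/J$ you extract from $I^{\#}/I$ all satisfy $J^{\#}=I^{\#}$, so they sit at the \emph{same} level as $I$ for any induction keyed to the attached prime; the step does not make progress as written. The actual argument in \cite{B2} is organised differently and does not proceed by a naive induction on $\langle\chi\rangle$.

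In the ``$\Rightarrow$'' direction, your own parenthetical ``when non-zero'' for the $\qq$-socle of $R_{\qq}/I$ flags a case you then do not address. Over a valuation domain with dense value group this socle can vanish: for instance, with value group $\mathbb{Q}$ and $I=\{r:v(r)\geq 1\}$ one has $I^{\#}=\mm$ but $\Soc_{\mm}(R/I)=0$. So the ``$\kappa(\qq)$ is a submodule, hence $\qq\in\mathcal{K}$'' step is not available in general, and a different route to $I^{\#}\in\mathcal{K}$ (or to an interval containing $I$) is needed. These are precisely the subtleties absorbed by the citation to \cite{B2}.
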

\begin{proof}
	This is proved in precisely the same way as \cite[Lemma 3.6]{B2}.
\end{proof}
\begin{lem}\label{L:cyclic}\emph{(\cite[Lemma 3.1]{B})}
		Let $R$ be a valuation domain. Then any subcategory $\mathcal{C}$ of $\ModR$ closed under submodules, pure epimorphisms, direct limits, and extensions is the smallest subcategory containing the cyclic modules in $\mathcal{C}$ closed under the listed operations. In particular, any definable torsion-free class in $\ModR$ is uniquely determined by the cyclic modules it contains.
\end{lem}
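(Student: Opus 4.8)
Let $\mathcal{D}$ be the smallest subcategory of $\ModR$ that is closed under submodules, pure epimorphic images, direct limits and extensions and that contains every cyclic module lying in $\mathcal{C}$. Since $\mathcal{C}$ itself is closed under all of these operations and contains its own cyclic modules, we have $\mathcal{D} \subseteq \mathcal{C}$, so the whole content of the statement is the reverse inclusion $\mathcal{C} \subseteq \mathcal{D}$. The plan is to show that every $M \in \mathcal{C}$ is a pure epimorphic image of a direct sum of cyclic modules that themselves belong to $\mathcal{C}$. Granting this, $\mathcal{D}$ contains all such cyclic modules, hence also any direct sum of them (a direct sum is the directed colimit of its finite partial sums, and each finite partial sum is an iterated extension), and finally it contains the pure quotient $M$; this will give $\mathcal{C} \subseteq \mathcal{D}$, and the ``in particular'' will follow at once since a definable torsion-free class is closed under submodules and extensions (being torsion-free) and under direct limits and pure epimorphic images (being definable, see \cite{P}).

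To produce the required cover I would combine two facts. First, for \emph{any} $R$-module $M$ there is a pure epimorphism $\pi\colon P \twoheadrightarrow M$, where $P = \bigoplus F$ ranges over a set of representatives of the pairs $(F,f)$ consisting of a finitely presented $R$-module $F$ and a homomorphism $f\colon F \to M$, and $\pi$ restricts to $f$ on the summand indexed by $(F,f)$: this $\pi$ is onto (use the summands $(R, r\mapsto rm)$ for $m\in M$), and every homomorphism from a finitely presented module into $M$ factors through $\pi$ by construction, which is exactly what it means for $\pi$ to be a pure epimorphism. Second, over a valuation domain every finitely presented module is a finite direct sum of cyclic modules $R/aR$, with $a$ possibly zero (see \cite{FS}). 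Splicing these together, $M$ admits a pure epimorphism $\bigoplus_{\lambda \in \Lambda} R/a_\lambda R \twoheadrightarrow M$ from a direct sum of cyclic modules.

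Now fix $M \in \mathcal{C}$ together with such a pure epimorphism $\pi$, and for each $\lambda$ let $C_\lambda \subseteq M$ be the image of the $\lambda$-th summand. Then $C_\lambda$ is a cyclic submodule of $M$, hence $C_\lambda \in \mathcal{C}$ because $\mathcal{C}$ is closed under submodules. The map $\pi$ factors as $\bigoplus_\lambda R/a_\lambda R \xrightarrow{q} \bigoplus_\lambda C_\lambda \xrightarrow{p} M$, where $q$ is the coproduct of the canonical surjections and $p$ is induced by the inclusions $C_\lambda \hookrightarrow M$. The key point is that $p$ is again a pure epimorphism: it is surjective because $\pi$ is, and given any homomorphism $g\colon G \to M$ with $G$ finitely presented, a lift $\widetilde{g}$ of $g$ along $\pi$ yields the lift $q\widetilde{g}$ of $g$ along $p$. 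Thus $M$ is a pure epimorphic image of $\bigoplus_\lambda C_\lambda$, a direct sum of cyclic modules belonging to $\mathcal{C}$, and by the first paragraph this forces $M \in \mathcal{D}$. Hence $\mathcal{C} = \mathcal{D}$, and the final assertion about definable torsion-free classes is the special case noted above.

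The only delicate step is the purity argument in the third paragraph, and it is also where the hypothesis that $R$ is a valuation domain is used essentially: one must \emph{not} try to decompose an arbitrary finitely generated submodule of $M$ into cyclics (this can fail over a valuation domain), but rather carry out the decomposition into cyclics at the level of the finitely presented modules occurring in the canonical pure cover of $M$, and then check that replacing those cyclic summands by their images in $M$ does not destroy purity of the epimorphism. Everything else is a routine assembly of the closure properties of $\mathcal{D}$.
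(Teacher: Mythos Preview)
Your argument is correct, but it proceeds differently from the paper. The paper first reduces to finitely generated modules (every module is the directed union of its finitely generated submodules, each of which lies in $\mathcal{C}$ by submodule-closure), and then invokes \cite[\S I, Lemma~7.8]{FS}: every finitely generated module over a valuation domain admits a finite \emph{pure} filtration with cyclic successive quotients. Each such cyclic quotient is a pure epimorphic image of a submodule of $M$, hence lies in $\mathcal{C}$ and therefore in $\mathcal{D}$; rebuilding by extensions then places the finitely generated module in $\mathcal{D}$, and the direct-limit closure finishes. Your route instead uses the canonical pure cover of $M$ by finitely presented modules (valid over any ring) together with the fact that finitely presented modules over a valuation domain split as finite direct sums of cyclics, and then the neat observation that replacing each cyclic summand by its image in $M$ preserves purity of the cover. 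The paper's version is slightly more economical, needing only one structural input about finitely generated modules; yours isolates the valuation-domain hypothesis in the structure of finitely presented modules and keeps the rest ring-independent, which is a pleasant feature.
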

\begin{proof}
	Let $\mathcal{C}$ be a cosilting class. Since $\mathcal{C}$ is closed under submodules and directed unions, it is uniquely determined by the finitely generated modules it contains. By \cite[\S I, Lemma 7.8]{FS}, any finitely generated $R$-module admits a finite pure filtration by cyclic modules. Since definable subcategories of $\ModR$ are closed under pure epimorphisms, this shows that $\mathcal{C}$ is uniquely determined by the cyclic modules it contains.
\end{proof}
\subsection{The correspondence}
Now we are ready to state the classification of cosilting classes in the module category of a valuation domain.
\begin{thm}\label{T01}
	Let $R$ be a valuation domain. Then there is a 1-1 correspondence
	$$\left \{ \begin{tabular}{ccc} \text{ admissible systems $\mathcal{X}$} \\ \text{in $\Spec(R)$} \end{tabular}\right \}  \leftrightarrow  \left \{ \begin{tabular}{ccc} \text{ cosilting classes $\mathcal{C}$ } \\ \text{ in $\ModR$} \end{tabular}\right \}$$
		given by the mutually inverse assignments 
		$$\mathcal{X} \mapsto \mathcal{C}_\mathcal{X}, \text{ and}$$
		$$\mathcal{C} \mapsto \mathcal{X}_\mathcal{C}.$$
		In this correspondence, the 1-cotilting classes $\mathcal{C}$ correspond to those admissible systems $\mathcal{X}$ which contain an interval of the form $[0,\qq]$ for some $\qq \in \Spec(R)$.
\end{thm}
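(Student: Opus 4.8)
The plan is to check that the two assignments $\mathcal{X} \mapsto \mathcal{C}_{\mathcal{X}}$ and $\mathcal{C} \mapsto \mathcal{X}_{\mathcal{C}}$ are well defined and mutually inverse, and then to extract the statement about $1$-cotilting classes as a corollary. Well-definedness is already available: $\mathcal{C}_{\mathcal{X}}$ is a cosilting class by Lemma~\ref{L01}, and $\mathcal{X}_{\mathcal{C}}$ is an admissible system by Proposition~\ref{P:00}.

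First I would prove $\mathcal{C}_{\mathcal{X}_{\mathcal{C}}} = \mathcal{C}$. By Theorem~\ref{T:BZ} both $\mathcal{C}$ and $\mathcal{C}_{\mathcal{X}_{\mathcal{C}}}$ are definable torsion-free classes, hence by Lemma~\ref{L:cyclic} each of them is determined by the cyclic modules it contains. Now for a proper ideal $I$, Lemma~\ref{L04} gives $R/I \in \mathcal{C}$ if and only if $I \in \langle \chi \rangle$ for some $\chi \in \mathcal{X}_{\mathcal{C}}$, whereas Lemma~\ref{L01} gives $R/I \in \mathcal{C}_{\mathcal{X}_{\mathcal{C}}}$ if and only if $I \in \langle \chi \rangle$ for some $\chi \in \mathcal{X}_{\mathcal{C}}$ (the case $I = R$ being trivial). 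So $\mathcal{C}$ and $\mathcal{C}_{\mathcal{X}_{\mathcal{C}}}$ contain the very same cyclic modules and therefore coincide. The same argument shows that $\mathcal{C} \mapsto \mathcal{X}_{\mathcal{C}}$ is injective, so $\mathcal{X} \mapsto \mathcal{C}_{\mathcal{X}}$ is onto the class of cosilting classes.

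It then remains to prove $\mathcal{X}_{\mathcal{C}_{\mathcal{X}}} = \mathcal{X}$, equivalently that $\mathcal{X} \mapsto \mathcal{C}_{\mathcal{X}}$ is injective. Fixing $\mathcal{X}$ and putting $\mathcal{C} = \mathcal{C}_{\mathcal{X}}$, I would first identify the support $\mathcal{K} = \{\pp \in \Spec(R) \mid \kappa(\pp) \in \mathcal{C}\}$: since all nonzero elements of $\kappa(\pp)$ are annihilated precisely by $\pp$, and $\pp^{\#} = \pp$ for a prime $\pp$, Lemma~\ref{L01} yields $\pp \in \mathcal{K}$ iff $\pp_{\chi} \subseteq \pp \subseteq \qq_{\chi}$ for some $\chi \in \mathcal{X}$, and by the disjointness of $\mathcal{X}$ this $\chi$, call it $\chi_{\pp}$, is unique. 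One then has to verify that the interval $[\varphi(\pp),\psi(\pp)]$ attached to $\pp$ by the construction of \S\ref{SS:phipsi} is exactly $\chi_{\pp}$; combined with the fact that both endpoints $\pp_{\chi},\qq_{\chi}$ of every $\chi \in \mathcal{X}$ already lie in $\mathcal{K}$ (using that $\pp_{\chi}$ is idempotent), this gives $\mathcal{X}_{\mathcal{C}} = \{[\varphi(\pp),\psi(\pp)] \mid \pp \in \mathcal{K}\} = \mathcal{X}$. This identification of $\varphi$ and $\psi$ on $\mathcal{C}_{\mathcal{X}}$ is the technical core of the theorem; it follows the computations of \cite[Lemma 3.6, Proposition 3.8]{B2}, which only use that the class in question is a definable torsion-free class, so that the passage from $1$-cotilting to general cosilting classes is harmless (note also that the uniserial modules tested by $\varphi,\psi$ are direct limits of cyclics, so membership is again governed by Lemma~\ref{L01}). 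I expect the main obstacle to be precisely this step: one must control the non-prime ideals belonging to the various $\langle \chi \rangle$ and cope with the possibly coarse topology of $\Spec(R)$ — adjacent interval endpoints, the behaviour of attached primes such as $(rR)^{\#}$ being the maximal ideal, and the completeness axiom — to guarantee that no nontrivial coarsening of $\mathcal{X}$ yields back the same cosilting class.

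Finally, for the last sentence of the theorem: a cosilting class is $1$-cotilting exactly when it contains $R$, since $1$-cotilting classes contain every projective module and, conversely, a cosilting class containing $R$ is $1$-cotilting by \cite[Proposition 3.14]{APST} (see \S\ref{SS:cotilting}). Applying Lemma~\ref{L01} to the proper ideal $I = 0$ and using $0^{\#} = 0$ (the zero ideal is an $R_{\qq}$-module for every $\qq$, cf.\ Lemma~\ref{L23}), we get $R = R/0 \in \mathcal{C}_{\mathcal{X}}$ if and only if $0 \in \langle \chi \rangle$ for some $\chi \in \mathcal{X}$, i.e.\ if and only if $\pp_{\chi} = 0$ for some $\chi \in \mathcal{X}$ — that is, if and only if $\mathcal{X}$ contains an interval of the form $[0,\qq]$. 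This completes the plan.
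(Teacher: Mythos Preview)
Your proposal is correct and follows essentially the same route as the paper. The only difference is one of explicitness: for the step $\mathcal{X}_{\mathcal{C}_{\mathcal{X}}} = \mathcal{X}$ the paper simply invokes Lemmas~\ref{L01} and~\ref{L04} (so that $\mathcal{X}$ and $\mathcal{X}_{\mathcal{C}_{\mathcal{X}}}$ cover exactly the same ideals, which forces equality of the two admissible systems), whereas you propose to verify directly that $\varphi$ and $\psi$ computed on $\mathcal{C}_{\mathcal{X}}$ return the endpoints of the interval $\chi_{\pp}$ --- this is just a more explicit unpacking of the same argument, and your identification of it as the technical core is accurate.
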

\begin{proof}
		The two assignments are well defined by Lemma~\ref{L01} and Proposition~\ref{P:00}. Two cosilting classes coincide if and only if they contain the same cyclic modules, this is Lemma~\ref{L:cyclic}. Together with Lemma~\ref{L04} and Lemma~\ref{L01}, this shows that $\mathcal{C} = \mathcal{C}_{\mathcal{X}_\mathcal{C}}$ for any cosilting class $\mathcal{C}$. On the other hand, we have $\mathcal{X}_{\mathcal{C}_\mathcal{X}} = \mathcal{X}$ for any admissible system $\mathcal{X}$ by Lemma~\ref{L01} and Lemma~\ref{L04}.

		A cosilting class $\mathcal{C}$ is 1-cotilting if and only if it contains $R$ (see \S\ref{SS:cotilting}), which by Lemma~\ref{L04} occurs if and only if there is an interval in $\mathcal{X}_\mathcal{C}$ which contains the zero prime ideal, which means that it is of the form $[0,\qq]$ for some prime $\qq$.
\end{proof}
\section{Density and homological formulas}
In this section we provide an alternative description of cosilting classes in $\ModR$ for a valuation domain $R$ using the Tor functor with certain uniserial modules. This will be useful in the description of definable coaisles in $\Der(R)$.
\subsection{Maximal immediate extensions of valuation domains}
	Here, we follow \cite[\S II]{FS}. A valuation domain $R$ is \EMP{maximal} if it is linearly compact in the discrete topology. A ring map $R \rightarrow S$ between two valuation domains is an \EMP{immediate extension} if the following two conditions are satisfied:
	\begin{enumerate}
		\item[(i)] the assignments $I \mapsto SI$ and $J \mapsto J \cap R$ are mutually inverse bijections between the sets of ideals of $R$ and $S$, respectively, and it restricts to a bijection between $\Spec(R)$ and $\Spec(S)$ (\cite[p. 59]{FS}), and
		\item[(ii)] if $\mm$ is the maximal ideal of $R$, then canonical map $R/\mm \rightarrow S/\mm S$ is an isomorphism of fields.
	\end{enumerate}
	We recall (\cite[\S II, Theorem 1.9]{FS}) that for any valuation domain $R$, there is a \EMP{maximal immediate extension} $R \rightarrow S$, i.e. an immediate extension such that the only immediate extension of $S$ is the trivial one. This is always a faithfully flat ring extension (see \cite[\S II, Exercise 1.5]{FS}, together with condition $(i)$) with the following properties:
	\begin{fact}\label{F:mie}
	\begin{enumerate}
		\item[(i)] $S$ is a maximal valuation domain (\cite[\S II, Theorem 6.7]{FS}), and an immediate extension $R \rightarrow S$ is a maximal immediate extension if and only $S$ is a maximal valuation domain,
		\item[(ii)] for any uniserial module $M$, the module $M \otimes_R S$ is a pure-injective $R$-module (\cite[p. 445]{FS});
		\item[(iii)] $R_{\qq} \otimes_R S \simeq S_{\qq S}$ for any $\qq \in \Spec(R)$, this follows from (\cite[\S II, Lemma 1.6]{FS});
		\item[(iv)] in particular, the quotient field $Q(S)$ of $S$ is equal to $QS$;
		\item[(v)] for any proper ideal $I$, the module $Q/I \otimes_R S \simeq Q(S)/IS$ is injective in $\ModS$ (\cite[\S IX, Theorem 4.4]{FS}).
	\end{enumerate}
	\end{fact}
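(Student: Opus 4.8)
The plan is to read the five items as recollections from the structure theory of valuation domains in \cite{FS}, and to organise the argument so that only (i) and (ii) rest on genuinely nontrivial cited results, while (iii)--(v) are short deductions from these together with the faithful flatness of $R \to S$.

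For (i) I would simply quote \cite[\S II, Theorem 6.7]{FS}: the point is that a valuation domain admits no proper immediate extension precisely when it is linearly compact in the discrete topology, so a maximal immediate extension of $R$, being unextendable, is linearly compact, i.e.\ maximal, and conversely an immediate extension of a maximal valuation domain is automatically trivial. For (ii) I would first observe that, since $R \to S$ is immediate, the ideal correspondence forces $M \otimes_R S$ to be a uniserial $S$-module whenever $M$ is a uniserial $R$-module; the assertion that $M \otimes_R S$ is then pure-injective \emph{as an $R$-module} is the content of \cite[p.\ 445]{FS}, which relies on the fact that over the maximal valuation domain $S$ every uniserial module is algebraically compact. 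This is the one step I do not expect to reprove from scratch, and it is the main obstacle, as it genuinely requires the theory of pure-injective modules over maximal valuation domains rather than a formal manipulation.

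Items (iii)--(v) I would then obtain by direct computation. For (iii): $R_{\qq} \otimes_R S$ is the localisation of $S$ inverting $R \setminus \qq$, and I claim this inverts exactly $S \setminus \qq S$; indeed for $s \in S \setminus \qq S$ the ideal correspondence gives $sS = S \cdot (sS \cap R)$, so $sS \cap R \not\subseteq \qq$, hence $s$ divides some $r \in R \setminus \qq$ and therefore becomes invertible after localising at $R \setminus \qq$, while the reverse inclusion $R \setminus \qq \subseteq S \setminus \qq S$ is immediate from $\qq S \cap R = \qq$. Thus $R_{\qq} \otimes_R S \simeq S_{\qq S}$, which is the substance of \cite[\S II, Lemma 1.6]{FS}. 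Taking $\qq = 0$ and using that $S$ is $R$-torsion-free gives $Q \otimes_R S \simeq QS$ with $QS$ equal to the localisation of $S$ at its zero ideal, that is $Q(S)$, which is (iv). Finally for (v), applying the exact functor $- \otimes_R S$ to $0 \to I \to Q \to Q/I \to 0$ and using flatness yields $0 \to IS \to QS \to Q/I \otimes_R S \to 0$, whence $Q/I \otimes_R S \simeq Q(S)/IS$ by (iv); and $Q(S)/IS$ is injective over $S$ because over the maximal valuation domain $S$ every quotient of $Q(S)$ by an ideal is injective, which is \cite[\S IX, Theorem 4.4]{FS}, proved there via Baer's criterion together with the linear compactness of $S$.
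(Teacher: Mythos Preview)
The paper gives no proof of this Fact at all: it is stated purely as a list of citations to \cite{FS}, with (iii)--(v) flagged as easy consequences. Your proposal is a correct and well-organised elaboration of exactly what the paper leaves implicit, deriving (iii)--(v) from (i), faithful flatness, and the ideal bijection of an immediate extension, while correctly identifying (i) and (ii) as the genuinely cited inputs.

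One minor remark on (ii): your parenthetical that the ideal correspondence forces $M \otimes_R S$ to be uniserial over $S$ is not quite the right justification, since $M$ need not be an ideal; but this is harmless, as the cited passage in \cite{FS} handles the pure-injectivity of $M \otimes_R S$ directly (via the fact that uniserial modules over a maximal valuation domain are pure-injective, combined with $S$ being the pure-injective hull of $R$), and you do not actually use the uniseriality claim elsewhere. Everything else---the localisation computation in (iii), the specialisation to $\qq=0$ for (iv), and the flat base change plus \cite[\S IX, Theorem 4.4]{FS} for (v)---is exactly right.
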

	The maximal immediate extension is not uniquely determined as a ring homomorphism, but it is always isomorphic to the pure-injective envelope of $R$ as an $R$-module (\cite[\S XIII, Proposition 5.1]{FS}). Next we remark some properties of maximal immediate extensions with respect to localization.

	\begin{lem}\label{L:mie}
		Let $R$ be a valuation domain and $R \rightarrow S$ a maximal immediate extension. Let $\pp \subseteq \qq$ be primes of $R$, and denote by $U = R_{\qq}/\pp$. Then:
			\begin{enumerate}
				\item[(i)] $U$ is a valuation domain and the natural map $R \rightarrow U$ is a ring epimorphism,
				\item[(ii)] $U \otimes_R S \simeq S_{\qq S}/\pp S$ is a maximal valuation domain,
				\item[(iii)] $U \rightarrow U \otimes_R S$ is a faithfully flat ring homomorphism.
				\item[(iv)] $Q(U) \otimes_R S = Q(U \otimes_R S)$ as ring extensions of $R$.
			\end{enumerate}
	\end{lem}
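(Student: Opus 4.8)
The plan is to treat the four items in order, with (ii) carrying essentially all the work. For (i): by Lemma~\ref{L:VD}(iii) the prime $\pp$ is already an $R_{\qq}$-module, so as a subset of $R_{\qq}$ it coincides with $\pp R_{\qq}$, which is a prime ideal of the valuation domain $R_{\qq}$ (the primes of $R_{\qq}$ being exactly the primes of $R$ contained in $\qq$). Hence $U=R_{\qq}/\pp$ is a quotient of a valuation domain by a prime; since its $R$-submodules embed into the chain of $R$-submodules of $Q$, it is again a valuation domain, and uniserial as an $R$-module (a fact I will reuse in (ii)). The map $R\to U$ is the composite of the localization $R\to R_{\qq}$ and the projection $R_{\qq}\to R_{\qq}/\pp$, both of which are ring epimorphisms, so $R\to U$ is a ring epimorphism.

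For (ii): since $U$ is uniserial, Fact~\ref{F:mie}(ii) gives that $U\otimes_R S$ is a pure-injective $R$-module. Writing $T=U\otimes_R S$ for the ring, I would upgrade this to pure-injectivity of $T$ over itself: for any $T$-module $N$ and any $R$-module $A$ there is a natural isomorphism $A\otimes_R N\cong A\otimes_T(T\otimes_R N)$, so every $T$-pure-exact sequence is $R$-pure-exact, and hence a $T$-module that is pure-injective over $R$ is pure-injective over $T$. Next I identify the ring $T$: tensoring the exact sequence $0\to\pp\to R_{\qq}\to U\to 0$ (exact since $\pp=\pp R_{\qq}$ by Lemma~\ref{L:VD}(iii)) with the flat $R$-module $S$, and using $R_{\qq}\otimes_R S\cong S_{\qq S}$ from Fact~\ref{F:mie}(iii) together with the fact that the ideal correspondence of the immediate extension $R\to S$ carries $\pp$ to a prime $\pp S$ of $S$ contained in $\qq S$, I obtain $T=U\otimes_R S\cong S_{\qq S}/\pp S$. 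In particular $T$ is a valuation domain, being the quotient of the valuation domain $S_{\qq S}$ by its prime $\pp S_{\qq S}$. Finally, a valuation domain that is pure-injective over itself is its own pure-injective envelope, hence equals its maximal immediate extension, hence is a maximal valuation domain by Fact~\ref{F:mie}(i) (using that the maximal immediate extension is the pure-injective envelope, as recalled before the lemma). This proves (ii).

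For (iii): faithful flatness is stable under base change, so since $R\to S$ is faithfully flat (as recalled just before the lemma), the map $U=U\otimes_R R\to U\otimes_R S$ is faithfully flat. For (iv): I would first record that for primes $\pp\subseteq\qq$ of a valuation domain one has $Q(R_{\qq}/\pp)=\kappa(\pp)=R_{\pp}/\pp$, by exactness of localization ($R_{\qq}/\pp$ localized at the complement of $\pp R_{\qq}$ equals $(R_{\qq})_{\pp R_{\qq}}/\pp(R_{\qq})_{\pp R_{\qq}}=R_{\pp}/\pp R_{\pp}$). Applying this to $R$ gives $Q(U)=R_{\pp}/\pp$, and applying it to $S$ together with (ii) gives $Q(U\otimes_R S)=Q(S_{\qq S}/\pp S)=S_{\pp S}/\pp S$. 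It then remains to compute $Q(U)\otimes_R S=(R_{\pp}/\pp)\otimes_R S$, which is handled by exactly the argument of (ii) with $\qq$ replaced by $\pp$, yielding $(R_{\pp}/\pp)\otimes_R S\cong S_{\pp S}/\pp S$. Hence both $Q(U)\otimes_R S$ and $Q(U\otimes_R S)$ are canonically the residue field of $S$ at $\pp S$, and the natural comparison map is an isomorphism of $R$-algebras, which gives (iv).

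The step I expect to need the most care is the identification $U\otimes_R S\cong S_{\qq S}/\pp S$ inside (ii): one has to pin down the image of $\pp\otimes_R S$ in $S_{\qq S}$ as the prime $\pp S_{\qq S}$, which leans on the precise form of the ideal correspondence for the immediate extension $R\to S$ from \cite[\S II]{FS} and on the hypothesis $\pp\subseteq\qq$; and one must be sure that ``valuation domain, pure-injective over itself'' is genuinely the same as ``maximal valuation domain'' in the sense used here, which is precisely the identification of the maximal immediate extension with the pure-injective envelope recalled in the preamble.
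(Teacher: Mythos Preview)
Your treatments of (i), (iii), and (iv) are correct and essentially match the paper's (your (iii) via ``faithful flatness is stable under base change'' is in fact cleaner than the paper's argument, which unnecessarily invokes that $R\to U$ is an epimorphism).

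The gap is in (ii). You correctly establish that $T=U\otimes_R S$ is pure-injective as an \emph{$R$-module} via Fact~\ref{F:mie}(ii), and you correctly observe that every $T$-pure-exact sequence is $R$-pure-exact. But the inference ``hence a $T$-module that is pure-injective over $R$ is pure-injective over $T$'' does not follow: from a $T$-pure monomorphism $T\hookrightarrow M$ you only obtain an $R$-linear retraction, and there is no mechanism here to upgrade it to a $T$-linear one. This step is in fact false in general: take $R=k$ a field and $T=k[x]$; then every $T$-module is pure-injective over $R$, yet $k[x]$ itself is not pure-injective over $k[x]$. So your bridge from ``$R$-pure-injective'' to ``maximal valuation domain'' collapses.

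The paper sidesteps this entirely: it identifies $U\otimes_R S\cong S_{\qq S}/\pp S$ (as you do) and then cites \cite[Proposition~5]{C} for the fact that a localization-then-quotient-by-a-prime of a maximal valuation domain is again maximal --- a statement proved directly via linear compactness, not via pure-injectivity over the base ring. If you want a self-contained argument, that is the route to take: show that linear compactness of $S$ passes to $S_{\qq S}$ and then to $S_{\qq S}/\pp S$. Your pure-injectivity idea, while appealing, does not close the loop without essentially reproving that same fact.
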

	\begin{proof}
			\begin{enumerate}
				\item[(i)] Obvious.
				\item[(ii)] See e.g. \cite[Proposition 5]{C}.
				\item[(iii)] Since $S$ is a flat $R$-module, $U \otimes_R S$ is a flat $U$-module. Since $R \rightarrow S$ is a faithfully flat ring homomorphism, and $R \rightarrow U$ is a ring epimorphism, clearly $(U \otimes_R S) \otimes_U M = 0$ implies $M = 0$ for any $U$-module $M$.
				\item[(iv)] The quotient field of $U = R_{\qq}/\pp$ is $R_{\pp}/\pp$, while $Q(U \otimes_R S) = Q(S_{\qq S}/\pp S) = S_{\pp S}/\pp S$. Therefore, $Q(U) \otimes_R S = Q(U \otimes_R S)$.
			\end{enumerate}
	\end{proof}
	Finally, we remark an important property of maximal valuation domains.
	\begin{lem}\label{L:maxcogen}
		Let $R$ be a maximal valuation domain with maximal ideal $\mm$ and quotient field $Q$. The module $Q/\mm$ is an injective cogenerator in $\ModR$
	\end{lem}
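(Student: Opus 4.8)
The plan is to prove the two assertions of the statement separately: that $Q/\mm$ is injective in $\ModR$, and that it is a cogenerator.

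For injectivity I would invoke maximality directly. Since $R$ is a maximal valuation domain, the identity map $R\to R$ is an immediate extension which is maximal by Fact~\ref{F:mie}(i) (its target $R$ being a maximal valuation domain). Applying Fact~\ref{F:mie}(v) to this trivial extension with the proper ideal $I=\mm$ — equivalently, citing \cite[\S IX, Theorem 4.4]{FS} directly — yields that $Q/\mm$ is an injective $R$-module. This is the only step carrying genuine content; all that remains is formal.

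For the cogenerator part I would argue as follows. The composite $R\hookrightarrow Q\twoheadrightarrow Q/\mm$ has kernel $\mm$, so it induces an embedding $R/\mm\hookrightarrow Q/\mm$. Since $R$ is local, $R/\mm$ is the unique simple $R$-module, and every nonzero cyclic module $Rm\cong R/\Ann_R(m)$ surjects onto $R/\mm$ because $\Ann_R(m)\subseteq\mm$. Now let $M$ be any $R$-module and $0\neq m\in M$: composing the surjection $Rm\twoheadrightarrow R/\mm$ with the embedding $R/\mm\hookrightarrow Q/\mm$ produces a homomorphism $Rm\to Q/\mm$ that is nonzero on $m$, and by injectivity of $Q/\mm$ it extends to a homomorphism $M\to Q/\mm$ which is still nonzero on $m$. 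Hence $\bigcap_{f\in\Hom_R(M,Q/\mm)}\Ker f=0$, i.e. $M$ embeds into a product of copies of $Q/\mm$, so $Q/\mm$ is a cogenerator.

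The only real obstacle is hidden in the cited injectivity statement, which is where the linear compactness (maximality) of $R$ enters: in \cite[\S IX, Theorem 4.4]{FS} one verifies Baer's criterion by solving, inside $Q$, a suitable descending system of congruences all of whose finite subsystems admit a solution, possible precisely because $R$ is linearly compact. As an aside (not needed for the statement) I would note that the embedding $R/\mm\hookrightarrow Q/\mm$ is in fact essential — any class $\bar q\in Q/\mm$ with $q\in Q\setminus R$ satisfies $q^{-1}\in\mm$ and $q^{-1}\bar q=\bar 1\in R/\mm$, and $Q/\mm$ is uniserial — so $Q/\mm$ is the injective envelope of $R/\mm$, the minimal injective cogenerator of the local ring $R$.
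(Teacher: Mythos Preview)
Your proof is correct and follows essentially the same approach as the paper: both invoke Fact~\ref{F:mie}(v) for injectivity of $Q/\mm$, and both deduce the cogenerator property from the fact that $Q/\mm$ contains the unique simple $R$-module. You have simply spelled out the standard details that the paper leaves implicit.
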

	\begin{proof}
			By Fact~\ref{F:mie}(v), $Q/\mm$ is an injective $R$-module. Since $Q/\mm$ contains the unique simple $R$-module, it is an injective cogenerator in $\ModR$.
	\end{proof}
	\subsection{Uniserial modules over valuation domains}
	Recall that an $R$-module is called \EMP{uniserial} if its lattice of submodules is totally ordered. Over a valuation domain $R$ with quotient field $Q=Q(R)$, any module of the form $J/I$ is uniserial, where $I \subseteq J \subseteq Q$ are $R$-submodules of the quotient field. Uniserial modules of this form are called \EMP{standard}. In general there can be uniserial modules not isomorphic to a standard uniserial module, see \cite[\S X.4]{FS}. However, over a maximal valuation domain, every uniserial module is standard (\cite[\S X Proposition 3.1]{FS}). A very important fact for us is that definable subcategories of the module category of a valuation domain are completely determined by the standard uniserial modules they contain. This follows from a result due to Ziegler \cite{Z2}, reproved by algebraic methods by Monari-Martinez \cite{MM}, which shows that the indecomposable pure-injective modules over a valuation domain $R$ are up to isomorphism precisely the pure-injective envelopes of the standard uniserial modules over $R$. Note also, that given a standard uniserial $R$-module $J/I$, its pure-injective hull can be expressed explicitly --- it is additively equivalent to $JS/IS$, where $R \rightarrow S$ is any maximal immediate extension of $R$, see \cite[\S XIII, Corollary 5.5]{FS}.

	\begin{lem}\label{L:standarduni}
			Let $R$ be a valuation domain and $\mathcal{C}$ a definable subcategory of $\ModR$. Then $\mathcal{C}$ is determined uniquely as a definable subcategory of $\ModR$ by the standard uniserial modules it contains.
	\end{lem}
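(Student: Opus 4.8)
The plan is to derive the statement from Ziegler's description of definable subcategories of $\ModR$, combined with the classification of the indecomposable pure-injective $R$-modules over a valuation domain recalled just before the lemma.

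First, recall the module-theoretic analogue of Theorem~\ref{T:Ziegler} (see \cite[\S 4]{P}, or \cite{Z2}): a definable subcategory $\mathcal{C}$ of $\ModR$ is uniquely determined by the set $\mathcal{C} \cap \Zgs(R)$ of isomorphism classes of indecomposable pure-injective modules it contains. Hence it suffices to show that $\mathcal{C} \cap \Zgs(R)$ is already determined by the collection of standard uniserial modules lying in $\mathcal{C}$.

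Next, by the theorem of Ziegler \cite{Z2} (reproved by Monari-Martinez \cite{MM}) recalled above, every point of $\Zgs(R)$ is the pure-injective envelope $\mathrm{PE}(U)$ of some standard uniserial $R$-module $U=J/I$. I claim that for such a $U$ one has $\mathrm{PE}(U)\in\mathcal{C}$ if and only if $U\in\mathcal{C}$. The forward direction is immediate, since $U$ is a pure submodule of $\mathrm{PE}(U)$ and any definable subcategory is closed under pure submodules. For the converse, recall that a definable subcategory of $\ModR$ is closed under pure-injective envelopes (\cite[\S 3.4]{P}; this is the same closure property exploited in the proof of Corollary~\ref{C:VNR}), so $U\in\mathcal{C}$ forces $\mathrm{PE}(U)\in\mathcal{C}$.

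Combining these observations, $\mathcal{C}\cap\Zgs(R)=\{\,\mathrm{PE}(U)\mid U \text{ a standard uniserial module with } U\in\mathcal{C}\,\}$, which depends only on the set of standard uniserial modules contained in $\mathcal{C}$. Consequently any two definable subcategories of $\ModR$ containing exactly the same standard uniserial modules contain the same indecomposable pure-injectives and therefore coincide, as claimed. The only point requiring care is the closure of definable subcategories under pure-injective envelopes; once that is invoked, the remainder is bookkeeping with the Ziegler correspondence.
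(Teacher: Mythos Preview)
Your proof is correct and follows essentially the same approach as the paper: both invoke the Ziegler correspondence (definable subcategories are determined by their indecomposable pure-injectives), the classification of indecomposable pure-injectives over a valuation domain as pure-injective hulls of standard uniserial modules, and the closure of definable subcategories under pure-injective envelopes. The paper's proof cites \cite[\S XIII, Theorem 5.9]{FS}, \cite[Corollary 5.1.4]{P}, and \cite[Theorem 3.4.8]{P} for these three ingredients, but the argument is otherwise identical to yours.
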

	\begin{proof}
			By \cite[\S XIII Theorem 5.9]{FS}, an $R$-module $M$ is indecomposable pure-injective if and only if it is a pure-injective hull of a standard uniserial module. By \cite[Corollary 5.1.4]{P}, any definable subcategory is uniquely determined by the indecomposable pure-injectives it contains. Finally, an $R$-module $M$ belongs to a definable subcategory of $\ModR$ if and only if its pure-injective hull does (\cite[Theorem 3.4.8]{P}), which concludes the proof.
	\end{proof}

	Let $[\pp,\qq]$ be an interval in $\Spec(R)$. We will be especially interested in two kinds of standard uniserial modules --- $R_{\qq}/\pp$ and $R_{\pp}/\qq$. While $R_{\qq}/\pp$ is an epimorphic ring extension of $R$, the role of $R_{\pp}/\qq$ is clarified by the following observation:

\begin{lem}\label{L68}
		Let $R$ be a valuation domain and $S$ its maximal immediate extension. Let $[\pp,\qq]$ be an interval in $\Spec(R)$. Then the module $(R_{\pp}/\qq) \otimes_R S$ is an injective cogenerator in the category $\ModRqqppS$, and therefore it is a cogenerator in $\ModRqqpp$.
	\end{lem}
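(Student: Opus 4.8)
The plan is to establish the injectivity of $(R_{\pp}/\qq) \otimes_R S$ over the ring $T := (R_{\qq}/\pp) \otimes_R S \simeq S_{\qq S}/\pp S$ and then separately verify that it is a cogenerator there; the final ``therefore'' clause follows by faithfully flat descent along $R_{\qq}/\pp \to T$ (which is a faithfully flat ring homomorphism by Lemma~\ref{L:mie}(iii)), since a cogenerator of $\ModT$ restricts to a cogenerator of $\Mod\text{-}(R_{\qq}/\pp)$. First I would unwind the localizations: writing $U = R_{\qq}/\pp$, one has $Q(U) = R_{\pp}/\pp$, so $R_{\pp}/\qq$ fits into the short exact sequence $0 \to \qq R_{\pp}/\pp \to R_{\pp}/\pp \to R_{\pp}/\qq \to 0$ of $U$-modules, exhibiting $R_{\pp}/\qq$ as a quotient of the quotient field $Q(U)$ of $U$; in fact $R_{\pp}/\qq = Q(U)/\overline{\qq}$ where $\overline{\qq} = \qq R_{\pp}/\pp$ is (the image of) the maximal ideal of the valuation domain $U = R_{\qq}/\pp$.

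The key step is then to identify $(R_{\pp}/\qq) \otimes_R S$ as an object of $\ModT$. Tensoring the above presentation with $S$ over $R$ (equivalently, over $U$, since $U \otimes_R S = T$) and using Lemma~\ref{L:mie}(iv) which gives $Q(U) \otimes_R S = Q(T)$, one obtains $(R_{\pp}/\qq)\otimes_R S \simeq Q(T)/\qq S_{\qq S} \cdot(\cdots)$, and more precisely this is $Q(T)/\mm_T$ where $\mm_T$ is the maximal ideal of the \emph{maximal} valuation domain $T = S_{\qq S}/\pp S$ (maximality of $T$ is Lemma~\ref{L:mie}(ii)). At this point I would invoke Lemma~\ref{L:maxcogen}: over a maximal valuation domain, $Q/\mm$ is an injective cogenerator. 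Hence $(R_{\pp}/\qq)\otimes_R S$ is an injective cogenerator in $\ModT = \ModRqqppS$, which is exactly the first assertion.

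Finally, for the descent clause: since $R_{\qq}/\pp \to T$ is faithfully flat, every $T$-module is in particular an $R_{\qq}/\pp$-module, and an injective cogenerator of $\ModT$ need not stay injective after restriction, but it does remain a cogenerator — indeed, for any nonzero $M \in \Mod\text{-}(R_{\qq}/\pp)$, faithful flatness gives $M \otimes_{R_{\qq}/\pp} T \neq 0$, which then embeds into a product of copies of the $T$-cogenerator, and this embedding composed with the unit $M \to M \otimes_{R_{\qq}/\pp} T$ shows $M$ embeds into a product of copies of $(R_{\pp}/\qq)\otimes_R S$ viewed in $\Mod\text{-}(R_{\qq}/\pp)$. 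The main obstacle I anticipate is purely bookkeeping: carefully tracking that the quotient $\qq R_{\pp}$ in $R_{\pp}/\pp$ corresponds under $- \otimes_R S$ to the maximal ideal of $S_{\qq S}/\pp S$ rather than to some smaller or larger ideal, i.e.\ checking that the immediate-extension identification of $\Spec$ (Fact~\ref{F:mie} and the defining property of immediate extensions) is applied to the correct prime; once that is pinned down, the argument reduces to Lemma~\ref{L:maxcogen} and a one-line descent.
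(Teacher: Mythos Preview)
Your proposal is correct and follows essentially the same route as the paper's proof: both identify $(R_{\pp}/\qq)\otimes_R S$ with $Q(T)/\mm_T$ for the maximal valuation domain $T = (R_{\qq}/\pp)\otimes_R S$ via Lemma~\ref{L:mie}, invoke Lemma~\ref{L:maxcogen}, and then use faithful flatness of $R_{\qq}/\pp \to T$ for the descent clause. Your descent argument is slightly more explicit (spelling out the injectivity of the unit map $M \to M \otimes_U T$), whereas the paper simply asserts that any $U$-module embeds into a $T$-module; otherwise the arguments are the same.
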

	\begin{proof}
			Denote $U = R_{\qq}/\pp$, let $Q(U) = R_{\pp}/\pp$ be the quotient field of $U$ and let $\mm(U) = \qq/\pp$ be the maximal ideal of $U$. By Lemma~\ref{L:mie} we know that $Q(U) \otimes_R S$ is the field of quotients of the valuation domain $U \otimes_R S$ and clearly $\mm \otimes_R S$ is its maximal ideal. Also by Lemma~\ref{L:mie}, $U \otimes_R S$ is a maximal valuation domain. Therefore, Lemma~\ref{L:maxcogen} implies that $(R_{\pp}/\qq) \otimes_R S \simeq (Q(U)/\mm(U)) \otimes_R S$ is an injective cogenerator in $\ModU$.	

			Finally, since $U \rightarrow U \otimes_R S$ is a faithfully flat extension by Lemma~\ref{L:mie}, any $U$-module embeds into an $(U \otimes_R S)$-module. Therefore, $(R_{\pp}/\qq) \otimes_R S$ is a cogenerator in $\ModU$.
	\end{proof}

	Now we will be interested in computing the Ext-orthogonal to the modules of the form $(R_{\pp}/\qq) \otimes_R S$. The following lemma is proved in \cite[Lemma 6.6]{B} for the case in which $R$ is already maximal.

	\begin{lem}\label{L:extA}
			Let $\pp = \pp^2 \subseteq \qq$ be a couple of prime ideals, and let $I$ be an ideal. Then $\Ext_R^1(R/I,(R_{\pp}/\qq) \otimes_R S) = 0$ if and only if either
			\begin{enumerate}
					\item[(i)] $\pp \subseteq I$, or
					\item[(ii)] $I^\# \subseteq \pp$ and $I \not\simeq R_{\pp}$.
			\end{enumerate}
	\end{lem}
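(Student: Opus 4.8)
The plan is to reduce the computation of $\Ext_R^1(R/I, (R_\pp/\qq)\otimes_R S)$ to the already-known maximal case via a faithfully flat base change. Set $U = R_\qq/\pp$; by Lemma~\ref{L:mie} this is a valuation domain, $U \otimes_R S \simeq S_{\qq S}/\pp S$ is a \emph{maximal} valuation domain, and $U \to U \otimes_R S$ is faithfully flat with $Q(U)\otimes_R S = Q(U\otimes_R S)$. The module $(R_\pp/\qq)\otimes_R S$ is, up to the identification $R_\pp/\qq = Q(U)/\mm(U)$, exactly the module $(Q(U)/\mm(U))\otimes_U (U\otimes_R S)$, i.e. the module ``$Q/\mm$'' for the maximal valuation domain $U\otimes_R S$. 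First I would record that, since $\pp = \pp^2 \subseteq I$ is the degenerate case, the interesting situation is $\pp \not\subseteq I$, where $I$ is genuinely an ideal of $R$ with $I \subsetneq \pp$; because $\pp$ is idempotent, $\pp$ is an $R_\pp$-module (Lemma~\ref{L:VD}(iii)), and $R_\pp/\qq$ as well as $(R_\pp/\qq)\otimes_R S$ are $R_\pp$-modules. Hence $\Hom_R(R/I, -)$ and its derived functors applied to $(R_\pp/\qq)\otimes_R S$ only see $I$ through $IR_\pp$, and more precisely through $R_\pp/I R_\pp$; this lets me replace $R$ by $R_\pp$ from the start and assume $\pp$ is the maximal ideal, $I \subseteq \pp$, and $I$ is an $R_\pp$-submodule situation. (Alternatively one notes $R/I$ here can be taken $\pp$-divisible and $\pp$-torsion-free appropriately.)

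Next I would pass to $U = R_\qq/\pp$. Using the ring epimorphism $R \to R_\qq \to U$ and the fact that $(R_\pp/\qq)\otimes_R S$ is a $U\otimes_R S$-module, the change-of-rings/flatness isomorphism gives
$$\Ext_R^1\bigl(R/I, (R_\pp/\qq)\otimes_R S\bigr) \;\simeq\; \Ext_U^1\bigl((R/I)\otimes_R U,\ (R_\pp/\qq)\otimes_R S\bigr),$$
so the question becomes whether $\Ext_U^1(\,\overline{R/I},\ Q(U)/\mm(U)\otimes_U(U\otimes_R S)\,)$ vanishes, where $\overline{R/I} = (R/I)\otimes_R U$ is a cyclic $U$-module $U/I U$. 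By faithful flatness of $U \to U\otimes_R S$ (Lemma~\ref{L:mie}(iii)), $\Ext^1_U(U/IU, Q(U)/\mm(U)\otimes_U(U\otimes_R S)) = \Ext_U^1(U/IU, Q(U)/\mm(U))\otimes_U (U\otimes_R S)$ vanishes if and only if $\Ext_U^1(U/IU, Q(U)/\mm(U)) = 0$. Now $U$ is a valuation domain whose maximal immediate extension ``is'' $U\otimes_R S$, and $Q(U)/\mm(U)$ is precisely the object for which \cite[Lemma 6.6]{B} computes the $\Ext$-orthogonal in the maximal case --- but after tensoring up to the maximal extension the criterion is insensitive to whether $U$ itself was maximal, exactly by the same faithfully flat descent. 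Applying that lemma to the ideal $IU$ of $U$ translates into: $\Ext_U^1(U/IU, Q(U)/\mm(U)\otimes) = 0$ iff either $\mm(U) \subseteq IU$ (impossible once $\pp\not\subseteq I$, since then $IU \subsetneq \mm(U)$) or $(IU)^{\#} \subseteq \mm(U)$ --- which is automatic, $\mm(U)$ being maximal --- together with $IU \not\simeq U$, i.e. $IU$ is not free, i.e. $I \not\simeq R_\pp$. Unwinding the translation of $(-)^\#$ and $\simeq R_\pp$ under $R \rightsquigarrow R_\pp \rightsquigarrow U$ gives exactly conditions (i) and (ii) as stated, where (i) $\pp\subseteq I$ absorbs the degenerate $IU = \mm(U)$ boundary case and (ii) $I^\#\subseteq\pp$ with $I\not\simeq R_\pp$ is the nondegenerate case.

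The step I expect to be the main obstacle is making the reduction to the maximal case genuinely \emph{iff} rather than just one direction: I must be careful that tensoring the copresentation of $Q(U)/\mm(U)$ (or of $R/I$) with the faithfully flat extension $U\otimes_R S$ both preserves and reflects the vanishing of the relevant $\Ext^1$, which requires that the modules involved ($U/IU$ has a finite free presentation since $IU$ is finitely generated when $I$ is; and $Q(U)/\mm(U)$ has injective dimension $\le 1$ over $U$) behave well under flat base change --- so that $\Ext^1_U(U/IU, M)\otimes_U (U\otimes_R S) \cong \Ext^1_{U\otimes_R S}(\,(U\otimes_R S)/IU(U\otimes_R S),\ M\otimes_U(U\otimes_R S)\,)$ and faithful flatness kills one side exactly when it kills the other. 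A secondary bookkeeping point is correctly matching $(IU)^\#$ inside $U$ with $I^\#$ inside $R$ (using Lemma~\ref{L23} and Lemma~\ref{L:mie}(i), the bijection of ideals and of spectra) and matching ``$IU$ free over $U$'' with ``$I\simeq R_\pp$''; these are routine but need the idempotency $\pp=\pp^2$ to ensure $\pp$ is exactly the kernel of $R\to U$ and that $R_\pp$-submodule structure is available.
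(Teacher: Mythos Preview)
Your reduction strategy has a fatal gap at the change-of-rings step to $U = R_\qq/\pp$. The adjunction you invoke, $\Ext_R^1(R/I,N) \simeq \Ext_U^1((R/I)\otimes_R U, N)$ for a $U$-module $N$, requires $R \to U$ to be flat, which it is not when $\pp \neq 0$. Worse, in the interesting case $I \subsetneq \pp$ one computes $(R/I)\otimes_R U = R_\qq/(IR_\qq + \pp) = R_\qq/\pp = U$ (since $IR_\qq \subseteq \pp$ when $I \subsetneq \pp = \pp^2$), so your right-hand side becomes $\Ext_U^1(U,-) = 0$ regardless of $I$. This would force $\Ext_R^1(R/I,(R_\pp/\qq)\otimes_R S) = 0$ for every $I \subsetneq \pp$, contradicting the lemma. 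The preliminary step of ``replacing $R$ by $R_\pp$'' also fails: $(R_\pp/\qq)\otimes_R S$ is an $R_\qq$-module but \emph{not} an $R_\pp$-module, because $\qq$ is not an $R_\pp$-submodule of $R_\pp$ once $\pp \subsetneq \qq$ (take $s \in \qq\setminus\pp$; then $s^{-1}\qq \not\subseteq \qq$). Finally, even granting the rest, the base-change isomorphism $\Ext^1_U(U/IU,M)\otimes_U(U\otimes_R S) \simeq \Ext^1_U(U/IU,M\otimes_U(U\otimes_R S))$ needs $U/IU$ finitely presented, but $I$ is an arbitrary ideal, not assumed finitely generated.

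The paper's route avoids all of this by passing directly along the \emph{flat} extension $R \to S$: flatness gives $\Ext_R^1(R/I,(R_\pp/\qq)\otimes_R S) \simeq \Ext_S^1(S/IS,(R_\pp/\qq)\otimes_R S)$ without any hypothesis on $I$. Since $S$ is maximal, one applies \cite[Lemma~6.6]{B} over $S$ to the ideal $IS$ and the primes $\pp S \subseteq \qq S$, obtaining conditions (i$'$) $\pp S \subseteq IS$ or (ii$'$) $(IS)^\# \subseteq \pp S$ and $IS \not\simeq S_{\pp S}$. The translation back to $R$ uses the ideal bijection of the immediate extension: (i$'$)$\Leftrightarrow$(i) is immediate, $(IS)^\# = (I^\#)S$ gives the first half of (ii), and $IS \simeq S_{\pp S} \Leftrightarrow I \simeq R_\pp$ follows from writing a generator $t \in S$ of $IS$ over $S_{\pp S}$ as $t = re$ with $r \in R$ and $e$ a unit in $S$ (\cite[\S II, Lemma 1.6]{FS}).
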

	\begin{proof}
			If $R$ is a maximal valuation domain, this is precisely \cite[Lemma 6.6]{B}. For general valuation domain, we have 
			$$\Ext{}_R^1(R/I,(R_{\pp}/\qq) \otimes_R S) \simeq \Ext{}_S^1(S/IS,(R_{\pp}/\qq) \otimes_R S)$$ 
			by flatness of $S$ over $R$. Using the maximal case, this means that the vanishing of $\Ext_R^1(R/I,(R_{\pp}/\qq) \otimes_R S)$ occurs if and only if one of the following conditions hold over $S$:
			\begin{enumerate}
					\item[(i')] $S\pp \subseteq SI$, or
					\item[(ii')] $(SI)^\# \subseteq S\pp$ and $SI \not\simeq S_{S\pp}$.
			\end{enumerate}
			The condition $(i')$ is clearly equivalent to $(i)$. It is easy to see that $(SI)^\# = S(I^\#)$, and thus $(SI)^\# \subseteq S\pp$ is equivalent to $I^\# \subseteq \pp$. If $I \simeq R_{\pp}$, then clearly $SI \simeq S_{S\pp} \simeq R_{\pp} \otimes_R S$. Conversely, if $SI \simeq S_{S\pp}$, then there is $t \in S$ such that $SI = tS_{S\pp}$. By \cite[\S II Lemma 1.6]{FS}, there is an element $r \in R$ and a unit $e \in S$ such that $t=re$. Therefore, $SI = rS_{S\pp}$, and thus $I = SI \cap R_{\pp} = tR_{\pp}$. This proves that $(ii)$ is equivalent to $(ii')$.
	\end{proof}

\subsection{Density and gaps of admissible systems}\label{SS:densityandgaps}
	Let $(X,<)$ be a totally ordered set. A non-degenerate interval $x<y$ in $X$ is called \EMP{dense} if for any $x \leq s<t \leq y$ there is an element $z \in X$ with $s<z<t$. If $X$ admits a minimal element $0$ and a maximal element $1$, we say that $X$ is \EMP{dense} if the interval $0<1$ is dense. We say that $X$ is \EMP{nowhere dense} if it contains no dense intervals. We say that a subset $Y \subseteq X$ is \EMP{dense in $X$} if for any interval $x<y$ in $X$ there is $z \in Y$ such that $x<z<y$. Say that an element $y \in X$ \EMP{covers} an element $x \in X$ if $x<y$ and there is no element $z \in X$ such that $x<z<y$.

	If $R$ is a valuation domain and $\mathcal{X}$ an admissible system in $\Spec(R)$, we say that $\mathcal{X}$ is \EMP{nowhere dense} if the totally ordered set $(\mathcal{X},<)$ is such. We say that $\mathcal{X}$ is \EMP{dense everywhere} if $(\mathcal{X},<)$ is dense and if $\mathcal{X}$ contains an interval of the form $[0,\qq]$ and an interval of the form $[\pp,\mm]$.

	Let $\mathcal{X}$ be an admissible system of intervals of $\Spec(R)$. Following \cite[Notation 6.7]{B2}, we introduce first an equivalence relation $\sim$ on $\mathcal{X}$ by setting $\chi \sim \xi$ if either $\chi = \xi$ or whenever the interval $\chi<\xi$ (or $\xi < \chi$) in $(\mathcal{X},<)$ between the two intervals is dense.  Using the completeness we see that each equivalence class $C \in \mathcal{X}/\sim$ of $\mathcal{X}$ under $\sim$ has a minimal element $[\pp,\qq]$ and a maximal element $[\pp',\qq']$. This defines an interval $\tau_C = [\pp,\qq']$ associated to $C$ for each $C \in \mathcal{X}/\sim$. We let $\bar{\mathcal{X}}$ denote the set of intervals $\{\tau_C \mid C \in \mathcal{X}/\sim\}$. It is not hard to check that $\bar{\mathcal{X}}$ is a nowhere dense admissible system on $\Spec(R)$. 

	Also, we let $\mathcal{H}(\mathcal{X})$ be the collection of all equivalence classes from $\mathcal{X}/\sim$ with more than one element. Note, that this set corresponds naturally to the set of all maximal dense intervals in $(\mathcal{X},<)$. We also consider each equivalence class $C$ as a totally ordered subset of $(\mathcal{X},<)$.

	Let $\Spec^*(R) = \Spec(R) \cup \{-\infty,R\}$ be an extension of the spectrum of a valuation domain $R$, where $-\infty$ will be understood as a formal symbol satisfying $-\infty \subsetneq I$ for any ideal $I$ of $R$. Let $\qq \subsetneq \pp$ be two elements of $\Spec^*(R)$. 

	We say that $(\qq,\pp)$ is a \EMP{gap} of the admissible system $\mathcal{X}$ if one of the following conditions is satisfied:
	\begin{enumerate}
		\item[(i)] there are intervals $[\pp',\qq] < [\pp,\qq'] \in \mathcal{X}$ such that $[\pp,\qq']$ covers $[\pp',\qq]$,
		\item[(ii)] $\qq = -\infty$, and the minimal interval of $\mathcal{X}$ is of the form $[\pp,\qq']$, where $\pp \neq 0$,
		\item[(iii)] $\pp = R$, and the maximal interval of $\mathcal{X}$ is of the form $[\pp',\qq]$, where $\qq \neq \mm$,
		\item[(iv)] $\qq = -\infty$ and $\pp = R$ if $\mathcal{X} = \emptyset$.
	\end{enumerate}
	
	We denote the collection of all gaps of $\mathcal{X}$ by $\mathcal{G}(\mathcal{X})$. Observe that $\mathcal{G}(\mathcal{X}) = \mathcal{G}(\bar{\mathcal{X}})$. The relation between density, gaps, and ideals is the content of the following auxiliary result. Given an ideal $I$ and a gap $(\qq,\pp)$, we will denote by $I \in (\qq,\pp)$ the situation $\qq \subsetneq I \subsetneq \pp$.
	\begin{lem}\label{L:denseideal}
		Let $R$ be a valuation domain and $\mathcal{X}$ an admissible system in $\Spec(R)$. The for any ideal $I$ of $R$, one of the following possibilities occurs:  
			\begin{enumerate}
					\item[(i)] there is an interval $[\pp,\qq] \in \mathcal{X}$ such that $\pp \subseteq I \subseteq \qq$, or
					\item[(ii)] there is a gap $(\qq,\pp) \in \mathcal{G}(\mathcal{X})$ such that $I \in (\qq,\pp)$. 
			\end{enumerate}
			Furthermore, if $\mathcal{X}$ is dense everywhere, then $\mathcal{G}(\mathcal{X}) = \emptyset$, and therefore only $(i)$ can occur.
	\end{lem}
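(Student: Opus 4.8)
The plan is to test the position of $I$ against each interval of $\mathcal{X}$ and, whenever alternative $(i)$ fails, to read off a gap from the resulting Dedekind cut of the totally ordered set $(\mathcal{X},<)$. I take $I$ to be a proper ideal of $R$, as in Lemma~\ref{L23}; properness enters in the form $I\subsetneq R$ and $I\subseteq\mm$ (where $\mm$ is the maximal ideal of the local ring $R$).

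First I would record a trichotomy: for each $\chi=[\pp_\chi,\qq_\chi]\in\mathcal{X}$, since the ideals of $R$ form a chain, exactly one of $I\subsetneq\pp_\chi$, $\pp_\chi\subseteq I\subseteq\qq_\chi$, $\qq_\chi\subsetneq I$ holds, the middle one being precisely $(i)$. So assume $(i)$ fails for every $\chi$, and set $\mathcal{B}=\{\chi\in\mathcal{X}\mid\qq_\chi\subsetneq I\}$ and $\mathcal{A}=\{\chi\in\mathcal{X}\mid I\subsetneq\pp_\chi\}$. These are disjoint, cover $\mathcal{X}$, and every member of $\mathcal{B}$ is $<$ every member of $\mathcal{A}$, because $\qq_\chi\subsetneq I\subsetneq\pp_\xi$ whenever $\chi\in\mathcal{B}$ and $\xi\in\mathcal{A}$. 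Thus $(\mathcal{B},\mathcal{A})$ is a cut of $(\mathcal{X},<)$.

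Next I would exploit the completeness axiom (Definition~\ref{D:admissiblesystem}(iii)). Applied to $\mathcal{Y}=\mathcal{X}$ it shows that a nonempty $\mathcal{X}$ has a least element $\nu_0$ and a greatest element $\mu_0$; more generally, it computes the supremum (resp.\ infimum) in $(\mathcal{X},<)$ of any nonempty $\mathcal{Y}\subseteq\mathcal{X}$ lacking a maximum (resp.\ minimum) as the interval with left endpoint $\bigcup_{\chi\in\mathcal{Y}}\pp_\chi$ (resp.\ right endpoint $\bigcap_{\chi\in\mathcal{Y}}\qq_\chi$). With this in hand the case analysis is short. If $\mathcal{X}=\emptyset$ we land in gap $(iv)$. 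If $\mathcal{B}=\emptyset\neq\mathcal{X}$, then $\nu_0\in\mathcal{A}$, so $I\subsetneq\pp_{\nu_0}$, forcing $\pp_{\nu_0}\neq 0$, and $(-\infty,\pp_{\nu_0})$ is a gap of type $(ii)$ with $I\in(-\infty,\pp_{\nu_0})$. If $\mathcal{A}=\emptyset\neq\mathcal{X}$, then $\mu_0\in\mathcal{B}$, so $\qq_{\mu_0}\subsetneq I\subseteq\mm$, forcing $\qq_{\mu_0}\neq\mm$, and $(\qq_{\mu_0},R)$ is a gap of type $(iii)$ with $I\in(\qq_{\mu_0},R)$. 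If both $\mathcal{B},\mathcal{A}$ are nonempty, put $\beta=\sup\mathcal{B}$, $\alpha=\inf\mathcal{A}$. The delicate step — which I expect to be the main obstacle — is to show $\beta=\max\mathcal{B}$ and $\alpha=\min\mathcal{A}$: if $\beta\notin\mathcal{B}$ then $\beta\in\mathcal{A}$ and $\mathcal{B}$ has no maximum, so $\pp_\beta=\bigcup_{\chi\in\mathcal{B}}\pp_\chi\subseteq I$ (as each $\pp_\chi\subseteq\qq_\chi\subsetneq I$), contradicting $I\subsetneq\pp_\beta$; symmetrically, if $\alpha\notin\mathcal{A}$ then $\qq_\alpha=\bigcap_{\chi\in\mathcal{A}}\qq_\chi\supseteq I$, contradicting $\qq_\alpha\subsetneq I$. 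Once this is known, $\beta<\alpha$ and no member of $\mathcal{X}=\mathcal{B}\sqcup\mathcal{A}$ lies strictly between them (it would lie in $\mathcal{B}$, contradicting maximality of $\beta$, or in $\mathcal{A}$, contradicting minimality of $\alpha$), so $\alpha$ covers $\beta$ and $(\qq_\beta,\pp_\alpha)$ is a gap of type $(i)$ with $\qq_\beta\subsetneq I\subsetneq\pp_\alpha$. This proves the dichotomy $(i)$/$(ii)$.

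For the ``furthermore'' part I would rule out every type of gap under the dense-everywhere hypothesis. Type $(iv)$ is excluded because $[0,\qq]\in\mathcal{X}$, so $\mathcal{X}\neq\emptyset$. The interval $[0,\qq]$ has left endpoint $0$ and nothing of $\mathcal{X}$ can be $<$ it, so it is $\nu_0$, whence type $(ii)$ (which requires $\pp_{\nu_0}\neq0$) cannot occur; symmetrically $[\pp,\mm]=\mu_0$ with $\qq_{\mu_0}=\mm$, excluding type $(iii)$. Finally, a type $(i)$ gap would produce a covering pair $\beta<\alpha$ in $(\mathcal{X},<)$, which is impossible since density of $(\mathcal{X},<)$ inserts a member strictly between any two. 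Hence $\mathcal{G}(\mathcal{X})=\emptyset$, and by the first part every proper ideal of $R$ satisfies $(i)$.
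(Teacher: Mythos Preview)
Your proof is correct and follows essentially the same approach as the paper: both split $\mathcal{X}$ into the sets $\mathcal{A}=\{\chi\mid I\subsetneq\pp_\chi\}$ and $\mathcal{B}=\{\chi\mid\qq_\chi\subsetneq I\}$, use completeness to produce $\max\mathcal{B}$ and $\min\mathcal{A}$, and read off the covering pair (or boundary gap) from the resulting cut. Your version is slightly more explicit---you phrase the key step via $\sup\mathcal{B}$ and $\inf\mathcal{A}$ and argue by contradiction that they are attained, whereas the paper directly observes that the interval with left endpoint $\bigcup_{\chi\in\mathcal{B}}\pp_\chi$ lies in $\mathcal{B}$---and you spell out the ``furthermore'' clause (which the paper leaves as ``clear from the definition''), but the substance is identical.
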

	\begin{proof}
			If $\mathcal{X}$ is empty then $(-\infty,R)$ is a gap, and $(ii)$ is clearly true. Then we can assume $\mathcal{X}$ non-empty. Let us assume that $(i)$ is not true. Then $\mathcal{X}$ can be written as a disjoint union $\mathcal{X} = \mathcal{A} \cup \mathcal{B}$, where $\mathcal{A} = \{\chi \in \mathcal{X} \mid I \subsetneq \pp_\chi \}$, and $\mathcal{B} = \{\chi \in \mathcal{X} \mid \qq_\chi \subsetneq I\}$. If $\mathcal{B}$ is empty, then $\mathcal{A}$ is non-empty, and by the completeness $\mathcal{A}$ has a minimal element $[\pp_A,\qq_A]$. Necessarily $I \subsetneq \pp_A$, and thus $(-\infty,\pp_A) \in \mathcal{G}(\mathcal{X})$. The case when $\mathcal{A}$ is empty is handled similarly.
			
			Suppose that both $\mathcal{A}$ and $\mathcal{B}$ are non-empty. By the completeness, there is an interval of the form $[\pp_B,\qq_B]$, where $\pp_B = \bigcup_{\chi \in \mathcal{B}} \pp_\chi$. Since $\pp_\chi \subsetneq I$ for all $\chi \in \mathcal{B}$, we have $\pp_B \subseteq I$, and thus $[\pp_B,\qq_B]$ belongs to $\mathcal{B}$, and it is the maximal element of $(\mathcal{B},\leq)$. Similarly, $\mathcal{A}$ has a minimal element $[\pp_A,\qq_A]$. But then $[\pp_A,\qq_A]$ covers $[\pp_B,\qq_B]$, and therefore there is a gap $(\qq_B,\pp_A) \in \mathcal{G}(\mathcal{X})$ such that $\qq_B \subsetneq I \subsetneq \pp_A$.

			The furthermore part is clear from the definition of a dense everywhere admissible system.
	\end{proof}
	
	\subsection{Cotilting modules corresponding to dense everywhere admissible systems}\label{SS:dense}
For admissible systems which are dense everywhere, the associated 1-cotilting modules have a rather special form, which will turn important in \S \ref{S:coaisletoint}. The following proof is a generalization of \cite[Proposition 5.4]{B2}. 
	\begin{prop}\label{P:cotiltingdense}
		Let $R$ be a valuation domain and $R \subseteq S$ a maximal immediate extension. Suppose that $\mathcal{X}$ is a dense everywhere admissible system in $\Spec(R)$. Then the module 
			$$C = \prod_{[\pp,\qq] \in \mathcal{X}} ((R_{\pp}/\qq) \otimes_R S)$$
		is a 1-cotilting module associated to the 1-cotilting class $\mathcal{C}_\mathcal{X}$.
	\end{prop}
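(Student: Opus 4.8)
The plan is to verify the two properties that, by definition (see \S\ref{SS:cotilting}), make $C$ a $1$-cotilting module: that $\operatorname{id}_R C\le 1$ and that $\Perp{}C=\Cogen(C)$; and simultaneously to show that this common class equals $\mathcal{C}_\mathcal{X}$. Write $C_\chi=(R_{\pp_\chi}/\qq_\chi)\otimes_R S$ for $\chi=[\pp_\chi,\qq_\chi]\in\mathcal{X}$, so $C=\prod_\chi C_\chi$; by Fact~\ref{F:mie}(ii) each $C_\chi$, hence also $C$, is pure-injective. The arguments will rest on four ingredients already available: Lemma~\ref{L68} (each $C_\chi$ cogenerates the modules over $R_{\qq_\chi}/\pp_\chi$), Lemma~\ref{L:extA} (the $\Ext^1$-orthogonal class of $C_\chi$, described by conditions on ideals --- applicable since every $\pp_\chi$ is idempotent), Lemma~\ref{L:denseideal} (density everywhere gives $\mathcal{G}(\mathcal{X})=\emptyset$, so every ideal is trapped between the endpoints of some interval of $\mathcal{X}$), and Lemma~\ref{L:cyclic} together with Theorem~\ref{T01} (definable torsion-free classes over $R$ are determined by their cyclic members).

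For the injective dimension, tensor the exact sequence $0\to R_{\pp_\chi}/\qq_\chi\to Q/\qq_\chi\to Q/R_{\pp_\chi}\to 0$ with the flat module $S$; using Fact~\ref{F:mie}(iii)--(iv) one gets $0\to C_\chi\to Q(S)/\qq_\chi S\to Q(S)/S_{\pp_\chi S}\to 0$, where the middle term is injective in $\ModS$ by Fact~\ref{F:mie}(v) and the right-hand term is injective in $\ModS$ since $\pp_\chi S$ is an idempotent prime of the maximal valuation domain $S$ (or one simply invokes $\operatorname{id}_S C_\chi\le 1$ from \cite{B}). As $S$ is flat over $R$, injective $S$-modules restrict to injective $R$-modules, so $\operatorname{id}_R C_\chi\le 1$; injective dimension at most one being closed under products, $\operatorname{id}_R C\le 1$.

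Now I compute $\Perp{}C$. Being pure-injective with $\operatorname{id}_R C\le 1$, the class $\Perp{}C$ is closed under submodules, products, extensions and direct limits, hence definable torsion-free, and is thus determined by its cyclic members; moreover $\Ext^1_R(R/I,C)=\prod_\chi\Ext^1_R(R/I,C_\chi)$ is computed by Lemma~\ref{L:extA}. If $R/I\in\mathcal{C}_\mathcal{X}$, say $I\in\langle\chi_0\rangle$ (Lemma~\ref{L04}), then for any $\chi\in\mathcal{X}$ the trichotomy $\chi<\chi_0$, $\chi=\chi_0$, $\chi_0<\chi$ combined with $\pp_{\chi_0}\subseteq I\subseteq I^\#\subseteq\qq_{\chi_0}$ forces, via Lemma~\ref{L:extA}, $\Ext^1_R(R/I,C_\chi)=0$; hence $\mathcal{C}_\mathcal{X}\subseteq\Perp{}C$. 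Conversely, if $R/I\notin\mathcal{C}_\mathcal{X}$, then $I\notin\langle\chi\rangle$ for every $\chi$, and since $\mathcal{X}$ is dense everywhere, $\mathcal{G}(\mathcal{X})=\emptyset$, so Lemma~\ref{L:denseideal} yields $\chi_0=[\pp_0,\qq_0]\in\mathcal{X}$ with $\pp_0\subseteq I\subseteq\qq_0$; as $I\notin\langle\chi_0\rangle$ this forces $\qq_0\subsetneq I^\#\subseteq\mm$, so $\chi_0$ is not the maximal interval of $\mathcal{X}$. Using density of $(\mathcal{X},<)$, one locates a larger interval $\chi=[\pp_\chi,\qq_\chi]$ with $\qq_0\subsetneq\pp_\chi\subseteq I^\#$; then $I\subsetneq\pp_\chi$, and comparing $\pp_\chi$ with $I^\#$ --- using the exceptional clause $I\simeq R_{\pp_\chi}$ of Lemma~\ref{L:extA} in the borderline case $\pp_\chi=I^\#$ --- we obtain $\Ext^1_R(R/I,C_\chi)\ne 0$, so $R/I\notin\Perp{}C$. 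Therefore $\Perp{}C=\mathcal{C}_\mathcal{X}$.

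It remains to prove $\Cogen(C)=\mathcal{C}_\mathcal{X}$. One inclusion follows at once from $C\in\mathcal{C}_\mathcal{X}$: each $C_\chi$ is a module over $R_{\qq_\chi}/\pp_\chi$, hence $\qq_\chi$-torsion-free and annihilated by $\pp_\chi$, so $C_\chi\in\mathcal{C}_\mathcal{X}$ by Lemma~\ref{L01}, and $\mathcal{C}_\mathcal{X}$ is closed under products and submodules, whence $\Cogen(C)\subseteq\mathcal{C}_\mathcal{X}$. For $\mathcal{C}_\mathcal{X}\subseteq\Cogen(C)$, take $M\in\mathcal{C}_\mathcal{X}$ and $0\ne m\in M$: by Lemma~\ref{L01} there is $\chi$ with $\pp_\chi m=0$ and $mR$ $\qq_\chi$-torsion-free, so by Lemma~\ref{L23} the canonical map embeds $mR$ into an $R_{\qq_\chi}/\pp_\chi$-module, which by Lemma~\ref{L68} embeds into a power of $C_\chi$; this nonzero map out of $mR$ is then extended to a map $M\to C$ not annihilating $m$ by a maximal-extension argument along $mR\subseteq M$ that uses $\operatorname{id}_R C\le 1$, the already-established equality $\Perp{}C=\mathcal{C}_\mathcal{X}$, and the structure theory of modules over valuation domains, as in the proof of \cite[Proposition 5.4]{B2}. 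Since this works for every nonzero $m$, $M\in\Cogen(C)$. Combining, $\Perp{}C=\Cogen(C)=\mathcal{C}_\mathcal{X}$ and $\operatorname{id}_R C\le 1$, so $C$ is a $1$-cotilting module with associated $1$-cotilting class $\mathcal{C}_\mathcal{X}$. I expect the main obstacle to be exactly the two steps using density everywhere --- the inclusions $\Perp{}C\subseteq\mathcal{C}_\mathcal{X}$ and $\mathcal{C}_\mathcal{X}\subseteq\Cogen(C)$: the hypothesis is what rules out ``gap'' ideals $R/I$ that would be $\Ext^1$-orthogonal to every $C_\chi$ yet lie outside $\mathcal{C}_\mathcal{X}$, and making both the interval-climbing estimate and the maximal-extension argument watertight is where the real work lies.
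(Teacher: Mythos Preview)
Your overall architecture matches the paper's: establish pure-injectivity (hence $\operatorname{id}_R C\le 1$), prove $\Perp{}C=\mathcal{C}_\mathcal{X}$ on cyclics, and then settle $\Cogen(C)=\mathcal{C}_\mathcal{X}$. Two minor points first. The sentence ``injective dimension at most one being closed under products'' is not a valid general principle; the paper instead observes that $C$ is pure-injective (a product of pure-injectives), and over a ring of weak global dimension $\le 1$ this already forces $\operatorname{id}_R C\le 1$ --- you noted pure-injectivity yourself, so just use it. In your contrapositive for $\Perp{}C\subseteq\mathcal{C}_\mathcal{X}$, the ``borderline case $\pp_\chi=I^\#$'' is handled by asserting $I\simeq R_{\pp_\chi}$, which is not justified; the clean fix is that density lets you choose $\chi$ with $\qq_0\subsetneq\pp_\chi\subsetneq I^\#$ \emph{strictly}, and then Lemma~\ref{L:extA}(ii) fails outright.

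The real problem is the inclusion $\mathcal{C}_\mathcal{X}\subseteq\Cogen(C)$. Your proposed ``maximal-extension argument'' is not a proof: given a nonzero map $mR\to C^\kappa$, the obstruction to extending along $mR\hookrightarrow M$ lives in $\Ext^1_R(M/mR,C^\kappa)$, and $M/mR$ need not lie in $\Perp{}C$, so there is no reason the map extends (or that any extension survives on $m$). Citing \cite[Proposition~5.4]{B2} does not close the gap --- that proof, like the paper's, does \emph{not} proceed by extending maps elementwise. The paper instead argues as follows: since $\Cogen(C)\subseteq\Perp{}C$, the class $\Cogen(C)$ is closed under extensions and is a torsion-free class; for $M\in\mathcal{C}_\mathcal{X}$ one takes the torsion submodule $T\subseteq M$ with respect to this torsion pair and shows $T=0$. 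Assuming $0\neq t\in T$ with $\Ann_R(t)\in\langle\pp,\qq\rangle$, one passes to the $\pp$-socle $T'=\Soc_{\pp}(T)$, checks $T/T'\in\Perp{}C$ so that $\Hom_R(T',C)=0$, and then observes that the image of the localization map $T'\to T'_{\qq}$ lies in $\Cogen(C)$ (via Lemma~\ref{L68}), forcing $T'$ to be $\qq$-torsion --- contradicting $t\in T'$ with $\Ann_R(t)^\#\subseteq\qq$. This torsion-pair/socle reduction is precisely the ``real work'' you anticipated, and it cannot be replaced by a generic extension-along-submodule step.
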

	\begin{proof}
			We show that $C$ is 1-cotilting by proving that $\Cogen(C) = {}^{\perp} C$. Since all the modules of the form $R_{\pp}/\qq$ are standard uniserial $R$-modules, it follows from Fact~\ref{F:mie} that $(R_{\pp}/\qq) \otimes_R S$ is pure-injective for all $[\pp,\qq] \in \mathcal{X}$, and thus $C$ is pure-injective, and therefore of injective dimension at most one. In particular, ${}^\perp C$ is closed under submodules, pure epimorphic images, and direct limits (see \cite[Corollary 6.21]{GT}).

				\textbf{Claim I:} ${}^\perp C = \mathcal{C}_\mathcal{X}$:
		Recall that
			$$\mathcal{C}_\mathcal{X} = \{ M \in \ModR \mid ~\forall 0 \neq m \in M ~\exists [\pp,\qq] \in \mathcal{X}: ~\Ann{}_R(m) \in \langle \pp,\qq \rangle\},$$
			which is a cosilting class by Theorem~\ref{T01}, and even a 1-cotilting class, as $R \in \mathcal{C}_\mathcal{X}$. To show ${}^\perp C = \mathcal{C}_\mathcal{X}$, it is by Lemma~\ref{L:cyclic} enough to show that both classes contain the same cyclic modules. If $R/I \in {}^\perp C$, then by Lemma~\ref{L:denseideal} there is an interval $[\pp,\qq] \in \mathcal{X}$ such that $\pp \subseteq I \subseteq \qq$. Using Lemma~\ref{L:extA}, $I^\# \subseteq \pp'$ for any interval $[\pp',\qq'] \in \mathcal{X}$ with $\qq \subseteq \pp'$, and thus by density of $\mathcal{X}$, necessarily $I^\# \subseteq \qq$, proving that $R/I \in \mathcal{C}_\mathcal{X}$.

			Let $R/I \in \mathcal{C}_\mathcal{X}$, and let us show $R/I \in {}^\perp C$. Choose $[\pp,\qq] \in \mathcal{X}$. If $\pp \subseteq I$, we apply Lemma~\ref{L:extA}(i). Assume now that $I \subsetneq \pp$. Since $R/I \in \mathcal{C}_\mathcal{X}$, then necessarily $I^\# \subsetneq \pp$. Because $I \simeq I_{I^\#}$, we infer that $I$ cannot be isomorphic to $R_{\pp}$, and thus $R/I \in {}^\perp C$ by Lemma~\ref{L:extA}(ii).
	
		\textbf{Claim II:} $\Cogen(C) \subseteq {}^\perp C$:

			Since ${}^\perp C = \mathcal{C}_\mathcal{X}$ is a cosilting class, it is enough to show that $C \in \mathcal{C}_\mathcal{X}$. This amounts to checking that $(R_{\pp}/\qq) \otimes_R S \in \mathcal{C}_\mathcal{X}$ for any $[\pp,\qq] \in \mathcal{X}$. As $S$ is a flat $R$-module and $\mathcal{C}_\mathcal{X}$ is closed under direct limits, the task finally reduces to showing that $R_{\pp}/\qq \in \mathcal{C}_\mathcal{X}$ for all $[\pp,\qq] \in \mathcal{X}$. For any non-zero element $x \in R_{\pp}/\qq$, we have $\Ann_R(x) = s^{-1}\qq$ for some $s \in R_{\qq} \setminus \pp$. Therefore $\pp \subseteq \Ann_R(x) \subseteq \qq$, and clearly also $\Ann_R(x)^\# \subseteq \qq$. Therefore, $R_{\pp}/\qq \in \mathcal{C}_\mathcal{X}$.
								
			\textbf{Claim III:} ${}^\perp C \subseteq \Cogen(C)$:

			By Claim II we know that $\Cogen(C)$ is closed under extensions, that is, $\Cogen(C)$ is a torsion-free class. Choose $M \in {}^\perp C = \mathcal{C}_\mathcal{X}$ and let $T$ be its maximal torsion submodule with respect to the torsion pair with torsion-free class $\Cogen(C)$. Towards a contradiction, assume that there is a non-zero element $t \in T $, and let $I = \Ann_R(t)$. Claim I then implies that $I \in \langle \pp,\qq \rangle$ for some $[\pp,\qq] \in \mathcal{X}$.

			Put $T' = \Soc_{\pp}(T) = \{m \in T \mid \pp m = 0\}$. We claim that $\Hom_R(T',C)=0$. Since $\Hom_R(T,C) = 0$, it is enough to show that $T/T' \in {}^\perp C$. Pick $m + T' \in T/T'$ non-zero, and let $J = \Ann_R(m+T')$ and $K = \Ann_R(m)$. Since $T \in \mathcal{C}_\mathcal{X}$, there is $[\pp',\qq'] \in \mathcal{X}$ with $K \in \langle \pp',\qq' \rangle$. As $m \not\in T'=\Soc_{\pp}(T)$, and $\pp=\pp^2$, clearly $K \subsetneq \pp$, and thus $\qq' \subsetneq \pp$. Clearly $K \subseteq J$. If $r \in J \setminus K$, then $rm \in T'$, and we have inclusions $\pp \subseteq \Ann_R(rm) = r^{-1}K \subseteq K^\#  \subseteq \qq'$, which is a contradiction with $\qq' \subsetneq \pp$. Therefore $J = K$, showing that $T/T' \in \mathcal{C}_\mathcal{X} = \Perp{} C$, and thus $\Hom_R(T',C)=0$.

			Consider the localization map $f: T' \rightarrow T'_{\qq}$. The module $T'_{\qq}$ is an $R_{\qq}/\pp$-module, and whence is cogenerated by $(R_{\pp}/\qq) \otimes_R S$ due to Lemma~\ref{L68}, and therefore belongs to $\Cogen(C)$. Then also $T' / \Ker(f) \in \Cogen(C)$, as it is a submodule in $T'_{\qq}$. Together with $\Hom_R(T',C) = 0$, this forces $T' = \Ker(f)$, or in other words, $T'$ is $\qq$-torsion. But since $I \in \langle \pp,\qq \rangle$, $0 \neq t \in T' \setminus \Gamma_{\qq}(T')$ by Lemma~\ref{L23}, a contradiction.
	\end{proof}
	\subsection{Description via homology}\label{SS:homology}
	It will be useful to express the cosilting classes homologically, using the derived tensor functor with respect to certain uniserial modules coming from the intervals and gaps. For this, we introduce the following notation. Let $\mathcal{X}$ be an admissible system and $(\qq,\pp) \in \mathcal{G}(\mathcal{X})$ be a gap. Then we define a complex
			$$K(\qq,\pp) = (\cdots \rightarrow 0 \rightarrow \pp \xrightarrow{i} R_{\qq} \rightarrow 0 \rightarrow \cdots),$$ 
			where $\pp$ is in degree 0, and $i$ is the natural inclusion. In the case where $\qq = -\infty$, the symbol $R_{\qq}$ will be interpreted as zero, and thus $K(-\infty,\pp)$ is just a stalk complex of the prime ideal $\pp$ concentrated in degree $0$. Note that the zero cohomology of $K(\qq,\pp) \otimes_R -$ is then computed as follows
			$$H^0(K(\qq,\pp) \otimes_R -) = \begin{cases} \Tor_1^R(R_{\qq}/\pp,-), & \text{ if} \qq \in \Spec(R) \\ \pp \otimes_R -, & \text{ if} \qq = - \infty.  \end{cases}$$

					Also, it will be convenient to let $\Gamma_{-\infty}$ be the identity functor, while $F_{-\infty}$ and $\Soc_{R}$ will both stand for the zero functor on $\ModR$.
	\begin{lem}\label{L:compute}
			Let $R$ be a valuation domain, and $\mathcal{X}$ an admissible system in $\Spec(R)$. Let $M$ be an  $R$-module $M$ and $I$ an ideal of $R$.
			\begin{enumerate}
					\item[(i)] For any interval $[\pp,\qq] \in \mathcal{X}$ we have:
							
							\begin{itemize} 
											
									\item $\Tor_1^R(R_{\qq}/\pp,R/I) = 0$ if and only if either $I \supseteq \pp$, or if $I \subsetneq \pp$ then $I^\# \subseteq \pp$ and $I \not\simeq R_{\pp}$.
							\end{itemize}
					\item[(ii)] For any gap $(\qq,\pp) \in \mathcal{G}(\mathcal{X})$ we have:
							\begin{itemize}
									\item $H^0(K(\qq,\pp) \otimes_R M)=0$ if and only if $\Gamma_{\qq}(M) \subseteq \Soc_{\pp}(M)$.
							\end{itemize}
			\end{enumerate}
	\end{lem}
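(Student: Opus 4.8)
The plan is to evaluate both sides by resolving $R_\qq/\pp$ (resp. $\pp$) flatly and then unwinding the resulting kernels. Throughout I would use that over a ring of weak global dimension at most one every ideal and every localization of $R$ is flat, whence $\Tor_2^R(-,-)\equiv0$, and that the prime $\pp$ appearing in an interval of $\mathcal{X}$ or in a gap satisfies $\pp=\pp^2$ by the admissibility axioms (together with Lemma~\ref{L:VD}).

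For (i), the sequence $0\to\pp\to R_\qq\to R_\qq/\pp\to0$ is a flat resolution of $R_\qq/\pp$, so $\Tor_1^R(R_\qq/\pp,R/I)$ is the kernel of $\pp\otimes_R R/I\to R_\qq\otimes_R R/I$, i.e. of the natural map $\pp/\pp I\to R_\qq/IR_\qq$ induced by $\pp\subseteq R_\qq$; hence $\Tor_1^R(R_\qq/\pp,R/I)\cong(\pp\cap IR_\qq)/\pp I$. I would then split into cases. If $\pp\subseteq I$, then $\pp\subseteq IR_\qq$ and $\pp I=\pp^2=\pp$, so the $\Tor$ vanishes. If $I\subsetneq\pp$, then $I\subseteq\pp\cap IR_\qq$, so $I/\pp I$ injects into the $\Tor$; if moreover $I^\#\not\subseteq\pp$, pick $s\in I^\#\setminus\pp$, so $\pp\subsetneq sR$ and $\pp I\subseteq sI\subsetneq I$, giving $I/\pp I\ne0$ and a nonzero $\Tor$; and if $I^\#\subseteq\pp$, then $I$ is an $R_\pp$-module by Lemma~\ref{L23}, so $IR_\qq=I$ and the $\Tor$ equals $I/\pp I$, which is $R_\pp/\pp\ne0$ when $I\simeq R_\pp$ (write $I=cR_\pp$ and compute $\pp I=c\pp$) and is zero when $I$ is a non-principal $R_\pp$-submodule of $Q$ (then each $y\in I$ lies in $y'R_\pp$ for some $y'\in I$ with $y/y'\in\pp R_\pp=\pp$, so $I\subseteq\pp I$).

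For (ii), recall $H^0(K(\qq,\pp)\otimes_R-)$ equals $\Tor_1^R(R_\qq/\pp,-)$ when $\qq\in\Spec(R)$ and $\pp\otimes_R-$ when $\qq=-\infty$. The case $\pp=R$ is trivial since $\Soc_R=0$ and $H^0(K(\qq,R)\otimes_R M)\cong\Gamma_\qq(M)$; the case $\qq=-\infty$ amounts to $\pp\otimes_R M=0\iff\pp M=0$, which holds because the canonical surjection $\pp\otimes_R M\twoheadrightarrow\pp M$ gives one direction while $\pp M=0$ makes $M$ an $R/\pp$-module, so $\pp\otimes_R M\cong(\pp/\pp^2)\otimes_{R/\pp}M=0$. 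So assume $\qq\in\Spec(R)$ and that $\pp$ is a proper idempotent prime, hence $\pp\not\subseteq\qq$ and $\pp R_\qq=R_\qq$. Using the flat resolution above, $\Tor_1^R(R_\qq/\pp,M)=\ker\!\big(\pp\otimes_R M\xrightarrow{\gamma}M\xrightarrow{\ell}M_\qq\big)=\gamma^{-1}(\Gamma_\qq(M))$, where $\operatorname{im}\gamma=\pp M$ and $\ker\ell=\Gamma_\qq(M)$. If this vanishes then $\pp M\cap\Gamma_\qq(M)=\gamma(\gamma^{-1}(\Gamma_\qq(M)))=0$, hence $\pp\Gamma_\qq(M)\subseteq\pp M\cap\Gamma_\qq(M)=0$; this is the forward implication.

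For the converse, suppose $\pp\Gamma_\qq(M)=0$, so $\Gamma_\qq(M)$ is an $R/\pp$-module, and apply the long exact $\Tor_1^R(R_\qq/\pp,-)$-sequence to $0\to\Gamma_\qq(M)\to M\to F_\qq(M)\to0$. First, $\Tor_1^R(R_\qq/\pp,\Gamma_\qq(M))=0$ because $\pp\otimes_R\Gamma_\qq(M)\cong(\pp/\pp^2)\otimes_{R/\pp}\Gamma_\qq(M)=0$. Next, $F_\qq(M)$ is $\qq$-torsion-free and so embeds into the $R_\qq$-module $M_\qq$; on $M_\qq$ the map $\pp\otimes_R M_\qq\to R_\qq\otimes_R M_\qq$ is an isomorphism (both sides are $M_\qq$ and the map is the identity, as $\pp R_\qq=R_\qq$), so $\Tor_1^R(R_\qq/\pp,M_\qq)=0=R_\qq/\pp\otimes_R M_\qq$, whence $\Tor_1^R(R_\qq/\pp,F_\qq(M))=0$ follows from $0\to F_\qq(M)\to M_\qq\to M_\qq/F_\qq(M)\to0$ together with $\Tor_2^R\equiv0$. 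The long exact sequence now forces $\Tor_1^R(R_\qq/\pp,M)=0$. The main obstacle is keeping the case analysis in (i) correct---especially isolating the condition $I\simeq R_\pp$---and verifying the identifications of the various tensor products as module maps; everything else is routine once the idempotency of $\pp$ is exploited.
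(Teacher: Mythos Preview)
Your proof is correct. For part (i) you give a direct, self-contained computation where the paper simply cites \cite[Theorem 6.11, Claim (i)]{B2}, so your argument is actually more informative than the paper's.

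For part (ii) your argument and the paper's diverge in an interesting way. The paper observes that, since $\pp R_{\qq}=R_{\qq}$, the map $\pp\otimes_R M\to M_{\qq}$ can be identified with $\pp\otimes_R f_M$ where $f_M:M\to M_{\qq}$ is the localization map; flatness of $\pp$ then gives $\Ker(\pp\otimes_R f_M)=\pp\otimes_R\Gamma_{\qq}(M)$ directly, and the equivalence $\pp\otimes_R N=0\Leftrightarrow\pp N=0$ (which you also prove) finishes both directions at once. Your route instead factors the map as $\pp\otimes_R M\xrightarrow{\gamma}M\xrightarrow{\ell}M_{\qq}$: the forward direction goes via $\pp M\cap\Gamma_{\qq}(M)=\gamma(\gamma^{-1}(\Gamma_{\qq}(M)))=0$, and the converse via the long exact $\Tor$-sequence associated to $0\to\Gamma_{\qq}(M)\to M\to F_{\qq}(M)\to 0$ together with $\Tor_2^R\equiv 0$. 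Both approaches are valid; the paper's is a bit more economical because flatness of $\pp$ computes the kernel in one stroke, whereas your method splits the two implications and needs the auxiliary vanishing $\Tor_1^R(R_{\qq}/\pp,F_{\qq}(M))=0$. On the other hand, your factorization through $M$ makes the role of $\Gamma_{\qq}(M)$ as $\ker\ell$ and of $\pp M$ as $\operatorname{im}\gamma$ very transparent.
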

	\begin{proof}
	\item[(i)] This is \cite[Theorem 6.11, Claim (i)]{B2}. 		

	\item[(ii)] We start by remarking that $\pp^2 = \pp$ implies that for any $R$-module $N$ we have the equivalence 
$$\pp \otimes_R N = 0 \Leftrightarrow \pp N = 0.$$ 
Indeed, consider the canonical exact sequence $$0 \rightarrow \Tor{}_1^R(R/\pp,N) \rightarrow \pp \otimes_R N \rightarrow \pp N \rightarrow 0,$$
then immediately we see that $\pp \otimes_R N = 0$ implies $\pp N = 0$. Conversely, if $\pp N = 0$ then $\pp(\pp \otimes_R N) = 0$ since $\Tor_1^R(R/\pp,N)$ gets killed by $\pp$. Since $\pp^2 = \pp$, we obtain $\pp \otimes_R N = 0$.

 Note that if $\qq \in \Spec(R)$, then $H^0(K(\qq,\pp) \otimes_R M)=\Tor_1^R(R_{\qq}/\pp,M)=0$ if and only if the natural multiplication map $\pp \otimes_R M \rightarrow M_{\qq}$ is injective. We claim that the kernel of this map is zero if and only if $\pp \Gamma_{\qq}(M) = 0$, or equivalently, $\Gamma_{\qq}(M) \subseteq \Soc_{\pp}(M)$. Indeed, since $\qq \subsetneq \pp$, we have $\pp R_{\qq} = R_{\qq}$, and so $\pp \otimes_R M_{\qq} \simeq \pp R_{\qq} \otimes_R M_{\qq} \simeq M_{\qq}$. It follows that the multiplication map $\pp \otimes_R M \rightarrow M_{\qq}$ is identified with the map $(\pp \otimes_R f_M): \pp \otimes_R M \rightarrow \pp \otimes_R M_{\qq}$, where $f_M: M \rightarrow M_{\qq}$ is the canonical map. By flatness of $\pp$, one has that $\Ker(\pp \otimes_R f_M) = \pp \otimes_R \Gamma_{\qq}(M)$. Finally, it follows from the first paragraph that $\pp \otimes_R \Gamma_{\qq}(M) = 0$ if and only if $\pp \Gamma_{\qq}(M) = 0$.

	It remains to address the case of $\qq = -\infty$. Then $H^0(K(\qq,\pp) \otimes_R M)=\pp \otimes_R M = 0$ is equivalent to $\pp M = 0$ by the first paragraph again, and the latter can be rewritten as $M = \Gamma_{-\infty}(M) \subseteq \Soc_{\pp}(M)$.
	\end{proof}
	We are ready to show that any cosilting class in $\ModR$ is given by derived tensor product. Notice that $\mathcal{G}(\mathcal{X})$ does not contain a gap of the form $(-\infty,\pp)$ if and only if the cosilting class does not contain $R$, which is further equivalent to it not being a 1-cotilting class. In this case, we express the class as a Tor-orthogonal class, recovering \cite[Theorem 6.11]{B2}. For the definition of the set $\mathcal{H}(\mathcal{X})$ we refer the reader to \S~\ref{SS:densityandgaps}.
	\begin{prop}\label{P:tor}
			Let $\mathcal{C}$ be a cosilting class corresponding to an admissible system $\mathcal{X}$ via Theorem~\ref{T01}. For each $C \in \mathcal{H}(\mathcal{X})$, let $\mathcal{Y}_C$ be a dense subset of $C$. Then 
			$$\mathcal{C} = \bigcap_{(\qq,\pp) \in \mathcal{G}(\bar{\mathcal{X}})}\Ker H^0(K(\qq,\pp) \otimes_R -) \cap \bigcap_{[\pp,\qq] \in \mathcal{Y}_C, C \in \mathcal{H}(\mathcal{X})}\Ker \Tor{}_1^R(R_{\qq}/\pp,-).$$
	\end{prop}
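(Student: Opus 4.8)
The plan is to identify both sides as definable torsion-free classes in $\ModR$, so that by Lemma~\ref{L:cyclic} it suffices to compare them on cyclic modules $R/I$, and then to translate everything into conditions on the ideal $I$ via Lemmas~\ref{L04}, \ref{L:denseideal} and \ref{L:compute}. Write $\mathcal{D}$ for the right-hand side. Each module $R_{\qq}/\pp$ appearing has projective dimension at most one (from $0\to\pp\to R_{\qq}\to R_{\qq}/\pp\to 0$ with flat ends), so $\Tor_2^R(R_{\qq}/\pp,-)=0$ and $\Ker\Tor_1^R(R_{\qq}/\pp,-)$ is closed under submodules and extensions; it is definable, being the intersection over the directed family $\{R/J\mid J\in\langle\pp,\qq\rangle\}$ of finitely presented cyclics of the definable classes $\Ker\Tor_1^R(R/J,-)$; and for a gap $(-\infty,\pp)$ one has $\Ker(\pp\otimes_R-)=\{M\mid\pp M=0\}$ since $\pp$ is idempotent. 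An intersection of definable torsion-free classes is again one, so $\mathcal{D}$ — and, by Theorem~\ref{T:BZ}, also $\mathcal{C}$ — is a definable torsion-free class; moreover $R/I\in\mathcal{C}$ iff $I\in\langle\chi\rangle$ for some $\chi\in\mathcal{X}$ by Lemma~\ref{L04}.

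For $\mathcal{C}\subseteq\mathcal{D}$ I would take $R/I$ with $I\in\langle\chi\rangle$, $\chi=[\pp_\chi,\qq_\chi]$, and check each factor. For a gap $(\qq^\circ,\pp^\circ)$ disjointness and completeness force $\qq_\chi\subseteq\qq^\circ$ or $\pp_\chi\supseteq\pp^\circ$; the first gives $I^\#\subseteq\qq_\chi\subseteq\qq^\circ$, hence $\Gamma_{\qq^\circ}(R/I)=0$, and the second gives $\pp^\circ\subseteq\pp_\chi\subseteq I$, hence $\Soc_{\pp^\circ}(R/I)=R/I$; either way $\Gamma_{\qq^\circ}(R/I)\subseteq\Soc_{\pp^\circ}(R/I)$ and $H^0(K(\qq^\circ,\pp^\circ)\otimes_R R/I)=0$ by Lemma~\ref{L:compute}(ii). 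For $[\pp',\qq']\in\mathcal{Y}_C$, comparing $\chi$ and $[\pp',\qq']$ in the chain $(\mathcal{X},<)$ gives either $\pp'\subseteq I$ (if $\chi\geq[\pp',\qq']$) or $I^\#\subseteq\qq_\chi\subsetneq\pp'$, which also excludes $I\simeq R_{\pp'}$ (if $\chi<[\pp',\qq']$); either way $\Tor_1^R(R_{\qq'}/\pp',R/I)=0$ by Lemma~\ref{L:compute}(i). Hence $R/I\in\mathcal{D}$.

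For $\mathcal{D}\subseteq\mathcal{C}$ I would argue the contrapositive on cyclics: if $R/I\notin\mathcal{C}$ then $I\notin\langle\chi\rangle$ for all $\chi$, and Lemma~\ref{L:denseideal} leaves two cases. If $I$ lies in a gap $(\qq^\circ,\pp^\circ)$, then $I\not\subseteq\qq^\circ$ gives $\Gamma_{\qq^\circ}(R/I)=R/I$ and $\pp^\circ\not\subseteq I$ gives $\Soc_{\pp^\circ}(R/I)\subsetneq R/I$, so $H^0(K(\qq^\circ,\pp^\circ)\otimes_R R/I)\neq0$, i.e. $R/I\notin\mathcal{D}$. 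Otherwise $\pp\subseteq I\subseteq\qq$ for the unique $[\pp,\qq]\in\mathcal{X}$ with this property, and $I\notin\langle\pp,\qq\rangle$ forces $\qq\subsetneq I^\#$. I would then apply Lemma~\ref{L:denseideal} to the prime $I^\#$: if $I^\#$ lies strictly in a gap $(\qq^\circ,\pp^\circ)$, or if $I^\#$ lies inside an interval $[\pp^\circ,\qq^\circ]\in\mathcal{X}$ but the stretch of $(\mathcal{X},<)$ between $[\pp,\qq]$ and $[\pp^\circ,\qq^\circ]$ contains a covering pair, then in either case there is a gap $(\qq^\circ,\pp^\circ)\in\mathcal{G}(\bar{\mathcal{X}})$ with $\qq\subseteq\qq^\circ\subsetneq I^\#$, and I claim $H^0(K(\qq^\circ,\pp^\circ)\otimes_R R/I)\neq0$: since $I^\#\supsetneq\qq^\circ\supseteq I$ we have $\Gamma_{\qq^\circ}(R/I)=(IR_{\qq^\circ}\cap R)/I\neq0$, while $\Soc_{\pp^\circ}(R/I)=(I:\pp^\circ)/I$ is strictly smaller — it vanishes outright when $\pp^\circ\supsetneq I^\#$, because then $\pp^\circ I=I$ as $I$ is an $R_{I^\#}$-module, and in general $(I:\pp^\circ)\subsetneq IR_{\qq^\circ}\cap R$ by uniseriality of $R/I$ together with $\qq^\circ\subsetneq\pp^\circ$. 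If instead that stretch is dense, then $[\pp,\qq]$ and $[\pp^\circ,\qq^\circ]$ lie in one maximal dense class $C\in\mathcal{H}(\mathcal{X})$, and density of $\mathcal{Y}_C$ yields $[\pp',\qq']\in\mathcal{Y}_C$ with $I\subseteq\qq\subsetneq\pp'\subsetneq I^\#$, so $\Tor_1^R(R_{\qq'}/\pp',R/I)\neq0$ by Lemma~\ref{L:compute}(i). In every case $R/I\notin\mathcal{D}$, completing the argument.

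The hard part will be this last case distinction: showing that whenever $\qq\subsetneq I^\#$ a witness for $R/I\notin\mathcal{D}$ always exists — either a gap of $\bar{\mathcal{X}}$ squeezed between the levels of $\qq$ and $I^\#$ (detected through the strict inclusion $\Soc_{\pp^\circ}(R/I)\subsetneq\Gamma_{\qq^\circ}(R/I)$, which rests on $I$ being an $R_{I^\#}$-module), or, when no such gap exists, a suitable interval of $\mathcal{Y}_C$ obtained from density inside the maximal dense class containing $[\pp,\qq]$. Pinning down this dichotomy, together with the conventions at $\pm\infty$ (in particular the cotilting criterion of Theorem~\ref{T01} for $I=0$), is the only step that is not essentially formal; the forward inclusion, the definability, and the torsion-free closure properties all follow routinely from Lemmas~\ref{L04}, \ref{L:denseideal} and \ref{L:compute}.
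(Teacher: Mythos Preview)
Your overall strategy matches the paper's: both reduce to cyclic modules via Lemma~\ref{L:cyclic} and then translate membership into conditions on the ideal $I$ using Lemmas~\ref{L04}, \ref{L:denseideal} and \ref{L:compute}. The forward inclusion $\mathcal{C}\subseteq\mathcal{D}$ is argued essentially identically.

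For the reverse inclusion the paper organizes the argument more efficiently by applying Lemma~\ref{L:denseideal} to the \emph{nowhere dense} system $\bar{\mathcal{X}}$ rather than to $\mathcal{X}$ itself. Since $\mathcal{G}(\bar{\mathcal{X}})=\mathcal{G}(\mathcal{X})$, the gap conditions alone already force the existence of an interval $[\pp,\qq]\in\bar{\mathcal{X}}$ with $\pp\subseteq I\subseteq I^\#\subseteq\qq$; there is no need for your sub-case 2a at all. Only afterwards does the paper use density of $\mathcal{Y}_C$ to descend to an interval of $\mathcal{X}$, by choosing a decreasing sequence $[\pp_\alpha,\qq_\alpha]\in\mathcal{Y}_C$ with $\bigcap_\alpha\pp_\alpha=\qq'$ and reading off $I^\#\subseteq\qq'$ from Lemma~\ref{L:compute}(i).

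Your sub-case 2a contains a genuine gap. The assertion ``in general $(I:\pp^\circ)\subsetneq IR_{\qq^\circ}\cap R$ by uniseriality of $R/I$ together with $\qq^\circ\subsetneq\pp^\circ$'' is not justified, and uniseriality alone does not give it. What you actually need to exhibit is some $x\notin I$ with $\qq^\circ\subsetneq x^{-1}I\subsetneq\pp^\circ$; equivalently, an annihilator ideal of a nonzero element of $R/I$ lying strictly inside the gap. This requires invoking the idempotency of $\pp^\circ$ (it is the left endpoint of an interval, or $\pp^\circ=R$) together with $\qq^\circ\subsetneq I^\#$, and the verification is not entirely trivial --- for instance one must separately handle the cases $\pp^\circ\subsetneq I^\#$ and $\pp^\circ=I^\#$. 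The argument can be completed, but it is exactly the complication that the paper's passage through $\bar{\mathcal{X}}$ is designed to avoid.
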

	\begin{proof}
			Recall that 
			$$\mathcal{C} = \mathcal{C}_\mathcal{X} =\{M \in \ModR \mid \forall 0 \neq m \in M ~\exists \chi \in \mathcal{X}: \Ann{}_R(m) \in \langle \chi \rangle\}.$$
			Denote 
			$$\mathcal{C}' = \bigcap_{(\qq,\pp) \in \mathcal{G}(\bar{\mathcal{X}})}\Ker H^0(K(\qq,\pp) \otimes_R -) \cap \bigcap_{[\pp,\qq] \in \mathcal{Y}_C, C \in \mathcal{H}(\mathcal{X})}\Ker \Tor{}_1^R(R_{\qq}/\pp,-),$$ 
			and let us prove that $\mathcal{C} = \mathcal{C}'$. In view of Lemma~\ref{L:cyclic}, it is enough to show that $\mathcal{C}$ and $\mathcal{C}'$ contain the same cyclic modules. Let $R/I \in \mathcal{C}$. Then there is an interval $[\pp,\qq] \in \mathcal{C}$ such that $\pp \subseteq I$ and $I^\# \subseteq \qq$. By Lemma~\ref{L:compute}(ii), $H^0(K(\qq',\pp') \otimes_R R/I)=0$ for any gap $(\qq',\pp') \in \mathcal{G}(\mathcal{X})$. Indeed, either $\pp' \subseteq \pp \subseteq I$, and thus $R/I = \Soc_{\pp'}(R/I)$, or $I^\# \subseteq \qq \subseteq \qq'$, and thus $\Gamma_{\qq'}(M) = 0$. Let $C \in \mathcal{H}(\mathcal{X})$, and let $[\pp',\qq'] \in \mathcal{Y}_C$. If $I \subsetneq \pp'$, then also $I^\# \subsetneq \pp'$, and thus $\Tor_1^R(R_{\qq'}/\pp',R/I) = 0$ by Lemma~\ref{L:compute}(i).

			For the converse, let $R/I \in \mathcal{C}'$. Since $R/I \in \bigcap_{(\qq,\pp) \in \mathcal{G}(\bar{\mathcal{X}})}\Ker H^0(K(\qq,\pp) \otimes_R -)$, we see by Lemma~\ref{L:compute}(ii) and Lemma~\ref{L:denseideal} that there is an interval $[\pp,\qq] \in \bar{\mathcal{X}}$ such that $\pp \subseteq I \subseteq I^\# \subseteq \qq$. Then either $[\pp,\qq] \in \mathcal{X}$, and we are done, or there is $C \in \mathcal{H}(\mathcal{X})$ such that $[\pp,\qq] = \tau_C$. Again by Lemma~\ref{L:denseideal}, there is an interval $[\pp',\qq'] \in C$ such that $\pp' \subseteq I \subseteq \qq'$. Because $\mathcal{Y}_C$ is dense in $C$, together with the completeness of $C$, there is a sequence of intervals $[\pp_\alpha,\qq_\alpha] \in \mathcal{Y}_C, \alpha<\lambda$, such that $\bigcap_{\alpha< \lambda}\pp_\alpha = \qq'$. Since $R/I \in \mathcal{C}'$, we have $\Tor_1^R(R_{\qq_{\alpha}}/\pp_{\alpha},R/I)=0$ for all $\alpha<\lambda$. By Lemma~\ref{L:compute}(i), we have $I^\# \subseteq \pp_\alpha$ for all $\alpha<\lambda$, and therefore $I^\# \subseteq \qq'$. We showed that $\pp' \subseteq I \subseteq I^\# \subseteq \qq'$, and since $[\pp',\qq'] \in \mathcal{X}$, we conclude that $R/I \in \mathcal{C}$.
	\end{proof}

\section{From definable coaisles to admissible filtrations}\label{S:coaisletoint}
	The goal of this section is to associate to a coaisle of a homotopically smashing t-structure in the derived category of a valuation domain $R$ a sequence of admissible systems on $\Spec(R)$ indexed by the cohomological degrees, in a way which leads to a bijective correspondence when restricted to definable coaisles. 

	\begin{definition}\label{D:phipsi}
			Let $\mathcal{V}$ be a coaisle of a  homotopically smashing t-structure (so, in particular, $\mathcal{V}$ can be a definable coaisle) in the derived category $\Der(R)$ of a valuation domain $R$. Denote $\mathcal{V}_n = \{M \in \ModR \mid M[-n] \in \mathcal{V}\}$, and let $\mathcal{K}_n = \{\pp \in \Spec(R) \mid \kappa(\pp) \in \mathcal{V}_n\}$ for each $n \in \mathbb{Z}$. Inspired by \cite{B},\cite{B2}, we define the two following assignments on prime ideals in the same way as in Section~\ref{SS:phipsi}:
			$$\varphi_n(\pp) = \inf\{\qq \in \Spec(R) \mid R_{\qq}/\pp \in \mathcal{V}_n\},$$
			$$\psi_n(\pp) = \sup \{ \qq \in \Spec(R) \mid R_{\varphi_n(\pp)}/\qq \in \mathcal{V}_n\}.$$
			Finally, we define for each $n \in \mathbb{Z}$ a set $\mathcal{X}_n=\{[\varphi_n(\pp),\psi_n(\pp)] \mid \pp \in \mathcal{K}_n\}$ of formal intervals in $\Spec(R)$. 	
	\end{definition}
	\textit{In the rest of this section, we will be in the situation of Definition~\ref{D:phipsi} over a valuation domain $R$ and we will show in several steps that $\mathbb{X}=(\mathcal{X}_n \mid n \in \mathbb{Z})$ forms a nested sequence of admissible systems on $\Spec(R)$.}
	\begin{lem}\label{L:belong}
			For any $\pp \in \mathcal{K}_n$ we have $R_{\varphi_n(\pp)}/\pp \in \mathcal{V}_n$ and $R_{\varphi_n(\pp)}/\psi_n(\pp) \in \mathcal{V}_n$.
	\end{lem}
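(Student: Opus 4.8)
The plan is to exhibit each of the two modules as a directed colimit of modules already known to lie in $\mathcal{V}_n$, and then invoke closure of $\mathcal{V}_n$ under direct limits. So the first thing I would record is that closure property: since $\mathcal{V}$ is the coaisle of a homotopically smashing t-structure it is closed under directed homotopy colimits, and regarding a directed system of $R$-modules as a coherent diagram of stalk complexes placed in cohomological degree $n$ --- exactly as in the proof of Lemma~\ref{L:hrs} --- and using $H^{n}(\hocolim_{i}\mathscr{X}) \simeq \varinjlim_{i} H^{n}(\mathscr{X}_{i})$, one obtains that $\mathcal{V}_n$ is closed under direct limits in $\ModR$. Next observe that $\pp \in \mathcal{K}_n$ says precisely that $\kappa(\pp) = R_{\pp}/\pp \in \mathcal{V}_n$, so the set $A = \{\qq \in \Spec(R) \mid R_{\qq}/\pp \in \mathcal{V}_n\}$ is non-empty and $\varphi_n(\pp) = \inf A \subseteq \pp$.

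For $R_{\varphi_n(\pp)}/\pp$ I would use that in the totally ordered poset $\Spec(R)$ the infimum of a non-empty set is its intersection (the intersection of a chain of primes being again a prime, and clearly the greatest lower bound), so $\varphi_n(\pp) = \bigcap_{\qq \in A}\qq$. From $R_{\qq} = (R\setminus\qq)^{-1}R$ and $\bigcup_{\qq\in A}(R\setminus\qq) = R\setminus\bigcap_{\qq\in A}\qq$ it follows that $R_{\varphi_n(\pp)} = \bigcup_{\qq\in A}R_{\qq}$ as subrings of $Q$. Since $\pp \subseteq R \subseteq R_{\qq}$ for every $\qq$, the standard uniserial modules $R_{\qq}/\pp$ ($\qq \in A$) are submodules of $R_{\varphi_n(\pp)}/\pp$, they form a chain ordered by reverse inclusion of the $\qq$'s, and their union is all of $R_{\varphi_n(\pp)}/\pp$; hence $R_{\varphi_n(\pp)}/\pp = \varinjlim_{\qq\in A}R_{\qq}/\pp \in \mathcal{V}_n$.

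For $R_{\varphi_n(\pp)}/\psi_n(\pp)$ I would run the dual argument on the quotient side. By what was just proved, $\pp$ belongs to $B = \{\qq \in \Spec(R) \mid R_{\varphi_n(\pp)}/\qq \in \mathcal{V}_n\}$, so $B$ is non-empty and $\psi_n(\pp) = \sup B = \bigcup_{\qq\in B}\qq$ (the union of a chain of primes, which is the least upper bound). Because direct limits commute with cokernels, $R_{\varphi_n(\pp)}/\psi_n(\pp) = R_{\varphi_n(\pp)}/\bigl(\bigcup_{\qq\in B}\qq\bigr) = \varinjlim_{\qq\in B}R_{\varphi_n(\pp)}/\qq$, a directed colimit (now along the surjections corresponding to increasing $\qq$) of modules in $\mathcal{V}_n$, so it lies in $\mathcal{V}_n$ as well.

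The only nontrivial ingredient is the closure of $\mathcal{V}_n$ under direct limits, which is exactly where the homotopically smashing hypothesis enters; everything else is bookkeeping --- that the colimits in question are directed, which is automatic since $A$ and $B$ are subsets of the chain $\Spec(R)$, and that the quotients $R_{\qq}/\pp$ and $R_{\varphi_n(\pp)}/\qq$ genuinely are quotients of $R$-submodules of $Q$, so that the set-theoretic union and cokernel computations above are legitimate. I therefore do not expect a serious obstacle.
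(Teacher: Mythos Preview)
Your proof is correct and follows the same approach as the paper's: both arguments express $R_{\varphi_n(\pp)}/\pp$ and $R_{\varphi_n(\pp)}/\psi_n(\pp)$ as directed colimits over the very index sets defining $\varphi_n(\pp)$ and $\psi_n(\pp)$, and then appeal to the closure of $\mathcal{V}_n$ under direct limits coming from the homotopically smashing hypothesis. You simply supply more of the bookkeeping --- non-emptiness of the index sets, the identification of the infimum and supremum with intersection and union of primes, and the explicit directed-system structure --- that the paper leaves implicit.
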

	\begin{proof}
		The first claim is proved by noting that $R_{\varphi_n(\pp)}/\pp = \varinjlim_{\qq, R_{\qq}/\pp \in \mathcal{V}_n} R_{\qq}/\pp$, and by the fact that $\mathcal{V}_n$ is closed under direct limits. The second follows similarly from $R_{\varphi_n(\pp)}/\psi_n(\pp) = \varinjlim_{\qq, R_{\varphi_n(\pp)}/\qq \in \mathcal{V}_n} R_{\varphi_n(\pp)}/\qq$. 
	\end{proof}
			\begin{lem}\label{L:mono1} For any $\pp \in \mathcal{K}_n$ we have $\varphi_n(\pp) \subseteq \pp$ and $\psi_n(\pp) \supseteq \pp$.
			\end{lem}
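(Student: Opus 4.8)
The plan is to unwind the definitions directly, using the membership $\pp \in \mathcal{K}_n$ for the first inclusion and Lemma~\ref{L:belong} for the second.

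First I would prove $\varphi_n(\pp) \subseteq \pp$. By definition, $\varphi_n(\pp)$ is the infimum (with respect to inclusion of primes) of the set $\{\qq \in \Spec(R) \mid R_\qq/\pp \in \mathcal{V}_n\}$. Since $\pp \in \mathcal{K}_n$ means precisely $\kappa(\pp) = R_\pp/\pp \in \mathcal{V}_n$, the prime $\pp$ itself belongs to this set. Hence $\varphi_n(\pp) \subseteq \pp$, as an infimum is contained in (a lower bound for) every element of the set it is taken over.

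Next I would prove $\psi_n(\pp) \supseteq \pp$. By definition, $\psi_n(\pp)$ is the supremum of $\{\qq \in \Spec(R) \mid R_{\varphi_n(\pp)}/\qq \in \mathcal{V}_n\}$. By Lemma~\ref{L:belong} we have $R_{\varphi_n(\pp)}/\pp \in \mathcal{V}_n$, so $\pp$ lies in this set, and therefore $\psi_n(\pp) \supseteq \pp$.

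There is essentially no obstacle here: the statement is a formal consequence of the definitions together with the already-established Lemma~\ref{L:belong} (which in turn relies only on $\mathcal{V}_n$ being closed under direct limits). The only point worth a word of care is the convention that ``$\inf$'' and ``$\sup$'' are taken in the totally ordered set $(\Spec(R),\subseteq)$ of primes of the valuation domain $R$, and that these exist by Lemma~\ref{L:VD}(iv); once that is fixed, the two inclusions follow immediately. I would keep the proof to the two short paragraphs above.

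\begin{proof}
	Since $\pp \in \mathcal{K}_n$, we have $\kappa(\pp) = R_\pp/\pp \in \mathcal{V}_n$, so $\pp$ belongs to the set $\{\qq \in \Spec(R) \mid R_\qq/\pp \in \mathcal{V}_n\}$. As $\varphi_n(\pp)$ is the infimum of this set in $(\Spec(R),\subseteq)$, we conclude $\varphi_n(\pp) \subseteq \pp$.

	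By Lemma~\ref{L:belong}, $R_{\varphi_n(\pp)}/\pp \in \mathcal{V}_n$, so $\pp$ belongs to the set $\{\qq \in \Spec(R) \mid R_{\varphi_n(\pp)}/\qq \in \mathcal{V}_n\}$. Since $\psi_n(\pp)$ is the supremum of this set, we conclude $\psi_n(\pp) \supseteq \pp$.
\end{proof}
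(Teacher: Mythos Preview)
Your proof is correct and follows essentially the same approach as the paper: both use $\kappa(\pp) = R_{\pp}/\pp \in \mathcal{V}_n$ (from the definition of $\mathcal{K}_n$) to get $\varphi_n(\pp) \subseteq \pp$, and Lemma~\ref{L:belong} to get $R_{\varphi_n(\pp)}/\pp \in \mathcal{V}_n$ and hence $\psi_n(\pp) \supseteq \pp$.
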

			\begin{proof}
					It is enough to show that $\kappa(\pp)$ and $R_{\varphi_n(\pp)}/\pp$ are in $\mathcal{V}_n$ whenever $\pp \in \mathcal{K}_n$. The first claim follows directly from the definition of $\mathcal{K}_n$, while the second from Lemma~\ref{L:belong}.
			\end{proof}
			The following lemma follows from an application of a \emph{d\'{e}vissage} technique and is valid for an arbitrary coaisle in the derived category of any commutative ring.
			\begin{lem}\label{L:RHom}
					Suppose that $\pp$ is a prime, $X \in \mathcal{V}$, and that $\Hom_{\Der(R)}(\kappa(\pp)[-n],X) \neq 0$. Then $\kappa(\pp) \in \mathcal{V}_n$.
			\end{lem}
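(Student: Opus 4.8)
The plan is to split the proof into a reduction to the module level and a d\'{e}vissage carried out inside $\ModR$.

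For the reduction I would peel $X$ using the truncation functors $\tau^{\leq k},\tau^{\geq k}$ of the standard $t$-structure on $\Der(R)$. The clean input is that, for a module $A$ in degree $0$ and any complex $Z\in\Der^{\geq 0}$, one has $\Hom_{\Der(R)}(A,Z)\cong\Hom_R(A,H^0(Z))$, since in the hyper-$\Ext$ spectral sequence $E_2^{p,q}=\Ext_R^p(A,H^q(Z))$ only the term $(p,q)=(0,0)$ contributes to total degree $0$. Running the triangles $\tau^{\leq k}X\to X\to\tau^{\geq k+1}X$ and their rotations --- first passing to $\tau^{\leq n}X$, then peeling off $H^n(X)[-n],H^{n-1}(X)[-(n-1)],\dots$ from the top --- the hypothesis $\Hom_{\Der(R)}(\kappa(\pp)[-n],X)\neq 0$ yields, for some $j\geq 0$, a nonzero class in $\Ext_R^j(\kappa(\pp),H^{n-j}(X))$ (a homotopy-limit argument handles the case where $X$ is unbounded below). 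Since $R$ has weak global dimension at most one, $\mathcal{V}$ is determined on cohomology by Theorem~\ref{T:cohomology}, so $L:=H^{n-j}(X)$ lies in $\mathcal{V}_{n-j}\subseteq\mathcal{V}_n$. I am thus reduced to the module-theoretic statement: if $L\in\mathcal{V}_{n-j}$ and $\Ext_R^j(\kappa(\pp),L)\neq0$ for some $j\geq0$, then $\kappa(\pp)\in\mathcal{V}_n$.

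The d\'{e}vissage then takes place on the residue field itself. In the basic case $j=0$ I have a nonzero homomorphism $h\colon\kappa(\pp)\to L$ with $L\in\mathcal{V}_n$; I would use the presentation $\kappa(\pp)=Q(R/\pp)=\varinjlim(R/\pp\xrightarrow{\,\cdot\bar{s}\,}R/\pp)$ of the residue field as a directed union of copies of the cyclic module $R/\pp$. The image of $h$ is a nonzero $\pp$-divisible quotient of the uniserial module $R_{\pp}/\pp$, and chasing finitely generated submodules exhibits cyclic modules $R/I$ with $\pp\subseteq I$ and $I^{\#}\subseteq\pp$, together with nonzero maps $(R/I)[-n]\to L[-n]$; since $R/I$ is finitely presented, such a map can be promoted to a split monomorphism after passing to a sufficiently large product of $L[-n]\in\mathcal{V}$, so $(R/I)[-n]\in\mathcal{V}$. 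The short exact sequences relating the $R/I$ to $R/\pp$, together with the directed-union presentation of $\kappa(\pp)$, are then reassembled --- using that a coaisle is closed under extensions, cosuspensions, products and direct summands --- to conclude $\kappa(\pp)[-n]\in\mathcal{V}$, i.e.\ $\kappa(\pp)\in\mathcal{V}_n$. For $j\geq1$ one argues analogously on a $j$-fold extension realizing the chosen class of $\Ext_R^j(\kappa(\pp),L)$, invoking in addition the kernel--cokernel condition for the chain $\cdots\subseteq\mathcal{V}_{n-1}\subseteq\mathcal{V}_n$ from Proposition~\ref{P:moduletheoretic}.

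I expect the real difficulty to be in this last d\'{e}vissage step: a nonzero map of modules is in general neither injective nor split, so $\kappa(\pp)[-n]$ is not visibly a summand of $X$, and one must genuinely exploit the structure of the residue field --- the quotient field of the domain $R/\pp$, uniserial over a valuation domain, with endomorphism ring a field --- to descend to finitely presented cyclic layers and then to realize the (possibly transfinite) colimit describing $\kappa(\pp)$ inside $\mathcal{V}$ without appealing to closure under coproducts or under directed homotopy colimits, which an arbitrary coaisle need not possess. Managing the bookkeeping of this reassembly, together with the case of strictly positive $\Ext$-degree, is where the substantive work lies.
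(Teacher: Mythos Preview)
Your d\'{e}vissage step contains a genuine gap. The claim that a nonzero map $R/I\to L$ with $R/I$ finitely presented ``can be promoted to a split monomorphism after passing to a sufficiently large product of $L[-n]$'' is simply false: finite presentation gives no such splitting, and there is no mechanism by which products of $L$ acquire $R/I$ as a summand. You correctly identify this as the hard point and then do not resolve it; the subsequent ``reassembly'' of $\kappa(\pp)$ from cyclic layers inherits the same problem. The $j\geq 1$ case, handled ``analogously'', is even less clear. Moreover, your reduction step invokes Theorem~\ref{T:cohomology} and Proposition~\ref{P:moduletheoretic}, so even if the argument could be completed it would need weak global dimension at most one and definability of $\mathcal{V}$, whereas the lemma is stated (and used) for an arbitrary coaisle over any commutative ring.

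The paper's proof avoids all of this with one idea you are missing: do not try to extract $\kappa(\pp)$ from $X$, but look instead at the complex $Y=\RHom_R(\kappa(\pp),X)$. Two facts make this work. First, $Y\in\mathcal{V}$ for \emph{any} coaisle $\mathcal{V}$ over a commutative ring; this is \cite[Proposition~2.3(ii)]{HCG}, a d\'{e}vissage on $\kappa(\pp)$ itself. Second, $Y$ lies in the essential image of $\Der(\kappa(\pp))\to\Der(R)$ and is therefore quasi-isomorphic to a split complex of $\kappa(\pp)$-vector spaces. Hence $H^n(Y)[-n]$ is a direct summand of $Y$ and so belongs to $\mathcal{V}$. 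The hypothesis says $H^n(Y)=\Hom_{\Der(R)}(\kappa(\pp)[-n],X)\neq 0$, and since this is a nonzero $\kappa(\pp)$-vector space, $\kappa(\pp)[-n]$ is a summand of $H^n(Y)[-n]\in\mathcal{V}$. Done. The splitting you were struggling to manufacture comes for free once you pass to $\RHom$, because everything lives over a field.
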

			\begin{proof}
					Recall that 
					$$\Hom{}_{\Der(R)}(\kappa(\pp)[-n],X) \simeq \Hom{}_{\Der(R)}(\kappa(\pp),X[n]) \simeq H^n\RHom{}_R(\kappa(\pp),X).$$ 
					By \cite[Proposition 2.3]{HCG}(ii), the complex $\RHom_R(\kappa(\pp),X)$ belongs to $\mathcal{V}$. But $\RHom_R(\kappa(\pp),X)$ also lives in the essential image of the forgetful functor $\Der(\kappa(\pp)) \rightarrow \Der(R)$, and thus is isomorphic in $\Der(R)$ to a complex of vector spaces over the field $\kappa(\pp)$. In particular, $\RHom_R(\kappa(\pp),X)$ is isomorphic in $\Der(R)$ to a split complex. Therefore, $H^n\RHom_R(\kappa(\pp),X)[-n] \in \mathcal{V}$. Since $H^n\RHom_R(\kappa(\pp),X)$ is a non-zero vector space over $\kappa(\pp)$, it follows that $\kappa(\pp)[-n] \in \mathcal{V}$, or in other words $\kappa(\pp) \in \mathcal{V}_n$.
			\end{proof}
	\begin{lem}\label{L67}
		Let $\pp \subseteq \pp' \subsetneq \pp''$. Then $R_{\pp}/R_{\pp'}$ is isomorphic to a direct limit of copies of $R_{\pp}/\pp''$.
	\end{lem}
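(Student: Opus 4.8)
The plan is to realize $R_\pp/R_{\pp'}$ as a tensor product and then exploit that $R_{\pp'}$, being a localization of $R$, is a direct limit of copies of $R$. Concretely, I will first establish an isomorphism of $R$-modules
$$R_\pp/R_{\pp'} \cong (R_\pp/\pp'') \otimes_R R_{\pp'},$$
and then expand the right-hand side.

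For the isomorphism, apply the functor $- \otimes_R R_{\pp'}$ to the short exact sequence $0 \to \pp'' \to R_\pp \to R_\pp/\pp'' \to 0$; since $R_{\pp'}$ is flat over $R$, the resulting sequence is again exact. Here $\pp'' \otimes_R R_{\pp'} \cong \pp'' R_{\pp'}$, and because $\pp' \subsetneq \pp''$ we may pick $t \in \pp'' \setminus \pp'$, which is a unit of $R_{\pp'}$ lying in the ideal $\pp'' R_{\pp'}$, so that $\pp'' R_{\pp'} = R_{\pp'}$. On the other hand $\pp \subseteq \pp'$ gives $R \setminus \pp' \subseteq R \setminus \pp$, so localizing $R_\pp$ further at $R \setminus \pp'$ does nothing and $R_\pp \otimes_R R_{\pp'} \cong R_\pp$. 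Viewing all modules inside the quotient field $Q$, the map $\pp'' \otimes_R R_{\pp'} \to R_\pp \otimes_R R_{\pp'}$ becomes the natural inclusion $R_{\pp'} \hookrightarrow R_\pp$, so its cokernel $(R_\pp/\pp'') \otimes_R R_{\pp'}$ is identified with $R_\pp/R_{\pp'}$.

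Next I will write $R_{\pp'}$ as the directed union $R_{\pp'} = \bigcup_{r \in R \setminus \pp'} \tfrac{1}{r}R$, in which each $\tfrac{1}{r}R$ is isomorphic to $R$ via multiplication by $r$ and the index set is directed (indeed totally ordered) by divisibility. Thus $R_{\pp'} \cong \varinjlim_{r} R$ in $\ModR$, a direct limit of copies of $R$ with connecting maps given by multiplication by elements of $R \setminus \pp'$. Since $- \otimes_R (R_\pp/\pp'')$ commutes with direct limits, we get
$$R_\pp/R_{\pp'} \cong (R_\pp/\pp'') \otimes_R R_{\pp'} \cong \varinjlim_{r} \big( (R_\pp/\pp'') \otimes_R R \big) = \varinjlim_{r} \, R_\pp/\pp'',$$
which exhibits $R_\pp/R_{\pp'}$ as a direct limit of copies of $R_\pp/\pp''$. (If $\pp = \pp'$ the module is zero and the assertion is trivial.)

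I do not expect any serious obstacle: the argument is essentially bookkeeping. The only points needing a moment's care are the identification of the connecting homomorphism in the tensored short exact sequence with the inclusion $R_{\pp'} \hookrightarrow R_\pp$ (immediate once everything is placed inside $Q$), and noticing that both hypotheses are genuinely used — $\pp \subseteq \pp'$ to get $R_\pp \otimes_R R_{\pp'} \cong R_\pp$, and $\pp' \subsetneq \pp''$ to get $\pp'' R_{\pp'} = R_{\pp'}$.
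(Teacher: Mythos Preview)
Your proof is correct and follows essentially the same idea as the paper's: both use that $\pp''R_{\pp'} = R_{\pp'}$ (from $\pp' \subsetneq \pp''$) together with writing $R_{\pp'}$ as a directed union of principal submodules, and that multiplication by any $r \notin \pp'$ is an automorphism of $R_\pp$ (from $\pp \subseteq \pp'$). The only cosmetic difference is that the paper writes $R_{\pp'} = \varinjlim_{r \notin \pp'} r^{-1}\pp''$ directly and then passes to quotients $R_\pp/r^{-1}\pp'' \cong R_\pp/\pp''$, whereas you package the same computation via the tensor identity $(R_\pp/\pp'')\otimes_R R_{\pp'} \cong R_\pp/R_{\pp'}$ before expanding $R_{\pp'}$ as $\varinjlim R$.
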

	\begin{proof}
		Since $\pp' \subsetneq \pp''$, we have $R_{\pp'} \simeq \pp'' \otimes_R R_{\pp'} \simeq \varinjlim_{r \not\in \pp'} r^{-1}\pp''$. Then $R_{\pp}/R_{\pp'} \simeq \varinjlim_{r \not\in \pp'} R_{\pp}/r^{-1}\pp''$. But since $\pp \subseteq \pp'$, we have $R_{\pp}/r^{-1}\pp'' \simeq R_{\pp}/\pp''$ for any $r \in R \setminus \pp'$.
	\end{proof}
Since the coaisle $\mathcal{V}$ is closed under directed homotopy colimits, it follows that the subcategory $\mathcal{V}_n = \{M \in \ModR \mid M[-n] \in \mathcal{V}\} = \mathcal{V}[n] \cap \ModR$ is closed under directed limits in $\ModR$. We will be mostly interested in the case when $\mathcal{V}$ is a definable subcategory of $\Der(R)$, and in this situation we know by the results of Section~\ref{SS:determined} that $\mathcal{V}_n = \{H^n(X) \mid X \in \mathcal{V}\}$ and that $\mathcal{V}_n$ is a definable subcategory of $\ModR$.
			\begin{lem}\label{L:mono}The assignments $\varphi_n, \psi_n$ are monotone functions $\mathcal{K}_n \rightarrow \mathcal{K}_n$.
			\end{lem}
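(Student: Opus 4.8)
The statement has two parts: that $\varphi_n,\psi_n$ map $\mathcal{K}_n$ \emph{into} $\mathcal{K}_n$, and that they are order-preserving for $\subseteq$. The plan is to prove the first part first, because the second part needs $\psi_n(\pp)$ to be an honest prime ideal. For membership in $\mathcal{K}_n$, the idea is to feed the modules produced by Lemma~\ref{L:belong} into Lemma~\ref{L:RHom}. Since $\varphi_n(\pp)\subseteq\pp$ by Lemma~\ref{L:mono1}, the residue field $\kappa(\varphi_n(\pp))=R_{\varphi_n(\pp)}/\varphi_n(\pp)$ surjects onto $R_{\varphi_n(\pp)}/\pp$, and since $\varphi_n(\pp)\subseteq\psi_n(\pp)$ we have $R_{\psi_n(\pp)}\subseteq R_{\varphi_n(\pp)}$, so $\kappa(\psi_n(\pp))$ embeds into $R_{\varphi_n(\pp)}/\psi_n(\pp)$. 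In both cases the corresponding $\Hom$-group in $\Der(R)$ is nonzero and the target complex lies in $\mathcal{V}$ by Lemma~\ref{L:belong}, so Lemma~\ref{L:RHom} forces $\kappa(\varphi_n(\pp)),\kappa(\psi_n(\pp))\in\mathcal{V}_n$, i.e. $\varphi_n(\pp),\psi_n(\pp)\in\mathcal{K}_n$.

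For monotonicity, fix $\pp\subsetneq\pp'$ in $\mathcal{K}_n$. For $\varphi_n$ I would split according to whether $\pp\subseteq\varphi_n(\pp')$, which is trivial since then $\varphi_n(\pp)\subseteq\pp\subseteq\varphi_n(\pp')$, or $\varphi_n(\pp')\subsetneq\pp\subsetneq\pp'$. In the latter case the goal is to show $R_{\varphi_n(\pp')}/\pp\in\mathcal{V}_n$, which immediately gives $\varphi_n(\pp)\subseteq\varphi_n(\pp')$ from the defining infimum. To get it, I would apply Lemma~\ref{L67} to the primes $\varphi_n(\pp')\subseteq\pp\subsetneq\pp'$ to write $R_{\varphi_n(\pp')}/R_\pp$ as a directed colimit of copies of $R_{\varphi_n(\pp')}/\pp'$, which lies in $\mathcal{V}_n$ by Lemma~\ref{L:belong}; closure of $\mathcal{V}_n$ under direct limits puts $R_{\varphi_n(\pp')}/R_\pp$ in $\mathcal{V}_n$, and then the short exact sequence $0\to\kappa(\pp)\to R_{\varphi_n(\pp')}/\pp\to R_{\varphi_n(\pp')}/R_\pp\to 0$ together with $\kappa(\pp)\in\mathcal{V}_n$ (as $\pp\in\mathcal{K}_n$) finishes it by closure under extensions.

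For $\psi_n$ the argument runs in parallel, but the final move is different. I would first dispose of the case $\varphi_n(\pp)=\varphi_n(\pp')$, in which $\psi_n(\pp)=\psi_n(\pp')$ by definition, and the case $\psi_n(\pp)\subseteq\pp'$, in which $\psi_n(\pp)\subseteq\pp'\subseteq\psi_n(\pp')$; this leaves $\varphi_n(\pp)\subsetneq\varphi_n(\pp')\subseteq\pp'\subsetneq\psi_n(\pp)$. Now Lemma~\ref{L67} applied to the primes $\varphi_n(\pp)\subsetneq\varphi_n(\pp')\subsetneq\psi_n(\pp)$ (legitimate because $\psi_n(\pp)$ is a prime by the first part) expresses $R_{\varphi_n(\pp)}/R_{\varphi_n(\pp')}$ as a directed colimit of copies of $R_{\varphi_n(\pp)}/\psi_n(\pp)\in\mathcal{V}_n$ (Lemma~\ref{L:belong}), hence $R_{\varphi_n(\pp)}/R_{\varphi_n(\pp')}\in\mathcal{V}_n$. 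Applying the kernel part of condition~(ii) of Proposition~\ref{P:moduletheoretic} to the canonical surjection $R_{\varphi_n(\pp)}/\psi_n(\pp)\twoheadrightarrow R_{\varphi_n(\pp)}/R_{\varphi_n(\pp')}$, whose source lies in $\mathcal{V}_n$ and whose target lies in $\mathcal{V}_n\subseteq\mathcal{V}_{n+1}$, shows its kernel $R_{\varphi_n(\pp')}/\psi_n(\pp)$ lies in $\mathcal{V}_n$, whence $\psi_n(\pp)\subseteq\psi_n(\pp')$ from the defining supremum.

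The subtle point, and the main obstacle, is that one cannot imitate the cotilting case of \cite{B},\cite{B2}, where the analogous class is a torsion-free class and everything reduces to cyclic modules: here $\mathcal{V}_n$ need not be closed under submodules (e.g.\ it can consist of divisible torsion-free modules without containing $R$), so that shortcut is unavailable. The workaround, used above, is to express the ``localization quotients'' $R_{\mathfrak a}/R_{\mathfrak b}$ arising in the comparison as directed colimits (via Lemma~\ref{L67}) of modules that Lemma~\ref{L:belong} already places in $\mathcal{V}_n$, and then to combine them with $\kappa(\pp)$ either through closure under extensions (when the module we want sits as the sub- or quotient-term of the relevant triangle, as for $\varphi_n$) or through the kernel half of the degreewise condition in Proposition~\ref{P:moduletheoretic}(ii) (when it sits as the kernel term, as for $\psi_n$).
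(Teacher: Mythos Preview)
Your proof is correct and follows essentially the same route as the paper: the paper also uses Lemma~\ref{L:RHom} on nonzero maps from $\kappa(\varphi_n(\pp))$ and $\kappa(\psi_n(\pp))$ for the first part, and for monotonicity uses exactly the same short exact sequences together with Lemma~\ref{L67} to place the quotient $R_{\varphi_n(\pp)}/R_{\varphi_n(\pp')}$ in $\mathcal{V}_n$, then concludes via closure of $\mathcal{V}_n$ under kernels of epimorphisms. The only cosmetic differences are that the paper routes both nonzero maps through the single target $R_{\varphi_n(\pp)}/\psi_n(\pp)$, and phrases the final step directly as ``$\mathcal{V}_n$ is closed under kernels of epimorphisms'' (which for an epimorphism follows from the triangle $N[-n-1]\to K[-n]\to M[-n]\to N[-n]$ and needs only that $\mathcal{V}$ is a coaisle, so your appeal to Proposition~\ref{P:moduletheoretic} is valid even in the homotopically smashing generality of Definition~\ref{D:phipsi}).
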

			\begin{proof}
					First, we show that $\varphi_n$ and $\psi_n$ are functions $\mathcal{K}_n \rightarrow \mathcal{K}_n$, that is, they take values in $\mathcal{K}_n$. To do this, we need to show that $\kappa(\varphi_n(\pp))$ and $\kappa(\psi_n(\pp))$ are in $\mathcal{V}_n$ whenever $\pp \in \mathcal{K}_n$. By Lemma~\ref{L:belong}, we know that $R_{\varphi_n(\pp)}/\psi_n(\pp) \in \mathcal{V}_n$. Note that there are non-zero canonical maps
				$$\kappa(\varphi_n(\pp)) \rightarrow R_{\varphi_n(\pp)}/\psi_n(\pp)\text{, and}$$
$$\kappa(\psi_n(\pp)) \rightarrow R_{\varphi_n(\pp)}/\psi_n(\pp).$$
					By Lemma~\ref{L:RHom}, we have $\kappa(\varphi_n(\pp)), \kappa(\psi_n(\pp)) \in \mathcal{V}_n$.

				Now we need to show that $\varphi_n$ and $\psi_n$ are monotone. Consider $\pp_1 \subsetneq \pp_2$ in $\mathcal{K}_n$. If $\pp_1 \subseteq \varphi_n(\pp_2)$, there is nothing to prove in case of $\varphi_n$. Otherwise, if $\varphi_n(\pp_2) \subsetneq \pp_1$ there is an exact sequence
				$$0 \rightarrow \kappa(\pp{}_1) \rightarrow R_{\varphi_n(\pp_2)}/\pp{}_1 \rightarrow R_{\varphi_n(\pp_2)}/R_{\pp_1} \rightarrow 0.$$
				By our assumption, we have $\varphi_n(\pp_2) \subsetneq \pp_1 \subsetneq \pp_2$, and thus we can use Lemma~\ref{L67} and infer that $R_{\varphi_n(\pp_2)}/R_{\pp_1}$ is a direct limit of copies of $R_{\varphi_n(\pp_2)}/\pp_2 \in \mathcal{V}_n$. Therefore, we conclude that $R_{\varphi_n(\pp_2)}/\pp{}_1 \in \mathcal{V}_n$, and thus $\varphi_n(\pp_1) \subseteq \varphi_n(\pp_2)$. 

				Finally, we show that $\psi_n$ is monotone. Consider the exact sequence
				$$0 \rightarrow R_{\varphi_n(\pp_2)}/\psi_n(\pp{}_1) \rightarrow R_{\varphi_n(\pp_1)}/\psi_n(\pp{}_1) \rightarrow R_{\varphi_n(\pp_1)}/R_{\varphi_n(\pp_2)} \rightarrow 0.$$
				The middle term is in $\mathcal{V}_n$, and the rightmost term is in $\mathcal{V}_n$ by a similar argument as in the previous paragraph, since we can assume $\psi_n(\pp_1) \supsetneq \pp_2$ (otherwise the monotony is clear), and apply Lemma~\ref{L67} to prime ideals $\varphi_n(\pp_1) \subseteq \varphi_n(\pp_2) \subsetneq \psi_n(\pp_1)$. Then the leftmost term $R_{\varphi_n(\pp_2)}/\psi_n(\pp_1)$ belongs to $\mathcal{V}_n$, because $\mathcal{V}_n$ is closed under extensions and kernels of epimorphisms.

		\end{proof}
Given a couple of intervals $\chi, \xi$ in $\Spec(R)$, we say that $\chi$ is \EMP{contained} in $\xi$, denoted $\chi \subseteq \xi$, if $\pp_\xi \subseteq \pp_\chi \subseteq \qq_\chi \subseteq \qq_\xi$. The following Lemma explains the relation between the intervals of $\mathcal{X}_n$ and certain uniserial modules belonging to $\mathcal{V}_n$.
\begin{lem}\label{L:mc}
		\begin{enumerate}
			\item[(i)] For any $[\pp,\qq]$ in $\mathcal{X}_n$ we have $\ModRqqpp \subseteq \mathcal{V}_n$. 
			\item[(ii)] If $R_{\pp}/\qq \in \mathcal{V}_n$ for some prime ideals $\pp \subseteq \qq$ in $\Spec(R)$ then there is an interval $\chi \in \mathcal{X}_n$ which contains the formal interval $[\pp,\qq]$.
			\item[(iii)] If $\pp,\qq \in \mathcal{K}_n$ and $[\pp,\qq] \in \mathcal{X}_{n+1}$ then $R_{\qq}/\pp \in \mathcal{V}_n$.
		\end{enumerate}
	\end{lem}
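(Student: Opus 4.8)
The three statements are nearly independent, and my plan is to dispatch (ii) and (iii) by short d\'{e}vissage arguments and to spend the real effort on (i). Throughout I would use that over a valuation domain the coaisle of a homotopically smashing t-structure is definable, so that the two descriptions of $\mathcal{V}_n$ coincide by Theorem~\ref{T:cohomology} and, by Proposition~\ref{P:moduletheoretic}, the following \emph{co-narrow condition} is available: if $f\colon M\to N$ is a homomorphism with $M\in\mathcal{V}_n$ and $N\in\mathcal{V}_{n+1}$ then $\Ker f\in\mathcal{V}_n$ and $\Coker f\in\mathcal{V}_{n+1}$. I will also use freely that each $\mathcal{V}_n$ is definable, extension-closed and increasing in $n$, that $\pp,\qq\in\mathcal{K}_n$ whenever $[\pp,\qq]\in\mathcal{X}_n$ (Lemma~\ref{L:mono}), and that for primes $\pp\subseteq\oo\subseteq\qq$ the module $R_{\pp}/\qq$ is an $R_{\oo}$-module with $\Hom_R(\kappa(\oo),R_{\pp}/\qq)\cong\Soc_{\oo}(R_{\pp}/\qq)\ni 1+\qq\neq 0$.

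For (ii), given $R_{\pp}/\qq\in\mathcal{V}_n$ with $\pp\subseteq\qq$, the inclusion $\kappa(\qq)=R_{\qq}/\qq\hookrightarrow R_{\pp}/\qq$ is nonzero, so Lemma~\ref{L:RHom} applied to $X=(R_{\pp}/\qq)[-n]\in\mathcal{V}$ gives $\qq\in\mathcal{K}_n$, whence $\chi=[\varphi_n(\qq),\psi_n(\qq)]\in\mathcal{X}_n$. Now $\varphi_n(\qq)=\inf\{\qq''\mid R_{\qq''}/\qq\in\mathcal{V}_n\}\subseteq\pp$ because $R_{\pp}/\qq\in\mathcal{V}_n$, and $\psi_n(\qq)\supseteq\qq$ by Lemma~\ref{L:mono1}, so $\pp_\chi=\varphi_n(\qq)\subseteq\pp\subseteq\qq\subseteq\psi_n(\qq)=\qq_\chi$, i.e.\ $\chi$ contains $[\pp,\qq]$. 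For (iii), where $[\pp,\qq]\in\mathcal{X}_{n+1}$, Lemma~\ref{L:belong} at level $n+1$ gives $R_{\pp}/\qq\in\mathcal{V}_{n+1}$; since $\pp\in\mathcal{K}_n$ we have $\kappa(\pp)=R_{\pp}/\pp\in\mathcal{V}_n$ and $\kappa(\pp)/(\qq/\pp)\cong R_{\pp}/\qq\in\mathcal{V}_{n+1}$, so the co-narrow condition yields $\qq/\pp\in\mathcal{V}_n$; then the exact sequence $0\to\qq/\pp\to R_{\qq}/\pp\to\kappa(\qq)\to 0$, together with $\kappa(\qq)\in\mathcal{V}_n$ (as $\qq\in\mathcal{K}_n$) and extension-closure, gives $R_{\qq}/\pp\in\mathcal{V}_n$.

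For (i), write $U=R_{\qq}/\pp$, a valuation domain, so that $\ModRqqpp$ is the category of $U$-modules. Since $\mathcal{V}_n$ is definable in $\ModR$, its intersection with the $U$-modules is a definable subcategory of the module category of $U$, hence determined by the standard uniserial $U$-modules it contains (Lemma~\ref{L:standarduni}), each of which — being of the shape $\widetilde{J}/\widetilde{I}$ with $\pp\subseteq\widetilde{I}\subseteq\widetilde{J}\subseteq R_{\pp}$ — is also standard uniserial over $R$; as each such module is a directed colimit of cyclic $U$-modules, it suffices to place every cyclic $U$-module in $\mathcal{V}_n$. The starting data are $R_{\pp}/\qq=Q(U)/\mm(U)\in\mathcal{V}_n$ (Lemma~\ref{L:belong}) and, by Lemma~\ref{L:RHom} applied to $X=(R_{\pp}/\qq)[-n]$ and the maps $\kappa(\oo)\to R_{\pp}/\qq$ just described, all residue fields $\kappa(\oo)$ of $U$ (i.e.\ for all primes $\pp\subseteq\oo\subseteq\qq$). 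From this I would bootstrap with the co-narrow condition in both its kernel and cokernel forms, Lemma~\ref{L67}, and closure under extensions and direct limits: first $\qq/\oo\in\mathcal{V}_n$ and $R_{\pp}/\oo\in\mathcal{V}_n$ for every $\oo\in[\pp,\qq]$ (e.g.\ $R_{\oo}/\qq\hookrightarrow R_{\pp}/\qq$ has cokernel $R_{\pp}/R_{\oo}\cong\varinjlim R_{\pp}/\qq\in\mathcal{V}_n$ by Lemma~\ref{L67}, which feeds the co-narrow argument), then $R_{\oo}/\oo'\in\mathcal{V}_n$ and $R_{\qq}/\oo'\in\mathcal{V}_n$ for all primes $\pp\subseteq\oo\subseteq\oo'\subseteq\qq$; and finally, filtering a general cyclic $U$-module $R_{\qq}/I'$ through its attached prime $\oo'=(I')^{\#}\in[\pp,\qq]$ via $0\to\oo'/I'\to R_{\qq}/I'\to R_{\qq}/\oo'\to 0$, one is reduced to the submodules $\oo'/I'$.

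The hard part will be exactly this last reduction in (i): unlike residue fields, the modules $\oo'/I'$ are not directly detected by Lemma~\ref{L:RHom}, and writing $\oo'/I'$ as the directed union of its cyclic submodules only replaces $R_{\qq}/I'$ by cyclic $U$-modules with the same attached prime $\oo'$ but larger defining ideal, so one must set up a transfinite d\'{e}vissage along the chain of ideals between $I'$ and $\oo'$ — using $R_{\qq}/\oo'\in\mathcal{V}_n$ as the base case and the co-narrow condition at the successor and limit stages — and this is where the technical care is needed. (A tempting shortcut is to tensor with a maximal immediate extension $S$ of $R$: then $(R_{\pp}/\qq)\otimes_R S\in\mathcal{V}_n$ is an injective cogenerator of the module category of the maximal valuation domain $U\otimes_R S$ by Lemma~\ref{L68}, and $\mathcal{V}_n$ is closed under the pure embeddings $M\to M\otimes_U(U\otimes_R S)$; but a definable subcategory of modules over a maximal valuation domain need not be the whole category merely because it contains an injective cogenerator, so this route still leaves the d\'{e}vissage to be carried out, now over $U\otimes_R S$.)
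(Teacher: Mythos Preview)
Your arguments for (ii) and (iii) are correct. For (iii) the paper uses a single short exact sequence
\[
0 \longrightarrow R_{\qq}/\pp \longrightarrow \kappa(\qq)\oplus\kappa(\pp) \longrightarrow R_{\pp}/\qq \longrightarrow 0
\]
and applies the co-narrow condition once, but your two-step version is equally valid.

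For (i), however, you have talked yourself out of the right proof. The ``tempting shortcut'' via the maximal immediate extension is exactly what the paper does, and your objection to it rests on a misreading of how the cogenerator is to be used. You are right that containing an injective cogenerator does not force a \emph{definable subcategory of $\ModU$} to be all of $\ModU$; but the paper never argues that way. The point is to exploit the closure properties of the coaisle $\mathcal{V}$ in the \emph{derived} category, not just the definability of $\mathcal{V}_n$ in the module category. Concretely: with $C=(R_{\pp}/\qq)\otimes_R S\in\mathcal{V}_n$ a cogenerator of $\ModRqqpp$ (Lemma~\ref{L68}), any $M\in\ModRqqpp$ admits a coresolution
\[
0 \to M \to C^{\varkappa_0} \to C^{\varkappa_1} \to C^{\varkappa_2} \to \cdots.
\]
Place $C^{\varkappa_0}$ in cohomological degree $n$. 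Each stalk $C^{\varkappa_k}[-n-k]$ lies in $\mathcal{V}$ (products and cosuspensions), the finite stupid truncations lie in $\mathcal{V}$ (extensions), and the full brutally truncated complex is the homotopy limit of these and hence also lies in $\mathcal{V}$, since \emph{every} coaisle is closed under arbitrary homotopy limits (\cite[Proposition~4.2]{SSV}). This complex is quasi-isomorphic to $M[-n]$, so $M\in\mathcal{V}_n$. No d\'evissage is needed.

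By contrast, your proposed route for (i) is genuinely incomplete: the ``transfinite d\'evissage along the chain of ideals between $I'$ and $\oo'$'' is not specified, and as you yourself observe, passing to cyclic submodules of $\oo'/I'$ only reproduces the same difficulty with a larger ideal and the same attached prime. There is no evident well-founded induction here, and you have not supplied one.
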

	\begin{proof}
			$(i)$ Since $[\pp,\qq] \in \mathcal{X}_n$, we have $R_{\pp}/\qq[-n] \in \mathcal{V}$ by Lemma~\ref{L:belong}. Let $S$ be a maximal immediate extension of $R$, then also $C = R_{\pp}/\qq \otimes_R S \in \mathcal{V}_n$, since $S$ is flat and $\mathcal{V}_n$ is closed under direct limits. By Lemma~\ref{L68}, $C = R_{\pp}/\qq \otimes_R S$ cogenerates  $\ModRqqpp$. Therefore, there is a coresolution for any $M \in \ModRqqpp$ of the form
			$$0 \rightarrow M \rightarrow C^{\varkappa_0} \rightarrow C^{\varkappa_1} \rightarrow C^{\varkappa_2} \rightarrow \cdots$$
			for some cardinals $\varkappa_n, n \geq 0$.
			
			Since $\mathcal{V}$ is closed under cosuspensions, extensions, products, and homotopy limits, the truncated complex
			$$\cdots \rightarrow 0 \rightarrow C^{\varkappa_0} \rightarrow C^{\varkappa_1} \rightarrow C^{\varkappa_2} \rightarrow \cdots$$
			with the first non-zero component situated in degree $n$ belongs to $\mathcal{V}[n]$, and therefore $M[-n] \in \mathcal{V}$, which in turn means $M \in \mathcal{V}_n$. (The use of homotopy limits comes from expressing this complex as a countable directed homotopy limit of its stupid truncations from above.)

			$(ii)$ As in the proof of $(i)$, $R_{\pp}/\qq \in \mathcal{V}_n$ implies that $\ModRqqpp \subseteq \mathcal{V}_n$. In particular, $\kappa(\qq) \in \mathcal{V}_n$, and thus $\qq \in \mathcal{K}_n$. Then there is an interval $\chi = [\varphi_n(\qq),\psi_n(\qq)] \in \mathcal{X}_n$. By Lemma~\ref{L:mono1}, $\qq \subseteq \psi_n(\qq)$. On the other hand, the definition of the map $\varphi_n$ together with $R_{\pp}/\qq \in \mathcal{V}_n$ ensures that $\varphi_n(\qq) \subseteq \pp$. Therefore, $\chi$ contains the interval $[\pp,\qq]$.

			$(iii)$ There is the following exact sequence
			$$0 \rightarrow R_{\qq}/\pp \rightarrow \kappa(\qq) \oplus \kappa(\pp) \rightarrow R_{\pp}/\qq \rightarrow 0$$	
			where the map $R_{\qq}/\pp \rightarrow \kappa(\qq) \oplus \kappa(\pp)$ is given by the canonical projection and injection, respectively. Since $\pp, \qq \in \mathcal{K}_n$, the middle term of the sequence belongs to $\mathcal{V}_n$, and since the interval $[\pp,\qq]$ belongs to $\mathcal{X}_{n+1}$, we have $R_{\pp}/\qq \in \mathcal{V}_{n+1}$ by Lemma~\ref{L:belong}. Therefore, $R_{\qq}/\pp$ belongs to $\mathcal{V}_n$ by Proposition~\ref{P:moduletheoretic}.
	\end{proof}
			\begin{lem}\label{L:disjoint} The system of intervals $\mathcal{X}_n$ is disjoint.\end{lem}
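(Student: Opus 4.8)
The goal is to verify the disjointness clause of Definition~\ref{D:admissiblesystem}(i) for $\mathcal{X}_n$, and I would follow the pattern established for cosilting/cotilting classes in \cite{B2} (Definition~3.7 and Proposition~3.8) and \cite{B}, but carried out with the subcategory $\mathcal{V}_n$ in place of the torsion-free class there. The crucial point to keep in mind is that $\mathcal{V}_n$ need \emph{not} be closed under arbitrary submodules or quotients; the closure properties genuinely at my disposal are closure under extensions, under kernels of epimorphisms between members of $\mathcal{V}_n$ (both because $\mathcal{V}$ is a coaisle, hence closed under extensions and cosuspensions, cf.\ Corollary~\ref{C:closure} and the discussion preceding Lemma~\ref{L:mono}), under direct limits (since $\mathcal{V}$ is homotopically smashing), together with $\kappa(\pp)\in\mathcal{V}_n$ for $\pp\in\mathcal{K}_n$, the uniserials produced by Lemma~\ref{L:belong}, and the dévissage presentation of Lemma~\ref{L67}. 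I would also use the elementary observation, immediate from Definition~\ref{D:phipsi}, that $\psi_n(\pp)$ depends on $\pp$ only through $\varphi_n(\pp)$.

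\textbf{Reduction.} Let $\chi=[\mathfrak{a},\mathfrak{b}]=[\varphi_n(\pp),\psi_n(\pp)]$ and $\xi=[\mathfrak{a}',\mathfrak{b}']=[\varphi_n(\pp'),\psi_n(\pp')]$ be two distinct members of $\mathcal{X}_n$. Since the ideals of $R$ are totally ordered, after swapping $\chi$ and $\xi$ we may assume $\mathfrak{a}\subseteq\mathfrak{a}'$; then disjointness fails precisely when $\mathfrak{a}'\subseteq\mathfrak{b}$, and so I assume this inclusion and aim to reach a contradiction. By Lemma~\ref{L:mono} all four endpoints lie in $\mathcal{K}_n$, and by Lemma~\ref{L:belong} the uniserial modules $R_{\mathfrak{a}}/\mathfrak{b}$, $R_{\mathfrak{a}'}/\mathfrak{b}'$, $R_{\mathfrak{a}'}/\pp'$ all lie in $\mathcal{V}_n$.

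\textbf{The proper-overlap case $\mathfrak{a}'\subsetneq\mathfrak{b}$.} Here I would use Lemma~\ref{L67} applied to $\mathfrak{a}\subseteq\mathfrak{a}'\subsetneq\mathfrak{b}$: it presents $R_{\mathfrak{a}}/R_{\mathfrak{a}'}$ as a direct limit of copies of $R_{\mathfrak{a}}/\mathfrak{b}\in\mathcal{V}_n$, whence $R_{\mathfrak{a}}/R_{\mathfrak{a}'}\in\mathcal{V}_n$. Then the short exact sequence
$$0 \to R_{\mathfrak{a}'}/\pp' \to R_{\mathfrak{a}}/\pp' \to R_{\mathfrak{a}}/R_{\mathfrak{a}'} \to 0$$
exhibits $R_{\mathfrak{a}}/\pp'$ as an extension of two members of $\mathcal{V}_n$, so $R_{\mathfrak{a}}/\pp'\in\mathcal{V}_n$; by the definition of $\varphi_n$ this forces $\varphi_n(\pp')\subseteq\mathfrak{a}$, i.e.\ $\mathfrak{a}'=\mathfrak{a}$. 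Since $\psi_n$ depends only on the value of $\varphi_n$, also $\psi_n(\pp)=\psi_n(\pp')$, i.e.\ $\mathfrak{b}=\mathfrak{b}'$, so $\chi=\xi$ — contradicting distinctness. (If $\mathfrak{a}'=\pp'$ then $R_{\mathfrak{a}}/\pp'=R_{\mathfrak{a}}/\mathfrak{a}'$, handled directly by the argument of the previous display without the outer sequence.)

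\textbf{The abutting case $\mathfrak{a}'=\mathfrak{b}$, and the main obstacle.} The remaining configuration $\mathfrak{a}\subsetneq\mathfrak{b}=\mathfrak{a}'$ (with $\pp'\supsetneq\mathfrak{b}$, the case $\pp'=\mathfrak{b}$ being immediate as above) is where the argument is genuinely delicate and cannot simply quote the torsion-free computation of \cite{B2}. Here one must rule the configuration out entirely, and the natural route is to show $R_{\mathfrak{a}}/\pp'\in\mathcal{V}_n$ — which again contradicts $\varphi_n(\pp')=\mathfrak{b}\supsetneq\mathfrak{a}$ — using $R_{\mathfrak{a}}/\mathfrak{b}\in\mathcal{V}_n$ and $R_{\mathfrak{b}}/\pp'\in\mathcal{V}_n$. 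This amounts to the \emph{concatenation} property: for primes $\mathfrak{a}\subseteq\mathfrak{b}\subseteq\mathfrak{c}$ in $\mathcal{K}_n$, if $R_{\mathfrak{a}}/\mathfrak{b}\in\mathcal{V}_n$ and $R_{\mathfrak{b}}/\mathfrak{c}\in\mathcal{V}_n$, then $R_{\mathfrak{a}}/\mathfrak{c}\in\mathcal{V}_n$. I expect this to be the hard step: every short exact sequence one writes down to pass between the uniserials $R_{\mathfrak{a}}/\mathfrak{b}$, $R_{\mathfrak{a}}/R_{\mathfrak{b}}$, $R_{\mathfrak{a}}/\mathfrak{c}$ must be arranged so that all three terms are visibly in $\mathcal{V}_n$, and the asymmetry of the closure properties (cokernels of maps in $\mathcal{V}_n$ only land in $\mathcal{V}_{n+1}$, cf.\ Proposition~\ref{P:moduletheoretic}) makes the obvious attempts circular; the way out is to feed a carefully chosen intermediate prime into Lemma~\ref{L67} so that $R_{\mathfrak{a}}/R_{\mathfrak{b}}$ becomes a directed colimit of uniserials already known to lie in $\mathcal{V}_n$, and then to exploit the intermediate submodule identities such as $R_{\mathfrak{b}}/R_{\mathfrak{b}}=\kappa(\mathfrak{b})$ to recover $\mathfrak{b}/\pp,\ \mathfrak{c}/\mathfrak{b}\in\mathcal{V}_n$ as kernels of epimorphisms between members of $\mathcal{V}_n$, iterating as in \cite{B2}. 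Granting concatenation, the abutting case is eliminated exactly as above, completing the proof that $\mathcal{X}_n$ is disjoint.
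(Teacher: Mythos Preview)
Your proper-overlap case is fine, but the abutting case $\mathfrak{a}'=\mathfrak{b}$ is left as a sketch, and your outlined route (``feed a carefully chosen intermediate prime into Lemma~\ref{L67}'') does not work: Lemma~\ref{L67} needs a \emph{strict} inclusion beyond $\mathfrak{b}$, and the only candidate prime $\pp'$ makes the argument circular, exactly as you note. The missing ingredient is Lemma~\ref{L:mc}(i). In the exact sequence
\[
0 \to R_{\mathfrak{b}}/\pp' \to R_{\mathfrak{a}}/\pp' \to R_{\mathfrak{a}}/R_{\mathfrak{b}} \to 0,
\]
the left term is in $\mathcal{V}_n$ by Lemma~\ref{L:belong}, and the right term $R_{\mathfrak{a}}/R_{\mathfrak{b}}$ is an $R_{\mathfrak{b}}/\mathfrak{a}$-module (it is an $R_{\mathfrak{b}}$-module, and for $t\in\mathfrak a$, $s\in\mathfrak b\setminus\mathfrak a$ one has $t\in sR$ since $\mathfrak a$ is prime, so $\mathfrak a$ annihilates it). Since $[\mathfrak a,\mathfrak b]=\chi\in\mathcal{X}_n$, Lemma~\ref{L:mc}(i) gives $R_{\mathfrak{a}}/R_{\mathfrak{b}}\in\mathcal{V}_n$, and extension-closure finishes the ``concatenation'' you wanted. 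So the step you flagged as hard is in fact immediate once you invoke the right lemma; without it your proof is incomplete.

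For comparison, the paper takes a different, more structural route: rather than a direct case analysis on overlapping intervals, it verifies the functional identities $\varphi_n\circ\varphi_n=\varphi_n$, $\varphi_n\circ\psi_n=\varphi_n$, $\psi_n\circ\varphi_n=\psi_n$, $\psi_n\circ\psi_n=\psi_n$ and then cites \cite[Lemma~6.2]{B} to conclude disjointness from these together with Lemmas~\ref{L:mono1} and~\ref{L:mono}. The substantive step there is $\varphi_n^2=\varphi_n$, and it is proved by exactly the same mechanism you were missing: the quotient $R_{\varphi_n^2(\pp)}/R_{\varphi_n(\pp)}$ is an $R_{\varphi_n(\pp)}/\varphi_n^2(\pp)$-module, hence lies in $\mathcal{V}_n$ by Lemma~\ref{L:mc}(i). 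Your direct approach is perfectly viable once patched, but the paper's functional-identity formulation has the advantage of packaging the overlap and abutting cases uniformly.
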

					\begin{proof}
					With respect to \cite[Lemma 6.2]{B} and Lemma~\ref{L:mono1} and Lemma~\ref{L:mono}, it is enough to show that for any $n \in \mathbb{Z}$ we have the identities
				\begin{equation}\label{E001}\varphi_n \circ \psi_n = \varphi_n ~\&  ~\varphi_n \circ \varphi_n = \varphi_n,\end{equation}
					and
				\begin{equation}\label{E002}\psi_n \circ \varphi_n = \psi_n ~\&  ~\psi_n \circ \psi_n = \psi_n.\end{equation}
						Fix $\pp \in \mathcal{K}_n$. By Lemmas~\ref{L:mono1} and \ref{L:mono}, we have $\varphi_n(\pp) \subseteq \varphi_n(\psi_n(\pp))$. On the other hand, as $R_{\varphi_n(\pp)}/\psi_n(\pp) \in \mathcal{V}_n$ by Lemma~\ref{L:belong}, we have $\varphi_n(\psi_n(\pp)) \subseteq \varphi_n(\pp)$ by the definition of $\varphi_n$. 

							By Lemma~\ref{L:mono1}, $\varphi_n(\varphi_n(\pp)) \subseteq \varphi_n(\pp)$. There is an exact sequence

						$$0 \rightarrow R_{\varphi_n(\pp)}/\pp \rightarrow R_{\varphi_n^2(\pp)}/\pp \rightarrow R_{\varphi_n^2(\pp)}/R_{\varphi_n(\pp)} \rightarrow 0.$$
							The leftmost term is in $\mathcal{V}_n$. If we prove that also the rightmost term belongs to $\mathcal{V}_n$, then also $R_{\varphi_n^2(\pp)}/\pp \in \mathcal{V}_n$, which in turn implies $\varphi_n(\varphi_n(\pp)) = \varphi_n(\pp)$ by the definition of $\varphi_n$. First note that $R_{\varphi_n^2(\pp)}/R_{\varphi_n(\pp)}$ is an $R_{\varphi_n(\pp)}/\varphi_n^2(\pp)$-module. Indeed, $R_{\varphi_n^2(\pp)}/R_{\varphi_n(\pp)}$ is an $R_{\varphi_n(\pp)}$-module, and as it is clearly $(R \setminus \varphi_n^2(\pp))$-torsion, it is annihilated by $\varphi_n^2(\pp)$. Since $\varphi_n \in \mathcal{K}_n$, we have that $[\varphi_n^2(\pp),\varphi_n(\pp)]$ is contained in an interval from $\mathcal{X}_n$. Therefore Lemma~\ref{L:mc}(i) implies that any $R_{\varphi_n(\pp)}/\varphi_n^2(\pp)$-module belongs to $\mathcal{V}_n$, and thus in particular, $R_{\varphi_n^2(\pp)}/R_{\varphi_n(\pp)} \in \mathcal{V}_n$.
													
							Again by Lemma~\ref{L:mono1} and Lemma~\ref{L:mono}, we have $\psi_n(\varphi_n(\pp)) \subseteq \psi_n(\pp)$. Using $\varphi_n^2 = \varphi_n$, we have that $R_{\varphi_n(\pp)}/\psi_n(\pp) \in \mathcal{V}_n$ implies the other inclusion.

				To finish the proof of $(\ref{E002})$, we are left with showing that $\psi_n(\psi_n(\pp)) = \psi_n(\pp)$. Since clearly $\psi_n(\pp) \subseteq \psi_n(\psi_n(\pp))$, we have using (\ref{E001}) that applying $\varphi_n$ on the latter inequality yields $\varphi_n(\pp) = \varphi_n(\psi_n(\pp))$, and thus $R_{\varphi_n(\pp)}/\psi_n(\varphi_n(\pp)) \in \mathcal{V}_n$. This yields $\psi_n(\varphi_n(\pp)) \subseteq \psi_n(\pp)$, as desired.

				Using \cite[Lemma 6.2]{B}, we conclude that $\mathcal{X}_n$ is a disjoint system.
					\end{proof}
			\begin{lem}\label{L:idempotent} The prime ideal $\varphi_n(\pp)$ is idempotent for any $\pp \in \mathcal{K}_n$.\end{lem}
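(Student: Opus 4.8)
The plan is to argue by contradiction, so suppose $\oo := \varphi_n(\pp)$ is not idempotent; by Lemma~\ref{L:mono} it is in any case a prime ideal belonging to $\mathcal{K}_n$. The goal is to exhibit a prime ideal strictly smaller than $\oo$ which still lies in the set $\{\qq \in \Spec(R) \mid R_\qq/\pp \in \mathcal{V}_n\}$; since $\oo$ is the infimum of this set, and this infimum is attained by Lemma~\ref{L:belong}, this will be the desired contradiction.

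First I would invoke Lemma~\ref{L:VD}(v): as $\oo$ is not idempotent, the ideal $\oo R_\oo$ is principal in $R_\oo$. Since $\oo$ is prime, $x/s \in \oo$ whenever $x \in \oo$ and $s \in R \setminus \oo$, so $\oo$ is already an $R_\oo$-module and $\oo R_\oo = \oo$; thus there is $t \in \oo$ with $t R_\oo = \oo$. Note $t \neq 0$. I would then take the prime ideal $\qq_0 := \bigcup\{\qq \in \Spec(R) \mid t \notin \qq\}$, which satisfies $\qq_0 \subsetneq \oo$ (because $t \in \oo \setminus \qq_0$) and $R_{\qq_0} = R_\oo[t^{-1}] = \bigcup_{k \geq 0} t^{-k}R_\oo$, the last being a directed union. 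Because $\pp \supseteq \oo$, there are $R$-module inclusions $\pp \subseteq R_\oo \subseteq R_{\qq_0}$, giving a short exact sequence
$$0 \to R_\oo/\pp \to R_{\qq_0}/\pp \to R_{\qq_0}/R_\oo \to 0.$$
Since $R_\oo/\pp \in \mathcal{V}_n$ by Lemma~\ref{L:belong} and $\mathcal{V}_n$ is closed under extensions (Proposition~\ref{P:moduletheoretic}; or simply because $\mathcal{V}$ is a coaisle), it suffices to prove $R_{\qq_0}/R_\oo \in \mathcal{V}_n$.

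This last step is the crux, and where I expect the only real content to lie. For each $k \geq 0$, multiplication by $t^k$ is an isomorphism $R_\oo/t^kR_\oo \xrightarrow{\ \sim\ } t^{-k}R_\oo/R_\oo$, and multiplication by $t^{k-1}$ is an isomorphism $R_\oo/\oo = R_\oo/tR_\oo \xrightarrow{\ \sim\ } t^{k-1}R_\oo/t^kR_\oo$. Since $\oo \in \mathcal{K}_n$, the residue field $\kappa(\oo) = R_\oo/\oo$ belongs to $\mathcal{V}_n$, and an induction using the short exact sequences $0 \to t^{k-1}R_\oo/t^kR_\oo \to R_\oo/t^kR_\oo \to R_\oo/t^{k-1}R_\oo \to 0$ and closure of $\mathcal{V}_n$ under extensions shows $R_\oo/t^kR_\oo \in \mathcal{V}_n$ for every $k$. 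Hence each $t^{-k}R_\oo/R_\oo$ lies in $\mathcal{V}_n$, and as $R_{\qq_0}/R_\oo = \varinjlim_k t^{-k}R_\oo/R_\oo$ is a directed colimit, the closure of $\mathcal{V}_n$ under directed limits gives $R_{\qq_0}/R_\oo \in \mathcal{V}_n$. Feeding this into the sequence above yields $R_{\qq_0}/\pp \in \mathcal{V}_n$ with $\qq_0 \subsetneq \oo = \varphi_n(\pp)$, contradicting the minimality of $\varphi_n(\pp)$; so $\oo$ must be idempotent.

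The main obstacle, then, is the realization that non-idempotency makes the ``tail'' $R_{\qq_0}/R_\oo$ tractable: once one knows $\oo R_\oo$ is principal, this module is a directed union of the iterated self-extensions $R_\oo/t^kR_\oo$ of the residue field $\kappa(\oo)$, and membership in $\mathcal{V}_n$ follows purely from the closure axioms of a homotopically smashing coaisle together with $\oo \in \mathcal{K}_n$. The remaining ingredients --- the identities $\oo R_\oo = \oo$ and $R_{\qq_0} = R_\oo[t^{-1}]$, the strict inclusion $\qq_0 \subsetneq \oo$, and the short exact sequences --- are routine facts about valuation domains.
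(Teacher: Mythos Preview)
Your proof is correct and follows essentially the same approach as the paper's. Both argue by contradiction, use that non-idempotency makes $\oo = tR_\oo$ principal, build the successive quotients $R_\oo/t^kR_\oo$ as iterated extensions of $\kappa(\oo) \in \mathcal{V}_n$, and pass to the direct limit to obtain $R_{\qq}/\pp \in \mathcal{V}_n$ for some prime $\qq \subsetneq \oo$; the only cosmetic difference is that you first isolate the cokernel $R_{\qq_0}/R_\oo$ and identify $\qq_0$ explicitly, whereas the paper takes the direct limit of the whole system $t^{-k}R_\oo/\pp$ at once without naming the resulting prime.
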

		\begin{proof}
				If $\varphi_n(\pp)$ is not idempotent, it is well known (see Lemma~\ref{L:VD}) that $\varphi_n(\pp) = rR_{\varphi_n(\pp)}$ for some element $r \in \varphi_n(\pp)$. Consider for any $n>0$ the exact sequence
				$$0 \rightarrow R_{\varphi_n(\pp)}/\pp \rightarrow r^{-n}R_{\varphi_n(\pp)}/\pp \rightarrow R_{\varphi_n(\pp)}/r^{n}R_{\varphi_n(\pp)} \rightarrow 0.$$
						Then the leftmost element belongs to $\mathcal{V}_n$, and since $R_{\varphi_n(\pp)}/r^{n}R_{\varphi_n(\pp)}$ is isomorphic to an $(n-1)$-fold extension of $R_{\varphi_n(\pp)}/\varphi_n(\pp) \simeq \kappa(\varphi_n(\pp))$, it also belongs to $\mathcal{V}_n$. Therefore $\varinjlim_{n>0} r^{-n}R_{\varphi_n(\pp)}/\pp \in \mathcal{V}_n$. Since $r \in \varphi_n(\pp)$, the localization $\varinjlim_{n>0} r^{-n}R_{\varphi_n(\pp)}$ is isomorphic to $R_{\qq}$ for some prime ideal $\qq \subsetneq \varphi_n(\pp)$. Then the direct limit $\varinjlim_{n>0} r^{-n}R_{\varphi_n(\pp)}/\pp$ is isomorphic to $R_{\qq}/\pp$. But then $R_{\qq}/\pp \in \mathcal{V}_n$, which is a contradiction with the definition of $\varphi_n(\pp)$.
		\end{proof}

				In order to prove the completeness condition, we will make an essential use of the recent deep result \cite[Theorem A]{SSV}, which states that a t-structure in the underlying category of a strong and stable derivator can be naturally lifted to the category of coherent diagrams of any shape. In the case of a homotopically smashing t-structure, this allows in a sense to ``commute'' the coaisle approximation functor with a directed homotopy colimit, as in the following proof.
			\begin{lem}\label{L:nogaps} The set $\mathcal{X}_n$ satisfies the completeness condition of Definition~\ref{D:admissiblesystem}.\end{lem}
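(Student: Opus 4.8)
The completeness condition of Definition~\ref{D:admissiblesystem} requires, for every nonempty $\mathcal{Y} \subseteq \mathcal{X}_n$, that $\bigcup_{\chi \in \mathcal{Y}}\pp_\chi$ be the left endpoint and $\bigcap_{\chi \in \mathcal{Y}}\qq_\chi$ the right endpoint of intervals belonging to $\mathcal{X}_n$. Write $\mathcal{Y} = \{[\varphi_n(\pp_\alpha),\psi_n(\pp_\alpha)] \mid \alpha \in A\}$; by Lemma~\ref{L:disjoint} this is a chain for $<$. If the supremum (resp. infimum) is attained by a member of $\mathcal{Y}$ the claim is immediate, so I assume neither is, so that $A$ has no greatest and no least element. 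Disjointness then forces $\pp := \bigcup_\alpha \varphi_n(\pp_\alpha) = \bigcup_\alpha \psi_n(\pp_\alpha)$ and $\qq := \bigcap_\alpha \varphi_n(\pp_\alpha) = \bigcap_\alpha \psi_n(\pp_\alpha)$. I claim it suffices to prove $\pp,\qq \in \mathcal{K}_n$. Indeed, granting $\pp \in \mathcal{K}_n$, monotonicity of $\varphi_n$ (Lemma~\ref{L:mono}) together with the identity $\varphi_n\circ\varphi_n = \varphi_n$ established in the proof of Lemma~\ref{L:disjoint} gives $\varphi_n(\pp) \supseteq \varphi_n(\varphi_n(\pp_\alpha)) = \varphi_n(\pp_\alpha)$ for all $\alpha$, hence $\varphi_n(\pp) \supseteq \pp$; combined with $\varphi_n(\pp)\subseteq\pp$ (Lemma~\ref{L:mono1}) this yields $\varphi_n(\pp) = \pp$, so $[\pp,\psi_n(\pp)] \in \mathcal{X}_n$ witnesses the first half. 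Dually, granting $\qq\in\mathcal{K}_n$, from $\qq\subseteq\varphi_n(\pp_\alpha)$, monotonicity of $\psi_n$ and $\psi_n\circ\varphi_n = \psi_n$ we get $\psi_n(\qq)\subseteq\psi_n(\pp_\alpha)$ for all $\alpha$, hence $\psi_n(\qq)\subseteq\qq$, while $\psi_n(\qq)\supseteq\qq$ by Lemma~\ref{L:mono1}; so $[\varphi_n(\qq),\qq] \in \mathcal{X}_n$ witnesses the second half.

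For the membership $\pp \in \mathcal{K}_n$ I would argue directly. Fix $\alpha_0 \in A$. For each $\beta \geq \alpha_0$ the $R$-module $R_{\varphi_n(\pp_{\alpha_0})}/\psi_n(\pp_\beta)$ is annihilated by $\varphi_n(\pp_{\alpha_0})$ and $R\setminus\psi_n(\pp_{\alpha_0})$ acts invertibly on it, so it is a module over the valuation domain $R_{\psi_n(\pp_{\alpha_0})}/\varphi_n(\pp_{\alpha_0})$, hence lies in $\mathcal{V}_n$ by Lemma~\ref{L:mc}(i) applied to the interval $[\varphi_n(\pp_{\alpha_0}),\psi_n(\pp_{\alpha_0})] \in \mathcal{X}_n$. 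As $\{\beta \mid \beta\geq\alpha_0\}$ is cofinal in $A$ we have $\bigcup_{\beta\geq\alpha_0}\psi_n(\pp_\beta) = \pp$, so $R_{\varphi_n(\pp_{\alpha_0})}/\pp = \varinjlim_{\beta\geq\alpha_0}R_{\varphi_n(\pp_{\alpha_0})}/\psi_n(\pp_\beta) \in \mathcal{V}_n$ by closure of $\mathcal{V}_n$ under direct limits. Since $\varphi_n(\pp_{\alpha_0})\subseteq\pp$ we have $R_{\pp} \subseteq R_{\varphi_n(\pp_{\alpha_0})}$, so $\kappa(\pp) = R_{\pp}/\pp$ embeds nontrivially into $R_{\varphi_n(\pp_{\alpha_0})}/\pp$; applying Lemma~\ref{L:RHom} with $X = (R_{\varphi_n(\pp_{\alpha_0})}/\pp)[-n] \in \mathcal{V}$ we conclude $\kappa(\pp) \in \mathcal{V}_n$, i.e. $\pp \in \mathcal{K}_n$.

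The remaining membership $\qq \in \mathcal{K}_n$ is where \cite[Theorem A]{SSV} and the homotopy smashing of $\mathcal{V}$ become indispensable, and I expect it to be the main obstacle. The difficulty is that $\kappa(\qq)$ is $\qq$-torsion-free, whereas every module lying in $\mathcal{V}_n$ that is assembled from the intervals of $\mathcal{Y}$ — which all sit strictly above $\qq$ — is $\qq$-torsion, so it admits no nonzero morphism from $\kappa(\qq)$ and the module-theoretic argument of the previous paragraph breaks down. Instead, the plan is to realize $\kappa(\qq)[-n]$, or a complex admitting a nonzero morphism from it in $\Der(R)$, as a directed homotopy colimit of a coherent diagram $\mathscr{X} \in \mathbb{D}(I)$ over a directed $I$, whose components are shifts into degree $n$ of modules or complexes lying in $\mathcal{V}$ — built from the $R_{\varphi_n(\pp_\alpha)}/\psi_n(\pp_\alpha)[-n] \in \mathcal{V}$ and the $\kappa(\varphi_n(\pp_\alpha))[-n] \in \mathcal{V}$ so that the colimit is concentrated around degree $n$ with cohomology $\kappa(\qq)$ there. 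By \cite[Theorem A]{SSV} the t-structure $(\mathcal{U},\mathcal{V})$ lifts to $\mathbb{D}(I)$ with coaisle detected componentwise, so its coaisle reflection is computed degreewise; since $\mathcal{V}$ is homotopically smashing, $\hocolim$ carries this lifted reflection to $\tau_\mathcal{V}$, that is, $\tau_\mathcal{V}$ commutes with the directed homotopy colimit. Passing to $H^n$, which also commutes with directed homotopy colimits, then exhibits $\kappa(\qq)$ inside $\mathcal{V}_n$, finishing the proof.

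The delicate point throughout this last step is the design of $\mathscr{X}$: it must simultaneously have the prescribed homotopy colimit and admit enough control on the componentwise coaisle reflection — via the information already available about which uniserial modules lie in the $\mathcal{V}_n$ (Lemmas~\ref{L:belong} and~\ref{L:mc}) — to guarantee that the colimit of the reflected diagram contributes $\kappa(\qq)$ to $\mathcal{V}_n$. All the remaining ingredients (disjointness, idempotency, monotonicity of $\varphi_n$ and $\psi_n$, the functional identities) are already in hand from the preceding lemmas, so the proof reduces entirely to the two membership statements $\pp,\qq\in\mathcal{K}_n$, the first of which is elementary and the second of which is the genuine content.
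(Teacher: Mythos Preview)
Your reduction to showing $\pp,\qq\in\mathcal{K}_n$ and the deduction of completeness from it match the paper's first paragraph exactly. Your treatment of the union $\pp$ is correct and in fact \emph{simpler} than the paper's: the paper runs the full \cite[Theorem A]{SSV} machinery for both $\pp$ and $\qq$, taking the diagram $(R_{\pp}/\pp_\alpha)$ with embeddings into $\kappa(\pp_\alpha)\in\mathcal{V}_n$ to control the coaisle reflections, whereas you observe that $R_{\varphi_n(\pp_{\alpha_0})}/\psi_n(\pp_\beta)$ is already an $R_{\psi_n(\pp_{\alpha_0})}/\varphi_n(\pp_{\alpha_0})$-module, hence lies in $\mathcal{V}_n$ by Lemma~\ref{L:mc}(i), and a direct limit plus Lemma~\ref{L:RHom} finishes. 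This avoids derivators entirely for the union case.

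For the intersection $\qq$, your framework is right but the plan contains a genuine misconception. You propose a diagram whose components ``lie in $\mathcal{V}$''. But if all components were already in $\mathcal{V}$, the homotopy colimit would be in $\mathcal{V}$ by homotopically smashing alone, with no need for \cite[Theorem A]{SSV}; and as you yourself argue, no module assembled from the $R_{\psi_n(\pp_\alpha)}/\varphi_n(\pp_\alpha)$-pieces receives a nonzero map from $\kappa(\qq)$, so the conclusion would be vacuous. The paper's construction is different: the diagram is $(R_{\pp_\alpha}/\qq \mid \alpha\in\Lambda^{\mathrm{op}})$, whose components are \emph{not} in $\mathcal{V}$, with direct limit $\kappa(\qq)$. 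The control on the coaisle reflection comes from the embeddings $R_{\pp_\alpha}/\qq \hookrightarrow \prod_{\beta<\alpha}\kappa(\pp_\beta)\in\mathcal{V}_n$, which force each reflection map $f_\alpha$ to be injective on $H^n$ (via the same commutative-square trick you would recognise from the $\pp$-case in the paper). Passing to the homotopy colimit --- here \cite[Theorem A]{SSV} is essential to know the colimit of the $V_\alpha$ is the reflection of $\kappa(\qq)[-n]$ --- yields a nonzero map $\kappa(\qq)[-n]\to\hocolim V_\alpha\in\mathcal{V}$, and Lemma~\ref{L:RHom} concludes. So the missing idea is: the diagram consists of objects \emph{outside} $\mathcal{V}$ whose reflections are controlled by embeddings into known objects of $\mathcal{V}_n$.
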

					\begin{proof}
							It is enough to show the following claim: for any $n \in \mathbb{Z}$ and any non-empty subset $A$ of $\mathcal{K}_n$ the primes $\bigcup_{\pp \in A} \pp$ and $\bigcap_{\pp \in A} \pp$ belong to $\mathcal{K}_n$. Indeed, once we have this, then given any non-empty subset $B$ of $\mathcal{X}_n$, we let $A = \{\qq \mid [\pp,\qq] \in B\}$. Suppose that $B$ does not have a maximal element, then $\bigcup_{\qq \in A} \qq = \bigcup_{[\pp,\qq] \in B}\pp$. Therefore, the claim gives $\bigcup_{[\pp,\qq] \in B}\pp \in \mathcal{K}_n$, and so there is an interval $[\varphi_n(\bigcup_{[\pp,\qq] \in B}\pp),\psi_n(\bigcup_{[\pp,\qq] \in B}\pp)] \in \mathcal{X}_n$. By Lemma~\ref{L:mono} and (\ref{E001}), we have $\pp = \varphi_n(\pp) \subseteq \varphi_n(\bigcup_{[\pp,\qq] \in B}\pp)$ for any $[\pp,\qq] \in B$, and thus necessarily $\varphi_n(\bigcup_{[\pp,\qq] \in B}\pp) = \bigcup_{[\pp,\qq] \in B}\pp$ by Lemma~\ref{L:mono1}. The second part of the completeness condition follows by an analogous argument.

							It remains to prove the claim. Let $A$ be a non-empty subset of $\mathcal{K}_n$ and let $(\Lambda, \leq)$ be a totally ordered set (considered naturally as a small category) such that we can write $A = \{\pp_\alpha \mid \alpha \in \Lambda\}$ in a way that $\alpha \leq \beta$ if and only if $\pp_\alpha \subseteq \pp_\beta$ for all $\alpha, \beta \in \Lambda$. We put $\pp = \bigcup_{\alpha \in \Lambda}\pp_\alpha$ and $\qq = \bigcap_{\alpha \in \Lambda}\pp_\alpha$. We need to prove that $\kappa(\pp)$ and $\kappa(\qq)$ belong to $\mathcal{V}_n$.

				Let us express $\kappa(\pp)$ as the direct limit $\varinjlim_{\alpha \in \Lambda}R_{\pp}/\pp_\alpha$ of the direct system $\mathcal{Y} = (R_{\pp}/\pp_\alpha \mid \alpha \in \Lambda) \in (\ModR)^\Lambda$ consisting of the natural surjections. Let $\mathscr{Y} \in \Der((\ModR)^\Lambda)$ be the coherent diagram induced by $\mathcal{Y} \in (\ModR)^\Lambda$. By \cite[Theorem A]{SSV}, there is a t-structure $(\mathcal{U}_\Lambda,\mathcal{V}_\Lambda)$ in $\Der((\ModR)^\Lambda)$, where $\mathcal{V}_\Lambda$ (resp. $\mathcal{U}_\Lambda$) consists of all coherent diagrams of shape $\Lambda$ with all coordinates in $\mathcal{V}$ (resp. $\mathcal{U}$). Let
$$\Delta: \mathscr{U} \rightarrow \mathscr{Y}[-n] \xrightarrow{f} \mathscr{V} \rightarrow \mathscr{Y}[1]$$
be the approximation triangle in $\Der((\ModR)^\Lambda)$ of the coherent diagram $\mathscr{Y}[-n]$ with respect to the t-structure $(\mathcal{U}_\Lambda,\mathcal{V}_\Lambda)$. For each $\alpha \in \Lambda$, denote by $U_\alpha$ and $V_\alpha$ the $\alpha$-th coordinates of $\mathscr{U}$ and $\mathscr{V}$. By passing to a coordinate $\alpha \in \Lambda$, $\Delta$ induces a triangle
$$\Delta_\alpha: U_\alpha \rightarrow R_{\pp}/\pp{}_\alpha[-n] \xrightarrow{f_\alpha} V_\alpha \rightarrow U_\alpha[1],$$
which is the approximation triangle of $R_{\pp}/\pp_\alpha[-n]$ with respect to the t-structure $(\mathcal{U},\mathcal{V})$ in $\Der(R)$. 

 Note that for any $\alpha \in \Lambda$, there is a canonical embedding $R_{\pp}/\pp_\alpha \subseteq \kappa(\pp_\alpha)$. Let $\iota_\alpha: R_{\pp}/\pp_\alpha[-n] \rightarrow \kappa(\pp_\alpha)[-n]$ be a map in $\Der(R)$ inducing this embedding in the $n$-th cohomology. Since $\kappa(\pp_\alpha) \in \mathcal{V}_n$ by the assumption, applying the coaisle approximation functor $\tau_\mathcal{V}: \Der(R) \rightarrow \mathcal{V}$ onto $\iota_\alpha$ yields a commutative diagram in $\Der(R)$ as follows:
				\begin{equation}\label{E004}
				\begin{CD}
					R_{\pp}/\pp_\alpha[-n] @>\iota_\alpha >> \kappa(\pp_\alpha)[-n] \\
					@V f_\alpha VV @V\simeq VV \\
					V_\alpha @>\tau_\mathcal{V}(\iota_\alpha)>> \kappa(\pp_\alpha)[-n]
				\end{CD}
				\end{equation}
				Applying the $n$-th homology functor on (\ref{E004}) yields a commutative diagram in $\ModR$:
$$
				\begin{CD}
					R_{\pp}/\pp_\alpha @>\subseteq >> \kappa(\pp_\alpha) \\
					@V H^n(f_\alpha) VV @V\simeq VV \\
					H^n(V_\alpha) @>H^n(\tau_\mathcal{V}(\iota_\alpha))>> \kappa(\pp_\alpha)
				\end{CD}
$$
 The latter diagram shows that $H^n(f_\alpha): R_{\pp}/\pp_\alpha \rightarrow H^n(V_\alpha)$ is a monomorphism for each $\alpha \in \Lambda$. By \cite[Corollary 4.19]{G}, there is a triangle obtained by taking the homotopy colimit of triangle $\Delta$ in $\Der((\ModR)^\Lambda)$:
\begin{equation}\label{E007}\hocolim_{\alpha \in \Lambda} \Delta: \hocolim_{\alpha \in \Lambda} \mathscr{U} \rightarrow \kappa(\pp)[-n] \xrightarrow{\hocolim_{\alpha \in \Lambda}f} \hocolim_{\alpha \in \Lambda} \mathscr{V} \rightarrow \hocolim_{\alpha \in \Lambda}\mathscr{U}[1].\end{equation}
Since both $\mathcal{U}$ (\cite[Proposition 4.2]{SSV}) and $\mathcal{V}$ are closed under directed homotopy colimits, we have that (\ref{E007}) is the approximation triangle of $\kappa(\pp)[-n]$ with respect to the t-structure $(\mathcal{U},\mathcal{V})$. We compute the $n$-th cohomology of the coaisle approximation map,
$$H^n(\hocolim_{\alpha \in \Lambda}f) \simeq \varinjlim_{\alpha \in \Lambda}H^n(f_\alpha),$$
which together with the previous computation and the exactness of direct limits in $\ModR$ shows that $H^n(\hocolim_{\alpha \in \Lambda}f): \kappa(\pp) \rightarrow H^n(\hocolim_{\alpha \in \Lambda} \mathscr{V})$ is a monomorphism in $\ModR$. In particular, we proved that
$$\Hom{}_{\Der(R)}(\kappa(\pp)[-n],\hocolim_{\alpha \in \Lambda} \mathscr{V}) \neq 0.$$ 
Since $\hocolim_{\alpha \in \Lambda} \mathscr{V} \in \mathcal{V}$, Lemma~\ref{L:RHom} shows that $\kappa(\pp)[-n] \in \mathcal{V}$, and thus $\kappa(\pp) \in \mathcal{V}_n$.

We prove that $\kappa(\qq) \in \mathcal{V}_n$ using a similar argument. This time we express $\kappa(\qq)$ as the direct limit $\varinjlim_{\alpha \in \Lambda^{\text{op}}} R_{\pp_\alpha}/\qq$ of the direct system $\mathcal{Y}=(R_{\pp_\alpha}/\qq \mid \alpha \in \Lambda^{\text{op}})$ consisting of canonical embeddings, which again lifts to a coherent diagram $\mathscr{Y} \in \Der((\ModR)^{\Lambda^{\text{op}}})$. We observe that there are monomorphisms $R_{\pp_\alpha}/\qq \xhookrightarrow{} \prod_{\beta < \alpha \in \Lambda}\kappa(\pp_\beta)$ given by canonical maps in each coordinate $\beta < \alpha$ of the product, and $\prod_{\beta < \alpha \in \Lambda}\kappa(\pp_\beta) \in \mathcal{V}_n$ using that $\mathcal{V}_n$ is closed under products. As in the previous part of the proof, these embeddings can be used to show that the coaisle approximation maps $R_{\pp_\alpha}/\qq[-n] \xrightarrow{f_\alpha} V_\alpha:=\tau_\mathcal{V}(R_{\pp_\alpha}/\qq[-n])$ induce monomorphisms $H^n(f_\alpha)$ in the $n$-th cohomology. Repeating the argument with the homotopy colimit to show that the coaisle approximation map $\kappa(\qq)[-n] \rightarrow \hocolim_{\alpha \in \Lambda}V_\alpha$ is non-zero in $n$-cohomology, we again conclude that $\kappa(\qq)[-n] \in \mathcal{V}$ by Lemma~\ref{L:RHom}. 
\end{proof}
Putting together Lemma~\ref{L:disjoint}, \ref{L:idempotent}, and \ref{L:nogaps}, we obtain:
\begin{cor}\label{C:admissible}
		In the setting of Definition~\ref{D:phipsi}, the set $\mathcal{X}_n$ is an admissible system on $\Spec(R)$ for any $n \in \mathbb{Z}$.
\end{cor}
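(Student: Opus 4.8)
The plan is to check, for a fixed $n \in \mathbb{Z}$, the three defining conditions of an admissible system (Definition~\ref{D:admissiblesystem}) for the set $\mathcal{X}_n = \{[\varphi_n(\pp),\psi_n(\pp)] \mid \pp \in \mathcal{K}_n\}$ of Definition~\ref{D:phipsi}, noting that the substance of each has already been established in the preceding lemmas of this section.

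First I would verify that $\mathcal{X}_n$ is genuinely a set of formal intervals in $\Spec(R)$: for every $\pp \in \mathcal{K}_n$, Lemma~\ref{L:mono1} gives $\varphi_n(\pp) \subseteq \pp \subseteq \psi_n(\pp)$, so $[\varphi_n(\pp),\psi_n(\pp)]$ is a legitimate interval, while Lemma~\ref{L:mono} guarantees that both endpoints again lie in $\mathcal{K}_n$, and in particular in $\Spec(R)$.

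Then I would dispatch the three axioms one at a time. The disjointness condition (i) is precisely the assertion of Lemma~\ref{L:disjoint}. For the idempotency condition (ii), note that the left endpoint of any $\chi \in \mathcal{X}_n$ is of the form $\pp_\chi = \varphi_n(\pp)$ for some $\pp \in \mathcal{K}_n$, and Lemma~\ref{L:idempotent} says exactly that $\varphi_n(\pp) = \varphi_n(\pp)^2$, hence $\pp_\chi = \pp_\chi^2$. Finally, the completeness condition (iii) is nothing but Lemma~\ref{L:nogaps}. Assembling the three, $\mathcal{X}_n$ is an admissible system, and since $n$ was arbitrary this holds for every $n \in \mathbb{Z}$.

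The main obstacle lies not in this synthesis but in the three ingredient lemmas, and among those the genuinely hard step is the completeness condition of Lemma~\ref{L:nogaps}: its proof must lift the homotopically smashing t-structure $(\mathcal{U},\mathcal{V})$ to the category of coherent $\Lambda$-shaped diagrams by means of \cite[Theorem A]{SSV}, commute the coaisle approximation functor past a directed homotopy colimit (exploiting that both $\mathcal{U}$ and $\mathcal{V}$ are closed under directed homotopy colimits), and then detect the residue field $\kappa(\pp)$, respectively $\kappa(\qq)$, inside $\mathcal{V}_n$ via the d\'{e}vissage argument of Lemma~\ref{L:RHom}. Once those lemmas are in hand, Corollary~\ref{C:admissible} follows in a single line.
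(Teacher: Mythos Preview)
Your proposal is correct and matches the paper's approach exactly: the paper's proof is the single sentence ``Putting together Lemma~\ref{L:disjoint}, \ref{L:idempotent}, and \ref{L:nogaps}, we obtain'' the corollary, and you have done precisely this, with the added care of checking that the intervals are well-formed via Lemmas~\ref{L:mono1} and~\ref{L:mono}. Your closing remark about where the real work lies (Lemma~\ref{L:nogaps} and its use of \cite[Theorem A]{SSV}) is accurate commentary but not part of the proof proper.
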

	The sequence $\mathbb{X} = (\mathcal{X}_n \mid n \in \mathbb{Z})$ of admissible systems in $\Spec(R)$ satisfies two additional properties that will characterize it as a sequence associated to a definable coaisle.

	\begin{definition}\label{D:admissiblefiltration}
		Let $R$ be a valuation domain. We say that a sequence $\mathbb{X} = (\mathcal{X}_n \mid n \in \mathbb{Z})$ of admissible systems on $\Spec(R)$ is a \EMP{nested} sequence if $\mathcal{X}_n$ is a \EMP{nested subsystem} of $\mathcal{X}_{n+1}$ for each $n \in \mathbb{Z}$, meaning that for any $\chi \in \mathcal{X}_n$ there is $\xi \in \mathcal{X}_{n+1}$ such that $\chi \subseteq \xi$.
	
		We say that $\mathbb{X} = (\mathcal{X}_n \mid n \in \mathbb{Z})$ satisfies the \EMP{degreewise non-density condition} if the following holds:
		\begin{itemize}
			\item For any $n \in \mathbb{Z}$ and any dense interval $\chi < \xi$ in $\mathcal{X}_{n+1}$, there is an interval $\chi < \tau < \xi$ in $\mathcal{X}_{n+1}$ such that $\tau$ does not contain any interval from $\mathcal{X}_n$.
	\end{itemize}

		An \EMP{admissible filtration} in $\Spec(R)$ is a nested sequence $\mathbb{X} = (\mathcal{X}_n \mid n \in \mathbb{Z})$ of admissible systems satisfying the degreewise non-density condition.
	\end{definition}

	\begin{remark}\label{R:gap}
		The condition of $\tau \in \mathcal{X}_{n+1}$ not containing any interval from $\mathcal{X}_n$ in the definition of the degreewise non-density condition above can be rephrased by $\tau$ being strictly contained in a gap from $\mathcal{G}(\mathcal{X}_n)$. Indeed, assume that $\tau$ does not contain any interval from $\mathcal{X}_n$. Let $\chi$ be the maximal interval in $\mathcal{X}_n$ such that $\chi < \tau$ and $\xi$ be a minimal interval in $\mathcal{X}_n$ with $\xi > \tau$, such intervals exist by the completeness condition satisfied by any admissible system (Definition~\ref{D:admissiblesystem}). Since $\mathcal{X}_n$ is a nested subsystem of the admissible system $\mathcal{X}_{n+1}$, we have $\qq_\chi \subsetneq \pp_\tau \subseteq \qq_\tau \subsetneq \pp_\xi$. If there was an interval $\theta \in \mathcal{X}_n$ with $\chi < \theta < \xi$, then necessarily $\theta \subseteq \tau$, a contradiction. Therefore, $(\qq_\chi,\pp_\xi)$ is the desired gap in $\mathcal{G}(\mathcal{X}_n)$ containing the interval $\tau$ strictly.

	Conversely, the definition of a gap (\S~\ref{SS:densityandgaps}) implies that if $\tau$ is contained in a gap from $\mathcal{G}(\mathcal{X}_n)$ then $\tau$ cannot contain any interval from $\mathcal{X}_n$. Therefore, the degreewise non-density condition can be equivalently formulated as follows:
		\begin{itemize}
			\item For any $n \in \mathbb{Z}$ and any dense interval $\chi < \xi$ in $\mathcal{X}_{n+1}$, there is an interval $\chi < \tau < \xi$ in $\mathcal{X}_{n+1}$ and a gap $(\qq,\pp) \in \mathcal{G}(\mathcal{X}_n)$ which strictly contains $\tau$ (that is, $\qq \subsetneq \pp_\tau \subseteq \qq_\tau \subsetneq \pp$).
	\end{itemize}
	\end{remark}
	
	Before proving that the sequence we associated to a definable coaisle is indeed an admissible filtration, we remark a useful equivalent formulation and one consequence of the degreewise non-density condition. We point the reader to the definition of the set $\mathcal{H}(\mathcal{X}_n)$ of maximal dense intervals of $\mathcal{X}_n$ in \S~\ref{SS:densityandgaps}.

	\begin{lem}\label{L:dnequiv}
			Let $\mathbb{X} = (\mathcal{X}_n \mid n \in \mathbb{Z})$ be a nested sequence of admissible systems. Then the degreewise non-density condition is equivalent to the following:
			\begin{itemize}
				\item For any $n \in \mathbb{Z}$ and for any maximal dense interval $C \in \mathcal{H}(\mathcal{X}_{n+1})$, the subset
				$$\mathcal{Z}_C = \{\tau \in C \mid \text{ $\tau$ does not contain any interval from $\mathcal{X}_n$}\}$$
				is dense in $C$ (that is, for any $\chi < \xi$ in $C$ there is $\tau \in \mathcal{Z}_C$ with $\chi < \tau < \xi$).
			\end{itemize}
	\end{lem}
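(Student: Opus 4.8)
The plan is to translate both conditions through the dictionary, set up in \S\ref{SS:densityandgaps}, between the dense intervals of $\mathcal{X}_{n+1}$ and the equivalence classes of the relation $\sim$. Two elementary facts do all the work. First, \textbf{(a)}: a sub-interval of a dense interval is again dense (immediate from the definition of density), so if $\chi<\xi$ is a dense interval of $\mathcal{X}_{n+1}$ and $\chi<\tau<\xi$, then the sub-intervals $[\chi,\tau]$ and $[\tau,\xi]$ are dense and hence $\chi\sim\tau\sim\xi$. Second, \textbf{(b)}: consequently each $\sim$-class is a convex (``interval-shaped'') subset of the totally ordered set $(\mathcal{X}_{n+1},<)$, the classes of cardinality at least two are by definition exactly the members of $\mathcal{H}(\mathcal{X}_{n+1})$, and each such class, with the induced order, is precisely a maximal dense interval of $\mathcal{X}_{n+1}$ (using the completeness axiom of $\mathcal{X}_{n+1}$ to produce its least and greatest element).

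First I would prove that the degreewise non-density condition implies the condition in the statement. Fix $n\in\mathbb{Z}$, a maximal dense interval $C\in\mathcal{H}(\mathcal{X}_{n+1})$, and arbitrary $\chi<\xi$ in $C$. Since $C$ is a dense interval and $\chi,\xi\in C$, the sub-interval $\chi<\xi$ is dense in $\mathcal{X}_{n+1}$ by \textbf{(a)}. The degreewise non-density condition then yields $\tau\in\mathcal{X}_{n+1}$ with $\chi<\tau<\xi$ which contains no interval from $\mathcal{X}_n$; by \textbf{(a)} again, $\tau\sim\chi$, so $\tau\in C$, and therefore $\tau\in\mathcal{Z}_C$. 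As $\chi<\xi$ in $C$ were arbitrary, $\mathcal{Z}_C$ is dense in $C$.

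For the converse, assume $\mathcal{Z}_C$ is dense in $C$ for every $n\in\mathbb{Z}$ and every $C\in\mathcal{H}(\mathcal{X}_{n+1})$. Fix $n$ and a dense interval $\chi<\xi$ in $\mathcal{X}_{n+1}$. By definition of $\sim$ we have $\chi\sim\xi$ with $\chi\neq\xi$, so $\chi$ and $\xi$ lie in a common $\sim$-class of cardinality at least two, that is, in some $C\in\mathcal{H}(\mathcal{X}_{n+1})$; by convexity of $C$ (fact \textbf{(b)}), $\chi<\xi$ is an interval inside $C$. Density of $\mathcal{Z}_C$ in $C$ supplies $\tau\in\mathcal{Z}_C$ with $\chi<\tau<\xi$, and $\tau$ contains no interval from $\mathcal{X}_n$ by the definition of $\mathcal{Z}_C$ — which is exactly the conclusion of the degreewise non-density condition.

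The whole argument is a book-keeping exercise once \textbf{(a)} and \textbf{(b)} are recorded, so I do not anticipate a real obstacle; the only point deserving care is the verification of \textbf{(b)}, and within it the \emph{maximality} of a $\sim$-class of size $\geq 2$ among dense intervals — this is where one uses \textbf{(a)}, since adjoining one further interval of $\mathcal{X}_{n+1}$ to such a class and keeping it dense would, by \textbf{(a)}, force that interval into the class, a contradiction.
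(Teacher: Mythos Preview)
Your proposal is correct and follows essentially the same approach as the paper's proof: both directions are handled by passing through the equivalence relation $\sim$ from \S\ref{SS:densityandgaps}, using that a dense interval $\chi<\xi$ in $\mathcal{X}_{n+1}$ lies entirely inside a single class $C\in\mathcal{H}(\mathcal{X}_{n+1})$ by convexity. The paper's version is terser and leaves the convexity fact (your \textbf{(a)}--\textbf{(b)}) implicit, but the argument is the same.
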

	\begin{proof}
		This follows easily from the definition of $\mathcal{H}(\mathcal{X}_{n+1})$. Indeed, for any dense interval $\chi < \xi$ in $\mathcal{X}_{n+1}$ we have $\chi \sim \xi$, and thus there is $C \in \mathcal{H}(\mathcal{X}_{n+1})$ such that for any $\chi \leq \tau \leq \xi$ we have $\tau \in C$. Since $\mathcal{Z}_C$ is dense in $C$, we infer that there is $\tau \in \mathcal{Z}_C$ such that $\chi < \tau < \xi$, and thus the condition of the Lemma implies the degreewise non-density condition. For the converse, let $\chi < \xi$ be in $C$, then the interval $\chi < \xi$ is dense by the definition of $C$, and therefore there is $\tau$ in between $\chi$ and $\xi$ belonging to $\mathcal{Z}_C$ by the degreewise non-density condition.
	\end{proof}
	\begin{lem}\label{L:dncons}
		Let $\mathbb{X} = (\mathcal{X}_n \mid n \in \mathbb{Z})$ be an admissible filtration. If $\chi < \xi$ is a dense interval in $\mathcal{X}_n$ then there is an interval $\mu \in \mathcal{X}_{n+1}$ such that $\chi, \xi \subseteq \mu$.
	\end{lem}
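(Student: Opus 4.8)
The plan is to argue by contradiction. By the nesting hypothesis fix intervals $\mu_\chi,\mu_\xi\in\mathcal{X}_{n+1}$ with $\chi\subseteq\mu_\chi$ and $\xi\subseteq\mu_\xi$; comparing boundary primes and using $\chi<\xi$ (i.e.\ $\qq_\chi\subsetneq\pp_\xi$) one sees at once that $\mu_\chi\le\mu_\xi$ in $(\mathcal{X}_{n+1},<)$. If $\mu_\chi=\mu_\xi$ we may take $\mu:=\mu_\chi$ and are done, so assume $\mu_\chi<\mu_\xi$ and aim for a contradiction.

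\emph{Step 1: the interval $\mu_\chi<\mu_\xi$ is dense in $\mathcal{X}_{n+1}$.} Put $\mathcal{A}=\{\eta\in\mathcal{X}_n\mid\chi\le\eta\le\xi\}$; by density of $\chi<\xi$ and completeness of $\mathcal{X}_n$, $\mathcal{A}$ is a densely ordered set in which every non-empty subset has a supremum (whose $\pp$-boundary is the union of the $\pp$-boundaries) and an infimum (whose $\qq$-boundary is the intersection of the $\qq$-boundaries). Let $\mu\colon\mathcal{A}\to\mathcal{X}_{n+1}$ send $\eta$ to the unique interval of $\mathcal{X}_{n+1}$ containing it; this is monotone. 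Suppose some covering pair $\mu_\chi\le P<P'\le\mu_\xi$ exists in $\mathcal{X}_{n+1}$. A short boundary-prime computation shows that then no interval $\eta\in\mathcal{X}_n$ can satisfy $\qq_P\subsetneq\pp_\eta$ and $\qq_\eta\subsetneq\pp_{P'}$, for otherwise $\mu(\eta)$ would be an interval of $\mathcal{X}_{n+1}$ strictly between $P$ and $P'$. Hence $\mathcal{A}=\mathcal{A}^-\sqcup\mathcal{A}^+$ with $\mathcal{A}^-=\{\eta\in\mathcal{A}\mid\mu(\eta)\le P\}$ an initial segment and $\mathcal{A}^+=\{\eta\in\mathcal{A}\mid\mu(\eta)\ge P'\}$ a final segment, both non-empty since $\chi\in\mathcal{A}^-$ and $\xi\in\mathcal{A}^+$. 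As $\mathcal{A}$ is densely ordered this cut is realized on exactly one side, so $\sup\mathcal{A}^-=\inf\mathcal{A}^+=:\theta\in\mathcal{X}_n$, and the two descriptions of $\theta$ give $\pp_\theta\subseteq\qq_P$ and $\qq_\theta\supseteq\pp_{P'}$. But $\theta$ lies in $\mathcal{A}^-$ or in $\mathcal{A}^+$, and from $\theta\subseteq\mu(\theta)$ with $\mu(\theta)\le P$ (resp.\ $\mu(\theta)\ge P'$) we obtain $\qq_\theta\subseteq\qq_P$ (resp.\ $\pp_\theta\supseteq\pp_{P'}$), contradicting $\qq_P\subsetneq\pp_{P'}$. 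Therefore $[\mu_\chi,\mu_\xi]$ contains no covering pair, i.e.\ it is a dense interval of $\mathcal{X}_{n+1}$.

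\emph{Step 2: invoking the degreewise non-density condition.} Applying it to the dense interval $\mu_\chi<\mu_\xi$ (in the form of Remark~\ref{R:gap}) produces $\tau\in\mathcal{X}_{n+1}$ with $\mu_\chi<\tau<\mu_\xi$ and a gap $(\qq,\pp)\in\mathcal{G}(\mathcal{X}_n)$ strictly containing $\tau$, i.e.\ $\qq\subsetneq\pp_\tau\subseteq\qq_\tau\subsetneq\pp$. From $\chi\subseteq\mu_\chi<\tau$ we get $\qq_\chi\subsetneq\pp$, and from $\tau<\mu_\xi\supseteq\xi$ we get $\qq\subsetneq\pp_\xi$. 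Assume first the gap is of type (i), say with $[\pp',\qq]<[\pp,\qq']$ in $\mathcal{X}_n$ and $[\pp,\qq']$ covering $[\pp',\qq]$. Were $\chi>[\pp',\qq]$, then $\qq\subsetneq\pp_\chi\subseteq\qq_\chi\subsetneq\pp$, placing $\chi$ strictly inside the gap, which is impossible by the definition of a gap; hence $\chi\le[\pp',\qq]$, and symmetrically $\xi\ge[\pp,\qq']$. Now $\chi\le[\pp',\qq]<[\pp,\qq']\le\xi$, so the density of the interval $\chi<\xi$ yields an interval of $\mathcal{X}_n$ strictly between $[\pp',\qq]$ and $[\pp,\qq']$, contradicting that $[\pp,\qq']$ covers $[\pp',\qq]$. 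The gaps of type (ii) and (iii) are excluded analogously: their definitions identify the least (resp.\ greatest) interval of $\mathcal{X}_n$, which, combined with $\chi\in\mathcal{X}_n$ (resp.\ $\xi\in\mathcal{X}_n$) and the inequalities just obtained, forces $\qq_\chi\subsetneq\pp\subseteq\qq_\chi$ (resp.\ its mirror image), a contradiction; and a type (iv) gap cannot occur since $\mathcal{X}_n$ contains $\chi$ and $\xi$. This contradiction shows $\mu_\chi=\mu_\xi$, which is the assertion.

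The main obstacle is Step 1: one must carefully use the disjointness and completeness of the admissible systems to run the Dedekind-cut argument on $\mathcal{A}$ — in particular to decide on which side of the cut $\theta$ falls and to read off its boundary primes consistently from both the ``sup'' and the ``inf'' descriptions — whereas Step 2 is a routine case analysis over the four types of gaps once $[\mu_\chi,\mu_\xi]$ is known to be dense.
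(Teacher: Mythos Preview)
Your proof is correct, and it follows the same overall architecture as the paper's: assume the two containing intervals $\mu_\chi<\mu_\xi$ in $\mathcal{X}_{n+1}$ are distinct, show this produces a dense interval in $\mathcal{X}_{n+1}$, and contradict the degreewise non-density condition. The differences lie in the execution of both steps.

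For Step 1, the paper argues more directly and proves something slightly stronger. Rather than your Dedekind-cut argument, it takes any two intervals $\tau<\theta$ in $\mathcal{X}_{n+1}$ each containing an interval of $\mathcal{X}_n$ between $\chi$ and $\xi$, and uses Lemma~\ref{L:denseideal} (applied to ideals $I$ with $\qq_\tau\subseteq I\subseteq \pp_\theta$, together with the density of $\chi<\xi$ in $\mathcal{X}_n$) to show that \emph{every} interval of $\mathcal{X}_{n+1}$ lying between $\tau$ and $\theta$ contains an interval of $\mathcal{X}_n$. This is precisely the negation of the degreewise non-density condition, so the contradiction is immediate and Step 2 becomes a single sentence --- no gap-type case analysis is needed.

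Your route trades this appeal to Lemma~\ref{L:denseideal} for a purely order-theoretic argument (the cut on $\mathcal{A}$), which is self-contained but longer; since you only establish density and not the stronger ``every intermediate interval contains something from $\mathcal{X}_n$'', you must then invoke Remark~\ref{R:gap} and run the case analysis over gap types to close the argument. Both approaches are sound; the paper's is more economical because it exploits the ideal-theoretic Lemma~\ref{L:denseideal} rather than re-deriving the needed order facts by hand.
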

	\begin{proof}
		We define a set 
		$$A = \{\tau \in \mathcal{X}_{n+1} \mid \text{$\tau$ contains an interval from $\mathcal{X}_n$ in between $\chi$ and $\xi$}\}.$$
		Since $\mathbb{X}$ is nested, each interval from $\mathcal{X}_n$ between $\chi$ and $\xi$ is contained in some $\tau \in A$. Our aim is to show that $A$ is a singleton, because then the interval $\mu \in \mathcal{X}_{n+1}$ with $A = \{\mu\}$ has the desired property.
	
		First, we remark that $A$ is clearly non-empty, so it is enough to show that $A$ does not contain two distinct elements. Suppose that there are intervals $\tau < \theta$ in $A$. The intervals $\tau$ and $\theta$ are disjoint by the definition of an admissible system. For any ideal $I$ with $\qq_\tau \subseteq I \subseteq \pp_\theta$, there is an interval $\gamma \in \mathcal{X}_n$ satisfying $\pp_\gamma \subseteq I \subseteq \qq_\gamma$ and $\chi < \gamma < \xi$, this follows from Lemma~\ref{L:denseideal}. Then $\gamma$ is contained in some interval $\delta \in A$, and necessarily $\tau < \delta < \theta$. In this way the density of the interval $\chi < \xi$ in $\mathcal{X}_n$ implies the density of the interval $\tau < \theta$ in $\mathcal{X}_{n+1}$. Since each interval from $\mathcal{X}_{n+1}$ which lies in between $\tau$ and $\theta$ contains an interval from $\mathcal{X}_n$ by Lemma~\ref{L:denseideal} again, this is in contradiction with the degreewise non-density condition.
	\end{proof}

	\begin{prop}\label{P:coaislestointervals}
			Let $R$ be a valuation domain. The assignment $\mathcal{V} \mapsto \mathbb{X} = (\mathcal{X}_n)_{n \in \mathbb{Z}}$ of Definition~\ref{D:phipsi} yields a map
			$$\Theta: \left \{ \begin{tabular}{ccc} \text{ definable coaisles $\mathcal{V}$} \\ \text{in $\Der(R)$} \end{tabular}\right \}  \rightarrow  \left \{ \begin{tabular}{ccc} \text{ admissible filtrations $\mathbb{X}$ } \\ \text{ in $\Spec(R)$} \end{tabular}\right \}.$$
	\end{prop}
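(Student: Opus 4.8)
The plan is to confirm that the sequence $\mathbb{X}=(\mathcal{X}_n)_{n\in\mathbb{Z}}$ produced by Definition~\ref{D:phipsi} meets the two requirements in Definition~\ref{D:admissiblefiltration}, knowing from Corollary~\ref{C:admissible} that each $\mathcal{X}_n$ is already an admissible system. The \emph{nestedness} is quick: by Proposition~\ref{P:moduletheoretic} the sequence $(\mathcal{V}_n)_n$ is increasing, hence $\mathcal{K}_n\subseteq\mathcal{K}_{n+1}$; given $\chi=[\varphi_n(\pp),\psi_n(\pp)]\in\mathcal{X}_n$, Lemma~\ref{L:belong} gives $R_{\varphi_n(\pp)}/\psi_n(\pp)\in\mathcal{V}_n\subseteq\mathcal{V}_{n+1}$, so Lemma~\ref{L:mc}(ii) applied in degree $n+1$ yields an interval $\xi\in\mathcal{X}_{n+1}$ with $\chi\subseteq\xi$. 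Thus $\mathcal{X}_n$ is a nested subsystem of $\mathcal{X}_{n+1}$, and the whole content of the proposition is the degreewise non-density condition.

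For that, I would argue by contradiction: assume there is a degree $n$ and a dense interval $\chi<\xi$ in $\mathcal{X}_{n+1}$ such that every $\tau\in\mathcal{X}_{n+1}$ with $\chi<\tau<\xi$ contains an interval of $\mathcal{X}_n$. Since $\chi\sim\xi$, the closed interval $[\chi,\xi]$ in $(\mathcal{X}_{n+1},<)$ is densely ordered and, by Lemma~\ref{L:denseideal}, has no gaps. The natural move is to localize: put $T=R_{\qq_\xi}/\pp_\chi$, a valuation domain with $R\to T$ a ring epimorphism (Lemma~\ref{L:mie}(i)), identify $\ModT$ with the bireflective subcategory $\{M\in\ModR\mid \pp_\chi M=0,\ M=M_{\qq_\xi}\}$ of $\ModR$, and translate $\chi$, $\xi$ and the interior intervals $\tau$ into a \emph{dense everywhere} admissible system $\mathcal{X}'$ on $\Spec(T)$ (with $\chi$ becoming the minimal interval $[0,-]$ and $\xi$ the maximal one $[-,\mm_T]$). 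By Proposition~\ref{P:cotiltingdense} the module $C=\prod_{[\pp,\qq]\in\mathcal{X}'}\bigl((R_\pp/\qq)\otimes_R S\bigr)$ is a $1$-cotilting $T$-module with ${}^{\perp}C=\Cogen(C)=\mathcal{C}_{\mathcal{X}'}$; by Lemma~\ref{L68} each factor lies in the module category of the corresponding $R_\qq/\pp$, which is an interval of $\mathcal{X}_{n+1}$, so Lemma~\ref{L:mc}(i) puts each factor — and hence $C$ itself — in $\mathcal{V}_{n+1}$.

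The technical core is then to show that the contradiction hypothesis forces $C\in\mathcal{V}_n$ as well. For each interior $\tau$ there is $[\pp',\qq']\in\mathcal{X}_n$ with $[\pp',\qq']\subseteq\tau$, so $\pp',\qq'\in\mathcal{K}_n$ and $R_{\pp'}/\qq'\in\mathcal{V}_n$; combining this with the fact (via Lemma~\ref{L:denseideal}) that the interior $\tau$'s cover the whole region densely, with Lemma~\ref{L:mc}(iii), and with the closure of $\mathcal{V}_n$ under products, directed limits, cosuspensions and condition (ii) of Proposition~\ref{P:moduletheoretic}, one upgrades to $\ModRqqpp\subseteq\mathcal{V}_n$ for every interval $[\pp,\qq]\in\mathcal{X}'$ (the endpoints $\chi,\xi$ included), and therefore $C\in\mathcal{V}_n$. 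Finally, feeding $C$ — now in both $\mathcal{V}_n$ and $\mathcal{V}_{n+1}$ — into a canonical map (a localization or residue map attached to a suitably chosen prime of the region) and applying condition (ii) of Proposition~\ref{P:moduletheoretic} once more forces into $\mathcal{V}_n$ a uniserial module whose presence is incompatible with the construction of $\mathcal{X}_n$, i.e. it violates the disjointness of $\mathcal{X}_n$ or the defining infimum/supremum properties of $\varphi_n$ and $\psi_n$, which is the desired contradiction.

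I expect the main obstacle to be exactly this bootstrapping — passing from ``$\mathcal{X}_n$ is dense inside the region'' to ``$C\in\mathcal{V}_n$'' — together with the precise extraction of the final contradiction. The delicate point is that $\mathcal{X}_n$ restricted to the region may itself have genuine gaps, so denseness of the $\mathcal{X}_n$-intervals cannot be used directly on a fixed ideal; it has to be transported through the ambient density of the $\mathcal{X}_{n+1}$-intervals and through the dense-everywhere cotilting module $C$, which is the reason Proposition~\ref{P:cotiltingdense} is needed here in the first place. The nestedness part and the reduction to $T$ are routine; everything hard is concentrated in making the last two steps of the previous paragraph rigorous.
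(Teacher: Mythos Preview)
Your setup is right and the nestedness argument is fine, but the ``technical core'' as you describe it does not go through, and the paper's route is genuinely different from what you sketch.

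The hypothesis (every interior $\tau\in\mathcal{X}_{n+1}$ contains some interval of $\mathcal{X}_n$) together with completeness gives, for each interior $\tau$, that both endpoints $\pp_\tau,\qq_\tau$ lie in $\mathcal{K}_n$; then Lemma~\ref{L:mc}(iii) yields $R_{\qq_\tau}/\pp_\tau\in\mathcal{V}_n$ --- the \emph{ring} of the interval. What you need for $C\in\mathcal{V}_n$ is the \emph{cotilting-type} uniserial $R_{\pp_\tau}/\qq_\tau\in\mathcal{V}_n$, and there is no mechanism to pass from one to the other in degree $n$: the obvious short exact sequence $0\to R_{\qq_\tau}/\pp_\tau\to\kappa(\pp_\tau)\oplus\kappa(\qq_\tau)\to R_{\pp_\tau}/\qq_\tau\to 0$ only puts the cokernel in $\mathcal{V}_{n+1}$ via Proposition~\ref{P:moduletheoretic}. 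Your claimed upgrade ``$\ModRqqpp\subseteq\mathcal{V}_n$ for every $[\pp,\qq]\in\mathcal{X}'$'' would, by Lemma~\ref{L:mc}(ii), force every interior $\tau$ to actually belong to $\mathcal{X}_n$, which is far stronger than the hypothesis and not something you can extract from the closure properties you list.

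The paper does not try to put $C$ in $\mathcal{V}_n$. It targets instead the single uniserial $M=R_{\pp_\xi}/\qq_\chi$. Step~I embeds $M$ via the diagonal map $\eta$ into the product $\prod_{\chi\le\tau\le\xi} M\otimes_R R_{\qq_\tau}/\pp_\tau$, whose factors are (isomorphic to) the ring-type modules $R_{\qq_\tau}/\pp_\tau\in\mathcal{V}_n$ just obtained. Step~II uses the cotilting module $C$ \emph{only in degree $n+1$}: comparing $\eta$ to the universal map $\nu\colon M\to C^{\Hom(M,C)}$ shows $\Coker(\eta)\in\mathcal{C}_{\mathcal{X}'}$, and then a $\Prod(C)$-coresolution (with $C\in\mathcal{V}_{n+1}$) places $\Coker(\eta)$ in $\mathcal{V}_{n+1}$. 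Now Proposition~\ref{P:moduletheoretic} gives $M\in\mathcal{V}_n$. The contradiction is then extracted in $\mathcal{X}_{n+1}$, not $\mathcal{X}_n$: the monomorphism $M\to\kappa(\qq_\chi)\oplus\kappa(\pp_\xi)$ between objects of $\mathcal{V}_n$ has cokernel $R_{\qq_\chi}/\pp_\xi$, hence $R_{\qq_\chi}/\pp_\xi\in\mathcal{V}_{n+1}$, and Lemma~\ref{L:mc}(ii) produces a single interval of $\mathcal{X}_{n+1}$ containing $[\qq_\chi,\pp_\xi]$, contradicting the density of $\chi<\xi$.
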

	\begin{proof}
			Let $\mathcal{V}$ be a definable coaisle in $\Der(R)$. Then the sequence of admissible systems associated to $\mathcal{V}$ via Definition~\ref{D:phipsi} is clearly nested, because $\mathcal{V}_n \subseteq \mathcal{V}_{n+1}$ for all $n \in \mathbb{Z}$.

			The only thing which remains to be proved is that the nested sequence $\mathbb{X} = (\mathcal{X}_n \mid n \in \mathbb{Z})$ associated to $\mathcal{V}$ satisfies the degreewise non-density condition. Towards a contradiction, let us assume that there is $n \in \mathbb{Z}$ and a dense interval $\chi < \xi$ in $\mathcal{X}_{n+1}$ such that every interval $\tau \in \mathcal{X}_{n+1}$ with $\chi < \tau < \xi$ contains some interval $\tau_0$ from $\mathcal{X}_n$.

			The density of the interval together with the completeness property satisfied by admissible systems implies that for each $\chi < \tau \leq \xi$, we can write $\pp_\tau$ as the union $\pp_\tau = \bigcup_{\chi < \theta < \tau}\pp_\theta$. By our assumption there is an interval $\theta_0 \in \mathcal{X}_n$ contained in $\theta$ for each $\chi < \theta < \tau$, and therefore the completeness property yields an interval of the form $[\pp_\tau, \qq]$ in $\mathcal{X}_n$ with $\pp_\tau \subseteq \qq \subseteq \qq_\tau$. By a completely dual argument, there is also an interval of the form $[\pp,\qq_\tau]$ in $\mathcal{X}_n$ for any $\chi \leq \tau < \xi$ with $\pp_\tau \subseteq \pp \subseteq \qq_\tau$.

			We claim that the module $M = R_{\pp_\xi}/{\qq_\chi}$ belongs to $\mathcal{V}_n$ and show that this leads to the desired contradiction. We prove this in several steps.

			\textbf{Step I.} The module $\prod_{\chi \leq \tau \leq \xi} M \otimes_R R_{\qq_\tau}/\pp_\tau$ belongs to $\mathcal{V}_n$ for any $\chi \leq \tau \leq \xi$. 

			Since $\mathcal{V}_n$ is closed under products, it is enough to show that the factors of the product belong to $\mathcal{V}_n$. First, $M \otimes_R R_{\qq_\chi}/\pp_\chi \simeq \kappa(\qq_\chi) \in \mathcal{V}_n$, as there is an interval $[\pp,\qq_\chi] \in \mathcal{X}_n$, and so $\qq_\chi \in \mathcal{K}_n$. For any $\chi < \tau < \xi$, we have $M \otimes_R R_{\qq_\tau}/\pp_\tau \simeq R_{\qq_\tau}/\pp_\tau \in \mathcal{V}_n$ by Lemma~\ref{L:mc}(iii), since there are intervals of the form $[\pp_\tau,\qq]$ and $[\pp,\qq_\tau]$ in $\mathcal{X}_n$ contained both in the interval $[\pp_\tau,\qq_\tau] \in \mathcal{X}_{n+1}$. Finally, $M \otimes_R R_{\qq_\xi}/\pp_\xi \simeq \kappa(\pp_\xi) \in \mathcal{V}_n$, as there is an interval $[\pp_\xi,\qq] \in \mathcal{X}_n$.

			\textbf{Step II.} The natural map $\eta: M \rightarrow \prod_{\chi \leq \tau \leq \xi} M \otimes_R R_{\qq_\tau}/\pp_\tau$ is a monomorphism and $\Coker(\eta) \in \mathcal{V}_{n+1}$.

			Let $\mathcal{Y} = \{\tau \in \mathcal{X}_{n+1} \mid \chi \leq \tau \leq \xi\}$. Note that $\mathcal{Y}$ is naturally an admissible system in the spectrum of the valuation domain $U = R_{\qq_\xi}/\pp_\chi$, and that $\mathcal{Y}$ is dense everywhere as such. Recall that the idempotency of $\pp_\chi$ ensures that the natural ring homomorphism $R \rightarrow U$ is a homological ring epimorphism, and therefore both the homomorphisms and the extensions in $\ModU$ can be equivalently computed over $R$. Let $S$ be a maximal immediate extension of $U$. Then Proposition~\ref{P:cotiltingdense} yields that 
		$$C:=\prod_{\tau \in \mathcal{Y}} ((U_{\pp_\tau}/\qq{}_\tau) \otimes_U S) = \prod_{\tau \in \mathcal{Y}} ((R_{\pp_\tau}/\qq{}_\tau) \otimes_R S)$$ 
		is a 1-cotilting $U$-module corresponding to the cotilting class $\mathcal{C}_\mathcal{Y} = \Cogen(C) = {}^{\perp_1}C$ in $\ModU$. The module $M = R_{\pp_\xi}/{\qq_\chi}$ is a $U$-module an clearly belongs to $\mathcal{C}_\mathcal{Y}$. Consider the universal map $\nu: M \rightarrow C^{\Hom_R(M,C)}.$

		 Since $M \in \mathcal{C}_\mathcal{Y} = \Cogen(C)$, the map $\nu$ is a monomorphism, and by applying $\Hom_U(-,C)$ we obtain an exact sequence
		$$\xrightarrow{\Hom_U(\nu,C)} \Hom{}_U(M,C) \rightarrow \Ext{}_U^1(\Coker(\nu),C) \rightarrow \Ext{}_U^1(C^{\Hom_U(M,C)},C).$$
		By the universality of $\nu$, $\Hom_R(\nu,C)$ is an epimorphism, and because $C$ is a 1-cotilting $U$-module, we also have $\Ext_U^1(C^{\Hom_R(M,C)},C) = 0$. Therefore, we can infer $\Ext_U^1(\Coker(\nu),C) = 0$, and thus $\Coker(\nu) \in \mathcal{C}_\mathcal{Y}$.

		As $C = \prod_{\tau \in \mathcal{Y}} ((R_{\pp_\tau}/\qq_\tau) \otimes_U S)$, the map $\nu$ is given coordinate-wise by maps $\nu_{\tau}: M \rightarrow ((R_{\pp_\tau}/\qq_\tau) \otimes_U S)^{\Hom_R(M,C)}$. Since $(R_{\pp_\tau}/\qq_\tau) \otimes_U S$ is an $R_{\qq_\tau}/\pp_\tau$-module, the map $\nu_{\tau}$ factors through the natural map $M \rightarrow M \otimes_R R_{\qq_{\tau}}/\pp_{\tau}$. Therefore, we have the following commutative diagram:
$$
\begin{CD}
	0 @>>> M @>\eta>> \prod_{\chi \leq \tau \leq \xi} M \otimes_R R_{\qq_\tau}/\pp_\tau @>>> \Coker(\eta) @>>> 0 \\
	& & @| @V f VV @V g VV \\
	0 @>>> M @>\nu>> C^{\Hom_R(M,C)} @>>> \Coker(\nu) @>>> 0 
\end{CD}
$$
The map $\eta$ is a monomorphism, because $\nu$ is a monomorphism. The module $\prod_{\chi \leq \tau \leq \xi} M \otimes_R R_{\qq_\tau}/\pp_\tau$ belongs to $\mathcal{C}_\mathcal{Y}$, and so does its submodule $\Ker(f)$. By the Snake Lemma, we have $\Ker(f) \simeq \Ker(g)$. Because $\Coker(\eta)$ is an extension of $\Ker(g)$ and a submodule of $\Coker(\nu) \in \mathcal{C}_{\mathcal{Y}}$, we see that $\Coker(\eta) \in \mathcal{C}_{\mathcal{Y}}$. We claim that this implies that $\Coker(\eta) \in \mathcal{V}_{n+1}$. Indeed, since $\Coker(\eta) \in \mathcal{C}_\mathcal{Y} = \Cogen(C) = {}^{\perp_1}C$, then by iterating the natural map to products of $C$, we get that $\Coker(\eta)$ admits a $\Prod(C)$-coresolution (see also \cite[Proposition 15.5(a)]{GT}), that is, an exact sequence of the form
			$$0 \rightarrow \Coker(\eta) \rightarrow C_0 \rightarrow C_1 \rightarrow C_2 \rightarrow \cdots$$
where $C_i$ is a direct product of copies of $C$. Since $\tau \in \mathcal{X}_{n+1}$ for each $\chi \leq \tau \leq \xi$, we have $R_{\pp_\tau}/\qq_\tau \in \mathcal{V}_{n+1}$ by Lemma~\ref{L:belong}. As $S$ is a flat $U$-module, the module $(R_{\pp_\tau}/\qq_\tau) \otimes_U S$ is isomorphic to a direct limit of copies of $R_{\pp_\tau}/\qq_\tau$ for any $\tau \in \mathcal{Y}$, and therefore $(R_{\pp_\tau}/\qq_\tau) \otimes_U S \in \mathcal{V}_{n+1}$. Since $\mathcal{V}_{n+1}$ is closed under direct products, we showed that $C = \prod_{\tau \in \mathcal{Y}} ((R_{\pp_\tau}/\qq_\tau) \otimes_U S)$ belongs to $\mathcal{V}_{n+1}$, and thus so does also the module $C_i$ for each $i \geq 0$. Arguing as in the proof of Lemma~\ref{L:mc}(i), we conclude that $\Coker(\eta)$ belongs to $\mathcal{V}_{n+1}$.

		\textbf{Step III.} The module $M$ belongs to $\mathcal{V}_n$, and this leads to a contradiction.

		In Steps I. and II. we showed that $M$ is the kernel of a map $\prod_{\chi \leq \tau \leq \xi} M \otimes_R R_{\qq_\tau}/\pp_\tau \rightarrow \Coker(\eta)$ between a module from $\mathcal{V}_n$ and from $\mathcal{V}_{n+1}$, respectively. Therefore, $M$ belongs to $\mathcal{V}_n$ by Proposition~\ref{P:moduletheoretic}. Now consider the obvious map $M = R_{\pp_\xi}/\qq_\chi \rightarrow \kappa(\qq_\chi) \oplus \kappa(\pp_\xi)$. This map is a monomorphism between modules from $\mathcal{V}_n$ and its cokernel is $R_{\qq_\chi}/\pp_\xi$, which implies $R_{\qq_\chi}/\pp_\xi \in \mathcal{V}_{n+1}$ by Proposition~\ref{P:moduletheoretic} again. But then Lemma~\ref{L:mc}(ii) yields that there is an interval in $\mathcal{X}_{n+1}$ which contains $[\qq_\chi,\pp_\xi]$, a contradiction with the interval $\chi < \xi$ in $\mathcal{X}_{n+1}$ being dense.
	\end{proof}

	\begin{remark}\label{R:spec}
	Let us remark what the degreewise non-density condition means in two ``extremal'' cases --- that of a stable t-structure, and that of the Happel-Reiten-Smal\o ~t-structure. 
			\begin{itemize}
				\item If $\mathcal{V}$ is closed under suspension, then the associated admissible filtration $\mathbb{X}$ is necessarily constant, that is, $\mathcal{X}_n = \mathcal{X}_{n+1}$ for all $n \in \mathbb{Z}$. The degreewise non-density then simply means, in view of Lemma~\ref{L:dncons}, that $\mathcal{X}_n$ is nowhere dense --- cf. \cite[Theorem 5.23]{BS}.
				\item If $\mathcal{V}$ belongs to a Happel-Reiten-Smal\o ~t-structure, then 
						$$\mathcal{X}_n = \begin{cases} \emptyset, & n<0 \\ \{[0,\mm]\}, & n>0 \end{cases},$$
								and the only interesting admissible system is $\mathcal{X}_0$, for which the degreewise non-density condition is vacuous. This fits nicely with Theorem~\ref{T01}, where no condition on density was required.
			\end{itemize}
	\end{remark}

	\section{Construction of definable coaisles}\label{S:inttocoaisle}
	The purpose of this section is to construct an injective assignment from admissible filtrations to definable coaisles. Given an admissible filtration $\mathbb{X}=(\mathcal{X}_n \mid n \in \mathbb{Z})$ in the spectrum of a valuation domain $R$, we define the following subcategories of $\ModR$ (see Lemma~\ref{L01}):
	$$\mathcal{C}_n = \mathcal{C}_{\mathcal{X}_n} = \{M \in \ModR \mid \forall 0 \neq m \in M ~\exists \chi \in \mathcal{X}_n: \Ann{}_R(m) \in \langle \chi \rangle\}$$
	and
	$$\mathcal{D}_n = \{M \in \ModR \mid F{}_{\qq}(M) \in \ModRqq ~\forall [\pp,\qq] \in \mathcal{X}_n\}.$$
We recall that for any $M \in \ModR$ we have $F_{\qq}(M) = \operatorname{Im}(M \rightarrow M \otimes_R R_{\qq})$, and thus $F_{\qq}(M) \in \ModRqq$ if and only if $F_{\qq}(M)$ is $\qq$-divisible. Our goal here is to show that there is a definable coaisle $\mathcal{V}$ defined on cohomology by putting $\mathcal{V}_n = \mathcal{C}_n \cap \mathcal{D}_{n+1}$ for each $n \in \mathbb{Z}$.

We refer the reader to \S \ref{SS:homology} for the definition of the complex $K(\qq,\pp)$ for a gap $(\qq,\pp)$.


		\begin{lem}\label{L:Dn}
			Let $M$ be an $R$-module, $\qq \in \Spec(R)$, and $J$ any ideal of $R$ such that $\qq \subsetneq J$. Then $F_{\qq}(M) \in \ModRqq$ if and only if $(R_{\qq}/J) \otimes_R M = 0$. In particular, for any $\qq \in \Spec^*(R)$ we have $F_{\qq}(M) \in \ModRqq$ if and only if $H^1(K(\qq,\pp) \otimes_R M) = 0$ for any gap of the form $(\qq,\pp)$ for $\qq \subsetneq \pp$. 
			
			As a consequence, the class $\mathcal{D}_n$ is closed under pure submodules, direct limits, extensions, and epimorphic images.
	\end{lem}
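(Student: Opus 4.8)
The plan is to deduce both equivalences, and then the closure properties, from a single computation that identifies $(R_\qq/J)\otimes_R M$ with a quotient of $M_\qq:=M\otimes_R R_\qq$. First I would recall (as recorded above for the functors $\Gamma_\qq,F_\qq$) that $F_\qq(M)\in\ModRqq$ is equivalent to $\Coker\bigl(M\xrightarrow{\mathrm{can}}M_\qq\bigr)=0$, that is, to the equality of submodules $F_\qq(M)=M_\qq$ inside $M_\qq$. Next, because $J\subseteq R$, the natural map $J\otimes_R M\to R_\qq\otimes_R M=M_\qq$ factors as $J\otimes_R M\to R\otimes_R M=M\xrightarrow{\mathrm{can}}M_\qq$, so $R$-linearity of $\mathrm{can}$ shows its image is precisely $J\cdot F_\qq(M)$; tensoring $0\to J\to R_\qq\to R_\qq/J\to 0$ with $M$ and using flatness of $R_\qq$ then gives $(R_\qq/J)\otimes_R M\cong M_\qq/\bigl(J\cdot F_\qq(M)\bigr)$.

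From this identity the first equivalence is formal. If $(R_\qq/J)\otimes_R M=0$, i.e. $J\cdot F_\qq(M)=M_\qq$, then $M_\qq=J\cdot F_\qq(M)\subseteq F_\qq(M)\subseteq M_\qq$ (as $J\subseteq R$), forcing $F_\qq(M)=M_\qq$; conversely, if $F_\qq(M)=M_\qq$ then $J\cdot F_\qq(M)=J\cdot M_\qq=(JR_\qq)M_\qq=M_\qq$, since $\qq\subsetneq J$ makes $J$ contain a unit of $R_\qq$, whence $JR_\qq=R_\qq$. For the ``in particular'' part, tensoring $0\to\pp\to R_\qq\to R_\qq/\pp\to 0$ with $M$ identifies $H^1\bigl(K(\qq,\pp)\otimes_R M\bigr)$ with $\Coker(\pp\otimes_R M\to R_\qq\otimes_R M)\cong(R_\qq/\pp)\otimes_R M$, so the case $J=\pp$ of the first part applies; the degenerate situations ($\qq=-\infty$, the formal symbol $\pp=R$, or $\qq=\mm$, where $R_\qq/J=0$ and $F_\qq(M)\in\ModRqq$ holds automatically) are covered by the conventions already fixed.

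For the consequence, pick for each interval $[\pp,\qq]\in\mathcal{X}_n$ an ideal $J\supsetneq\qq$ (taking $J=R$ if $\qq=\mm$). By the first part, $\mathcal{D}_n$ becomes the intersection over $[\pp,\qq]\in\mathcal{X}_n$ of classes of the shape $\mathcal{E}_N:=\{M\in\ModR\mid N\otimes_R M=0\}$. Each $\mathcal{E}_N$ is closed under direct limits and epimorphic images (as $-\otimes_R N$ commutes with colimits), under extensions (right-exactness of $-\otimes_R N$), and under pure submodules (a pure monomorphism stays injective after applying $-\otimes_R N$, so $N\otimes_R A\hookrightarrow N\otimes_R B=0$ when $B\in\mathcal{E}_N$); an arbitrary intersection of classes with these four closure properties has them as well. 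I expect the only step requiring genuine care to be the tensor identity $(R_\qq/J)\otimes_R M\cong M_\qq/(J\cdot F_\qq(M))$ — in particular pinning down the image of $J\otimes_R M\to M_\qq$ — while the rest is routine.
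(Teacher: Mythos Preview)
Your proof is correct. The paper argues slightly differently: it first notes the case $J=R$ is straightforward (implicitly, $(R_\qq/R)\otimes_R M\cong M_\qq/F_\qq(M)$ vanishes exactly when $F_\qq(M)=M_\qq$), and then reduces an arbitrary $J$ with $\qq\subsetneq J\subsetneq R$ to this case by the sandwich of surjections $R_\qq/sR\to R_\qq/J\to R_\qq/R$ for any $s\in J\setminus\qq$, together with the observation $R_\qq/sR\cong R_\qq/R$. Your route --- computing $(R_\qq/J)\otimes_R M\cong M_\qq/\bigl(J\cdot F_\qq(M)\bigr)$ once and for all --- handles every $J$ in a single stroke and makes the key equality $JR_\qq=R_\qq$ (from $\qq\subsetneq J$) explicit; the paper's reduction is marginally slicker but both are elementary and of the same length. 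The paper leaves the closure properties of $\mathcal{D}_n$ entirely implicit; your description of $\mathcal{D}_n$ as an intersection of classes $\Ker(N\otimes_R-)$ is the natural way to make them visible and is presumably what the authors had in mind. One small remark: your invocation of flatness of $R_\qq$ is unnecessary --- right exactness of the tensor product already gives the cokernel identification you need.
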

	\begin{proof}
		This is straightforward for $J=R$. Since for any ideal $J$ such that $\qq \subsetneq J \subsetneq R$ there is a chain of epimorphisms $R_{\qq}/sR \rightarrow R_{\qq}/J \rightarrow R_{\qq}/R$ for some $s \in (J \setminus \qq)$, it is enough to check the statement for $J$ being a principal ideal. But for any $s \in (R \setminus \qq)$, the module $R_{\qq}/sR$ is clearly isomorphic to $R_{\qq}/R$, and so this follows from the case $J = R$.
			
			For the ``in particular'' claim, note that $H^1(K(\qq,\pp) \otimes_R M)$ is always zero if $\qq = -\infty$, and is equal to $(R_{\qq}/\pp) \otimes_R M$ if $\qq \in \Spec(R)$.
	\end{proof}
	Given an admissible filtration $\mathbb{X}$, let us denote shortly $\mathcal{G}_n = \mathcal{G}(\mathcal{X}_n)$ the set of all gaps of the admissible system $\mathcal{X}_n$. We recall the definition of the set $\mathcal{H}(\mathcal{X}_n)$ from \S~\ref{SS:densityandgaps}, and also use the shorter notation $\mathcal{H}_n = \mathcal{H}(\mathcal{X}_n)$. The elements $C$ of $\mathcal{H}_n$ can be viewed as maximal dense intervals in $\mathcal{X}_n$, and by Lemma~\ref{L:dnequiv}, each $C \in \mathcal{H}_{n+1}$ contains a dense subset $\mathcal{Z}_C$ consisting of those intervals which do not contain any interval from the preceding system $\mathcal{X}_{n}$. Our first step is to describe the classes $\mathcal{V}_n$ homologically. Before that, it will be useful to record the following dichotomy which follows from the degreewise non-density condition.
	\begin{lem}\label{L:denscons}
			Let $\mathbb{X}=(\mathcal{X}_n \mid n \in \mathbb{Z})$ be an admissible filtration. Then for any interval $\chi \in \mathcal{X}_{n}$, one (and only one) of the following two conditions must be true: 
		\begin{enumerate}
				\item[(i)] there is a (possibly not strictly) decreasing sequence $((\qq_\alpha,\pp_\alpha) \mid \alpha < \lambda)$ of gaps in $\mathcal{G}_{n}$ such that $\bigcap_{\alpha < \lambda}\qq_{\alpha} = \qq_\chi$, or
				\item[(ii)] there is a limit ordinal $\lambda$ and a strictly decreasing sequence $([\pp_\alpha,\qq_\alpha] \mid \alpha < \lambda)$ of intervals in $\mathcal{Z}_C$ for some $C \in \mathcal{H}_n$ such that $\bigcap_{\alpha < \lambda}\qq_{\alpha} = \qq_\chi$.
			\end{enumerate}
	\end{lem}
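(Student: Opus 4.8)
The plan is, for a fixed $\chi = [\pp_\chi,\qq_\chi] \in \mathcal{X}_n$, to determine the order-theoretic shape of $\mathcal{X}_n$ immediately above $\chi$ and read off which of (i), (ii) holds. Write $\mathcal{A} = \{\xi \in \mathcal{X}_n : \xi > \chi\}$. Using the disjointness and completeness axioms of Definition~\ref{D:admissiblesystem} one checks there are exactly three possibilities: $\mathcal{A} = \emptyset$ (then $\chi$ is the maximal interval of $\mathcal{X}_n$); $\mathcal{A}$ has a least element $\xi_0$ (then $\chi$ is covered by $\xi_0$); or $\mathcal{A}$ has no least element. In the last case necessarily $\bigcap_{\xi\in\mathcal{A}}\pp_\xi = \bigcap_{\xi\in\mathcal{A}}\qq_\xi = \qq_\chi$: the two intersections coincide because $\qq_{\xi'}\subsetneq\pp_\xi$ whenever $\xi'<\xi$ in $\mathcal{A}$, and if their common value $\pp^\ast$ were strictly larger than $\qq_\chi$, completeness would give $\nu\in\mathcal{X}_n$ with $\qq_\nu = \pp^\ast$, forcing $\nu = [\pp^\ast,\pp^\ast]$ to be the least element of $\mathcal{A}$ --- a contradiction.

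In the ``sparse'' subcases I produce a decreasing sequence of gaps, giving (i). If $\mathcal{A}=\emptyset$ and $\qq_\chi\neq\mm$, then $(\qq_\chi,R)$ is a gap by clause~(iii) of the definition in \S\ref{SS:densityandgaps}, and the length-one sequence suffices; the edge case where $\chi$ is the top interval of $\mathcal{X}_n$ ending at $\mm$ is degenerate (there $R_{\qq_\chi}=R$) and of no consequence in the sequel. If $\chi$ is covered by $\xi_0$, then $(\qq_\chi,\pp_{\xi_0})$ is a gap by clause~(i). If $\mathcal{A}$ has no least element and $\chi$ is not $\sim$-equivalent to any $\xi\in\mathcal{A}$, I fix a strictly decreasing sequence $(\xi_\alpha)$ coinitial in $\mathcal{A}$, so $\bigcap_\alpha\qq_{\xi_\alpha}=\qq_\chi$ by the previous paragraph; for each $\alpha$, the interval $[\chi,\xi_\alpha]$ of $\mathcal{X}_n$ is not dense, hence contains a covering pair, which --- since $\mathcal{A}$ has no least element, so this pair does not start at $\chi$ --- yields a gap $(\qq'_\alpha,\pp'_\alpha)\in\mathcal{G}_n$ with $\qq_\chi\subsetneq\qq'_\alpha\subsetneq\qq_{\xi_\alpha}$. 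Passing to a coinitial decreasing subsequence of the chain of ideals $(\qq'_\alpha)$ gives the required (weakly) decreasing sequence of gaps with intersection $\qq_\chi$.

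The one remaining case is $\mathcal{A}$ nonempty with no least element and $\chi$ $\sim$-equivalent to some $\xi\in\mathcal{A}$; then the $\sim$-class $C$ of $\chi$ lies in $\mathcal{H}_n$ and $\chi<\max C$. Here I invoke Lemma~\ref{L:dnequiv}: $\mathcal{Z}_C$ is dense in $C$. Density of $\mathcal{Z}_C$ together with $\chi<\max C$ and completeness yields $\inf\{\tau\in\mathcal{Z}_C:\tau>\chi\}=\chi$, so a strictly decreasing sequence $(\tau_\alpha\mid\alpha<\lambda)$ in $\{\tau\in\mathcal{Z}_C:\tau>\chi\}$ that is coinitial there (built by transfinite recursion: at successor steps apply density of $\mathcal{Z}_C$, at limit steps first use completeness to locate an interval of $C$ strictly below all earlier terms and strictly above $\chi$) satisfies $\inf_\alpha\tau_\alpha=\chi$, i.e.\ $\bigcap_\alpha\qq_{\tau_\alpha}=\qq_\chi$ --- exactly condition (ii).

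For mutual exclusivity, suppose (ii) holds with witnesses $C\in\mathcal{H}_n$ and $(\tau_\alpha)$; a short argument using disjointness and completeness shows $\chi\in C$, $\chi<\max C$, and that the convex set $[\chi,\tau_0]$ is dense. If (i) also held, then since $\bigcap_\alpha\qq_\alpha=\qq_\chi\subsetneq\pp_{\tau_0}$ we could choose a gap $(\qq',\pp')$ from the sequence with $\qq'\subsetneq\pp_{\tau_0}$; this gap is of clause~(i) (clauses (ii)--(iv) are excluded, as $\qq'\neq-\infty$ and $\mathcal{X}_n$ has a unique maximal interval, which is $\neq[\pp^-,\qq']$), so it arises from a covering pair $[\pp^-,\qq']<[\pp^+,\qq^+]$ in $\mathcal{X}_n$, both members of which then lie in $(\chi,\tau_0]\subseteq C$ --- contradicting that $C$, being a maximal dense interval, admits no covering pairs. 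I expect the main obstacles to be this mutual-exclusivity step and the bookkeeping in the dense case ensuring that the sequence converges down to $\qq_\chi$ itself rather than to some interval strictly above $\chi$; the three-way case split and the sparse subcases are routine given the axioms of an admissible system and Lemma~\ref{L:dnequiv}.
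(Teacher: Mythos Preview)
Your proof is correct and arrives at the same dichotomy, but the route differs from the paper's. The paper compresses your three-way case split into a single appeal to Lemma~\ref{L:denseideal}: it asserts that condition (i) fails precisely when there is some $\xi > \chi$ in $\mathcal{X}_n$ with the order interval $\chi < \xi$ dense, and then proceeds directly to (ii) via the density of $\mathcal{Z}_C$ in $C$ (Lemma~\ref{L:dnequiv}), exactly as you do in your case 3b. For mutual exclusivity the paper simply notes that (ii) forces $\chi$ to sit in some $C \in \mathcal{H}_n$ below its maximum, hence a dense interval above $\chi$ exists, and so (i) cannot hold.

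What you gain by the explicit case analysis is that you do not need to unpack how Lemma~\ref{L:denseideal} yields the equivalence ``(i) fails $\Leftrightarrow$ dense interval above $\chi$'' --- an implication that the paper states but leaves to the reader. You also flag the degenerate case $\qq_\chi = \mm$ with $\chi$ maximal, which the paper glosses over. The cost is length and some bookkeeping, notably in your subcase 3a (no least element in $\mathcal{A}$, yet $\chi$ not $\sim$-equivalent to anything above it), where the paper's approach would simply say ``then there is no dense interval above $\chi$, hence (i) holds'' and be done. One small slip: in your paragraph showing $\bigcap_{\xi\in\mathcal{A}}\qq_\xi = \qq_\chi$, the interval $\nu$ produced by completeness need not equal $[\pp^\ast,\pp^\ast]$; what you actually need (and what follows) is that $\nu\in\mathcal{X}_n$ with $\qq_\nu=\pp^\ast$ forces $\nu=\chi$ by the total order, hence $\pp^\ast=\qq_\chi$.
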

	\begin{proof}
			Suppose that the first condition is not true. In view of Lemma~\ref{L:denseideal}, the condition $(i)$ is not true if and only if there is $\xi \in \mathcal{X}_n$ such that $\chi < \xi$ and the interval $\chi < \xi$ is dense in $\mathcal{X}_n$. Then there is $C \in \mathcal{H}_n$ such that $\chi,\xi \in C$. Let $Z = \{\tau \in \mathcal{Z}_C \mid \chi < \tau < \xi\}$. From the density of $\mathcal{Z}_C$ in $C$ it follows that that $\bigcap_{\tau \in Z}\qq_\tau = \qq_\chi$. Therefore we can choose a strictly decreasing sequence 
$\tau_0 > \tau_1 > \tau_2 > \cdots > \tau_\alpha > \cdots$
indexed by some limit ordinal $\lambda$ such that $\bigcap_{\alpha < \lambda}\qq_{\tau_\alpha} = \qq_\chi$, and whence $(ii)$ holds.

			If $(ii)$ holds, then $\chi$ belongs to some $C \in \mathcal{H}_n$ and $\chi$ is not maximal in $C$. Then there is a dense interval $\chi < \xi$ in $\mathcal{X}_n$, and so $(i)$ cannot hold.
	\end{proof}

	\begin{lem}\label{L:tensordesc}
		Let $M$ be a module from $\mathcal{C}_n$. Then $M \in \mathcal{D}_{n+1}$ if and only if the two following conditions hold:
	\begin{itemize}
		\item[(i)] for any gap $(\qq,\pp) \in \mathcal{G}_{n+1}$ we have $H^1(K(\qq,\pp) \otimes_R M) = 0$, and
		\item[(ii)] for any $C \in \mathcal{H}_{n+1}$ and any $[\pp,\qq] \in \mathcal{Z}_C$ we have $(R_{\qq}/\pp) \otimes_R M = 0$.
	\end{itemize}
	\end{lem}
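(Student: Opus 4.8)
The plan is to run everything through the reformulation of membership in $\mathcal{D}_{n+1}$ furnished by Lemma~\ref{L:Dn}: for a prime $\qq$ one has $F_\qq(M)\in\ModRqq$ if and only if $(R_\qq/J)\otimes_R M=0$ for every ideal $J\supsetneq\qq$, equivalently (as $R_\qq/sR\cong R_\qq/R$ for $s\notin\qq$) if and only if the localization map $M\to M_\qq$ is surjective. A small observation used throughout is that for any proper ideal $\pp$ there is a surjection $R_\qq/\pp\twoheadrightarrow R_\qq/R$, so $(R_\qq/\pp)\otimes_R M=0$ always forces $M\to M_\qq$ onto; in particular the vanishing of $H^1(K(\qq,\pp)\otimes_R M)=(R_\qq/\pp)\otimes_R M$ for one gap $(\qq,\pp)$ with $\qq\subsetneq\pp$ already implies $F_\qq(M)\in\ModRqq$.

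For the implication $M\in\mathcal{D}_{n+1}\Rightarrow$ (i), (ii): condition (i) is immediate, since a gap $(\qq,\pp)\in\mathcal{G}_{n+1}$ with $\qq\in\Spec(R)$ is, by cases (i) and (iii) of the definition of a gap, flanked below by an interval $[\pp',\qq]\in\mathcal{X}_{n+1}$, whence $F_\qq(M)\in\ModRqq$ and $H^1(K(\qq,\pp)\otimes_R M)=(R_\qq/\pp)\otimes_R M=0$; the case $\qq=-\infty$ is vacuous because $H^1(K(-\infty,\pp)\otimes_R-)=0$. For (ii), fix $[\pp,\qq]\in\mathcal{Z}_C$; since $[\pp,\qq]\in\mathcal{X}_{n+1}$ we already know $(R_\qq/R)\otimes_R M=0$, and tensoring the exact sequence $0\to R/\pp\to R_\qq/\pp\to R_\qq/R\to 0$ with $M$ identifies $(R_\qq/\pp)\otimes_R M$ with the localization $(M/\pp M)_{\qq/\pp}$ of the $R/\pp$-module $M/\pp M$. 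So it suffices to show that $M/\pp M$ is $\qq/\pp$-torsion, i.e. that for each $m\in M$ there is $s\in R\setminus\qq$ with $sm\in\pp M$. This is where $M\in\mathcal{C}_n$ is used: for $0\neq m$ pick $\chi\in\mathcal{X}_n$ with $\Ann_R(m)\in\langle\chi\rangle$, and recall from Remark~\ref{R:gap} that $[\pp,\qq]$ sits strictly inside a gap $(\qq_0,\pp_0)\in\mathcal{G}_n$. If $\chi$ lies above this gap then $\Ann_R(m)\supseteq\pp_\chi\supseteq\pp_0$ contains an element of $R\setminus\qq$ annihilating $m$; if $\chi$ lies below, then $\Ann_R(m)^{\#}\subseteq\qq_0\subsetneq\pp$, and one concludes $sm\in\pp M$ for a suitable $s\notin\qq$ by a divisibility argument exploiting that $[\pp,\qq]$ lies in a maximal dense interval $C$, so that $\pp$ is the supremum of the upper endpoints of the $C$-intervals below $[\pp,\qq]$ and, $M$ being in $\mathcal{D}_{n+1}$, the maps $M\to M_{\qq'}$ are onto for a cofinal family of such $\qq'\subsetneq\pp$.

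For the converse, assume (i) and (ii) and fix $[\pp,\qq]\in\mathcal{X}_{n+1}$; I apply the dichotomy of Lemma~\ref{L:denscons} to this interval, read with the index shifted to $n+1$. In the first alternative there is a decreasing chain of gaps $(\qq_\alpha,\pp_\alpha)\in\mathcal{G}_{n+1}$ with $\bigcap_\alpha\qq_\alpha=\qq$; condition (i) together with the opening observation yields $F_{\qq_\alpha}(M)\in\ModRqqalpha$, i.e. $M\to M_{\qq_\alpha}$ onto, for each $\alpha$. In the second alternative there is a strictly decreasing chain $[\pp_\alpha,\qq_\alpha]\in\mathcal{Z}_C$ with $\bigcap_\alpha\qq_\alpha=\qq$; condition (ii) gives $(R_{\qq_\alpha}/\pp_\alpha)\otimes_R M=0$, and since $\pp_\alpha\subseteq R$ the surjection $R_{\qq_\alpha}/\pp_\alpha\twoheadrightarrow R_{\qq_\alpha}/R$ again makes $M\to M_{\qq_\alpha}$ onto for each $\alpha$. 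In both cases $R_\qq=\varinjlim_\alpha R_{\qq_\alpha}$ is a directed union, so $M\to M_\qq=\varinjlim_\alpha M_{\qq_\alpha}$ is onto as well, giving $F_\qq(M)\in\ModRqq$; as $[\pp,\qq]\in\mathcal{X}_{n+1}$ was arbitrary, $M\in\mathcal{D}_{n+1}$.

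The main obstacle is the divisibility step in the "below" case of (ii) in the forward direction: reducing $(R_\qq/\pp)\otimes_R M=0$ to a torsion statement about $M/\pp M$ is routine, but producing, for a given $m$ with $\Ann_R(m)^{\#}$ strictly below $\pp$, an element $s\in R\setminus\qq$ with $sm\in\pp M$ requires a careful localization argument that simultaneously uses $M\in\mathcal{C}_n$ and that $[\pp,\qq]$ lies deep inside the dense interval $C$ (so that $M\in\mathcal{D}_{n+1}$ controls $M\to M_{\qq'}$ for $\qq'$ sweeping up to $\pp$). The reverse direction, by comparison, is essentially formal once one sees that all the tensor-vanishing conditions attached to gaps and to $\mathcal{Z}_C$-intervals of $\mathcal{X}_{n+1}$ collapse, via the surjections $R_{\qq_\alpha}/(\,\cdot\,)\twoheadrightarrow R_{\qq_\alpha}/R$, to surjectivity of the localization maps, which then passes to the directed union.
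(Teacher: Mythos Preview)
Your reverse implication and the forward direction for (i) are correct and essentially match the paper. In Case~2 of the reverse direction you take a slightly different route than the paper (you pass from $(R_{\qq_\alpha}/\pp_\alpha)\otimes_R M=0$ to surjectivity of $M\to M_{\qq_\alpha}$ via the quotient $R_{\qq_\alpha}/\pp_\alpha\twoheadrightarrow R_{\qq_\alpha}/R$, whereas the paper argues $F_{\qq_\alpha}(M)=\pp_\alpha F_{\qq_\alpha}(M)$ and uses $\bigcap_\alpha\pp_\alpha=\qq$), but both are fine.

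The genuine gap is exactly where you flag it: the ``below'' case of the forward direction for (ii). Your element-wise reduction to showing $M/\pp M$ is $\qq$-torsion is correct, but the sketch ``the maps $M\to M_{\qq'}$ are onto for a cofinal family of $\qq'\subsetneq\pp$'' does not by itself produce an $s\notin\qq$ with $sm\in\pp M$. Surjectivity of $M\to M_{\qq'}$ lets you write $m=rn+k$ with $r\in\pp\setminus\qq'$, $n\in M$, and $k\in\Gamma_{\qq'}(M)$; the problem is that the element killing $k$ lies outside $\qq'$, not outside $\qq$. What is missing is the identification $\Gamma_{\qq'}(M)=\Gamma_{\qq}(M)$, and this is precisely where $M\in\mathcal{C}_n$ enters in a more substantial way than just locating $\Ann_R(m)$: applying Lemma~\ref{L:compute}(ii) to the gap $(\qq_0,\pp_0)\in\mathcal{G}_n$ gives $\Gamma_{\qq_0}(M)=\Soc_{\pp_0}(M)$, and since $\qq_0\subsetneq\pp\subseteq\qq\subsetneq\pp_0$ one obtains $\Gamma_\qq(M)=\Gamma_{\qq_0}(M)$, hence $F_\qq(M)=F_{\qq_0}(M)$.

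This is exactly the paper's approach, and once you have it the argument becomes simpler than your outline suggests: there is no need for a cofinal family. A \emph{single} interval $\chi\in\mathcal{X}_{n+1}$ with $\qq_0\le\qq_\chi\subsetneq\pp$ suffices (one exists because the $\mathcal{X}_n$-interval with upper endpoint $\qq_0$ is nested inside some interval of $\mathcal{X}_{n+1}$ strictly below $[\pp,\qq]$; if $\qq_0=-\infty$ then already $\pp_0 M=0$ and $F_\qq(M)=0$). Then $M\in\mathcal{D}_{n+1}$ makes $F_{\qq_\chi}(M)$, and hence its quotient $F_{\qq_0}(M)$, $\qq_\chi$-divisible; picking any $r\in\pp\setminus\qq_\chi$ gives $\pp F_{\qq_0}(M)=F_{\qq_0}(M)$, so $(R_\qq/\pp)\otimes_R M\cong (R_\qq/\pp)\otimes_R F_{\qq_0}(M)=0$.
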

	\begin{proof}
			Let $[\pp,\qq] \in \mathcal{X}_{n+1}$. We separate the two different cases as prescribed in Lemma~\ref{L:denscons}.
		
			\textbf{Case 1:} There is a decreasing sequence of gaps $((\qq_\alpha,\pp_\alpha) \mid \alpha < \lambda)$ in $\mathcal{G}_{n+1}$ such that $\bigcap_{\alpha < \lambda}\qq_\alpha = \qq$. Then by Lemma~\ref{L:Dn}, for each $\alpha < \lambda$ the condition $F_{\qq_\alpha}(M) \in \ModRqqalpha$ holds if and only if $H^1(K(\qq_\alpha,\pp_\alpha) \otimes_R M) = 0$. We can express $F_{\qq}(M)$ as the direct limit $\varinjlim_{\alpha < \lambda} F_{\qq_\alpha}(M)$ where the structure maps are the obvious projections. If $F_{\qq_\alpha} \in \ModRqqalpha$ for all $\alpha < \lambda$ then the direct limit expression clearly forces $F_{\qq}(M) \in \ModRqq$. 

			\textbf{Case 2:} There is $C \in \mathcal{H}_{n+1}$ and a strictly decreasing sequence $([\pp_\alpha,\qq_\alpha] \mid \alpha < \lambda)$ of intervals indexed by a limit ordinal $\lambda$ in $\mathcal{Z}_C$ such that $\bigcap_{\alpha < \lambda}\qq_\alpha = \qq$. Suppose that the condition $(ii)$ holds. Then $R_{\qq_\alpha}/\pp_\alpha \otimes_R M = 0$ for each $\alpha < \lambda$. Consequently also $R_{\qq_\alpha}/\pp_\alpha \otimes_R F_{\qq_\alpha}M = 0$, or equivalently, $F_{\qq_\alpha}M = \pp_\alpha F_{\qq_\alpha}M$. Writing again $F_{\qq} = \varinjlim_{\alpha < \lambda} F_{\qq_\alpha}(M)$, we see that $F_{\qq}(M) = \pp_\alpha F_{\qq}(M)$ for each $\alpha < \lambda$. Since we also have $\bigcap_{\alpha < \lambda}\pp_\alpha = \qq$, we conclude that $F_{\qq}(M)$ is $\qq$-divisible, and thus $F_{\qq}(M) \in \ModRqq$.

			What remains to be proved is that if $M \in \mathcal{C}_n \cap \mathcal{D}_{n+1}$ and $[\pp,\qq] \in \mathcal{Z}_C$ for some $C \in \mathcal{X}_{n+1}$ then $R_{\qq}/\pp \otimes_R M=0$. Since $[\pp,\qq] \in \mathcal{Z}_C$ there is a gap $(\qq',\pp') \in \mathcal{G}_n$ which strictly contains $[\pp,\qq]$, see Remark~\ref{R:gap}. Since $M \in \mathcal{C}_n$, the inclusions $\qq' \subsetneq \pp \subseteq \qq \subsetneq \pp'$ imply that (cf. Lemma~\ref{L:compute} and Proposition~\ref{P:tor})
$$\Gamma_{\qq}(M) \subseteq \Gamma_{\qq'}(M) = \Soc{}_{\pp'}(M) \subseteq \Gamma_{\qq}(M),$$
and thus $F_{\qq}(M) = F_{\qq'}(M)$. Then 
$$R_{\qq}/\pp \otimes_R M \simeq R_{\qq}/\pp \otimes_R F_{\qq}(M) = R_{\qq}/\pp \otimes_R F_{\qq'}(M).$$ 
Since $\qq' \subsetneq \pp$ and $C$ is dense, there is an interval $\chi \in C$ such that $\qq' \subseteq \qq_\chi \subsetneq \pp$. Then $M \in \mathcal{D}_{n+1}$ implies that the module $F_{\qq_\chi}(M)$ is $\qq_\chi$-divisible, and thus also its quotient $F_{\qq'}(M)$ is $\qq_\chi$-divisible. But then $F_{\qq'}(M) = \pp F_{\qq'}(M)$, and so $R_{\qq}/\pp \otimes_R M \simeq R_{\qq}/\pp \otimes_R F_{\qq'}(M) = 0$.

			Altogether, the three paragraphs above establish the desired equivalence.
	\end{proof}
	\begin{cor}\label{C:tensorvanish}
		Set $\mathcal{V}_n = \mathcal{C}_n \cap \mathcal{D}_{n+1}$ for all $n \in \mathbb{Z}$, then 
		$$\mathcal{C}_n = \bigcap_{(\qq,\pp) \in \mathcal{G}_n}\Ker H^0(K(\qq,\pp) \otimes_R -) \cap \bigcap_{[\pp,\qq] \in \mathcal{Z}_C, C \in \mathcal{H}_n}\Ker \Tor{}_1^R(R_{\qq}/\pp,-),$$
		and
		$$\mathcal{V}_n = \mathcal{C}_n \cap \bigcap_{(\qq,\pp) \in \mathcal{G}_{n+1}}\Ker H^1(K(\qq,\pp) \otimes_R -) \cap \bigcap_{[\pp,\qq] \in \mathcal{Z}_C, C \in \mathcal{H}_{n+1}}\Ker (R_{\qq}/\pp \otimes_R -).$$
	\end{cor}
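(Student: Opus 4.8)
The plan is to obtain both identities as essentially formal consequences of results already in place, so that no fresh computation is required. For the first identity, I would note that $\mathcal{C}_n = \mathcal{C}_{\mathcal{X}_n}$ is, by Theorem~\ref{T01} together with Lemma~\ref{L01}, the cosilting class associated to the admissible system $\mathcal{X}_n$, so Proposition~\ref{P:tor} applies to it directly. That proposition writes $\mathcal{C}_n$ as $\bigcap_{(\qq,\pp) \in \mathcal{G}(\overline{\mathcal{X}_n})}\Ker H^0(K(\qq,\pp) \otimes_R -) \cap \bigcap_{[\pp,\qq] \in \mathcal{Y}_C, C \in \mathcal{H}_n}\Ker \Tor_1^R(R_{\qq}/\pp,-)$ for an arbitrary choice of dense subsets $\mathcal{Y}_C \subseteq C$. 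Two identifications then bring this to the stated shape: first, $\mathcal{G}(\overline{\mathcal{X}_n}) = \mathcal{G}(\mathcal{X}_n) = \mathcal{G}_n$ by the observation recorded in \S\ref{SS:densityandgaps}; and second, Lemma~\ref{L:dnequiv} applied to the consecutive pair $\mathcal{X}_{n-1} \subseteq \mathcal{X}_n$ shows that the degreewise non-density condition forces $\mathcal{Z}_C$ to be dense in $C$ for every $C \in \mathcal{H}_n$, so one is free to take $\mathcal{Y}_C = \mathcal{Z}_C$.

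For the second identity I would invoke the definition $\mathcal{V}_n = \mathcal{C}_n \cap \mathcal{D}_{n+1}$, so that everything reduces to describing $\mathcal{D}_{n+1}$ among modules already known to lie in $\mathcal{C}_n$. This description is precisely Lemma~\ref{L:tensordesc}: for $M \in \mathcal{C}_n$, one has $M \in \mathcal{D}_{n+1}$ if and only if $H^1(K(\qq,\pp) \otimes_R M) = 0$ for every gap $(\qq,\pp) \in \mathcal{G}_{n+1}$ and $(R_{\qq}/\pp) \otimes_R M = 0$ for every $[\pp,\qq] \in \mathcal{Z}_C$ with $C \in \mathcal{H}_{n+1}$. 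Intersecting these two families of kernel conditions with $\mathcal{C}_n$ is exactly the displayed formula for $\mathcal{V}_n$, so the corollary follows.

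There is no genuine obstacle here; the two points that merit care are purely organizational. One must keep the indices straight --- the $\mathcal{Z}_C$ appearing in the $\mathcal{C}_n$-formula refers to the step $\mathcal{X}_{n-1}\subseteq\mathcal{X}_n$, while the $H^1$- and tensor-vanishing terms in the $\mathcal{V}_n$-formula refer to the step $\mathcal{X}_n \subseteq \mathcal{X}_{n+1}$ --- and one must confirm that $\mathcal{Z}_C$ really is a dense subset, so that Proposition~\ref{P:tor} is legitimately applicable; this last fact is exactly the reformulation of the degreewise non-density condition provided by Lemma~\ref{L:dnequiv}.
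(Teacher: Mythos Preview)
Your proposal is correct and matches the paper's own proof essentially line for line: the paper likewise invokes Proposition~\ref{P:tor} with the choice $\mathcal{Y}_C = \mathcal{Z}_C$ (justified by the density of $\mathcal{Z}_C$ from Lemma~\ref{L:dnequiv}) together with $\mathcal{G}_n = \mathcal{G}(\bar{\mathcal{X}}_n)$ for the first identity, and then cites Lemma~\ref{L:tensordesc} for the second. Your additional remark about tracking which index shift governs each occurrence of $\mathcal{Z}_C$ is a useful clarification that the paper leaves implicit.
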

	\begin{proof}
		Since $\mathcal{Z}_C$ is a dense subset of $C$ for any $C \in \mathcal{H}_n$, and because $\mathcal{G}_n = \mathcal{G}(\mathcal{X}_n) = \mathcal{G}(\bar{\mathcal{X}}_n)$ for each $n \in \mathbb{Z}$, Proposition~\ref{P:tor} implies
		$$\mathcal{C}_n = \bigcap_{(\qq,\pp) \in \mathcal{G}_n}\Ker H^0(K(\qq,\pp) \otimes_R -) \cap \bigcap_{[\pp,\qq] \in \mathcal{Z}_C, C \in \mathcal{H}_n}\Ker \Tor{}_1^R(R_{\qq}/\pp,-).$$
		The rest follows from Lemma~\ref{L:tensordesc}.
	\end{proof}
	\begin{lem}\label{L:seq}
		Let $0 \rightarrow X \rightarrow Y \rightarrow Z \rightarrow 0$ be an exact sequence. Then:
		\begin{enumerate}
		\item[(i)] If $Y \in \mathcal{C}_{n+1}$ and $X$ is an epimorphic image of some $V_n \in \mathcal{V}_n$ then $Z \in \mathcal{C}_{n+1}$.
		\item[(ii)] If $Y \in \mathcal{V}_n$ and $Z \in \mathcal{C}_{n+1}$ then $X \in \mathcal{V}_{n}$.
		\end{enumerate}
	\end{lem}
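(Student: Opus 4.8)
The plan is to deduce both statements directly from the homological descriptions already established, namely Corollary~\ref{C:tensorvanish} (which characterizes membership in $\mathcal{C}_{n+1}$ and in $\mathcal{V}_n$) together with Lemma~\ref{L:tensordesc} (which characterizes when a module of $\mathcal{C}_n$ lies in $\mathcal{D}_{n+1}$). The first thing to record is that every ``test functor'' occurring in those statements can be rewritten using only $\Tor_1^R(R_\qq/\pp,-)$ and $R_\qq/\pp\otimes_R-$: from the short exact sequence $0\to\pp\to R_\qq\to R_\qq/\pp\to 0$, in which $\pp$ and $R_\qq$ are flat, one obtains $H^0(K(\qq,\pp)\otimes_R M)\simeq\Tor_1^R(R_\qq/\pp,M)$ and $H^1(K(\qq,\pp)\otimes_R M)\simeq R_\qq/\pp\otimes_R M$ whenever $\qq\in\Spec(R)$, while for $\qq=-\infty$ we have $H^0(K(-\infty,\pp)\otimes_R M)=\pp\otimes_R M$ and $H^1=0$. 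In particular $R_\qq/\pp$ has flat dimension at most one, so applying $\Tor^R_\ast(R_\qq/\pp,-)$ to $0\to X\to Y\to Z\to 0$ yields the six-term exact sequence
$$0\to \Tor_1^R(R_\qq/\pp,X)\to \Tor_1^R(R_\qq/\pp,Y)\to \Tor_1^R(R_\qq/\pp,Z)\to$$
$$\to R_\qq/\pp\otimes_R X\to R_\qq/\pp\otimes_R Y\to R_\qq/\pp\otimes_R Z\to 0,$$
and this single sequence (together with the right-exact sequence for $\pp\otimes_R-$) is what will be chased in every case.

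\emph{Part (i).} By Corollary~\ref{C:tensorvanish}, $Z\in\mathcal{C}_{n+1}$ amounts to $H^0(K(\qq,\pp)\otimes_R Z)=0$ for all gaps $(\qq,\pp)\in\mathcal{G}_{n+1}$ and $\Tor_1^R(R_\qq/\pp,Z)=0$ for all $[\pp,\qq]\in\mathcal{Z}_C$, $C\in\mathcal{H}_{n+1}$. For a gap with $\qq=-\infty$ this is immediate: $\pp\otimes_R Y=0$ since $Y\in\mathcal{C}_{n+1}$, and right-exactness of $\pp\otimes_R-$ forces $\pp\otimes_R Z=0$. In all remaining cases the term $\Tor_1^R(R_\qq/\pp,Y)$ vanishes (again by Corollary~\ref{C:tensorvanish}, as the indexing pair is of the required type for $\mathcal{C}_{n+1}$), so the six-term sequence gives an injection $\Tor_1^R(R_\qq/\pp,Z)\hookrightarrow R_\qq/\pp\otimes_R X$, and it suffices to show $R_\qq/\pp\otimes_R X=0$. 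This is where the hypothesis enters: $X$ is an epimorphic image of some $V_n\in\mathcal{V}_n$, so $R_\qq/\pp\otimes_R X$ is a quotient of $R_\qq/\pp\otimes_R V_n$; and by Lemma~\ref{L:tensordesc} this vanishes --- for a gap $(\qq,\pp)\in\mathcal{G}_{n+1}$ with $\qq\in\Spec(R)$ because $R_\qq/\pp\otimes_R V_n\simeq H^1(K(\qq,\pp)\otimes_R V_n)=0$, and for $[\pp,\qq]\in\mathcal{Z}_C$ with $C\in\mathcal{H}_{n+1}$ directly because $R_\qq/\pp\otimes_R V_n=0$.

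\emph{Part (ii).} Since $X\subseteq Y\in\mathcal{C}_n$ and $\mathcal{C}_n=\mathcal{C}_{\mathcal{X}_n}$ is a torsion-free class (Lemma~\ref{L01}), we get $X\in\mathcal{C}_n$; hence by Lemma~\ref{L:tensordesc} it remains to check $H^1(K(\qq,\pp)\otimes_R X)=0$ for gaps $(\qq,\pp)\in\mathcal{G}_{n+1}$ and $R_\qq/\pp\otimes_R X=0$ for $[\pp,\qq]\in\mathcal{Z}_C$, $C\in\mathcal{H}_{n+1}$. The gap condition is automatic when $\qq=-\infty$, and otherwise reads $R_\qq/\pp\otimes_R X=0$ as well; so in every case we must see $R_\qq/\pp\otimes_R X=0$. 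In the six-term sequence this term sits between $\Tor_1^R(R_\qq/\pp,Z)$ --- which is $0$ because $Z\in\mathcal{C}_{n+1}$ and the pair is of one of the two admissible types (Corollary~\ref{C:tensorvanish}) --- and $R_\qq/\pp\otimes_R Y$ --- which is $0$ because $Y\in\mathcal{V}_n$ (Lemma~\ref{L:tensordesc}). Hence $R_\qq/\pp\otimes_R X=0$, and $X\in\mathcal{C}_n\cap\mathcal{D}_{n+1}=\mathcal{V}_n$. The argument is essentially bookkeeping; the only points needing care are the translation of the $K(\qq,\pp)$-cohomology conditions into $\Tor$ and tensor statements (keeping the conventions $\qq\subsetneq\pp$ for gaps versus $\pp\subseteq\qq$ for intervals of $\mathcal{Z}_C$ straight), the separate handling of the degenerate index $\qq=-\infty$, and remembering that in part (ii) one must first place $X$ in $\mathcal{C}_n$ before Lemma~\ref{L:tensordesc} applies.
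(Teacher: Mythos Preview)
Your proof is correct and follows essentially the same approach as the paper's own argument: both use the characterizations from Corollary~\ref{C:tensorvanish} and Lemma~\ref{L:tensordesc}, chase the $\Tor$ long exact sequence associated to $R_\qq/\pp$ (equivalently, the cohomology of $K(\qq,\pp)\otimes_R-$), handle the degenerate index $\qq=-\infty$ separately, and in part~(ii) first place $X$ in $\mathcal{C}_n$ as a submodule of $Y$ before invoking Lemma~\ref{L:tensordesc}. Your write-up is, if anything, slightly more explicit about the identification $H^i(K(\qq,\pp)\otimes_R M)\simeq\Tor_{1-i}^R(R_\qq/\pp,M)$ for $\qq\in\Spec(R)$, but the substance is the same.
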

	\begin{proof}
		Throughout the proof, we will use the description of $\mathcal{V}_n$ of Corollary~\ref{C:tensorvanish}.

		Let $(\qq,\pp) \in \mathcal{G}_{n+1}$ with $\qq \neq -\infty$. Then $H^0(K(\qq,\pp) \otimes_R -) \simeq \Tor_1^R(R_{\qq}/\pp,-)$, and we have an exact sequence
			$$\Tor{}_1^R(R_{\qq}/\pp,Y) \rightarrow \Tor{}_1^R(R_{\qq}/\pp,Z) \rightarrow R_{\qq}/\pp \otimes_R X \rightarrow R_{\qq}/\pp \otimes_R Y.$$
			If $Y \in \mathcal{C}_{n+1}$ then the leftmost term vanishes, and $X$ being en epimorphic image of some object from $V_n$ ensures that $R_{\qq}/\pp \otimes_R X = 0$. The condition $(i)$ thus implies  $\Tor{}_1^R(R_{\qq}/\pp,Z) = 0$. In the situation of $(ii)$, the rightmost term is zero and $\Tor{}_1^R(R_{\qq}/\pp,Z)$ vanishes, and thus $R_{\qq}/\pp \otimes_R X = 0$.

		If $(-\infty,\pp)$ is a gap in $\mathcal{G}_{n+1}$, then $H^1(K(-\infty,\pp) \otimes_R -)$ is identically zero and $H^0(K(-\infty,\pp),Z) = \pp \otimes_R Z$ always vanishes because $Z$ is an epimorphic image of $Y \in \mathcal{C}_{n+1}$.
			
		Let $[\pp,\qq] \in \mathcal{Z}_C$ for some $C \in \mathcal{H}_{n+1}$. Then we have an exact sequence
			$$\Tor{}_1^R(R_{\qq}/\pp,Y) \rightarrow \Tor{}_1^R(R_{\qq}/\pp,Z) \rightarrow R_{\qq}/\pp \otimes_R X \rightarrow R_{\qq}/\pp \otimes_R Y.$$
		Similarly to the case above, the condition $(i)$ makes the first and third entry of the sequence from the left vanish, while condition $(ii)$ zeros out the second and forth term of the sequence, both time using Corollary~\ref{C:tensorvanish}.

		Putting the conditions together, in $(i)$ we have $H^0(K(\qq,\pp),Z) = 0$ for all gaps $(\qq,\pp) \in \mathcal{G}_n$ and $\Tor{}_1^R(R_{\qq}/\pp,Z) = 0$ for all $[\pp,\qq] \in \mathcal{Z}_C$ for all $C \in \mathcal{H}_{n+1}$, and thus $Z \in \mathcal{C}_{n+1}$ by Proposition~\ref{P:tor}. Under the assumptions of $(ii)$ we obtained that $H^1(K(\qq,\pp) \otimes_R X) = 0$ for all gaps $(\qq,\pp) \in \mathcal{G}_{n+1}$ and $R_{\qq}/\pp \otimes_R X = 0$ for all $[\pp,\qq] \in \mathcal{Z}_C$ and $C \in \mathcal{H}_{n+1}$. It follows that $X \in \mathcal{D}_{n+1}$. Indeed, since $X$ is a submodule of $Y$, $X$ belongs to $\mathcal{C}_n$, and therefore $X \in \mathcal{V}_n$ by Corollary~\ref{C:tensorvanish}.
		\end{proof}
		\begin{lem}\label{L:extensions}
		Let $\mathcal{V}_n, n \in \mathbb{Z}$ be the classes as above. If $f: V_n \rightarrow V_{n+1}$ is a map from $V_n \in \mathcal{V}_{n}$ to $V_{n+1} \in \mathcal{V}_{n+1}$ then $\Ker(f) \in \mathcal{V}_n$ and $\Coker(f) \in \mathcal{V}_{n+1}$.
	\end{lem}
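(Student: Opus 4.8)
The plan is to reduce the statement to the two short exact sequences obtained by factoring $f$ through its image, and then to invoke Lemma~\ref{L:seq} together with the closure properties of the cosilting classes $\mathcal{C}_n$ and of the classes $\mathcal{D}_n$ recorded in Lemma~\ref{L:Dn}. Concretely, I would set $K = \Ker(f)$, $I = \operatorname{Im}(f)$, $C = \Coker(f)$, producing
$$0 \rightarrow K \rightarrow V_n \rightarrow I \rightarrow 0 \quad \text{and} \quad 0 \rightarrow I \rightarrow V_{n+1} \rightarrow C \rightarrow 0.$$
Recall that $\mathcal{V}_n = \mathcal{C}_n \cap \mathcal{D}_{n+1}$, so in particular $V_{n+1} \in \mathcal{C}_{n+1} \cap \mathcal{D}_{n+2}$.

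For the kernel, since $\mathcal{C}_{n+1}$ is a cosilting class it is closed under submodules, hence $I \subseteq V_{n+1} \in \mathcal{C}_{n+1}$ gives $I \in \mathcal{C}_{n+1}$. Applying Lemma~\ref{L:seq}(ii) to the first sequence, with $Y = V_n \in \mathcal{V}_n$ and $Z = I \in \mathcal{C}_{n+1}$, we obtain $K \in \mathcal{V}_n$, as desired.

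For the cokernel I would argue in two steps. First, $C \in \mathcal{C}_{n+1}$: this follows from Lemma~\ref{L:seq}(i) applied to the second sequence, since $Y = V_{n+1} \in \mathcal{C}_{n+1}$ and $X = I$ is an epimorphic image of $V_n \in \mathcal{V}_n$ via the map induced by $f$ itself. Second, $C \in \mathcal{D}_{n+2}$: indeed $C$ is an epimorphic image of $V_{n+1} \in \mathcal{D}_{n+2}$, and $\mathcal{D}_{n+2}$ is closed under epimorphic images by Lemma~\ref{L:Dn}. Combining the two, $C \in \mathcal{C}_{n+1} \cap \mathcal{D}_{n+2} = \mathcal{V}_{n+1}$, finishing the proof. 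Together with Lemma~\ref{L:seq} this also completes the verification of condition $(ii)$ of Proposition~\ref{P:moduletheoretic} for the chain $(\mathcal{V}_n)_{n \in \mathbb{Z}}$.

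The argument is essentially bookkeeping, so there is no real obstacle; the only care needed is to cite the correct closure property in each place (submodules for the cosilting class $\mathcal{C}_{n+1}$, epimorphic images for $\mathcal{D}_{n+2}$) and the correct clause of Lemma~\ref{L:seq} for each of the two sequences — clause $(ii)$ for $0 \rightarrow K \rightarrow V_n \rightarrow I \rightarrow 0$ and clause $(i)$ for $0 \rightarrow I \rightarrow V_{n+1} \rightarrow C \rightarrow 0$.
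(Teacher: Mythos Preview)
Your proof is correct and follows essentially the same approach as the paper: factor $f$ through its image, apply Lemma~\ref{L:seq}(ii) to the first short exact sequence to get $K \in \mathcal{V}_n$, apply Lemma~\ref{L:seq}(i) to the second to get $C \in \mathcal{C}_{n+1}$, and then use closure of $\mathcal{D}_{n+2}$ under epimorphic images to conclude $C \in \mathcal{V}_{n+1}$. Your write-up is in fact slightly more explicit than the paper's in citing why $I \in \mathcal{C}_{n+1}$ and which clause of Lemma~\ref{L:seq} is used at each step.
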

\begin{proof}
	Consider the induced exact sequences
	$$0 \rightarrow K \rightarrow V_n \rightarrow I \rightarrow 0,$$
	and
	$$0 \rightarrow I \rightarrow V_{n+1} \rightarrow C \rightarrow 0,$$
	where $K = \Ker(f)$, $C = \Coker(f)$, and $I = \operatorname{Im}(f)$. Since $I \in \mathcal{C}_{n+1}$, Lemma~\ref{L:seq} shows that $K \in \mathcal{V}_{n}$. Because $I$ is an epimorphic image of a module $V_n \in \mathcal{V}_n$ and $V_{n+1} \in \mathcal{C}_{n+1}$, the same lemma establishes that $C \in \mathcal{C}_{n+1}$. Since $C$ is also an epimorphic image of $V_{n+1} \in \mathcal{D}_{n+2}$, also $C \in \mathcal{D}_{n+2}$, and therefore $C \in \mathcal{V}_{n+1}$.
\end{proof}
	\begin{lem}\label{L:contain}
		For each $n \in \mathbb{Z}$ we have an inclusion $\mathcal{C}_n \cap \mathcal{D}_n \subseteq \mathcal{D}_{n+1}$.
	\end{lem}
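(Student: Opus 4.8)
The plan is to fix a module $M \in \mathcal{C}_n \cap \mathcal{D}_n$ together with an interval $[\pp,\qq] \in \mathcal{X}_{n+1}$, and to prove $F_\qq(M) \in \ModRqq$; since $F_\qq(M)$ is automatically $\qq$-torsion-free, it suffices to show that $F_\qq(M)$ is $\qq$-divisible. The first thing I would do is record a purely combinatorial fact: the right endpoint $\qq$ of an interval of $\mathcal{X}_{n+1}$ cannot satisfy $\pp_\chi \subseteq \qq \subsetneq \qq_\chi$ for any $[\pp_\chi,\qq_\chi] \in \mathcal{X}_n$ with $\pp_\chi \subsetneq \qq_\chi$. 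Indeed, if such a $\chi$ existed, nestedness of $\mathbb{X}$ would provide $\eta \in \mathcal{X}_{n+1}$ with $[\pp_\chi,\qq_\chi] \subseteq \eta$, forcing $\pp_\eta \subseteq \qq$ and $\qq_\eta \supseteq \qq_\chi \supsetneq \qq$, which contradicts the disjointness axiom of the admissible system $\mathcal{X}_{n+1}$ applied to $\eta$ and $[\pp,\qq]$. Feeding this into Lemma~\ref{L:denseideal}, applied to the prime ideal $\qq$ and the system $\mathcal{X}_n$, yields a clean dichotomy: either (a) $\qq = \qq_\chi$ for some $\chi \in \mathcal{X}_n$, or (b) there is a gap $(\qq_0,\pp_0) \in \mathcal{G}(\mathcal{X}_n)$ with $\qq_0 \subseteq \qq \subsetneq \pp_0$ (here $\qq_0 = -\infty$ is permitted, which also absorbs the case of $\qq$ lying below all of $\mathcal{X}_n$; and if $\mathcal{X}_n = \emptyset$ then $\mathcal{C}_n = 0$ and the lemma is vacuous).

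Next I would extract the consequence of $M \in \mathcal{C}_n$ that does all the real work: for any gap $(\qq_0,\pp_0) \in \mathcal{G}(\mathcal{X}_n)$ one has $\Gamma_{\qq'}(M) = \Gamma_{\qq_0}(M)$ for \emph{every} prime $\qq'$ with $\qq_0 \subseteq \qq' \subsetneq \pp_0$ (with the conventions $\Gamma_{-\infty} = \operatorname{id}$, $F_{-\infty} = 0$). The point is that for $0 \neq m \in M$ the ideal $\Ann_R(m)$ lies in some $\langle\chi\rangle$ with $\chi \in \mathcal{X}_n$; since an ideal strictly inside a gap cannot be sandwiched between the endpoints of an interval of a disjoint admissible system, this forces $\Ann_R(m) \subseteq \qq_0$ or $\Ann_R(m) \supseteq \pp_0$, and in either case the condition $\Ann_R(m) \not\subseteq \qq'$ defining membership of $m$ in $\Gamma_{\qq'}(M)$ is equivalent to $\Ann_R(m) \supseteq \pp_0$, independently of $\qq' \in [\qq_0,\pp_0)$. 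Because $F_{\qq'}(M) \simeq M/\Gamma_{\qq'}(M)$ for every prime $\qq'$, this produces an isomorphism of $R$-modules $F_\qq(M) \simeq F_{\qq_0}(M)$ whenever $\qq_0 \subseteq \qq \subsetneq \pp_0$.

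Finally I would run the dichotomy. In case (a), $\qq$ is a right endpoint of an interval of $\mathcal{X}_n$, so $F_\qq(M) \in \ModRqq$ directly from $M \in \mathcal{D}_n$. In case (b) with $\qq_0 = -\infty$, the second step gives $\Gamma_\qq(M) = M$, hence $F_\qq(M) = 0$. In case (b) with $\qq_0 \neq -\infty$, the prime $\qq_0$ is the right endpoint of the interval of $\mathcal{X}_n$ immediately below the gap, so $F_{\qq_0}(M)$ is an $R_{\qq_0}$-module by $M \in \mathcal{D}_n$; thus $F_\qq(M) \simeq F_{\qq_0}(M)$ is $\qq_0$-divisible, and therefore $\qq$-divisible because $\qq_0 \subseteq \qq$. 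Since $F_\qq(M)$ is $\qq$-torsion-free, $F_\qq(M) \in \ModRqq$ in every case; as $[\pp,\qq] \in \mathcal{X}_{n+1}$ was arbitrary, $M \in \mathcal{D}_{n+1}$, giving $\mathcal{C}_n \cap \mathcal{D}_n \subseteq \mathcal{D}_{n+1}$. I expect the combinatorial step to be the crux — that along an admissible filtration $\mathcal{X}_{n+1}$ may only enlarge, never genuinely split, an interval of $\mathcal{X}_n$ — while the remaining effort is the careful handling of the degenerate endpoints ($\qq_0 = -\infty$, $\pp_0 = R$, and $\mathcal{X}_n = \emptyset$) and making sure that $F_\qq(M) \simeq F_{\qq_0}(M)$ is an isomorphism of $R$-modules so that divisibility genuinely transfers.
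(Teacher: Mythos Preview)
Your argument is correct, and it takes a genuinely different route from the paper's. The paper invokes Lemma~\ref{L:denscons} to approximate the right endpoint $\qq$ of $[\pp,\qq]\in\mathcal{X}_{n+1}$ from above by a sequence $(\qq_\alpha)_{\alpha<\lambda}$ arising either from gaps in $\mathcal{G}_{n+1}$ or from intervals of some $\mathcal{Z}_C$ with $C\in\mathcal{H}_{n+1}$; for each $\alpha$ it then finds a gap of $\mathcal{G}_n$ containing the relevant datum, transfers $F_{\qq_\alpha}(M)$ to $F_{\qq'_\alpha}(M)\in\operatorname{Mod-}R_{\qq'_\alpha}$ via $M\in\mathcal{C}_n\cap\mathcal{D}_n$, and passes to the direct limit $F_\qq(M)=\varinjlim_\alpha F_{\qq_\alpha}(M)$. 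Your approach is more direct: the nestedness-plus-disjointness observation pins down exactly where $\qq$ sits relative to $\mathcal{X}_n$ (either equal to a right endpoint, or strictly inside a gap), and one application of the identity $\Gamma_\qq(M)=\Gamma_{\qq_0}(M)$ for $M\in\mathcal{C}_n$ finishes each case without any limiting process. A pleasant by-product is that your proof uses only that $\mathbb{X}$ is a \emph{nested} sequence of admissible systems, whereas the paper's argument, as written, passes through $\mathcal{Z}_C$ and Lemma~\ref{L:denscons}, both of which rely on the degreewise non-density condition; so you have shown that the inclusion $\mathcal{C}_n\cap\mathcal{D}_n\subseteq\mathcal{D}_{n+1}$ holds under a weaker hypothesis than the paper explicitly invokes. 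One cosmetic remark: in your case~(b) the inclusion coming from Lemma~\ref{L:denseideal} is in fact strict, $\qq_0\subsetneq\qq$, but this does not affect the argument.
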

\begin{proof}
		Let $M \in \mathcal{C}_n \cap \mathcal{D}_n$ and consider an interval $[\pp,\qq] \in \mathcal{X}_{n+1}$. We distinguish the two cases provided by Lemma~\ref{L:denscons}. 

		One possibility is that there is a decreasing sequence $((\qq_\alpha,\pp_\alpha) \mid \alpha < \lambda)$ of gaps in $\mathcal{G}_{n+1}$ such that $\bigcap_{\alpha < \lambda}\qq_{\alpha} = \qq$. Since $\mathcal{X}_n$ is a nested subsystem of $\mathcal{X}_{n+1}$, for each $\alpha < \lambda$ there is a gap $(\qq'_\alpha,\pp'_\alpha) \in \mathcal{G}_n$ which contains the gap $(\qq_\alpha,\pp_\alpha)$ (meaning that $\qq'_\alpha \subseteq \qq_\alpha \subseteq \pp_\alpha \subseteq \pp'_\alpha$). 

		The only other possibility is that there is $C \in \mathcal{H}_{n+1}$ and a strictly decreasing sequence $([\pp_\alpha,\qq_\alpha] \mid \alpha < \lambda)$ of intervals in $\mathcal{Z}_C$ indexed by a limit ordinal $\lambda$ such that $\bigcap_{\alpha < \lambda}\qq_\alpha = \qq$. Again, in view of Remark~\ref{R:gap}, each $[\pp_\alpha,\qq_\alpha]$ is contained in a gap $(\qq'_\alpha,\pp'_\alpha) \in \mathcal{G}_n$. 

		In both cases, since $M \in \mathcal{C}_n$ and there is a gap $(\qq'_\alpha,\pp'_\alpha) \in \mathcal{G}_n$ with $\qq'_\alpha \subseteq \qq_\alpha \subsetneq \pp'_\alpha$, we have
		$$\Gamma_{\qq_\alpha}(M) \subseteq \Gamma_{\qq'_\alpha}(M) = \Soc{}_{\pp'_\alpha}(M) \subseteq \Gamma_{\qq_\alpha}(M),$$
and thus $F_{\qq_\alpha}(M) = F_{\qq'_\alpha}(M)$ for each $\alpha < \lambda$. Because $M \in \mathcal{D}_n$, $F_{\qq_\alpha}(M) \in \ModRqqalphaprime \subseteq \ModRqqalpha$ for each $\alpha < \lambda$. As a conclusion, we infer that $F_{\qq}(M) = \varinjlim_{\alpha < \lambda}F_{\qq_\alpha}(M) \in \ModRqq$.
	\end{proof}

\begin{prop}\label{P:intervalstocoaisles}
	Let $R$ be a valuation domain. Then there is an assignment
			$$\Xi: \left \{ \begin{tabular}{ccc} \text{ admissible filtrations $\mathbb{X}$} \\ \text{in $\Spec(R)$} \end{tabular}\right \}  \rightarrow  \left \{ \begin{tabular}{ccc} \text{ definable coaisles $\mathcal{V}$ } \\ \text{ in $\Der(R)$} \end{tabular}\right \},$$
			defined by setting
			$$\Xi(\mathbb{X}) = \{X \in \Der(R) \mid H^n(X) \in \mathcal{C}_n \cap \mathcal{D}_{n+1} ~\forall n \in \mathbb{Z}\}.$$
	\end{prop}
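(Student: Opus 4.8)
The goal is to show that for any admissible filtration $\mathbb{X}=(\mathcal{X}_n \mid n \in \mathbb{Z})$, the subcategory $\mathcal{V} := \Xi(\mathbb{X}) = \{X \in \Der(R) \mid H^n(X) \in \mathcal{C}_n \cap \mathcal{D}_{n+1}~\forall n\}$ is a definable coaisle, and that the assignment $\Xi$ is well-defined (the injectivity claimed in the title of the section can be addressed separately, e.g.\ by composing with $\Theta$ in the next section). By Proposition~\ref{P:moduletheoretic}, it suffices to verify that the sequence $(\mathcal{V}_n := \mathcal{C}_n \cap \mathcal{D}_{n+1} \mid n \in \mathbb{Z})$ is an increasing sequence of definable, extension-closed subcategories of $\ModR$ satisfying the kernel/cokernel condition of Proposition~\ref{P:moduletheoretic}(ii). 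Most of the pieces have already been assembled in this section, so the proof is largely a matter of citing them in the right order.

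**Key steps.** First I would record that each $\mathcal{V}_n$ is definable: $\mathcal{C}_n = \mathcal{C}_{\mathcal{X}_n}$ is a cosilting class, hence a definable torsion-free class by Theorem~\ref{T:BZ} (or Lemma~\ref{L01} together with Theorem~\ref{T:BZ}), and $\mathcal{D}_{n+1}$ is definable by Lemma~\ref{L:Dn} (closed under pure submodules, direct limits, extensions, epimorphic images), being cut out by the vanishing of the functors $H^1(K(\qq,\pp) \otimes_R -)$; an intersection of definable subcategories is definable. Extension-closedness of $\mathcal{V}_n$: $\mathcal{C}_n$ is a torsion-free class, hence closed under extensions, and $\mathcal{D}_{n+1}$ is closed under extensions by Lemma~\ref{L:Dn}; so $\mathcal{V}_n$ is too. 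The inclusion $\mathcal{V}_n \subseteq \mathcal{V}_{n+1}$ is the content of Lemma~\ref{L:contain}: since $\mathbb{X}$ is nested, $\mathcal{C}_n \subseteq \mathcal{C}_{n+1}$ and $\mathcal{D}_n \subseteq \mathcal{D}_{n+1}$ follow by comparing intervals, and then $\mathcal{V}_n = \mathcal{C}_n \cap \mathcal{D}_{n+1} \subseteq \mathcal{C}_{n+1} \cap (\mathcal{C}_n \cap \mathcal{D}_n) \subseteq \mathcal{C}_{n+1} \cap \mathcal{D}_{n+1} = \mathcal{D}_{n+1} \cap \mathcal{C}_{n+1}$; more directly, $\mathcal{V}_n \subseteq \mathcal{C}_{n+1}$ by nestedness and $\mathcal{V}_n \subseteq \mathcal{C}_n \cap \mathcal{D}_n \subseteq \mathcal{D}_{n+1}$ by Lemma~\ref{L:contain}, giving $\mathcal{V}_n \subseteq \mathcal{V}_{n+1}$. (One should first check $\mathcal{D}_n \cap \mathcal{C}_n \subseteq \mathcal{C}_{n+1}$ as well, which is immediate from nestedness since $\mathcal{C}_n \subseteq \mathcal{C}_{n+1}$ always holds for nested systems.) Finally, the kernel/cokernel condition is exactly Lemma~\ref{L:extensions}: for $f: V_n \to V_{n+1}$ with $V_n \in \mathcal{V}_n$, $V_{n+1} \in \mathcal{V}_{n+1}$, one has $\Ker(f) \in \mathcal{V}_n$ and $\Coker(f) \in \mathcal{V}_{n+1}$. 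Having verified all the hypotheses of Proposition~\ref{P:moduletheoretic}, I conclude that $\mathcal{V}$ is a definable coaisle, so $\Xi$ is well-defined.

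**The main obstacle.** Since the three lemmas~\ref{L:contain}, \ref{L:seq}, \ref{L:extensions} preceding this proposition do essentially all the work, the ``proof'' is mostly a bookkeeping exercise assembling them into the format of Proposition~\ref{P:moduletheoretic}. The one point requiring a little care is making the inclusion chain $\mathcal{V}_n \subseteq \mathcal{V}_{n+1}$ airtight: one must confirm that nestedness of $\mathbb{X}$ genuinely gives both $\mathcal{C}_n \subseteq \mathcal{C}_{n+1}$ (clear from the description of $\mathcal{C}_{\mathcal{X}}$ in terms of $\langle \chi \rangle$, since any $\langle \chi \rangle$ with $\chi \in \mathcal{X}_n$ embeds into $\langle \xi \rangle$ for the containing $\xi \in \mathcal{X}_{n+1}$ — this uses that $\chi \subseteq \xi$ implies $\langle \chi \rangle \subseteq \langle \xi \rangle$) and the needed comparison $\mathcal{C}_n \cap \mathcal{D}_n \subseteq \mathcal{D}_{n+1}$ from Lemma~\ref{L:contain}. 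Combining, $\mathcal{V}_n = \mathcal{C}_n \cap \mathcal{D}_{n+1} \subseteq \mathcal{C}_{n+1}$ and also $\mathcal{V}_n \subseteq \mathcal{C}_n \cap \mathcal{D}_n \subseteq \mathcal{D}_{n+1}$ (the middle inclusion because $\mathcal{D}_{n+1} \subseteq \mathcal{D}_n$? — no: here one must be careful about the direction, and in fact $\mathcal{V}_n \subseteq \mathcal{D}_{n+1}$ is already built into the definition, so the relevant statement to extract is just $\mathcal{V}_n \subseteq \mathcal{C}_{n+1} \cap \mathcal{D}_{n+1}$, where $\mathcal{D}_{n+1}$ is automatic and $\mathcal{C}_{n+1}$ follows from $\mathcal{C}_n \subseteq \mathcal{C}_{n+1}$). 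No genuinely new ideas are needed beyond what is in the section; the degreewise non-density condition has already been spent in Lemmas~\ref{L:denscons}--\ref{L:contain}.
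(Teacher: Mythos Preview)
Your proposal has a genuine gap: you assert that $\mathcal{D}_{n+1}$ is definable by citing Lemma~\ref{L:Dn}, but that lemma only gives closure under pure submodules, direct limits, extensions, and epimorphic images --- it says nothing about \emph{products}. The functor $F_{\qq}$ does not commute with products in general, so there is no reason to expect $\mathcal{D}_{n+1}$ itself to be product-closed. Consequently, your claim that $\mathcal{V}_n = \mathcal{C}_n \cap \mathcal{D}_{n+1}$ is definable ``as an intersection of definable subcategories'' fails. This is not a bookkeeping oversight: establishing product-closure of $\mathcal{V}_n$ is precisely the main new content of the paper's proof, and it occupies most of its length. The argument uses that for $M_i \in \mathcal{C}_n$ one has $\Gamma_{\qq_\alpha}(M_i) = \Soc_{\pp_\alpha}(M_i)$ for the relevant primes coming from gaps in $\mathcal{G}_n$ (via Lemma~\ref{L:compute} and the degreewise non-density condition through Lemma~\ref{L:denscons}); since $\Soc_{\pp_\alpha}$ commutes with products while $\Gamma_{\qq_\alpha}$ need not, this identification is what lets one conclude $F_{\qq_\alpha}(\prod M_i) \simeq \prod F_{\qq_\alpha}(M_i)$ and hence push the product inside $\mathcal{D}_{n+1}$. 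So the degreewise non-density condition is \emph{not} ``already spent'' in the earlier lemmas --- it is invoked again here.

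There is also a secondary confusion in your inclusion chain for $\mathcal{V}_n \subseteq \mathcal{V}_{n+1}$. You claim ``$\mathcal{D}_n \subseteq \mathcal{D}_{n+1}$ follow[s] by comparing intervals'' and later ``$\mathcal{V}_n \subseteq \mathcal{C}_n \cap \mathcal{D}_n$'', neither of which is justified (indeed, nestedness of $\mathcal{X}_n$ in $\mathcal{X}_{n+1}$ does not give either inclusion between $\mathcal{D}_n$ and $\mathcal{D}_{n+1}$ directly). The correct chain, as in the paper, is $\mathcal{V}_n = \mathcal{C}_n \cap \mathcal{D}_{n+1} \subseteq \mathcal{C}_{n+1} \cap \mathcal{D}_{n+1} \subseteq \mathcal{D}_{n+2}$, where the first inclusion uses $\mathcal{C}_n \subseteq \mathcal{C}_{n+1}$ and the second is Lemma~\ref{L:contain} applied at index $n+1$; combined with $\mathcal{V}_n \subseteq \mathcal{C}_n \subseteq \mathcal{C}_{n+1}$ this yields $\mathcal{V}_n \subseteq \mathcal{C}_{n+1} \cap \mathcal{D}_{n+2} = \mathcal{V}_{n+1}$. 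Your final parenthetical lands on $\mathcal{C}_{n+1} \cap \mathcal{D}_{n+1}$, which is not $\mathcal{V}_{n+1}$.
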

	\begin{proof}
			Denote $\mathcal{V} = \Xi(\mathbb{X})$. It is enough to check that the classes $\mathcal{V}_n = \mathcal{C}_n \cap \mathcal{D}_{n+1}$ satisfy the conditions of Proposition~\ref{P:moduletheoretic}. By Corollary~\ref{C:tensorvanish}, it is clear that $\mathcal{V}_n$ is closed under direct limits, pure submodules, and extensions for each $n \in \mathbb{Z}$. For the rest of the proof, we fix $n \in \mathbb{Z}$ and prove all of the other conditions.
			
			First we check that $\mathcal{V}_n \subseteq \mathcal{V}_{n+1}$. By Lemma~\ref{L:contain}, we have $\mathcal{V}_n = \mathcal{C}_n \cap \mathcal{D}_{n+1} \subseteq \mathcal{C}_{n+1} \cap \mathcal{D}_{n+1} \subseteq \mathcal{D}_{n+2}$. Because the inclusion $\mathcal{C}_n \subseteq \mathcal{C}_{n+1}$ is clear from the description in Lemma~\ref{L01}, this step is established.
	
			If $f: V_n \rightarrow V_{n+1}$ is a map with $V_i \in \mathcal{V}_{i}$ for $i=n,n+1$, then $\Ker(f) \in \mathcal{V}_n$ and $\Coker(f) \in \mathcal{V}_{n+1}$ by Lemma~\ref{L:extensions}
		
			Finally, we need to show that $\mathcal{V}_n$ is closed under direct products. Let $(M_i \mid i \in I)$ be a sequence of modules from $\mathcal{V}_n$. Since $\mathcal{C}_n$ is a cosilting class, clearly $\prod_{i \in I}M_i \in \mathcal{C}_n$. We need to check that $\prod_{i \in I} M_i \in \mathcal{D}_{n+1}$. Let $[\pp,\qq] \in \mathcal{X}_{n+1}$. We again separate the two cases given by Lemma~\ref{L:denscons}. 

The first possibility is that there is a decreasing sequence $((\qq_\alpha,\pp_\alpha) \mid \alpha < \lambda)$ of gaps from $\mathcal{G}_{n+1}$ with $\bigcap_{\alpha < \lambda} \qq_\alpha = \qq$. Since $\mathcal{X}_n$ is a nested subsystem of $\mathcal{X}_{n+1}$, each gap $(\qq_\alpha,\pp_\alpha)$ is contained in some gap from $\mathcal{G}_n$. The other possibility is by Lemma~\ref{L:denscons} the existence of $C \in \mathcal{H}_{n+1}$ and a strictly decreasing sequence $([\pp'_\alpha,\qq_\alpha] \mid \alpha < \lambda)$ of intervals (note the change of notation here) in $\mathcal{Z}_C$ indexed by a limit ordinal $\lambda$ such that $\bigcap_{\alpha < \lambda}\qq_\alpha = \qq$. Again, in view of Remark~\ref{R:gap}, each $[\pp'_\alpha,\qq_\alpha]$ is strictly contained in a gap $(\qq'_\alpha,\pp_\alpha) \in \mathcal{G}_n$. 

In both cases, $M_i \in \mathcal{D}_{n+1}$ implies $F_{\qq_\alpha}(M_i) \in \ModRqqalpha$ for each $i \in I$. Also, because $M_i \in \mathcal{C}_n$ for each $i \in I$, the gaps in $\mathcal{G}_n$ obtained in the preceding paragraph with $\qq_\alpha \subsetneq \pp_\alpha$ yield in view of Lemma~\ref{L:compute} that $\Gamma_{\qq_\alpha}(M_i) = \Soc_{\pp_\alpha}(M_i)$ for all $i \in I$ and $\alpha < \lambda$. Therefore, we can compute as follows:
$$\prod_{i \in I}\Gamma_{\qq_\alpha}(M_i) = \prod_{i \in I}\Soc{}_{\pp_\alpha}(M_i) = \Soc{}_{\pp_\alpha}(\prod_{i \in I}M_i) \subseteq \Gamma_{\qq_\alpha}(\prod_{i \in I}M_i) \subseteq \prod_{i \in I}\Gamma_{\qq_\alpha}(M_i).$$
From this we infer that $F_{\qq_\alpha}(\prod_{i \in I}M_i) \simeq \prod_{i \in I}F_{\qq_\alpha}(M_i) \in \ModRqqalpha$. Since $\bigcap_{\alpha < \lambda}\qq_\alpha = \qq$, we finally conclude that $F_{\qq}(\prod_{i \in I}M_i) \simeq \varinjlim_{\alpha < \lambda} F_{\qq_\alpha}(\prod_{i \in I}M_i) \in \ModRqq$.
				\end{proof}

			Finally, let us check that this construction is well-behaved with respect to the admissible filtration constructed in Section~\ref{S:coaisletoint}.
	\begin{prop}\label{P:sameint}
		The composition $\Theta \circ \Xi$ of the assignments defined in Proposition~\ref{P:coaislestointervals} and Proposition~\ref{P:intervalstocoaisles} is the identity on the set of all admissible filtrations in $\Spec(R)$.
	\end{prop}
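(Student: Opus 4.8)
The strategy is to start with an admissible filtration $\mathbb{X} = (\mathcal{X}_n \mid n \in \mathbb{Z})$, pass to the definable coaisle $\mathcal{V} = \Xi(\mathbb{X})$, and then show that the admissible filtration $\mathbb{Y} = (\mathcal{Y}_n \mid n \in \mathbb{Z}) = \Theta(\mathcal{V})$ reconstructed from $\mathcal{V}$ via Definition~\ref{D:phipsi} coincides with $\mathbb{X}$. Since an admissible system is uniquely determined by which uniserial modules $R_{\pp}/\qq$ lie in the associated classes (essentially Lemma~\ref{L:mc} together with Lemma~\ref{L:cyclic}), it suffices to identify the maps $\varphi_n, \psi_n$ computed for $\mathcal{V}$ with the corresponding data coming from $\mathbb{X}$. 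More concretely, by the construction of $\mathcal{X}_n$ from a cosilting class recalled in \S\ref{SS:phipsi} and by Theorem~\ref{T01}, it is enough to prove that for every $n \in \mathbb{Z}$ the support sets agree, $\mathcal{K}_n(\mathcal{V}) = \{\pp \in \Spec(R) \mid \kappa(\pp) \in \mathcal{V}_n\}$ equals the set of primes appearing as endpoints of intervals in $\mathcal{X}_n$, and that the equivalence relation $\sim$ recovering the intervals of $\mathcal{X}_n$ from $\mathcal{K}_n$ is reproduced — that is, $R_{\pp}/\qq \in \mathcal{V}_n$ if and only if $[\pp,\qq]$ is contained in an interval of $\mathcal{X}_n$.

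First I would unwind the definitions: by Proposition~\ref{P:intervalstocoaisles}, $\mathcal{V}_n = \mathcal{C}_n \cap \mathcal{D}_{n+1}$ where $\mathcal{C}_n = \mathcal{C}_{\mathcal{X}_n}$ and $\mathcal{D}_{n+1}$ is the divisibility condition attached to $\mathcal{X}_{n+1}$. For the inclusion $\mathcal{X}_n \subseteq \mathcal{Y}_n$ (in the sense of nested subsystems, or rather equality of the generated systems), the key computation is: if $[\pp,\qq] \in \mathcal{X}_n$, then every $R_{\pp'}/\qq'$-module with $\pp \subseteq \pp' \subseteq \qq' \subseteq \qq$ lies in $\mathcal{C}_n$ (by Lemma~\ref{L01}), and lies in $\mathcal{D}_{n+1}$ because $F_{\qq''}(-)$ is already divisible for any $\qq''$ between $\pp$ and $\qq$ — here one uses that $\mathcal{X}_n$ is a nested subsystem of $\mathcal{X}_{n+1}$, so no endpoint of an interval of $\mathcal{X}_{n+1}$ lies strictly between $\pp$ and $\qq$. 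This shows $\kappa(\pp) \in \mathcal{V}_n$ for endpoints $\pp$ of $\mathcal{X}_n$ and more generally gives the ``$\subseteq$'' direction on the level of intervals, via Lemma~\ref{L:mc}. The reverse direction is the delicate part: given $\kappa(\pp) \in \mathcal{V}_n$, one must show $\pp$ is an endpoint of some interval of $\mathcal{X}_n$, and given $R_{\pp}/\qq \in \mathcal{V}_n$ one must show $[\pp,\qq]$ is contained in an interval of $\mathcal{X}_n$. Here I would argue by contradiction using Lemma~\ref{L:denseideal}: if $\pp$ is not an endpoint of $\mathcal{X}_n$, then either $\pp$ sits in a gap $(\qq',\pp') \in \mathcal{G}(\mathcal{X}_n)$, or $\pp$ sits strictly inside an interval $[\pp_0,\qq_0] \in \mathcal{X}_n$ but is not an endpoint. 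In the gap case, the homological description of $\mathcal{C}_n$ in Corollary~\ref{C:tensorvanish} (via $\Ker H^0(K(\qq',\pp') \otimes_R -)$) forces $\Gamma_{\qq'}(\kappa(\pp)) \subseteq \Soc_{\pp'}(\kappa(\pp))$, which for the field $\kappa(\pp)$ with $\qq' \subsetneq \pp \subsetneq \pp'$ is impossible. The sub-interval case is handled similarly, or rather directly via the definition of $\varphi_n, \psi_n$ and the membership already established.

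The main obstacle I anticipate is the precise matching of the degreewise non-density data — that is, making sure that the reconstructed system $\mathcal{Y}_n$ does not \emph{add} intervals compared to $\mathcal{X}_n$ in the dense regions. The worry is that $\mathcal{D}_{n+1}$ might accidentally force some $R_{\pp}/\qq$ with $[\pp,\qq]$ \emph{not} contained in an interval of $\mathcal{X}_n$ to nonetheless lie in $\mathcal{V}_n$, which would collapse intervals of $\mathcal{X}_n$ together and destroy injectivity of $\Xi$. This is exactly where the degreewise non-density condition of $\mathbb{X}$ is used: Lemma~\ref{L:tensordesc} and Corollary~\ref{C:tensorvanish} show that $\mathcal{V}_n$ is cut out, inside $\mathcal{C}_n$, precisely by the vanishing of $R_{\qq}/\pp \otimes_R -$ for $[\pp,\qq]$ ranging over the dense subsets $\mathcal{Z}_C$ (intervals of $\mathcal{X}_{n+1}$ not containing any interval of $\mathcal{X}_n$) and of $H^1(K(\qq,\pp) \otimes_R -)$ for gaps of $\mathcal{X}_{n+1}$. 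I would feed a hypothetical ``extra'' uniserial module $R_{\pp}/\qq$ with $[\pp,\qq]$ lying in a gap of $\mathcal{X}_n$ into this description and derive, using that it belongs to $\mathcal{C}_n$ and to $\mathcal{D}_{n+1}$, that $[\pp,\qq]$ must in fact be contained in an interval of $\mathcal{X}_n$ — contradicting the gap assumption. With the support sets $\mathcal{K}_n$ and the interval-containment relation both matched for all $n$, Theorem~\ref{T01} (applied degreewise) and Lemma~\ref{L:mc} give $\mathcal{Y}_n = \mathcal{X}_n$, hence $\Theta \circ \Xi = \mathrm{id}$, and in particular $\Xi$ is injective and $\Theta$ surjective.
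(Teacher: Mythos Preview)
Your overall strategy is sound, but you are working much harder than necessary on the ``reverse direction'', and in doing so you invoke machinery (the degreewise non-density condition, the sets $\mathcal{Z}_C$, gap analysis via Corollary~\ref{C:tensorvanish}) that the argument does not need at all. The paper's proof is essentially a two-line observation that dispenses with your ``main obstacle'' entirely.

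The key point you are missing is this: by construction $\mathcal{V}_n = \mathcal{C}_n \cap \mathcal{D}_{n+1} \subseteq \mathcal{C}_n$, and $\mathcal{C}_n$ is precisely the cosilting class attached to $\mathcal{X}_n$ via Theorem~\ref{T01}. Hence the admissible system $\mathcal{X}'_n$ reconstructed from $\mathcal{V}_n$ by $\Theta$ is automatically a nested subsystem of $\mathcal{X}_n$ --- any interval $[\varphi_n(\pp),\psi_n(\pp)] \in \mathcal{X}'_n$ satisfies $R_{\varphi_n(\pp)}/\psi_n(\pp) \in \mathcal{V}_n \subseteq \mathcal{C}_n$, and membership in $\mathcal{C}_n = \mathcal{C}_{\mathcal{X}_n}$ forces this interval to sit inside some interval of $\mathcal{X}_n$. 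So the worry that $\mathcal{X}'_n$ might ``add'' or ``collapse'' intervals compared to $\mathcal{X}_n$ is unfounded from the start, and no density hypothesis is required to rule it out. What remains is only the \emph{forward} direction: for each $[\pp,\qq] \in \mathcal{X}_n$ one must check $R_{\pp}/\qq \in \mathcal{V}_n$, i.e.\ $R_{\pp}/\qq \in \mathcal{D}_{n+1}$ (membership in $\mathcal{C}_n$ being clear). This is a short case analysis on the position of an arbitrary $[\pp',\qq'] \in \mathcal{X}_{n+1}$ relative to $[\pp,\qq]$, using only that $\mathcal{X}_n$ is nested in $\mathcal{X}_{n+1}$: either $[\pp',\qq'] < [\pp,\qq]$ so $R_{\pp}/\qq$ is $\qq'$-torsion, or $[\pp,\qq] \subseteq [\pp',\qq']$ or $[\pp,\qq] < [\pp',\qq']$ so $R_{\pp}/\qq$ is already an $R_{\qq'}$-module; in every case $F_{\qq'}(R_{\pp}/\qq)$ is an $R_{\qq'}$-module. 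Combining the two directions with the nestedness gives $\mathcal{X}'_n = \mathcal{X}_n$.

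A small side remark: your description of $\mathcal{K}_n(\mathcal{V})$ as ``the set of primes appearing as endpoints of intervals in $\mathcal{X}_n$'' is not quite right --- it is the set of all primes contained in some interval of $\mathcal{X}_n$, since $\kappa(\pp)$ is an $R_{\qq_0}/\pp_0$-module whenever $\pp_0 \subseteq \pp \subseteq \qq_0$. This does not affect the argument once you use the shortcut above, but it would cause trouble in your proposed contradiction-by-gaps argument.
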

	\begin{proof}
		Let $\mathbb{X} = (\mathcal{X}_n \mid n \in \mathbb{Z})$ be an admissible filtration, and let $\Theta(\Xi(\mathbb{X})) = (\mathcal{X}'_n \mid n \in \mathbb{Z})$ be the admissible filtration associated to the definable coaisle $\mathcal{V} = \Xi(\mathbb{X})$. Because $\mathcal{V}_n \subseteq \mathcal{C}_n$, where $\mathcal{C}_n$ is the cosilting class corresponding to $\mathcal{X}_n$ via Theorem~\ref{T01}, we clearly have that $\mathcal{X}'_n$ is a nested subsystem of $\mathcal{X}_n$ for each $n \in \mathbb{Z}$. It is enough to show that for each $n \in \mathbb{Z}$ and each interval $[\pp,\qq] \in \mathcal{X}_n$, the module $R_{\pp}/\qq$ belongs to $\mathcal{D}_{n+1}$. Indeed, by the construction of $\mathcal{C}_n = \mathcal{C}_{\mathcal{X}_n}$ from Lemma~\ref{L01}, the module $R_{\pp}/\qq$ belongs to $\mathcal{C}_n$ for any $[\pp,\qq] \in \mathcal{X}_n$. If $R_{\pp}/\qq$ belongs also to $\mathcal{D}_{n+1}$, it belongs by definition to $\mathcal{V}_n$, and thus $[\pp,\qq]$ is contained in some interval from $\mathcal{X}'_n$ by Lemma~\ref{L:mc}. As $\mathcal{X}'_n$ is a nested subsystem of $\mathcal{X}_n$, this means that $[\pp,\qq] \in \mathcal{X}'_n$.

Let $[\pp',\qq'] \in \mathcal{X}_{n+1}$. Then either $[\pp',\qq'] < [\pp,\qq]$, and then $R_{\pp}/\qq = \Gamma_{\qq'}(R_{\pp}/\qq)$, or $[\pp,\qq] < [\pp',\qq']$, and then $R_{\pp}/\qq$ is already an $R_{\qq'}$-module, or finally $[\pp',\qq']$ contains $[\pp,\qq]$, in which case again  $R_{\pp}/\qq$ is already an $R_{\qq'}$-module. In all of the cases, $F_{\qq'}(R_{\pp}/\qq) \in \ModRqqprime$, showing that $R_{\pp}/\qq \in \mathcal{D}_{n+1}$.
	\end{proof}
	Corollary~\ref{C:tensorvanish} suggests that the constructed definable coaisles are given in the derived category as orthogonal classes with respect to the derived tensor product. This is indeed the case, and should be seen as the correct generalization of the module theoretic case of Proposition~\ref{P:tor} (cf. \cite[Theorem 6.11]{B2} and also \cite[Proposition 5.10]{HCG} for a similar type of description in the case of compactly generated t-structures).
	\begin{prop}\label{P:givenbytensor}
		Let $R$ be a valuation domain and $\mathbb{X} = (\mathcal{X}_n \mid n \in \mathbb{Z})$ an admissible filtration in $\Spec(R)$. Let us define the following subset of $\Der(R)$:
			$$\mathcal{S}_{\mathbb{X}} = \{\enspace K(\qq,\pp)[n] \enspace \mid n \in \mathbb{Z}, (\qq,\pp) \in \mathcal{G}_n\} \enspace\cup $$
			$$\cup\enspace \{\enspace R_{\qq}/\pp[n-1] \enspace\mid n \in \mathbb{Z}, C \in \mathcal{H}_n, [\pp,\qq] \in \mathcal{Z}_C\}.$$
		Then the definable coaisle $\mathcal{V} = \Xi(\mathbb{X})$ constructed from $\mathbb{X}$ by Proposition~\ref{P:intervalstocoaisles} is tensor-semi-orthogonal to the set $\mathcal{S}_\mathbb{X}$ in the following sense:
			$$\mathcal{V} = \{X \in \Der(R) \mid S \otimes_R^\mathbf{L} X \in \Der{}^{>0} ~\forall S \in \mathcal{S}_{\mathbb{X}}\}.$$
	\end{prop}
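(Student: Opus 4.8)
Write $\mathcal{W} = \{X \in \Der(R) \mid S \otimes_R^{\mathbf{L}} X \in \Der{}^{>0} ~\forall S \in \mathcal{S}_{\mathbb{X}}\}$ for the right-hand side. The plan is to show that both $\mathcal{V} = \Xi(\mathbb{X})$ and $\mathcal{W}$ are determined on cohomology and then to compare them on stalk complexes. For $\mathcal{V}$ this is built into the definition in Proposition~\ref{P:intervalstocoaisles}, with $\mathcal{V}_n = \mathcal{C}_n \cap \mathcal{D}_{n+1}$ described homologically in Corollary~\ref{C:tensorvanish}. For $\mathcal{W}$ the crucial point is a Künneth-type short exact sequence: since each $\pp$ and each $R_{\qq}$ appearing in a complex $K(\qq,\pp)$ is a flat $R$-module (we are over a valuation domain), the stupid-truncation triangle $R_{\qq}[-1] \to K(\qq,\pp) \to \pp \xrightarrow{i} R_{\qq}$ stays exact after applying $- \otimes_R^{\mathbf{L}} X$, and the long exact cohomology sequence collapses to natural short exact sequences
$$0 \to (R_\qq/\pp) \otimes_R H^{l-1}(X) \to H^l(K(\qq,\pp) \otimes_R^{\mathbf{L}} X) \to \Tor_1^R(R_\qq/\pp, H^l(X)) \to 0$$
for all $l \in \mathbb{Z}$ (reading the left term as zero and the right term as $\pp \otimes_R H^l(X)$ when $\qq = -\infty$, cf. \S\ref{SS:homology}). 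Hence $H^l(K(\qq,\pp) \otimes_R^{\mathbf{L}} X) = 0$ exactly when both $(R_\qq/\pp)\otimes_R H^{l-1}(X) = 0$ and $\Tor_1^R(R_\qq/\pp,H^l(X)) = 0$; in particular membership in $\mathcal{W}$ is controlled by vanishing conditions imposed on the individual cohomology modules of $X$, so $\mathcal{W}$ is determined on cohomology. Since $R_\qq/\pp[-1] \simeq K(\qq,\pp)$ in $\Der(R)$ when $\qq \neq -\infty$, every object of $\mathcal{S}_{\mathbb{X}}$ is a shift of some $K(\qq,\pp)$, and the whole statement reduces to proving, for each $m \in \mathbb{Z}$ and each $R$-module $M$: $M \in \mathcal{V}_m \iff M[-m] \in \mathcal{W}$.

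\textbf{Translation and the easy inclusion.} Unwinding shifts, the short exact sequence above shows that $K(\qq,\pp)[n] \otimes_R^{\mathbf{L}} M[-m] \in \Der{}^{>0}$ is equivalent to: $\Tor_1^R(R_\qq/\pp,M) = 0$ whenever $m \leq n$, and $(R_\qq/\pp)\otimes_R M = 0$ whenever $m \leq n-1$. Comparing with the description of $\mathcal{V}_m = \mathcal{C}_m \cap \mathcal{D}_{m+1}$ in Corollary~\ref{C:tensorvanish}, the implication $M[-m] \in \mathcal{W} \Rightarrow M \in \mathcal{V}_m$ is immediate: the four defining conditions of $\mathcal{V}_m$ are exactly what the objects $K(\qq,\pp)[m]$ (for $(\qq,\pp) \in \mathcal{G}_m$ and $[\pp,\qq] \in \mathcal{Z}_C$, $C\in\mathcal{H}_m$) and $K(\qq,\pp)[m+1]$ (for $(\qq,\pp) \in \mathcal{G}_{m+1}$ and $[\pp,\qq] \in \mathcal{Z}_C$, $C\in\mathcal{H}_{m+1}$) impose. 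The substance is the converse: from $M \in \mathcal{V}_m$ one must deduce $\Tor_1^R(R_\qq/\pp,M) = 0$ at every level $n \geq m$ and $(R_\qq/\pp)\otimes_R M = 0$ at every level $n \geq m+1$, not merely at the levels $m$ and $m+1$ visible in the definition of $\mathcal{V}_m$.

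\textbf{The propagation.} For this I would recycle the machinery already developed: (1) since $\mathbb{X}$ is nested, each gap of $\mathcal{X}_n$ is contained in a gap of $\mathcal{X}_m$ for $m \leq n$ (iterate the step used in the proof of Lemma~\ref{L:contain}), and by Remark~\ref{R:gap} each interval of $\mathcal{Z}_C$ with $C \in \mathcal{H}_n$ is strictly contained in a gap of $\mathcal{G}_{n-1}$, hence in a gap of $\mathcal{G}_m$ when $n - 1 \geq m$; (2) $M \in \mathcal{C}_m$ gives, via Lemma~\ref{L:compute} and Corollary~\ref{C:tensorvanish}, the identity $\Gamma_{\qq'}(M) = \Soc_{\pp'}(M)$ for every gap $(\qq',\pp') \in \mathcal{G}_m$; combined with a containment $\qq' \subseteq \qq \subsetneq \pp \subseteq \pp'$ this yields $\Gamma_\qq(M) \subseteq \Soc_\pp(M)$, i.e. $\Tor_1^R(R_\qq/\pp,M) = 0$ and also $F_\qq(M) = F_{\qq'}(M)$; (3) by Lemma~\ref{L:contain} one has $M \in \mathcal{D}_k$ for all $k \geq m+1$, so for a gap $(\qq,\pp) \in \mathcal{G}_n$ with $\qq \neq -\infty$ and $n \geq m+1$, writing $\qq = \qq_{\chi_0}$ for a suitable $\chi_0 \in \mathcal{X}_n$ shows that $F_\qq(M)$ is $\qq$-divisible, hence $\pp$-divisible as $\qq \subsetneq \pp$, whence $(R_\qq/\pp)\otimes_R M \simeq (R_\qq/\pp)\otimes_R F_\qq(M) = 0$ — this is precisely the chain of torsion/divisibility manipulations from the proofs of Lemma~\ref{L:tensordesc} and Lemma~\ref{L:contain} (that $(R_\qq/\pp)\otimes_R M$ only sees $F_\qq(M)$, and that quotients of $\pp$-divisible modules are $\pp$-divisible). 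Running this for all gaps and all $\mathcal{Z}_C$-intervals at all relevant levels, using the dichotomy of Lemma~\ref{L:denscons} to locate the containing gaps, closes the converse.

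\textbf{Main obstacle.} I expect no new idea to be needed, but the bookkeeping of the degenerate configurations will be the bulk of the work: gaps of the form $(-\infty,\pp)$ or $(\qq,R)$, for which the complex $K(\qq,\pp)$ has a single nonzero term, must be matched against clauses (ii)--(iv) in the definition of a gap (\S\ref{SS:densityandgaps}); and an interval $[\pp,\qq] \in \mathcal{Z}_C$ that happens to be the minimal interval of its system $\mathcal{X}_n$ cannot be handled by exhibiting a smaller interval of $\mathcal{X}_n$ below it, and must instead be traced — via nestedness — back to a gap $(-\infty,\pp')$ of $\mathcal{X}_m$ which forces $\pp' M = 0$ and hence $F_\qq(M) = 0$. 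Once these cases are dispatched, the two inclusions $\mathcal{V} \subseteq \mathcal{W}$ and $\mathcal{W} \subseteq \mathcal{V}$ follow.
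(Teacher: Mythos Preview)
Your proposal is correct and follows essentially the same strategy as the paper: reduce to stalk complexes via determination on cohomology, then match the vanishing conditions against Corollary~\ref{C:tensorvanish}. The paper's execution is more streamlined than yours in one respect: rather than tracing gap-containments and torsion/divisibility identities by hand for the ``propagation'' step, it observes directly that for $k>n$ the full vanishing of $K(\qq,\pp)\otimes_R M$ and $R_{\qq'}/\pp'\otimes_R^{\mathbf{L}} M$ splits into a $0$-th cohomology part (equivalent to $M\in\mathcal{C}_k$, vacuous once $M\in\mathcal{C}_n$) and a $1$-st cohomology part (equivalent to $M\in\mathcal{D}_k$ by Lemma~\ref{L:tensordesc}), and then invokes Lemma~\ref{L:contain} once to collapse the chain $\bigcap_{k>n}\mathcal{D}_k$ to $\mathcal{D}_{n+1}$. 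Your concern about degenerate gaps is largely unnecessary: these cases are already absorbed in the statements of Lemma~\ref{L:compute}, Lemma~\ref{L:Dn}, and Lemma~\ref{L:tensordesc}, so no separate bookkeeping is needed.
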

	\begin{proof}
		Let us denote $\mathcal{C} = \{X \in \Der(R) \mid S \otimes_R^\mathbf{L} X \in \Der{}^{>0} ~\forall S \in \mathcal{S}_{\mathbb{X}}\}$ and prove that $\mathcal{V} = \mathcal{C}$. By Proposition~\ref{P:moduletheoretic}, the definable coaisle $\mathcal{V}$ is determined on cohomology. By an application K\"{u}nneth formula as in the proof of \cite[Proposition 3.6]{BS}, also the class $\mathcal{C}$ is determined on cohomology. It is therefore enough to check that $M[-n] \in \mathcal{V}$ if and only if $M[-n] \in \mathcal{C}$ for any $n \in \mathbb{Z}$ and any $R$-module $M$. Using that $R$ is of weak global dimension at most one, we observe that $M[-n] \in \mathcal{C}$ if and only if 
	\begin{enumerate}
		\item[(i)] $H^0(K(\qq,\pp) \otimes_R M) = 0 = \Tor_1^R(R_{\qq'}/\pp', M)$ for each $(\qq,\pp) \in \mathcal{G}_n$ and $[\pp',\qq'] \in \mathcal{Z}_C, ~\forall C \in \mathcal{H}_n$, and
		\item[(ii)] $K(\qq,\pp) \otimes_R M$ and $R_{\qq'}/\pp' \otimes_R^{\mathbf{L}} M$ are zero in $\Der(R)$ for each $(\qq,\pp) \in \mathcal{G}_k$ and $[\pp',\qq'] \in \mathcal{Z}_C, ~\forall C \in \mathcal{H}_k$ for any $k>n$.
	\end{enumerate}
	By Proposition~\ref{P:tor}, condition $(i)$ is equivalent to $M \in \mathcal{C}_n$. The condition $(ii)$ says equivalently that the $0$-th and $1$-th cohomologies of the complexes $K(\qq,\pp) \otimes_R M$ and $R_{\qq}/\pp \otimes_R M$ vanish for all the prescribed indexing choices. The vanishing of $0$-th cohomology again translates as $M \in \mathcal{C}_k \supseteq \mathcal{C}_n$, and therefore is vacuous. The vanishing of the second cohomology is in view of Lemma~\ref{L:tensordesc} equivalent to $M \in \mathcal{D}_{k}$ for all $k>n$. By Lemma~\ref{L:contain}, this is equivalent to $M \in \mathcal{C}_n \cap \mathcal{D}_{n+1} = \mathcal{V}_n$, as desired.
	\end{proof}
	We finish this section by an example of a definable coaisle constructed from an admissible filtration whose admissible systems are not all nowhere dense, to illustrate the degreewise non-density condition. Note that the resulting coaisle is co-intermediate, and thus corresponds to an equivalence class of bounded cosilting complexes via Theorem~\ref{T:MV}.
	\begin{example}\label{EX0}
			The following example comes by adjusting \cite[Example 5.1]{B2}. Let $R$ be a valuation domain with $(\Spec(R),\subseteq)$ order isomorphic to the set $P=[0,1] \times \{0,1\}$ equipped with the lexicographic order (here $[0,1]$ denotes the closed real interval), and such that all primes from $\Spec(R)$ are idempotent. Such a valuation domain exists --- there is a valuation domain $R$ with $\Spec(R)$ order isomorphic to $P$ by \cite[\S II, Theorem 2.5 and Proposition 4.7]{FS}, and it can be constructed in a such a way that all primes are idempotent by \cite[\S II, Proposition 5.7 and the following paragraph]{FS}. Let $\pp_x$ (resp. $\qq_x$) be the prime of $\Spec(R)$ corresponding to the element $[x,0]$ (resp. $[x,1]$) of $P$. 

Let $Z$ be a nowhere dense closed subset of $[0,1]$. Then we define an admissible filtration $\mathbb{X} = (\mathcal{X}_n \mid n \in \mathbb{Z})$ on $\Spec(R)$ as follows:
			$$
			\mathcal{X}_n = \begin{cases}
						\emptyset, & n<0, \\
						\{[\pp_x,\pp_x],[\qq_x,\qq_x] \mid x \in Z\}, & n=0, \\
						\{[\pp_x,\qq_x] \mid x \in [0,1]\}, & n=1, \\
						\{[0,\mm]\}, & n>1.
			\end{cases}
			$$
			Note that the sequence $(\mathcal{X}_n \mid n \in \mathbb{Z})$ is clearly nested. Since $Z$ is closed, $\mathcal{X}_0$ satisfies the completeness condition of Definition~\ref{D:admissiblesystem}, and therefore forms an admissible system. The set $\mathcal{X}_1$ is easily checked to form a dense everywhere admissible system (cf. \cite[Example 5.1]{B2}). The degreewise non-density condition follows directly from $Z$ being a nowhere dense subset in $[0,1]$.
	\end{example}
	\section{Bijective correspondence}
	Now it is time to finally establish that, working over any valuation domain $R$, the two sections \ref{S:coaisletoint} and \ref{S:inttocoaisle} provide two mutually inverse assignments for the set of all definable coaisles in $\Der(R)$ and the set of all admissible filtrations in $\Spec(R)$.

We already know that the classes $\mathcal{V}_n$ coming from a definable coaisle $\mathcal{V}$ are uniquely determined by the standard uniserial modules they contain --- this is Lemma~\ref{L:standarduni}. The next Lemma shows that in this situation, these uniserial modules are determined by the associated admissible filtration.
	\begin{lem}\label{L:unicores}
			Let $R$ be a valuation domain, and let $\mathcal{V}$ be a definable coaisle in $\Der(R)$. Let $\mathbb{X} = (\mathcal{X}_n \mid n \in \mathbb{Z})$ be the admissible filtration associated to $\mathcal{V}$ by Definition~\ref{D:phipsi}. Suppose that $J/I \in \mathcal{V}_n$ for some $R$-submodules $I \subseteq J \subseteq Q$ of the quotient field and some integer $n$. Then there are intervals $[\pp,\qq] \in \mathcal{X}_n$ and $[\pp',\qq'] \in \mathcal{X}_{n+1}$ such that $J/I$ admits a coresolution of the form
			$$0 \rightarrow J/I \rightarrow M \rightarrow N \rightarrow 0,$$
			where $M$ is an $R_{\qq}/\pp$-module and $N$ is an $R_{\qq'}/\pp'$-module.
	\end{lem}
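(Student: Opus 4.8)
The plan is to build $M$ and $N$ by a single localization and then identify the intervals of $\mathcal{X}_n$ and $\mathcal{X}_{n+1}$ over which they are modules. We may assume $J/I\neq 0$, and recall it is standard uniserial. Set $\qq:=I^{\#}$, a prime, and let $\pp$ be the largest prime contained in $\Ann_R(J/I)$ (and $\pp=0$ if there is no nonzero such prime). First $\pp\subseteq\qq$: otherwise $I^{\#}\subsetneq\pp$, and then for $0\neq p\in\pp$ one has $pJ\subseteq I$ together with $pI=I$, forcing $pJ=I$, whence $I^{\#}=J^{\#}$ and $pJ=J$, contradicting $pJ=I\subsetneq J$. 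Since $\Gamma_{\qq}(J/I)\subseteq\Gamma_{\qq}(Q/I)=0$ by Lemma~\ref{L23}, the canonical map $J/I\to M:=(J/I)\otimes_R R_{\qq}$ is a monomorphism; put $N:=\Coker(J/I\to M)$, so that $0\to J/I\to M\to N\to 0$ is exact. A direct computation (using that $J/I$ is a subquotient of $Q$-modules, so that localizing introduces no torsion, and that $IR_{\qq}=I\subsetneq J$) identifies $M\cong JR_{\qq}/I$ as a $\qq$-torsion-free, $\qq$-divisible module annihilated by $\pp$ — hence an $R_{\qq}/\pp$-module — and $N\cong JR_{\qq}/J$ as a module annihilated by $\pp$ which is $J^{\#}$-torsion-free and $J^{\#}$-divisible, hence an $R_{J^{\#}}/\pp$-module.

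Since being an $R_{\qq^{*}}/\pp^{*}$-module is inherited under enlarging the interval $[\pp^{*},\qq^{*}]$, it now suffices to produce an interval of $\mathcal{X}_n$ containing $[\pp,\qq]$ and an interval of $\mathcal{X}_{n+1}$ containing $[\pp,J^{\#}]$ (when $N=0$ any interval of $\mathcal{X}_{n+1}$ works, and $\mathcal{X}_n\neq\emptyset$ as $\mathcal{X}_n$ is nested in $\mathcal{X}_{n+1}$). Note $M\in\mathcal{V}_n$, being a directed union of copies of $J/I$, and hence $N\in\mathcal{V}_{n+1}$ by Proposition~\ref{P:moduletheoretic} applied to $J/I\hookrightarrow M$ (both in $\mathcal{V}_n\subseteq\mathcal{V}_{n+1}$). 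By Lemma~\ref{L:mc}(ii) it is enough to exhibit a module $R_{\pp^{*}}/\qq^{*}\in\mathcal{V}_n$ with $\pp^{*}\subseteq\pp$, $\qq^{*}\supseteq\qq$, and one in $\mathcal{V}_{n+1}$ with $\pp^{*}\subseteq\pp$, $\qq^{*}\supseteq J^{\#}$. For this I would run a d\'{e}vissage: using Lemma~\ref{L:RHom} I show $\kappa(\qq)\in\mathcal{V}_n$ and $\kappa(J^{\#})\in\mathcal{V}_{n+1}$ by exhibiting nonzero maps $\kappa(\qq)\to M$ and $\kappa(J^{\#})\to N$, i.e.\ nonvanishing $\qq$-, resp.\ $J^{\#}$-socles, which is a short explicit computation when the prime in question is principal in its own localization. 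Then $\qq\in\mathcal{K}_n$ and $J^{\#}\in\mathcal{K}_{n+1}$, so Lemma~\ref{L:belong} produces the intervals $[\varphi_n(\qq),\psi_n(\qq)]\in\mathcal{X}_n$ and $[\varphi_{n+1}(J^{\#}),\psi_{n+1}(J^{\#})]\in\mathcal{X}_{n+1}$; finally $\varphi_n(\qq)\subseteq\pp$ follows from $R_{\varphi_n(\qq)}/\pp\in\mathcal{V}_n$ (a directed union of translates of $J/I$, cf.\ Lemma~\ref{L67}) and the extremal defining property of $\varphi_n$, and $\psi_n(\qq)\supseteq\qq$ is immediate from $\qq\in\mathcal{K}_n$ and Lemma~\ref{L:mono1}, and symmetrically in degree $n+1$.

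The main obstacle is the case where $\qq=I^{\#}$ or $J^{\#}$ is \emph{idempotent}: then its localization has non-principal maximal ideal (Lemma~\ref{L:VD}(v)), the socle arguments above degenerate, and one must instead write the prime as the union of a strictly increasing chain of smaller primes (Lemma~\ref{L:VD}(iv)), express the relevant uniserial modules as directed homotopy colimits of the modules $R_{\qq''}/\pp''$ over these smaller primes (Lemma~\ref{L67}), and then invoke the completeness of the genuine admissible systems $\mathcal{X}_n$, $\mathcal{X}_{n+1}$ (Corollary~\ref{C:admissible}) together with Lemma~\ref{L:RHom} applied after passing to the homotopy colimit — exactly the mechanism used in the proof of Lemma~\ref{L:nogaps} — to recover intervals of the admissible systems with the required endpoints. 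The only remaining configuration to treat separately is the genuinely degenerate one, $N=0$ (and the trivial $J/I=0$), where the claimed coresolution is immediate; the possibility that $J/I$ is a non-standard uniserial module does not arise, as it is a subquotient of $Q$ by hypothesis.
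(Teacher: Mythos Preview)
Your construction of $M=J_{\qq}/I$ and $N=J_{\qq}/J$ with $\qq=I^{\#}$ is exactly the paper's, but two of the key verifications are not carried out, and the sketches you give for them do not work.

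\textbf{Showing $I^{\#}\in\mathcal{K}_n$.} Your socle argument for a nonzero map $\kappa(\qq)\to M$ is fine when $\qq R_{\qq}$ is principal, but your fallback for the idempotent case --- ``write the prime as the union of a strictly increasing chain of smaller primes'' --- is simply false in general. Lemma~\ref{L:VD}(iv) goes the other way: it says such unions are idempotent, not that every idempotent prime is such a union. For instance, take $\Spec(R)=\{0,\mm\}$ with $\mm$ idempotent; then $\mm$ is not a union of smaller primes and your mechanism breaks down. The paper avoids the case distinction entirely: it first passes to the direct limit $J'/I^{\#}=\varinjlim_{r\in R\setminus I} r^{-1}J_{I^{\#}}/r^{-1}I\in\mathcal{V}_n$, which (after normalizing so that $R\subseteq J$) visibly contains $\kappa(I^{\#})$ as a submodule, and then applies Lemma~\ref{L:RHom}. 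The same issue recurs for $J^{\#}\in\mathcal{K}_{n+1}$.

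\textbf{Showing $\varphi_n(\qq)\subseteq\pp$.} Your argument here is garbled. From $R_{\varphi_n(\qq)}/\pp\in\mathcal{V}_n$ the defining property of $\varphi_n$ yields $\varphi_n(\pp)\subseteq\varphi_n(\qq)$, not $\varphi_n(\qq)\subseteq\pp$. What you actually need (by the definition of $\varphi_n$) is $R_{\pp}/\qq\in\mathcal{V}_n$, but that module is not a directed union of translates of $J/I$ either: all translates $rJ/rI$ share the same attached primes $I^{\#}$ and $J^{\#}$, and no direct limit of them produces a module with bottom $\qq$ and top $R_{\pp}$. The paper proves the equivalent inclusion $\varphi_n(I^{\#})\subseteq\Ann_R(J/I)$ by a genuine contradiction argument: assuming $t\in\varphi_n(I^{\#})\setminus\Ann_R(J/I)$, one uses that $\Ker(J/I\xrightarrow{\cdot t}J/I)=t^{-1}I/I\in\mathcal{V}_n$, builds up $R_{\varphi_n(I^{\#})}/t^kI\in\mathcal{V}_n$ by iterated extensions, and passes to a limit to contradict the minimality of $\varphi_n(I^{\#})$. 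An analogous (two-case) argument is then needed for $J^{\#}\subseteq\psi_{n+1}(I^{\#})$. Neither of these steps is a routine limit computation, and both are missing from your sketch.
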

	\begin{proof}
			Since the case $J=I$ is trivial, we can assume $I \subsetneq J$. By multiplying by a scalar, we can also assume that $I \subsetneq R \subseteq J$. Since $J_{I^\#}/I \simeq J/I \otimes_R R_{I^\#}$, we have $J_{I^\#}/I \in \mathcal{V}_n$. For each $r \in R \setminus I$, we have $r^{-1}J_{I^\#}/r^{-1}I \simeq J_{I^\#}/I \in \mathcal{V}_n$, and therefore also $\varinjlim_{r \in R \setminus I}r^{-1}J_{I^\#}/r^{-1}I \simeq J'/I^\# \in \mathcal{V}_n$, where $J' = \bigcup_{r \in R \setminus I}r^{-1}J_{I^\#}$. Because $R_{I^\#} \subseteq J_{I^\#} \subseteq J'$, we have a natural inclusion $\kappa(I^\#) \subseteq J'/I^\#$, and therefore $\kappa(I^\#) \in \mathcal{V}_n$ by Lemma~\ref{L:RHom}. In other words, $I^\# \in \mathcal{K}_n$. Let $\pp = \varphi_n(I^\#)$ and $\qq = \psi_n(I^\#)$, so that $[\pp,\qq] \in \mathcal{X}_n$. 

			For all $r \in R \setminus I$, consider the map $f_r: J/I \xrightarrow{\cdot r} J/I$ given by multiplication by $r$. Then $\Ker(f_r) = r^{-1}I/I \in \mathcal{V}_n$. Taking the directed union, we see that $I^\#/I = \bigcup_{r \in R \setminus I}r^{-1}I/I \in \mathcal{V}_n$. As $\pp = \varphi_n(I^\#)$, we have $R_{\pp}/I^\# \in \mathcal{V}_n$ by Lemma~\ref{L:belong}. Since also $I^\#/I \in \mathcal{V}_n$, we have that $R_{\pp}/I \in \mathcal{V}_n$.

			Denote $K=\Ann_R(J/I)$, and let us show that $\pp \subseteq K$. Towards a contradiction, suppose that there is $t \in \pp \setminus K$. Because $t \not\in K$, necessarily $t^{-1}I \subseteq J$. Then $\Ker(J/I \xrightarrow{\cdot t} J/I) = t^{-1}I/I \in \mathcal{V}_n$. Since $\mathcal{V}_n$ is closed under extensions, also $t^{-k}I/I \simeq I/t^kI \in \mathcal{V}_n$ for all $k > 0$. It follows that $R_{\pp}/t^kI \simeq t^{-k}R_{\pp}/I \in \mathcal{V}_n$ for all $k>0$, and therefore by passing to the direct limit over $k>0$, $R_{\oo_t}/I \in \mathcal{V}_n$ for a prime ideal $\oo_t \subsetneq tR \subseteq \pp$. Doing this for all $t \in \pp \setminus K$, and taking the direct limit, we can see that $R_{\oo}/I \in \mathcal{V}$, where $\oo = \bigcap_{t \in \pp \setminus K}\oo_t$. Then $\oo \subseteq K \subseteq I$, and thus $R_{\oo}/I^\# \simeq \varinjlim_{r \in R \setminus I}R_{\oo}/r^{-1}I \in \mathcal{V}_n$, a contradiction with the definition of $\pp = \varphi_n(I^\#)$. Therefore, indeed $\pp \subseteq K$.

			We set $M = J_{\qq}/I$. As $M = J/I \otimes_R R_{\qq}$, we have $M \in \mathcal{V}_n$. Since $I^\# \subseteq \qq$, $M$ is an $R_{\qq}$-module. Observe that $\Ann_R(M) = K$, and thus by the previous paragraph $M$ is an $R_{\qq}/\pp$-module. Denote $N$ the cokernel of the natural inclusion $J/I \subseteq J_{\qq}/I$, that is, $N \simeq J_{\qq}/J$. Let $\pp' = \varphi_{n+1}(I^\#)$, and $\qq' = \psi_{n+1}(I^\#)$. Since $[\pp,\qq] \in \mathcal{X}_n$, we know that $\pp' \subseteq \pp$ and $\qq \subseteq \qq'$. We have $U/J^\# \simeq \varinjlim_{q \in Q \setminus J} (q^{-1}J_{\qq}/q^{-1}J) \in \mathcal{V}_{n+1}$, where $U = \bigcup_{q \in Q \setminus J}q^{-1}J_{\qq}$ ($U=Q$ in the case $J=Q$). We want to show that $J^\# \subseteq \qq'$. Towards a contradiction, assume $\qq' \subsetneq J^\#$. There are two cases. Either $R_{\qq} \subseteq U$, then consider the exact sequence:
			$$0 \rightarrow R_{\qq}/J^\# \rightarrow U/J^\# \rightarrow U/R_{\qq} \rightarrow 0.$$
			The middle term is in $\mathcal{V}_{n+1}$, and $U/R_{\qq}$ is an $R_{\qq}$-module. Since $U \subseteq J_{\qq}$, we have $\Ann_{R}(U/R_{\qq}) \supseteq \Ann_R(J_{\qq}/R_{\qq}) = \Ann_R(J/R)_{\qq} \supseteq \Ann_R(J/I)_{\qq} = K_{\qq} \supseteq K \supseteq \pp$. Therefore, $U/R_{\qq}$ is an $R_{\qq}/\pp$-module, and whence $U/R_{\qq} \in \mathcal{V}_{n+1}$. It follows that $R_{\qq}/J^\# \in \mathcal{V}_{n+1}$. The other case is $U \subsetneq R_{\qq}$, here we consider the exact sequence:
			$$0 \rightarrow U/J^\# \rightarrow R_{\qq}/J^\# \rightarrow R_{\qq}/U \rightarrow 0.$$
			We know that $U/J^\# \in \mathcal{V}_{n+1}$, and that $R_{\qq}/U$ is an $R_{\qq}$-module. We know that $J^\# \subseteq U$, and by the assumption, $\qq \subseteq \qq' \subseteq J^\#$. Therefore, $\pp \subseteq \qq' \subseteq J^\# \subseteq U = \Ann_{R_{\qq}}(R_{\qq}/U)$, and therefore $R_{\qq}/U$ is an $R_{\qq}/\pp$-module, and thus $R_{\qq}/U \in \mathcal{V}_{n+1}$ by Lemma~\ref{L:mc}(i). It follows again that $R_{\qq}/J^\# \in \mathcal{V}_{n+1}$.

			We showed that $R_{\qq}/J^\# \in \mathcal{V}_{n+1}$, which is a contradiction with $\qq' \subsetneq J^\#$, since $\psi_{n+1}(\qq) = \psi_{n+1}(I^\#) = \qq'$. Finally, note that $\Ann_{R}(J_{\qq}/J) = R_{J^\#}\qq \supseteq \qq \supseteq \pp \supseteq \pp'$. Because we already proved that $\qq' \supseteq J^\#$, we see that $N=J_{\qq}/J$ is an $R_{\qq'}/\pp'$-module.
			\end{proof}
			\begin{cor}\label{C:determined}
				Let $R$ be a valuation domain and let $\mathcal{V}$, $\mathcal{V}'$ be two definable coaisles in $\Der(R)$. Then $\Theta(\mathcal{V}) = \Theta(\mathcal{V}')$ implies $\mathcal{V} = \mathcal{V}'$.
			\end{cor}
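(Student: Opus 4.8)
The plan is to show that a definable coaisle $\mathcal{V}$ is completely recovered from its associated admissible filtration $\Theta(\mathcal{V}) = \mathbb{X} = (\mathcal{X}_n \mid n \in \mathbb{Z})$, by proving that $\mathcal{V}$ equals $\Xi(\mathbb{X})$. Once this identity $\mathcal{V} = \Xi(\Theta(\mathcal{V}))$ is established, the corollary is immediate: if $\Theta(\mathcal{V}) = \Theta(\mathcal{V}')$, then $\mathcal{V} = \Xi(\Theta(\mathcal{V})) = \Xi(\Theta(\mathcal{V}')) = \mathcal{V}'$.

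First I would reduce to the module-theoretic level. By Proposition~\ref{P:moduletheoretic}, both $\mathcal{V}$ and $\Xi(\mathbb{X})$ are determined on cohomology, so it suffices to prove $\mathcal{V}_n = (\Xi(\mathbb{X}))_n = \mathcal{C}_n \cap \mathcal{D}_{n+1}$ for all $n$, where $\mathcal{C}_n = \mathcal{C}_{\mathcal{X}_n}$ and $\mathcal{D}_{n+1}$ are the classes from Section~\ref{S:inttocoaisle}. By Lemma~\ref{L:standarduni}, each definable subcategory of $\ModR$ is determined by the standard uniserial modules it contains, so it is enough to check that $\mathcal{V}_n$ and $\mathcal{C}_n \cap \mathcal{D}_{n+1}$ contain exactly the same standard uniserial modules $J/I$ with $I \subseteq J \subseteq Q$.

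For the inclusion $\mathcal{V}_n \subseteq \mathcal{C}_n \cap \mathcal{D}_{n+1}$: if $J/I \in \mathcal{V}_n$, apply Lemma~\ref{L:unicores} to obtain a coresolution $0 \to J/I \to M \to N \to 0$ with $M$ an $R_{\qq}/\pp$-module for some $[\pp,\qq] \in \mathcal{X}_n$ and $N$ an $R_{\qq'}/\pp'$-module for some $[\pp',\qq'] \in \mathcal{X}_{n+1}$. The module $M$ lies in $\mathcal{C}_n$ (its annihilators of nonzero elements are squeezed between $\pp$ and $\qq$, hence in $\langle \pp,\qq\rangle$ by Lemma~\ref{L23}), and since $\mathcal{C}_n$ is a torsion-free class (closed under submodules), $J/I \in \mathcal{C}_n$. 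For membership in $\mathcal{D}_{n+1}$ one uses that both $M$ and $N$ are $R_{\qq''}$-divisible-after-torsion-free-part for every $[\pp'',\qq''] \in \mathcal{X}_{n+1}$ — indeed $M$ and $N$ are (modulo kernels of localization) already $R_{\qq''}$-modules whenever $\qq \subseteq \qq''$, and when $\qq'' \subsetneq \qq$ the relevant torsion-free quotient vanishes — and then deduce the same for $J/I$ from the short exact sequence using the closure properties of $\mathcal{D}_{n+1}$ recorded in Lemma~\ref{L:Dn}. For the reverse inclusion $\mathcal{C}_n \cap \mathcal{D}_{n+1} \subseteq \mathcal{V}_n$: given a standard uniserial $J/I \in \mathcal{C}_n \cap \mathcal{D}_{n+1}$, one uses the $\mathcal{C}_n$-membership to locate an interval $[\pp,\qq] \in \mathcal{X}_n$ with $\pp \subseteq I^\# \subseteq \qq$ (via $I^\# \in \mathcal{K}_n$, combining Lemma~\ref{L:RHom} with a direct-limit argument as in the proof of Lemma~\ref{L:unicores}), then applies Lemma~\ref{L:mc}(i) to conclude $R_{\qq}/\pp$-modules lie in $\mathcal{V}_n$, and finally uses the $\mathcal{D}_{n+1}$-condition to show the natural map $J/I \to (J/I) \otimes_R R_{\qq}$ has cokernel (an appropriate divisible quotient) landing in $\mathcal{V}_{n+1}$, so that by Proposition~\ref{P:moduletheoretic} (the kernel condition) $J/I \in \mathcal{V}_n$.

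The main obstacle I anticipate is the bookkeeping in the second inclusion: carefully verifying that the $\mathcal{D}_{n+1}$-condition — which is phrased via $F_{\qq''}(-)$ being $\qq''$-divisible for intervals $[\pp'',\qq''] \in \mathcal{X}_{n+1}$ — translates precisely into the cokernel of localization at $\qq$ being an object of $\mathcal{V}_{n+1}$, keeping track of whether the relevant primes of $\mathcal{X}_{n+1}$ lie below, above, or around $[\pp,\qq]$. Here one will want to invoke Lemma~\ref{L:mc}(iii) and the degreewise non-density structure (through the tensor-descriptions of Corollary~\ref{C:tensorvanish}) to handle the dense pieces $\mathcal{Z}_C$ correctly. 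Given Proposition~\ref{P:sameint} — which already gives $\Theta \circ \Xi = \mathrm{id}$ — and the injectivity statement being proved here, the bijective correspondence $\Theta$ between definable coaisles and admissible filtrations (Theorem~\ref{T02}) follows formally.
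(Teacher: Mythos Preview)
Your overall strategy is sound, but it takes a detour that the paper avoids, and your execution of the first inclusion has a gap.

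The paper does not pass through $\Xi(\mathbb{X})$ at all. It argues directly and symmetrically: given $\Theta(\mathcal{V}) = \Theta(\mathcal{V}') = (\mathcal{X}_n)_n$ and a standard uniserial $J/I \in \mathcal{V}_n$, Lemma~\ref{L:unicores} produces a coresolution $0 \to J/I \to M \to N \to 0$ with $M \in \ModRqqpp$ for some $[\pp,\qq]\in\mathcal{X}_n$ and $N \in \ModRqqprimeppprime$ for some $[\pp',\qq']\in\mathcal{X}_{n+1}$. Since $\Theta(\mathcal{V}')$ is the same filtration, Lemma~\ref{L:mc}(i) applied to $\mathcal{V}'$ gives $M \in \mathcal{V}'_n$ and $N \in \mathcal{V}'_{n+1}$, and then the kernel condition of Proposition~\ref{P:moduletheoretic} puts $J/I$ into $\mathcal{V}'_n$. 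The reverse inclusion is by interchanging the roles of $\mathcal{V}$ and $\mathcal{V}'$. No reference to $\mathcal{C}_n$, $\mathcal{D}_{n+1}$, or $\Xi$ is needed.

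Your route forces an asymmetric argument, and the asymmetry bites you in both directions. For the inclusion $\mathcal{V}_n \subseteq \mathcal{C}_n\cap\mathcal{D}_{n+1}$, you claim $J/I \in \mathcal{D}_{n+1}$ follows from $M,N$ having the right divisibility via ``the closure properties of $\mathcal{D}_{n+1}$ recorded in Lemma~\ref{L:Dn}''. But Lemma~\ref{L:Dn} gives closure under pure submodules, direct limits, extensions, and epimorphic images --- not under kernels or arbitrary submodules. From $0\to J/I\to M\to N\to 0$ and $(R_{\qq''}/J')\otimes_R M=0$ one only gets that $(R_{\qq''}/J')\otimes_R (J/I)$ is a quotient of $\Tor_1^R(R_{\qq''}/J',N)$, which need not vanish without further input. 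For the reverse inclusion you anticipate heavy bookkeeping, but this is entirely avoidable: $\Xi(\mathbb{X})$ is itself a definable coaisle with $\Theta(\Xi(\mathbb{X}))=\mathbb{X}$ (Propositions~\ref{P:intervalstocoaisles} and~\ref{P:sameint}), so you may simply apply Lemma~\ref{L:unicores} to it and run the paper's symmetric argument with $\mathcal{V}'=\Xi(\mathbb{X})$. That simultaneously fixes both directions and removes the need to manipulate $\mathcal{D}_{n+1}$ directly.
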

			\begin{proof}
					Denote the admissible filtration by $\Theta(\mathcal{V}) = (\mathcal{X}_n \mid n \in \mathbb{Z})$. By Proposition~\ref{P:moduletheoretic}, both $\mathcal{V}$ and $\mathcal{V}'$ are determined by the cohomological projections $\mathcal{V}_n = H^n(\mathcal{V})$ and $\mathcal{V}'_n = H^n(\mathcal{V}')$ for all $n \in \mathbb{Z}$, respectively. By Lemma~\ref{L:standarduni}, the classes $\mathcal{V}_n$ and $\mathcal{V}'_n$ are fully determined by the standard uniserial modules of the form $J/I$ they contain, where $I \subseteq J \subseteq Q$. For any $n \in \mathbb{Z}$ and any standard uniserial module $J/I \in \mathcal{V}_n$, we have by Lemma~\ref{L:unicores}, that there are intervals $[\pp,\qq] \in \mathcal{X}_n$ and $[\pp',\qq'] \in \mathcal{X}_{n+1}$, and a coresolution 
					$$0 \rightarrow J/I \rightarrow M \rightarrow N \rightarrow 0,$$
					such that $M \in \ModRqqpp$, and $N \in \ModRqqprimeppprime$. Using Lemma~\ref{L:mc}(i), and the assumption $\Theta(\mathcal{V}') = \Theta(\mathcal{V})$, we see that $M \in \mathcal{V}'_n$ and $N \in \mathcal{V}'_{n+1}$. As $J/I$ is the kernel of a map $M \rightarrow N$, we infer using Proposition~\ref{P:moduletheoretic} that $J/I$ belongs to $\mathcal{V}'_n$. A symmetric argument shows that any standard uniserial module from $\mathcal{V}'_n$ belongs to $\mathcal{V}_n$ for all $n \in \mathbb{Z}$. We conclude that $\mathcal{V}_n = \mathcal{V}'_n$ for all $n \in \mathbb{Z}$, and therefore $\mathcal{V} = \mathcal{V}'$.
			\end{proof}
			\begin{thm}\label{T02}
			Let $R$ be a valuation domain. Then there is a bijective correspondence
					$$\left \{ \begin{tabular}{ccc} \text{ admissible filtrations $\mathbb{X}$} \\ \text{in $\Spec(R)$} \end{tabular}\right \}  \leftrightarrow  \left \{ \begin{tabular}{ccc} \text{ definable coaisles $\mathcal{V}$ } \\ \text{ in $\Der(R)$} \end{tabular}\right \}$$
							induced by the mutually inverse assignments $\Xi$ and $\Theta$ from Proposition~\ref{P:intervalstocoaisles} and Proposition~\ref{P:coaislestointervals}.
			\end{thm}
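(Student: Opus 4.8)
The plan is to assemble the theorem from the ingredients already prepared: the map $\Theta$ from definable coaisles to admissible filtrations (Proposition~\ref{P:coaislestointervals}), the map $\Xi$ in the opposite direction (Proposition~\ref{P:intervalstocoaisles}), the identity $\Theta \circ \Xi = \mathrm{id}$ on admissible filtrations (Proposition~\ref{P:sameint}), and the injectivity of $\Theta$ (Corollary~\ref{C:determined}). With these in hand the argument is purely formal, so the bulk of this proof is bookkeeping.

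First I would record that $\Theta$ and $\Xi$ are genuine maps between the two sets in the statement, which is exactly the content of Propositions~\ref{P:coaislestointervals} and \ref{P:intervalstocoaisles}. By Proposition~\ref{P:sameint}, $\Theta(\Xi(\mathbb{X})) = \mathbb{X}$ for every admissible filtration $\mathbb{X}$ in $\Spec(R)$; in particular $\Theta$ is surjective and $\Xi$ is injective. It then remains to verify that $\Xi \circ \Theta$ is the identity on definable coaisles. Given a definable coaisle $\mathcal{V}$ in $\Der(R)$, set $\mathbb{X} = \Theta(\mathcal{V})$ and apply Proposition~\ref{P:sameint} to this filtration to get $\Theta(\Xi(\Theta(\mathcal{V}))) = \Theta(\mathcal{V})$. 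Since $\Theta$ is injective by Corollary~\ref{C:determined}, this forces $\Xi(\Theta(\mathcal{V})) = \mathcal{V}$. Hence both composites $\Theta \circ \Xi$ and $\Xi \circ \Theta$ are identities, so $\Theta$ and $\Xi$ are mutually inverse bijections, which is the assertion.

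The real difficulty lies in the inputs rather than in this final deduction. Corollary~\ref{C:determined} rests on the uniserial coresolution produced in Lemma~\ref{L:unicores} together with the fact, recorded in Lemma~\ref{L:standarduni}, that a definable subcategory of $\ModR$ over a valuation domain is determined by the standard uniserial modules it contains; and the statement that $\Theta$ actually takes values in admissible filtrations hinges on the degreewise non-density verification of Proposition~\ref{P:coaislestointervals}, whose proof in turn uses the lifting of t-structures to diagram categories of \cite{SSV} and the dense-everywhere cotilting modules of Proposition~\ref{P:cotiltingdense}. Thus at this point there is no remaining obstacle: the theorem follows by combining Proposition~\ref{P:sameint} and Corollary~\ref{C:determined} as above.
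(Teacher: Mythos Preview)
Your proof is correct and follows exactly the same approach as the paper: both cite Propositions~\ref{P:coaislestointervals} and \ref{P:intervalstocoaisles} for well-definedness, then combine Proposition~\ref{P:sameint} ($\Theta\circ\Xi=\mathrm{id}$) with Corollary~\ref{C:determined} (injectivity of $\Theta$) to conclude that $\Xi$ and $\Theta$ are mutually inverse. The only difference is that you spell out the formal deduction of $\Xi\circ\Theta=\mathrm{id}$ from these ingredients, whereas the paper simply asserts it.
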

			\begin{proof}
					By Proposition~\ref{P:intervalstocoaisles} and Proposition~\ref{P:coaislestointervals}, both $\Xi$ and $\Theta$ are well-defined. Furthermore, by Proposition~\ref{P:sameint} and Corollary~\ref{C:determined}, these assignments are mutually inverse.
			\end{proof}
			In view of Remark~\ref{R:spec}, the classification of smashing subcategories \cite[Theorem 5.23]{BS} and of cosilting modules Theorem~\ref{T01} are special cases of Theorem~\ref{T02}. Also we get the following classification of bounded cosilting complexes as another consequence. Let us call an admissible filtration $\mathbb{X} = (\mathcal{X}_n \mid n \in \mathbb{Z})$ \EMP{bounded} provided that there are integers $m < l$ such that $\mathcal{X}_m = \emptyset$ and $\mathcal{X}_l = \{[0,\mm]\}$, where $\mm$ is the maximal ideal.
				\begin{thm}\label{T:cosilt}
			Let $R$ be a valuation domain. Then there is a bijective correspondence
						$$\left \{ \begin{tabular}{ccc} \text{ bounded admissible filtrations $\mathbb{X}$} \\ \text{in $\Spec(R)$} \end{tabular}\right \}  \leftrightarrow  \left \{ \begin{tabular}{ccc} \text{ bounded cosilting complexes } \\ \text{ in $\Der(R)$ up to equivalence} \end{tabular}\right \}.$$
			\end{thm}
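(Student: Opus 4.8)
The strategy is to combine the bijection of Theorem~\ref{T02} with the description of coaisles of bounded cosilting complexes in Theorem~\ref{T:MV}, and then to recognise the ``bounded'' conditions on the two sides as corresponding to each other. Recall first that, since $R$ has weak global dimension at most one, every definable coaisle $\mathcal{V}$ in $\Der(R)$ is determined on cohomology by Theorem~\ref{T:cohomology}, hence is completely encoded by the increasing sequence $(\mathcal{V}_n)_{n \in \mathbb{Z}}$ of definable subcategories of $\ModR$ of Proposition~\ref{P:moduletheoretic}.

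On the cosilting side, two bounded cosilting complexes are equivalent precisely when they induce the same t-structure, hence precisely when they have the same coaisle. Since any coaisle is automatically closed under extensions and cosuspensions, Theorem~\ref{T:MV} says that a definable coaisle $\mathcal{V}$ is the coaisle of a t-structure induced by a bounded cosilting complex if and only if $\mathcal{V}$ is co-intermediate. Thus the assignment $C \mapsto \Perp{>0}C$ induces a bijection between equivalence classes of bounded cosilting complexes in $\Der(R)$ and co-intermediate definable coaisles. Composing with Theorem~\ref{T02}, it only remains to show that the mutually inverse assignments $\Xi$ and $\Theta$ restrict to a bijection between bounded admissible filtrations and co-intermediate definable coaisles.

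For this last point I would first rephrase co-intermediacy module-theoretically. Using that $\mathcal{V}$ is determined on cohomology, one checks at once that $\mathcal{V} \subseteq \Der^{\geq m}$ if and only if $\mathcal{V}_k = 0$ for all $k < m$, and that $\Der^{\geq l} \subseteq \mathcal{V}$ if and only if $\mathcal{V}_n = \ModR$ for all $n \geq l$; so $\mathcal{V}$ is co-intermediate exactly when $\mathcal{V}_k = 0$ for all $k$ small enough and $\mathcal{V}_n = \ModR$ for all $n$ large enough. I would then match these with conditions on $\mathbb{X} = \Theta(\mathcal{V}) = (\mathcal{X}_n)_{n \in \mathbb{Z}}$. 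If $\mathcal{V}_k = 0$ then $\mathcal{K}_k = \emptyset$, whence $\mathcal{X}_k = \emptyset$; conversely, if $\mathcal{X}_k = \emptyset$ then $\mathcal{C}_{\mathcal{X}_k} = 0$ by the description in Lemma~\ref{L01}, and since $\mathcal{V} = \Xi(\Theta(\mathcal{V}))$ by Theorem~\ref{T02} and $\Xi$ is given degreewise by $\mathcal{C}_{\mathcal{X}_k} \cap \mathcal{D}_{k+1}$ (Proposition~\ref{P:intervalstocoaisles}), we get $\mathcal{V}_k = 0$. At the other extreme, if $\mathcal{X}_n = \{[0,\mm]\}$ then Lemma~\ref{L:mc}(i) applied to the interval $[0,\mm]$, using $R_{\mm}/0 = R$, gives $\ModR \subseteq \mathcal{V}_n$, i.e. $\mathcal{V}_n = \ModR$; conversely, if $\mathcal{V}_n = \ModR$ then $0 \in \mathcal{K}_n$, and a direct computation from Definition~\ref{D:phipsi} yields $\varphi_n(0) = 0$ and $\psi_n(0) = \mm$, so $[0,\mm] \in \mathcal{X}_n$, and the disjointness of the admissible system $\mathcal{X}_n$ forces $\mathcal{X}_n = \{[0,\mm]\}$. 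Finally, since $\mathbb{X}$ is nested, $\mathcal{X}_k = \emptyset$ for a single index $k$ propagates downward to all $j \leq k$, and $\mathcal{X}_n = \{[0,\mm]\}$ for a single index $n$ propagates upward to all $j \geq n$ (an interval of $\mathcal{X}_j$ containing $[0,\mm]$ must equal $[0,\mm]$, and disjointness forbids any further interval). Putting these equivalences together identifies co-intermediacy of $\mathcal{V}$ with boundedness of $\mathbb{X}$, and the theorem follows.

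I expect the last paragraph to be the only part requiring genuine care: one must keep straight the asymmetric roles of $\mathcal{C}_n$ and $\mathcal{D}_{n+1}$ in the identity $\mathcal{V}_n = \mathcal{C}_n \cap \mathcal{D}_{n+1}$ when passing back and forth between the filtration and the coaisle, and one must reconcile the ``single empty system, single full system'' formulation of boundedness with the coinitial/cofinal vanishing of the $\mathcal{V}_n$, which is exactly where the nestedness of an admissible filtration is used.
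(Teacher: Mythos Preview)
Your proposal is correct and follows exactly the same route as the paper: reduce to Theorem~\ref{T:MV} to identify equivalence classes of bounded cosilting complexes with co-intermediate definable coaisles, then restrict the bijection of Theorem~\ref{T02} by matching co-intermediacy with boundedness of the admissible filtration. The paper dismisses this last matching as ``easy to see''; your third paragraph spells it out carefully and correctly, so you have in fact supplied more detail than the original.
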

			\begin{proof}
					By Theorem~\ref{T:MV}, a t-structure $(\mathcal{U},\mathcal{V})$ in $\Der(R)$ is induced by a bounded cosilting complex if and only if $\mathcal{V}$ co-intermediate and definable. Also, recall that the equivalence of two cosilting complexes amount precisely to them inducing the same t-structure. Finally, it is easy to see that a t-structure corresponding to an admissible filtration $\mathbb{X}$ is co-intermediate precisely when $\mathbb{X}$ is bounded. In this way, the correspondence is given by restricting the correspondence from Theorem~\ref{T02}.
			\end{proof}
			As we will demonstrate in the last section of the paper, there exist pure-injective cosilting complexes over valuation domains which are not bounded.
	\subsection{Compactly generated t-structures}
	The compactly generated t-structures over an arbitrary commutative ring were classified in \cite{HCG}, generalizing the result for noetherian rings in \cite{AJS}, in terms of decreasing sequences of Thomason subsets of the Zariski spectrum of the ring, see \cite[Theorem 5.6]{HCG}. Recall that a subset $X$ of $\Spec(R)$ is \emph{Thomason} if it is an arbitrary union of Zariski closed sets $V(I)$ with $I$ finitely generated. When $R$ is a valuation domain, then the Thomason sets are precisely the sets of the form $X = \Spec(R)$ or $X_{\qq} = \{\pp \in \Spec(R) \mid \qq \subsetneq \pp\}$, where $\qq$ is any prime ideal. Indeed, if $X \neq \Spec(R)$ then $X = X_{\qq}$, where $\qq$ is the greatest element of $(\Spec(R) \setminus X, \subseteq)$. Conversely, for any $\qq \in \Spec(R)$ we have that $X_{\qq} = \bigcup_{r \in R \setminus \qq}V(rR)$ is a Thomason set. In other words, Thomason sets over valuation domains correspond to saturated multiplicative sets of elements. In the following we make explicit the way this result translates for valuation domains in terms of admissible filtrations.
	\begin{prop}\label{P:cg}
		Let $R$ be a valuation domain and $\mathbb{X} = (\mathcal{X}_n)_{n \in \mathbb{Z}}$ an admissible filtration in $\Spec(R)$. Let $\mathcal{V} = \Xi(\mathbb{X})$ be the definable coaisle corresponding to $\mathbb{X}$ via Theorem~\ref{T02} and let $(\mathcal{U},\mathcal{V})$ be the induced t-structure. Then the following conditions are equivalent:
	\begin{enumerate}
		\item[(i)] the t-structure $(\mathcal{U},\mathcal{V})$ is compactly generated,
		\item[(ii)] for each $n \in \mathbb{Z}$, the admissible system $\mathcal{X}_n$ is either empty or it is a singleton of the form $\mathcal{X}_n = \{[0,\qq_n]\}$ for some prime ideal $\qq_n$.
	\end{enumerate}
	\end{prop}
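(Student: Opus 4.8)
The plan is to identify, via the classification of compactly generated t-structures over a commutative ring from \cite[Theorem 5.6]{HCG}, exactly which admissible filtrations correspond to the compactly generated case, and then translate the resulting combinatorial data into the language of admissible systems. Recall from \cite{HCG} that over a commutative ring the compactly generated t-structures are parametrized by decreasing sequences of Thomason subsets of the Zariski spectrum, with the coaisle $\mathcal{V}$ recovered cohomologically by the formula $\mathcal{V} = \{X \in \Der(R) \mid \operatorname{Supp} H^n(X) \subseteq \Spec(R) \setminus \Phi_n ~\forall n \in \mathbb{Z}\}$ for the associated decreasing sequence of Thomason sets $(\Phi_n)_{n \in \mathbb{Z}}$ (up to indexing conventions), so that $\mathcal{V}_n = \{M \in \ModR \mid \operatorname{Supp}(M) \cap \Phi_n = \emptyset\}$. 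Over a valuation domain the Thomason sets are precisely $\Spec(R)$ and the sets $X_{\qq} = \{\pp \mid \qq \subsetneq \pp\}$ for $\qq \in \Spec(R)$, as recalled in the paragraph preceding the proposition; a decreasing sequence of these is thus encoded by a weakly increasing sequence of primes (with $-\infty$ and $R$ allowed as endpoints), and the associated $\mathcal{V}_n$ is the definable subcategory $\{M \mid M_{\pp} = 0 ~\forall \pp \in X_{\qq_n}\}$, i.e. the modules supported below a given prime.

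Concretely, I would first prove the implication $(ii) \Rightarrow (i)$: if each $\mathcal{X}_n$ is either $\emptyset$ or $\{[0,\qq_n]\}$, then using the explicit description $\mathcal{V}_n = \mathcal{C}_n \cap \mathcal{D}_{n+1}$ from Section~\ref{S:inttocoaisle} together with Lemma~\ref{L01} one checks that $\mathcal{C}_n$ is the class of modules $M$ all of whose cyclic subquotients $R/I$ satisfy $I \in \langle 0, \qq_n \rangle$ (equivalently $I^\# \subseteq \qq_n$), which is exactly the torsion-free class for the hereditary torsion pair associated to the Thomason set $X_{\qq_n}$; and $\mathcal{D}_{n+1}$ imposes no additional constraint beyond this on cyclic modules when $\mathcal{X}_{n+1}$ is also of the prescribed form, because the only possible gaps are of the form $(-\infty, \qq_{n+1})$ or $(\qq_{n+1}, R)$ and the set $\mathcal{H}(\mathcal{X}_{n+1})$ is empty (no dense intervals in a one-point or empty system). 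Hence $\mathcal{V}_n$ coincides with the $n$-th member of the cohomological description of the compactly generated t-structure associated to the decreasing sequence of Thomason sets $(X_{\qq_n})_{n}$ (with $X_{\qq_n} = \emptyset$ whenever $\mathcal{X}_n = \emptyset$), so by \cite[Theorem 5.6]{HCG} the t-structure $(\mathcal{U},\mathcal{V})$ is compactly generated.

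For the converse $(i) \Rightarrow (ii)$, suppose $(\mathcal{U},\mathcal{V})$ is compactly generated. By Proposition~\ref{P:compinj}, since $R$ is commutative, each $\mathcal{V}_n$ is closed under injective envelopes; equivalently, the torsion pair with torsion-free class $\mathcal{V}_n$ (after closing under submodules, but as noted $\mathcal{V}_n$ is in fact already a torsion-free class by the description via $\mathcal{C}_n \cap \mathcal{D}_{n+1}$ and Lemma~\ref{L01}) is hereditary. Over a valuation domain, a hereditary torsion-free class is determined by a Thomason set $X_{\qq_n}$ (or the empty set), which corresponds to the admissible system $\{[0, \qq_n]\}$ (or $\emptyset$). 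I would then argue that the admissible system attached to $\mathcal{V}_n$ via Definition~\ref{D:phipsi} is precisely $\mathcal{X}_n$, by Theorem~\ref{T02} (or directly: the $\varphi_n, \psi_n$ constructed from a hereditary torsion-free class give $\varphi_n \equiv 0$, forcing every interval to start at $0$, and disjointness then forces a single interval). Combining, $\mathcal{X}_n = \{[0,\qq_n]\}$ or $\mathcal{X}_n = \emptyset$ for every $n$, which is $(ii)$.

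The main obstacle I anticipate is the bookkeeping in matching conventions: aligning the indexing of the decreasing sequence of Thomason sets in \cite[Theorem 5.6]{HCG} (cohomological shifts, inclusion direction) with the nested-upwards convention for the $\mathcal{V}_n$'s and the admissible filtration here, and verifying carefully that in the case $(ii)$ the extra condition carried by $\mathcal{D}_{n+1}$ genuinely imposes nothing new — i.e. that $\mathcal{C}_n \cap \mathcal{D}_{n+1} = \mathcal{C}_n$ when both $\mathcal{X}_n, \mathcal{X}_{n+1}$ are of the prescribed singleton/empty form. This last point should follow cleanly from Lemma~\ref{L:contain} (giving $\mathcal{C}_n \cap \mathcal{D}_n \subseteq \mathcal{D}_{n+1}$, and $\mathcal{D}_n \supseteq$ the relevant modules when $\mathcal{X}_n$ is trivial) together with the observation that $\mathcal{H}(\mathcal{X}_{n+1}) = \emptyset$, but it requires a short verification that I would carry out explicitly.
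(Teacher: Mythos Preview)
Your overall strategy is reasonable, but there are two genuine errors that happen to compensate for each other in the direction $(ii)\Rightarrow(i)$, and a separate gap in $(i)\Rightarrow(ii)$.

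First, the coaisle of a compactly generated t-structure over a commutative ring is \emph{not} given by a support condition on cohomology; that describes the \emph{aisle}. Over a ring of weak global dimension at most one the coaisle condition in degree $n$ reads $\Hom_R(R/sR,H^n(X))=0$ for $s\notin\qq_n$ \emph{and} $\Ext^1_R(R/tR,H^n(X))=0$ for $t\notin\qq_{n+1}$, i.e.\ $H^n(X)$ must be $\qq_n$-torsion-free \emph{and} $\qq_{n+1}$-divisible. Correspondingly, your claim that $\mathcal{D}_{n+1}$ imposes nothing is false: when $\mathcal{X}_{n+1}=\{[0,\qq_{n+1}]\}$ the class $\mathcal{D}_{n+1}$ consists of those $M$ with $F_{\qq_{n+1}}(M)\in\ModR_{\qq_{n+1}}$, and since any $M\in\mathcal{C}_n$ is already $\qq_{n+1}$-torsion-free this becomes exactly the condition that $M$ be $\qq_{n+1}$-divisible. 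Your proposed verification via Lemma~\ref{L:contain} would fail: $R\in\mathcal{C}_n$ but $R\notin\mathcal{D}_n$ and $R\notin\mathcal{D}_{n+1}$ whenever $\qq_{n+1}\neq 0$. Once both descriptions are corrected, they \emph{do} match and the implication goes through; alternatively, the paper bypasses the explicit matching by observing directly that ``$\qq_n$-torsion-free and $\qq_{n+1}$-divisible'' is closed under injective envelopes and applying Proposition~\ref{P:compinj}.

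Second, for $(i)\Rightarrow(ii)$ your argument relies on $\mathcal{V}_n$ being a torsion-free class, but this is false in general: $\mathcal{D}_{n+1}$ is not closed under submodules (e.g.\ $R\subseteq R_{\qq_{n+1}}$), so Proposition~\ref{P:compinj} alone does not let you identify $\mathcal{V}_n$ with a hereditary torsion-free class. The paper avoids this by going the other way round: it starts from the Thomason filtration $\Phi$ furnished by \cite[Theorem 5.6]{HCG}, writes down the candidate admissible filtration as in (\ref{E:Xn}), and then checks directly that $\Xi(\mathbb{X})=\mathcal{V}$ by computing both $\mathcal{V}_n$'s as the $\qq_n$-torsion-free, $\qq_{n+1}$-divisible modules. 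A cleaner repair of your route would be to argue on the level of $\mathcal{K}_n$ and the functions $\varphi_n,\psi_n$: closure of $\mathcal{V}_n$ under injective envelopes forces, for any $\pp\in\mathcal{K}_n$, that the injective hull of $\kappa(\pp)$ lies in $\mathcal{V}_n$, and tracking annihilators of elements of this hull shows $\varphi_n(\pp)=0$; disjointness then forces $\mathcal{X}_n$ to be a singleton $\{[0,\qq_n]\}$.
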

	\begin{proof}
		First, assume $(ii)$. Together with the definition of an admissible filtration, this means that there is a lower bound $N \in \mathbb{Z} \cup \{-\infty\}$ such that
		\begin{equation}\label{E:Xn}\mathcal{X}_n = \begin{cases} \emptyset, & n < N \\ \{[0,\qq_n]\}, & n \geq N.\end{cases}.\end{equation}
		As a consequence, $\mathcal{V}_n = \{0\}$ whenever $n<N$ and $\mathcal{V}_n$ consists precisely of those $R$-modules which are both $\qq_n$-torsion-free and $\qq_{n+1}$-divisible for any $n \geq N$. Therefore, $\mathcal{V}_n$ is closed under injective envelopes for all $n \in \mathbb{Z}$. By Proposition~\ref{P:compinj}, the t-structure $(\mathcal{U},\mathcal{V})$ is compactly generated.

		For the converse implication, we use the classification from \cite{HCG}. Let $\Phi$ be the Thomason filtration $\Phi$ on $\Spec(R)$ corresponding to $\mathcal{U}$ via \cite[Theorem 5.6]{HCG}. By the definition of a Thomason filtration \cite[\S 3]{HCG} and the discussion above, there is a bound $N \in \mathbb{Z} \cup \{-\infty\}$ and prime ideals $\qq_n, n \geq N$ such that
	$$\Phi(n) = \begin{cases} \emptyset, & n < N \\ X_{\qq_n} = \{\pp \in \Spec(R) \mid \qq_n \subsetneq \pp\}, & n \geq N.\end{cases}.$$
	Since $\Phi(n) \supseteq \Phi(n+1)$ by the definition of Thomason filtration, we have $\qq_n \subseteq \qq_{n+1}$ for each $n \in \mathbb{Z}$. Therefore, the formula (\ref{E:Xn}) defines an admissible filtration $\mathbb{X} = (\mathcal{X}_n)_{n \in \mathbb{Z}}$. We claim that $\mathcal{V} = \Xi(\mathbb{X})$. 

	By the way the correspondence \cite[Theorem 5.6]{HCG} works, we have the following description of the aisle:
	$$\mathcal{V} = \{X \in \Der(R) \mid \RHom{}_R(R/sR, X) \in \Der{}^{>n} ~\forall s \in R, V(sR) \subseteq \Phi(n) ~\forall n \in \mathbb{Z}\}.$$
	By Proposition~\ref{P:definablecorr}, it is enough to check that the definable subcategories $\mathcal{V}$ and $\Xi(\mathbb{X})$ coincide on cohomology. But clearly, an $R$-module $M$ belongs to $\mathcal{V}_n$ if and only if $\Hom_R(R/sR,M) = 0 = \Ext^1_R(R/tR,M)$ for each $s \not\in \qq_n$ and $t \not\in \qq_{n+1}$, which by a standard computation amounts to $M$ being $\qq_n$-torsion-free and $\qq_{n+1}$-divisible.
	\end{proof}
	\subsection{Application to Question~\ref{Q:UTC}}
	A valuation domain $R$ is called \EMP{strongly discrete} if the only idempotent ideal of $R$ is zero. The following results should be compared with the case of smashing subcategories \cite[Theorem 7.2]{BS} and 1-cotilting modules \cite[Corollary 4.6]{B}. 
\begin{cor}\label{C:tcvd}
	Let $R$ be a valuation domain. Then the following conditions are equivalent:
\begin{enumerate}
	\item[(i)] $R$ is strongly discrete,
	\item[(ii)] any t-structure on $\Der(R)$ with a definable coaisle is compactly generated.
\end{enumerate}
\end{cor}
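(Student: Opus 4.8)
The plan is to read off the corollary from the two structural results already available over a valuation domain: the classification Theorem~\ref{T02}, which says that $\Xi$ and $\Theta$ set up a bijection between admissible filtrations $\mathbb{X}=(\mathcal{X}_n\mid n\in\mathbb{Z})$ in $\Spec(R)$ and definable coaisles in $\Der(R)$, and Proposition~\ref{P:cg}, which says that the t-structure attached to $\mathcal{V}=\Xi(\mathbb{X})$ is compactly generated precisely when each $\mathcal{X}_n$ is either empty or a singleton of the form $\{[0,\qq_n]\}$. Since a definable coaisle is in particular the coaisle of a t-structure (Corollary~\ref{C:closure}, Proposition~\ref{P:moduletheoretic}), condition (ii) is thus equivalent to the purely combinatorial assertion that \emph{every} admissible filtration on $\Spec(R)$ has all of its members of this restricted shape. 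So the whole proof reduces to comparing that combinatorial condition with strong discreteness.

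For the implication (i)$\Rightarrow$(ii), I would take an arbitrary admissible filtration $\mathbb{X}$, fix $n$, and examine an interval $\chi=[\pp_\chi,\qq_\chi]\in\mathcal{X}_n$. The idempotency axiom of an admissible system (Definition~\ref{D:admissiblesystem}(ii)) gives $\pp_\chi=\pp_\chi^2$, and strong discreteness then forces $\pp_\chi=0$; hence every interval of $\mathcal{X}_n$ is of the form $[0,\qq]$. If $\mathcal{X}_n$ contained two distinct such intervals, then the disjointness axiom (Definition~\ref{D:admissiblesystem}(i)), which applies since both left endpoints equal $0$, would force $\qq_\chi\subsetneq 0$ for one of them, which is impossible. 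Therefore $\mathcal{X}_n$ is empty or of the form $\{[0,\qq_n]\}$ for all $n$, and Proposition~\ref{P:cg} yields that every t-structure with a definable coaisle is compactly generated.

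For (ii)$\Rightarrow$(i) I would argue contrapositively. If $R$ is not strongly discrete, choose a nonzero idempotent ideal; by Lemma~\ref{L:VD}(ii) it is a prime ideal $\pp\neq 0$. Take the constant sequence $\mathbb{X}=(\mathcal{X}_n\mid n\in\mathbb{Z})$ with $\mathcal{X}_n=\{[\pp,\pp]\}$ for every $n$. One checks immediately that $\{[\pp,\pp]\}$ is an admissible system (disjointness is vacuous, idempotency holds because $\pp=\pp^2$, and completeness is trivial for a one-element set), that the sequence is nested (each member contains itself), and that the degreewise non-density condition is vacuous because no $\mathcal{X}_n$ contains a dense interval; so $\mathbb{X}$ is an admissible filtration. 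Then $\mathcal{V}=\Xi(\mathbb{X})$ is a definable coaisle by Theorem~\ref{T02}, but since $\mathcal{X}_n=\{[\pp,\pp]\}$ with $\pp\neq 0$ is neither empty nor of the form $\{[0,\qq]\}$, Proposition~\ref{P:cg} shows that the induced t-structure is \emph{not} compactly generated, contradicting (ii). By Remark~\ref{R:spec} this already happens for a stable t-structure, recovering the smashing-subcategory statement \cite[Theorem 7.2]{BS}.

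I do not expect a genuine obstacle here: once Theorem~\ref{T02} and Proposition~\ref{P:cg} are granted, the argument is bookkeeping. The one point that warrants a little care is the implication (i)$\Rightarrow$(ii), namely observing that idempotency of the left endpoints together with strong discreteness collapses every admissible system to at most one interval based at the zero ideal; and, symmetrically, in (ii)$\Rightarrow$(i) one must verify that the constant sequence built from $\pp$ really does satisfy all the axioms of an admissible filtration, which is where the triviality of the nesting and degreewise non-density conditions for a one-element system is used.
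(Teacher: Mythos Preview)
Your proof is correct. The implication (i)$\Rightarrow$(ii) is argued exactly as in the paper: the idempotency axiom together with strong discreteness forces every interval to have left endpoint $0$, disjointness then collapses each $\mathcal{X}_n$ to at most one interval, and Proposition~\ref{P:cg} finishes.

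For (ii)$\Rightarrow$(i) the paper simply invokes \cite[Theorem~7.2]{BS}, whereas you supply an explicit counterexample inside the framework of Theorem~\ref{T02} and Proposition~\ref{P:cg}: the constant filtration with $\mathcal{X}_n=\{[\pp,\pp]\}$ for a nonzero idempotent prime $\pp$. This is a genuinely more self-contained route, and as you note via Remark~\ref{R:spec}, the resulting t-structure is stable, so you are in effect reproving the relevant half of \cite[Theorem~7.2]{BS} with the tools of the present paper. Your approach has the advantage of not leaving the paper's internal machinery; the paper's citation has the advantage of brevity and of recording that the converse already follows from the weaker (stable) telescope conjecture.
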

\begin{proof}
	If $R$ is strongly discrete then since $0$ is the only idempotent prime ideal of $R$, an admissible system $\mathcal{X}$ in $\Spec(R)$ can only be either empty or of the form $\mathcal{X}=\{[0,\qq]\}$ for some prime ideal $\qq$. Then any definable coaisle belongs to a compactly generated t-structure by Theorem~\ref{T02} together with Proposition~\ref{P:cg}.

		The converse implication follows from \cite[Theorem 7.2]{BS}.
\end{proof}
Recall that valuation domains are precisely the local commutative rings of weak global dimension at most one. The next natural step is therefore to establish a global version of Corollary~\ref{C:tcvd}.
\begin{lem}\label{L:local}
		Let $R$ be a commutative ring and $(\mathcal{U},\mathcal{V})$ a t-structure such that $\mathcal{V}$ is definable. For any prime $\pp \in \Spec(R)$, define subcategories
		$$\mathcal{U}_{\pp}=\{X \otimes_R R_{\pp} \mid X \in \mathcal{U}\}\text{, and}$$
		$$\mathcal{V}_{\pp}=\{X \otimes_R R_{\pp} \mid X \in \mathcal{V}\}$$
		of $\Der(R)$. Then $(\mathcal{U}_{\pp},\mathcal{V}_{\pp})$ is a t-structure in $\Der(R_{\pp})$, $\mathcal{V}_{\pp}$ is definable in $\Der(R_{\pp})$, and we have the inclusions $\mathcal{U}_{\pp} \subseteq \mathcal{U}$ and $\mathcal{V}_{\pp} \subseteq \mathcal{V}$.
\end{lem}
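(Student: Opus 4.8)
The plan is to exploit that $\lambda\colon R\to R_{\pp}$ is a flat, hence homological, ring epimorphism (cf. \S\ref{SS:homological}), so that the full inclusion $\Der(R_{\pp})\subseteq\Der(R)$ admits a left adjoint, namely the functor $L:=(-)\otimes_R R_{\pp}\simeq(-)\otimes_R^{\mathbf{L}}R_{\pp}$ (the two agree since $R_{\pp}$ is flat). Then $LY\simeq Y$ for every $Y\in\Der(R_{\pp})$, and $\Hom_{\Der(R)}(LX,Y)\cong\Hom_{\Der(R)}(X,Y)$ naturally for all $X\in\Der(R)$ and $Y\in\Der(R_{\pp})$; moreover, under the identification of $\Der(R_{\pp})$ with its image in $\Der(R)$, we have $\mathcal{U}_{\pp}=L\mathcal{U}$ and $\mathcal{V}_{\pp}=L\mathcal{V}$.

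The first step is to show that both $\mathcal{U}$ and $\mathcal{V}$ are closed under $L$. Writing $R_{\pp}$ as a directed colimit of copies of $R$ as an $R$-module (a filtered colimit of modules along multiplication maps, which lifts to a coherent diagram), and using that tensoring with $R_{\pp}$ commutes with the relevant homotopy colimits, one sees that $LW=W\otimes_R R_{\pp}$ is a directed homotopy colimit of copies of $W$ in $\Der(R)$. Since any aisle is closed under arbitrary homotopy colimits by \cite[Proposition 4.2]{SSV}, and $\mathcal{V}$, being definable, is closed under directed homotopy colimits by Theorem~\ref{T:rosie1}, we obtain $L\mathcal{U}\subseteq\mathcal{U}$ and $L\mathcal{V}\subseteq\mathcal{V}$; these are precisely the asserted inclusions $\mathcal{U}_{\pp}\subseteq\mathcal{U}$ and $\mathcal{V}_{\pp}\subseteq\mathcal{V}$, and together with $LY\simeq Y$ for $Y\in\Der(R_{\pp})$ they also give the identity $\mathcal{V}_{\pp}=\mathcal{V}\cap\Der(R_{\pp})$.

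Next I would verify the t-structure axioms for $(\mathcal{U}_{\pp},\mathcal{V}_{\pp})$ in $\Der(R_{\pp})$. The condition $\mathcal{U}_{\pp}[1]\subseteq\mathcal{U}_{\pp}$ is immediate because $L$ commutes with the shift. For orthogonality, given $U\in\mathcal{U}$ and $V\in\mathcal{V}$, the adjunction yields $\Hom_{\Der(R)}(LU,LV)\cong\Hom_{\Der(R)}(U,LV)$, which vanishes since $LV\in\mathcal{V}$ by the previous step. For the approximation triangles, given $Y\in\Der(R_{\pp})$, apply the exact functor $L$ to the $(\mathcal{U},\mathcal{V})$-approximation triangle $\tau_{\mathcal{U}}(Y)\to Y\to\tau_{\mathcal{V}}(Y)\to\tau_{\mathcal{U}}(Y)[1]$ computed in $\Der(R)$; since $LY\simeq Y$, this produces a triangle $L\tau_{\mathcal{U}}(Y)\to Y\to L\tau_{\mathcal{V}}(Y)\to (L\tau_{\mathcal{U}}(Y))[1]$ with left-hand term in $\mathcal{U}_{\pp}$ and right-hand term in $\mathcal{V}_{\pp}$, as required.

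Finally, for the definability of $\mathcal{V}_{\pp}$ in the compactly generated triangulated category $\Der(R_{\pp})$, recall that $(-)\otimes_R R_{\pp}$ sends $\Der^{c}(R)$ into $\Der^{c}(R_{\pp})$. If $\Phi\subseteq\Mor(\Der^{c}(R))$ witnesses the definability of $\mathcal{V}$, put $\Phi_{\pp}=\{f\otimes_R R_{\pp}\mid f\in\Phi\}\subseteq\Mor(\Der^{c}(R_{\pp}))$. For $Y\in\Der(R_{\pp})$ and $f\in\Phi$, naturality of the adjunction identifies $\Hom_{\Der(R_{\pp})}(f\otimes_R R_{\pp},Y)$ with $\Hom_{\Der(R)}(f,Y)$, so $Y$ lies in the definable subcategory of $\Der(R_{\pp})$ cut out by $\Phi_{\pp}$ if and only if $Y\in\mathcal{V}$, i.e. $Y\in\mathcal{V}\cap\Der(R_{\pp})=\mathcal{V}_{\pp}$; hence $\Phi_{\pp}$ witnesses the definability of $\mathcal{V}_{\pp}$. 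I expect the only point requiring genuine care to be the realization of $(-)\otimes_R R_{\pp}$ as a directed homotopy colimit at the level of coherent diagrams, together with the routine bookkeeping of the adjunction between $L$ and the full inclusion $\Der(R_{\pp})\subseteq\Der(R)$; everything else is formal.
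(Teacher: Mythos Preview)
Your proof is correct and follows essentially the same approach as the paper: both use flatness of $R_{\pp}$ to realize $(-)\otimes_R R_{\pp}$ as a directed homotopy colimit (invoking \cite[Proposition 4.2]{SSV} for the aisle and definability for the coaisle), then localize the approximation triangle, and finally transfer the defining set $\Phi$ via the $\otimes$--$\Hom$ adjunction to witness definability of $\mathcal{V}_{\pp}$. Your framing in terms of the left adjoint $L$ to the full inclusion $\Der(R_{\pp})\subseteq\Der(R)$ is slightly more explicit than the paper's, but the arguments are the same.
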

\begin{proof}
		First, recall that $\mathcal{V}$ is closed under directed homotopy colimits, and $\mathcal{U}$ is closed under (any) homotopy colimits by \cite[Proposition 4.2]{SSV}. Since $R_{\pp}$ is a flat $R$-module, $X \otimes_R R_{\pp} \in \mathcal{V}$ for any $X \in \mathcal{V}$, and the analogous statement holds for the aisles. Therefore, $\mathcal{U}_{\pp} \subseteq \mathcal{U}$ and $\mathcal{V}_{\pp} \subseteq \mathcal{V}$.

		It is clear that for any $U \in \mathcal{U}_{\pp}$ and any $V \in \mathcal{V}_{\pp}$, 
		$$\Hom{}_{\Der(R{}_{\pp})}(U,V) \simeq \Hom{}_{\Der(R)}(U,V) = 0,$$ 
		and that $\mathcal{U}_{\pp}[1] \subseteq \mathcal{U}_{\pp}$. Let $X$ be an object of $\Der(R_{\pp})$ and consider the approximation triangle of $X$ with respect to $(\mathcal{U},\mathcal{V})$ in $\Der(R)$:
		$$U \rightarrow X \rightarrow V \rightarrow U[1].$$
		Localizing this triangle at $\pp$, we see by the uniqueness of approximation triangles that $U \in \mathcal{U}_{\pp}$ and $V \in \mathcal{V}_{\pp}$. This shows that $(\mathcal{U}_{\pp},\mathcal{V}_{\pp})$ is a t-structure in $\Der(R_{\pp})$.

		Finally, let $\Phi \subseteq \Der^c(R)$ be a set witnessing the definability of $\mathcal{V}$, that is, 
		$$\mathcal{V} = \{X \in \Der(R) \mid \Hom{}_{\Der(R)}(f,X) \text{ is surjective for all $f \in \Phi$}\}.$$ 
		Then for any $Y \in \Der(R_{\pp})$ we have by the $\otimes^\mathbf{L}_R$-$\RHom_R$ adjunction that there is a natural isomorphism
		$$\Hom{}_{\Der(R_{\pp})}(f \otimes_R R_{\pp},Y) \simeq \Hom{}_{\Der(R)}(f,Y),$$
		which means that $Y \in \mathcal{V}_{\pp}$ if and only if $\Hom_{\Der(R_{\pp})}(f \otimes_R R_{\pp},Y)$ is surjective for all $f \in \Phi$. Since $f \otimes_R R_{\pp}$ is a map in $\Der^c(R_{\pp})$ for any $f \in \Der^c(R)$, this establishes the definability of $\mathcal{V}_{\pp}$ in $\Der(R_{\pp})$.
\end{proof}
\begin{thm}\label{T:gtc}
	Let $R$ be a commutative ring of weak global dimension at most one. Then the following conditions are equivalent:
\begin{enumerate}
		\item[(i)] there is no $\pp \in \Spec(R)$ such that $\pp R_{\pp}$ is a non-zero idempotent ideal in $R_{\pp}$,
	\item[(ii)] any t-structure on $\Der(R)$ with a definable coaisle is compactly generated.
\end{enumerate}
\end{thm}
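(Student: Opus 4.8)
The plan is to reduce the global statement to the local case of a valuation domain, where it is precisely Corollary~\ref{C:tcvd}. The first step is the elementary observation that condition (i) is equivalent to: \emph{$R_{\pp}$ is a strongly discrete valuation domain for every prime ideal $\pp$}. Indeed, $R_{\pp}$ is always a valuation domain by Lemma~\ref{L:VD}(i); its primes are the ideals $\qq R_{\pp}$ for primes $\qq\subseteq\pp$ of $R$, and since $\qq$ is already an $R_{\qq}$-module by Lemma~\ref{L:VD}(iii), idempotency of $\qq R_{\pp}$ in $R_{\pp}$ is equivalent to idempotency of $\qq R_{\qq}$ in $R_{\qq}=(R_{\pp})_{\qq R_{\pp}}$, while $\qq R_{\pp}\ne 0$ iff $\qq\ne 0$. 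Hence ``$R_{\pp}$ strongly discrete for all $\pp$'' says exactly that $\qq R_{\qq}$ is never a non-zero idempotent, which is (i).

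For the implication (i) $\Rightarrow$ (ii), I would take a definable coaisle $\mathcal{V}$ in $\Der(R)$ and invoke Proposition~\ref{P:compinj}, so that it suffices to show each $\mathcal{V}_n$ is closed under injective envelopes in $\ModR$. Since $\mathcal{V}_n$ is definable it is closed under direct limits, hence under localizations. Given $M\in\mathcal{V}_n$, for each maximal ideal $\mm$ Lemma~\ref{L:local} yields that $\mathcal{V}_{\mm}$ is a definable coaisle in $\Der(R_{\mm})$, which is compactly generated by the first paragraph and Corollary~\ref{C:tcvd}; applying Proposition~\ref{P:compinj} over $R_{\mm}$ gives that the $n$-th cohomology class of $\mathcal{V}_{\mm}$ is closed under injective envelopes over $R_{\mm}$, whence $E_{R_{\mm}}(M_{\mm})$ lies in $\mathcal{V}_n$. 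Because $R\to R_{\mm}$ is a flat ring epimorphism, an $R_{\mm}$-module which is injective over $R_{\mm}$ is injective over $R$ (via the natural isomorphism $\Hom_R(-,E)\simeq\Hom_{R_{\mm}}(-\otimes_R R_{\mm},E)$ together with flatness), and essential extensions over $R_{\mm}$ stay essential over $R$. Then the canonical monomorphism $M\hookrightarrow\prod_{\mm}E_{R_{\mm}}(M_{\mm})$ has target an $R$-injective module lying in $\mathcal{V}_n$, so $E_R(M)$ is a direct summand of it and therefore belongs to $\mathcal{V}_n$.

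For (ii) $\Rightarrow$ (i) I would argue by contraposition: assuming (i) fails, pick a prime $\pp$ with $\mm:=\pp R_{\pp}$ a non-zero idempotent ideal of the valuation domain $R_{\pp}$. The one-element system $\mathcal{X}=\{[\mm,\mm]\}$ satisfies the axioms of Definition~\ref{D:admissiblesystem} (idempotency being exactly $\mm=\mm^2$), so Lemma~\ref{L01} makes $\mathcal{C}=\mathcal{C}_{\mathcal{X}}$ a cosilting class in $\ModRpp$; one identifies it with the category of $\kappa(\pp)$-vector spaces, i.e.\ the image of $-\otimes_R\kappa(\pp)$ along the ring epimorphism $R\to\kappa(\pp)=R_{\pp}/\pp R_{\pp}$. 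Viewed inside $\ModR$, $\mathcal{C}$ remains a definable torsion-free class: closure under submodules, products and direct limits is immediate, and closure under extensions follows from $\Ext_R^1(\kappa(\pp),\kappa(\pp))\simeq\Ext_{R_{\pp}}^1(\kappa(\pp),\kappa(\pp))$ (as $R\to R_{\pp}$ is a homological ring epimorphism), which vanishes since $\mm$ is idempotent. By Theorem~\ref{T:BZ}, $\mathcal{C}$ is then a cosilting class in $\ModR$, and I let $\mathcal{V}$ be the coaisle of the associated Happel-Reiten-Smal\o\ t-structure; it is a definable coaisle with $\mathcal{V}_0=\mathcal{C}$ by Lemma~\ref{L:hrs}. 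Finally, $\mathcal{C}$ fails to be closed under injective envelopes in $\ModR$: one has $\kappa(\pp)\in\mathcal{C}$, yet $\kappa(\pp)$ is not injective over $R_{\pp}$ (e.g.\ $\Ext_{R_{\pp}}^1(R_{\pp}/rR_{\pp},\kappa(\pp))\ne 0$ for any $0\ne r\in\mm$), and by the flat-epimorphism observation of the previous paragraph $E_R(\kappa(\pp))=E_{R_{\pp}}(\kappa(\pp))$ is a proper essential extension of the simple module $\kappa(\pp)$, hence not a $\kappa(\pp)$-vector space, so $E_R(\kappa(\pp))\notin\mathcal{C}$. Proposition~\ref{P:compinj} then shows $\mathcal{V}$ is not compactly generated, so (ii) fails.

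The step I expect to need the most care is the compatibility of injective envelopes with the flat ring epimorphisms $R\to R_{\mm}$ and $R\to R_{\pp}$ used in both directions: one must verify both that injectivity is preserved under restriction of scalars along such a map and that essential extensions remain essential, so that injective envelopes computed over the localization agree with those computed over $R$. Once this is in hand, the remaining points --- closure of $\mathcal{V}_n$ under localization, the transfer of the class $\mathcal{C}$ to a definable torsion-free class over $R$ (using that submodules, extensions, products and direct limits of $\kappa(\pp)$-vector spaces formed in $\ModR$ are again $\kappa(\pp)$-vector spaces), and the two $\Ext$-computations over a valuation domain with idempotent maximal ideal --- are routine.
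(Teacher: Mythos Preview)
Your proof is correct. For $(i)\Rightarrow(ii)$ your argument is essentially identical to the paper's: both localize (you at maximal ideals, the paper at all primes --- immaterial), apply Corollary~\ref{C:tcvd} and Proposition~\ref{P:compinj} locally, and then embed $M$ into the product of the local injective envelopes to conclude $E_R(M)\in\mathcal{V}_n$.

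For $(ii)\Rightarrow(i)$ you take a genuinely different route. The paper simply cites \cite[Theorem~7.2]{BS}, which under the failure of (i) produces a \emph{stable} t-structure (a smashing localization) that is not compactly generated. You instead build a Happel--Reiten--Smal\o\ t-structure from the cosilting class $\mathcal{C}=\operatorname{Mod-}\kappa(\pp)$ and show its coaisle fails the injective-envelope criterion of Proposition~\ref{P:compinj}. Your approach is more self-contained, avoiding the smashing classification of \cite{BS}; the paper's citation has the advantage of giving the stronger conclusion that already the Telescope Conjecture fails. One small remark on your verification that $\mathcal{C}$ is extension-closed in $\ModR$: the vanishing of $\Ext^1_R(\kappa(\pp),\kappa(\pp))$ alone does not literally cover arbitrary extensions of $\kappa(\pp)$-vector spaces, but the argument you need is direct --- once the middle term $B$ is an $R_{\pp}$-module (as $R\to R_{\pp}$ is a homological epimorphism), idempotency of $\mm=\pp R_{\pp}$ gives $\mm B = \mm^2 B \subseteq \mm A = 0$, so $B$ is a $\kappa(\pp)$-module.
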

\begin{proof}
		Assume $(i)$, and let $(\mathcal{U},\mathcal{V})$ be a t-structure with $\mathcal{V}$ definable. By $(i)$, $R_{\pp}$ is a strongly discrete valuation domain for each $\pp \in \Spec(R)$, and therefore, using Lemma~\ref{L:local}, $(\mathcal{U}_{\pp},\mathcal{V}_{\pp})$ is compactly generated for each $\pp \in \Spec(R)$ by Corollary~\ref{C:tcvd}. By Proposition~\ref{P:compinj}, the subcategories $(\mathcal{V}_{\pp})_n$ of $\ModRpp$ are closed under injective envelopes for any $\pp \in \Spec(R)$ and $n \in \mathbb{Z}$. By the same Proposition, it is enough to show that $\mathcal{V}_n$ is closed under injective envelopes for each $n \in \mathbb{Z}$. Let $M \in \mathcal{V}_n$, and let $E$ be the injective envelope of $M$. For any $\pp \in \Spec(R)$, the module $M_{\pp}$ belongs to $(\mathcal{V}_{\pp})_n \subseteq \mathcal{V}_n$, again using Lemma~\ref{L:local} for the last inclusion. The natural map $\iota: M \rightarrow \prod_{\pp \in \Spec(R)}M_{\pp}$ is a monomorphism. Since $R \rightarrow R_{\pp}$ is a flat ring epimorphism, the injective envelope $E(M_{\pp})$ in $\ModRpp$ is an injective $R$-module. Therefore, we can use the injectivity to extend $\iota$ to a map $\varphi: E \rightarrow \prod_{\pp \in \Spec(R)} E(M_{\pp})$. As $\varphi$ extends $\iota$, and $M$ is essential in $E$, it follows that $\varphi$ is a monomorphism. Therefore, $E$ is a direct summand in $\prod_{\pp \in \Spec(R)} E(M_{\pp})$. But $\prod_{\pp \in \Spec(R)} E(M_{\pp}) \in \mathcal{V}_n$, and thus $E \in \mathcal{V}_n$.

		The converse implication follows again from \cite[Theorem 7.2]{BS}.
\end{proof}
\section{Homological ring epimorphisms versus density}\label{S:nondense}
	Let $R$ be a valuation domain and $\mathbb{X}=(\mathcal{X}_n \mid n \in \mathbb{Z})$ an admissible filtration in $\Spec(R)$. We call such a sequence \EMP{nowhere dense} if the admissible system $\mathcal{X}_n$ is nowhere dense for all $n \in \mathbb{Z}$. Note that $\mathbb{X}$ is a nowhere dense admissible filtration if and only if it is just a nested sequence of nowhere dense admissible systems.
	
	The aim is to show that the coaisles corresponding to nowhere dense admissible filtrations via Theorem~\ref{T02} are precisely those arising from a chain of homological epimorphisms via Proposition~\ref{P:construction}. The starting point is the following classification of homological ring epimorphism from \cite{BS}:
	\begin{thm}\emph{(\cite[Theorem 5.23]{BS})}\label{T:BSepi}
		Let $R$ be a valuation domain. Then there is a bijection between:
			\begin{enumerate}
					\item[(i)] nowhere dense admissible systems $\mathcal{X}$ in $\Spec(R)$, and
				\item[(ii)] epiclasses of homological ring epimorphisms $\lambda: R \rightarrow S$.
			\end{enumerate}
			The bijection $(ii) \rightarrow (i)$ assigns to $\lambda$ the set of all intervals obtained as follows: For each maximal ideal $\nn \in \mSpec(S)$, the composition map $R \xrightarrow{\lambda} S \xrightarrow{\text{can}} S_{\nn}$ is equivalent to the natural map $R \rightarrow R_{\qq}/\pp$ for some interval $[\pp,\qq]$ in $\Spec(R)$ with $\pp$ idempotent. Then $\mathcal{X}$ is the collection of all intervals obtained by going through all maximal ideals of the commutative ring $S$ (\cite[Proposition 5.5(2)]{BS}).
	\end{thm}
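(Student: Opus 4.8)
The plan is to deduce the statement from the module-theoretic classification of \cite{BS}. By Theorem~\ref{T:epiclass}, since $R$ has weak global dimension at most one, epiclasses of homological ring epimorphisms $\lambda\colon R\to S$ correspond bijectively to extension-closed bireflective subcategories $\mathcal{B}=\ModS\simeq\operatorname{Im}(-\otimes_R S)$ of $\ModR$. So it suffices to match these subcategories with nowhere dense admissible systems, and the map $(ii)\to(i)$ described in the statement will be the composite.

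Direction $(ii)\to(i)$. Given $\lambda\colon R\to S$ and a maximal ideal $\nn\in\mSpec(S)$, the composite $R\xrightarrow{\lambda}S\to S_\nn$ is a ring epimorphism (localizations and composites of ring epimorphisms are ring epimorphisms), and it is homological because $S_\nn$ is flat over $S$, so $\Tor_i^R(S_\nn,S_\nn)$ is a localization of $\Tor_i^R(S,S)=0$ for $i>0$. The first structural input, which I would take from \cite[Proposition 5.5(2)]{BS}, is that a \emph{local} homological ring epimorphism out of a valuation domain is, up to epiclass, the canonical map $R\to R_\qq/\pp$ for a unique interval $[\pp,\qq]$ in $\Spec(R)$ with $\pp$ idempotent (the idea: the kernel of $R\to S_\nn$ is an ideal, hence comparable to every ideal, and a change-of-rings argument using $\operatorname{wgldim}R\le 1$ forces it to be an idempotent prime $\pp$, after which $R/\pp\to S_\nn$ is a flat local ring epimorphism from the valuation domain $R/\pp$, i.e.\ a localization). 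This produces $\mathcal{X}_\lambda=\{[\pp_\nn,\qq_\nn]\mid\nn\in\mSpec(S)\}$. I would then verify that $\mathcal{X}_\lambda$ is a nowhere dense admissible system: idempotency is built in; disjointness follows since $\Spec(S)\hookrightarrow\Spec(R)$ is an embedding of a chain, so two distinct maximal ideals of $S$ yield intervals meeting in at most one prime, which together with idempotency of the lower endpoints and \cite[p.~70]{FS} forces strict separation; completeness is obtained by locating, for a subset $\mathcal{Y}\subseteq\mathcal{X}_\lambda$, the maximal ideal of $S$ whose localization sits over the limit prime $\bigcup_{\chi\in\mathcal{Y}}\pp_\chi$ (resp.\ $\bigcap_{\chi\in\mathcal{Y}}\qq_\chi$), using that a union of idempotent primes is an idempotent prime (Lemma~\ref{L:VD}(iv)); and nowhere density follows because a dense sub-chain of intervals would place a dense family of residue fields $\kappa(\qq_\chi)$ inside $\mathcal{B}$, and closure of $\mathcal{B}$ under both products and cokernels would then force $\mathcal{B}$ to contain modules $R_\pp/\qq$ for primes strictly between consecutive intervals, contradicting disjointness.

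Direction $(i)\to(ii)$. Given a nowhere dense admissible system $\mathcal{X}$, I would construct $S$ by gluing the canonical maps $R\to R_{\qq_\chi}/\pp_\chi$ along the chain of primes: one forms a subdirect-product ring $S\subseteq\prod_{\chi\in\mathcal{X}}R_{\qq_\chi}/\pp_\chi$ cut out by the compatibility conditions at the gaps of $\mathcal{X}$. Because $\mathcal{X}$ is nowhere dense, $S$ has precisely $\mathcal{X}$ as its set of maximal ideals, and one checks that $R\to S$ is a ring epimorphism by verifying that $S\otimes_R S\to S$ is an isomorphism after localizing at each maximal ideal (using Theorem~\ref{T:epiclass} locally) and that it is homological by computing $\Tor_i^R(S,S)$ through the gluing, using idempotency of the $\pp_\chi$ and $\operatorname{wgldim}R\le 1$. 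Then $\mathcal{X}_S=\mathcal{X}$ by construction, and the uniqueness part of the local classification shows $S$ is determined up to epiclass by $\mathcal{X}$, so the two assignments are mutually inverse.

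I expect the main obstacle to be the converse construction in $(i)\to(ii)$: making the gluing precise and proving it stays inside the class of homological ring epimorphisms, together with the genuine \emph{use} of nowhere density — i.e.\ showing that a dense interval obstructs the epimorphism property, in contrast with the cotilting picture of Proposition~\ref{P:cotiltingdense} where dense-everywhere systems do occur. Keeping tight control of $\Spec(S)$ and $\mSpec(S)$ throughout — in particular verifying the completeness axiom for $\mathcal{X}_\lambda$ — is where essentially all of the work sits.
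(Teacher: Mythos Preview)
The paper does not prove this theorem at all: Theorem~\ref{T:BSepi} is stated with an explicit attribution to \cite[Theorem 5.23]{BS} and \cite[Proposition 5.5(2)]{BS}, and no proof is given in the present paper. It is quoted as an input from the literature, used in the proofs of Lemma~\ref{L:birefl}, Lemma~\ref{L:bireflnested}, and Theorem~\ref{T:homepi}. So there is no ``paper's own proof'' to compare your proposal against.

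That said, your sketch is a reasonable outline of how the result in \cite{BS} is structured, and the reduction via Theorem~\ref{T:epiclass} to extension-closed bireflective subcategories is indeed the right first move. A few of your steps are more impressionistic than they appear. Your disjointness argument (``two distinct maximal ideals of $S$ yield intervals meeting in at most one prime'') is not quite right as stated: two intervals could in principle overlap in more than one prime, and what actually forces strict separation is the bireflectivity of $\ModS$ together with the local structure, not just the embedding $\Spec(S)\hookrightarrow\Spec(R)$. Your nowhere-density argument is also vague: the claim that density would force $\mathcal{B}$ to contain ``modules $R_\pp/\qq$ for primes strictly between consecutive intervals'' needs a concrete mechanism, and in \cite{BS} this is handled by analyzing the smashing localization rather than directly through closure properties of $\mathcal{B}$. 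Finally, the gluing construction in $(i)\to(ii)$ is the genuinely hard part, as you yourself note; in \cite{BS} it is carried out not by an explicit subdirect product but through the correspondence with smashing subcategories of $\Der(R)$, which gives the ring $S$ for free as the endomorphism ring of the localization of $R$. If you want a self-contained module-theoretic proof, you would need to do substantially more work than your sketch indicates.
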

	Combining Theorem~\ref{T:BSepi} and Theorem~\ref{T:epiclass}, we see that nowhere dense admissible systems in $\Spec(R)$ correspond to extension-closed bireflective subcategories of $\ModR$. The next step is to compute these subcategories. 
	\begin{lem}\label{L:birefl}
			Let $R$ be a valuation domain and $\mathcal{X}$ be a nowhere dense admissible system in $\Spec(R)$. Then the extension-closed bireflective subcategory $\ModS \simeq \mathcal{B} \subseteq \ModR$ corresponding to the homological epimorphism $\lambda: R \rightarrow S$ via Theorem~\ref{T:epiclass}, which in turn corresponds to $\mathcal{X}$ via Theorem~\ref{T:BSepi}, can be written as follows:
			$$\mathcal{B} = \{M \in \ModR \mid K(\qq,\pp) \otimes_R M \text{ is exact for all }(\qq,\pp) \in \mathcal{G}(\mathcal{X})\}.$$

			Furthermore, we can write $\mathcal{B} = \mathcal{C} \cap \mathcal{D}$, where $\mathcal{C}$ is the cosilting class corresponding to $\mathcal{X}$ via Theorem~\ref{T01}, and $\mathcal{D}$ is the class of those $R$-modules $M$ such that $F_{\qq}(M) \in \ModRqq$ for each gap $(\qq,\pp) \in \mathcal{G}(\mathcal{X})$ (cf. \S 7).
	\end{lem}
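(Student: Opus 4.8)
The plan is to identify the bireflective subcategory $\mathcal{B} = \operatorname{Im}(- \otimes_R S)$ by computing it locally at each maximal ideal of $S$, using the description of $S$ furnished by Theorem~\ref{T:BSepi}. By that theorem, the maximal ideals $\nn \in \mSpec(S)$ are in bijection with the ``uppermost'' intervals of $\mathcal{X}$ in each connected component of the spectrum, so that the localizations $S_\nn$ are exactly the epimorphic extensions $R_{\qq}/\pp$ for the relevant intervals $[\pp,\qq]$. Since $\mathcal{B} \simeq \ModS$ is the essential image of the restriction functor, and localization is exact, I would first argue that $M \in \mathcal{B}$ if and only if $M$ is already an $S$-module, if and only if the canonical map $M \to M \otimes_R S$ is an isomorphism; and since $S \to \prod_{\nn} S_\nn$ is a faithfully flat (in fact pure) embedding by the properties of valuation domains, this reduces to checking that $M$ becomes an $S_\nn$-module after tensoring, for each $\nn$. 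The heart of the argument is then the claim that $K(\qq,\pp) \otimes_R M$ being exact for all gaps $(\qq,\pp) \in \mathcal{G}(\mathcal{X})$ is precisely the condition that $M$ is an $S$-module; the gaps of $\mathcal{X}$ encode exactly the ``jumps'' in the spectrum of $S$, i.e. the prime ideals $\qq R_{\qq}$ that get inverted ($R_{\qq}$ appears in $K(\qq,\pp)$) and the idempotent primes $\pp$ that get killed ($\pp$ is the degree-zero term).

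Concretely, I would unravel $K(\qq,\pp) \otimes_R M$ being exact: by the shape of $K(\qq,\pp) = (\pp \xrightarrow{i} R_{\qq})$ concentrated in degrees $0,1$, exactness means $H^1(K(\qq,\pp) \otimes_R M) = 0$ and $H^0(K(\qq,\pp) \otimes_R M) = 0$. By Lemma~\ref{L:Dn}, the vanishing of $H^1$ says exactly $F_{\qq}(M) \in \ModRqq$ (with the convention $H^1 = 0$ when $\qq = -\infty$), which is the defining condition of the class $\mathcal{D}$. By Lemma~\ref{L:compute}(ii), the vanishing of $H^0$ says $\Gamma_{\qq}(M) \subseteq \Soc_{\pp}(M)$. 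So I would then need to match the intersection over all gaps of the $H^0$-conditions with membership in the cosilting class $\mathcal{C} = \mathcal{C}_{\mathcal{X}}$: for a cyclic module $R/I$, $\Gamma_{\qq}(R/I) \subseteq \Soc_{\pp}(R/I)$ for all gaps forces $I$ to avoid lying strictly inside any gap (Lemma~\ref{L:denseideal}), hence $I \in \langle \chi \rangle$ for some $\chi \in \mathcal{X}$, which by Lemma~\ref{L01} is exactly $R/I \in \mathcal{C}$; one then upgrades from cyclic modules to all modules via Lemma~\ref{L:cyclic}. This gives $\mathcal{B} \cap \{$cyclics$\} = \mathcal{C} \cap \mathcal{D} \cap \{$cyclics$\}$, and since $\mathcal{B}$, $\mathcal{C}$, $\mathcal{D}$ are all definable (bireflective subcategories are definable; $\mathcal{C}$ is cosilting; $\mathcal{D}$ is closed under the relevant operations by Lemma~\ref{L:Dn}) and definable subcategories over a valuation domain are determined by the standard uniserial — hence by cyclic — modules they contain (Lemma~\ref{L:standarduni}, Lemma~\ref{L:cyclic}), the two classes coincide.

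For the reverse inclusion in the ``furthermore'' part, I need $\mathcal{C} \cap \mathcal{D} \subseteq \mathcal{B}$: given $M \in \mathcal{C} \cap \mathcal{D}$, I would show $M$ is an $S$-module by showing that for each $\nn \in \mSpec(S)$ corresponding to $[\pp,\qq]$, the module $M$ is, after passing through the appropriate localization/completion dictated by the structure of $S$, an $R_{\qq}/\pp$-module; the condition $M \in \mathcal{C}$ ensures that every element of $M$ is annihilated by an ideal in $\langle \chi \rangle$ for some $\chi \in \mathcal{X}$, controlling the ``$\pp$ kills'' direction, while $M \in \mathcal{D}$ ensures $F_{\qq}(M)$ is $\qq$-divisible, controlling the ``$\qq$ inverts'' direction. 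The first displayed equation for $\mathcal{B}$ then follows because ``$K(\qq,\pp) \otimes_R M$ exact for all gaps'' is by the above exactly ``$M \in \mathcal{C} \cap \mathcal{D}$.''

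The main obstacle I anticipate is the bookkeeping in the reverse inclusion, i.e. genuinely verifying that the two gap-conditions together force $M$ to be an $S$-module rather than merely satisfying necessary homological vanishing — one must use the precise description of $S$ from Theorem~\ref{T:BSepi} as glued from the rings $R_{\qq}/\pp$ along the component structure, and check that being simultaneously an $R_{\qq}/\pp$-module ``locally at $\nn$'' for all $\nn$ reassembles to being an $S$-module globally. This is where faithful flatness of $S$ over $R$, the bijection between ideals of $R$ and $S$ under immediate extension, and the fact that a module over a product of valuation rings decomposes accordingly all come into play; I expect the cleanest route is to reduce everything to cyclic modules immediately via definability and Lemma~\ref{L:cyclic}, where the statement becomes the purely ideal-theoretic assertion already essentially contained in Lemma~\ref{L:denseideal}, and thereby avoid the module-theoretic gluing altogether.
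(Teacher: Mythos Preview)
Your overall identification of the class $\mathcal{B}' := \{M : K(\qq,\pp) \otimes_R M \text{ exact for all gaps}\}$ with $\mathcal{C} \cap \mathcal{D}$ is fine and matches the paper (it is Corollary~\ref{C:tensorvanish} specialized to the constant filtration, where $\mathcal{H}_n = \emptyset$ because $\mathcal{X}$ is nowhere dense). The problem is the step $\mathcal{B}' = \mathcal{B}$.

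Your proposed reduction to cyclic modules does not go through: Lemma~\ref{L:cyclic} needs closure under \emph{submodules}, but neither $\mathcal{B}$ nor $\mathcal{D}$ (nor their intersection with $\mathcal{C}$) is closed under submodules --- a bireflective subcategory almost never is. Lemma~\ref{L:standarduni} only gets you down to standard uniserials $J/I$, and the further jump ``hence by cyclic'' is exactly where submodule closure is used. So you would have to verify directly, for every $J/I$, that $J/I$ is an $S$-module iff $J/I \in \mathcal{C} \cap \mathcal{D}$, which brings back the module-theoretic gluing you were hoping to avoid. There are also two factual slips in your outline: $\lambda: R \to S$ is a homological ring epimorphism, so $S$ is essentially never faithfully flat over $R$; and the ``bijection between ideals under immediate extension'' pertains to the maximal immediate extension of $R$, which is a completely different ring from $S$.

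The paper sidesteps all of this with an indirect argument. It first shows that $\mathcal{B}' = \mathcal{C} \cap \mathcal{D}$ is itself an extension-closed bireflective subcategory: since $\mathcal{X}$ is nowhere dense, the constant sequence $\mathcal{X}_n = \mathcal{X}$ is an admissible filtration, and then $\mathcal{B}' = \mathcal{V}_n$ for the associated coaisle; Proposition~\ref{P:moduletheoretic} gives closure under products, kernels and (because $\mathcal{V}_n = \mathcal{V}_{n-1}$) cokernels. Now $\mathcal{B}'$ corresponds via Theorem~\ref{T:epiclass} to some homological epimorphism $R \to T$, and via Theorem~\ref{T:BSepi} to some nowhere dense admissible system $\mathcal{Y}$. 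The proof finishes by showing $\mathcal{Y} = \mathcal{X}$ using only the easy observation that $R_{\qq}/\pp \in \mathcal{B}'$ for $[\pp,\qq] \in \mathcal{X}$ and the factoring of ring epimorphisms through localizations at maximal ideals. This exploits the bijection of Theorem~\ref{T:BSepi} rather than re-proving it, and never needs to decide directly when a given module is an $S$-module.
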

	\begin{proof}
	Put	$\mathcal{B}' = \{M \in \ModR \mid K(\qq,\pp) \otimes_R M \text{ is exact for all }(\qq,\pp) \in \mathcal{G}(\mathcal{X})\}$ and let us start by showing that $\mathcal{B}'$ is an extension-closed bireflective subcategory in $\ModR$. Since $\mathcal{X}$ is nowhere dense, the constant sequence $\mathbb{X} = (\mathcal{X}_n \mid n \in \mathbb{Z})$ defined by $\mathcal{X}_n = \mathcal{X}$ for all $n \in \mathbb{Z}$ is an admissible filtration, as the degreewise non-density condition is satisfied trivially. Let $\mathcal{V}$ be the definable coaisle corresponding to $\mathbb{X}$ via Proposition~\ref{P:intervalstocoaisles}. Since $\mathbb{X}$ is constant, $\mathcal{V}_n = \mathcal{V}_{n+1}$ for all $n \in \mathbb{Z}$, and by the definition of $\mathcal{V}$, $\mathcal{V}_n = \mathcal{C} \cap \mathcal{D}$. Furthermore, Corollary~\ref{C:tensorvanish} together with $\mathcal{X}$ being nowhere dense implies that $\mathcal{V}_n = \mathcal{B}'$. By Proposition~\ref{P:moduletheoretic}, $\mathcal{V}_n$ is closed under products, coproducts, extensions, kernels, and since $\mathcal{V}_n = \mathcal{V}_{n-1}$, also under cokernels. Equivalently, $\mathcal{B}'$ is an extension-closed bireflective subcategory of $\ModR$.

	It remains to show that $\mathcal{B}' = \mathcal{B}$, that is, that $\mathcal{B}'$ equals the image of the fully faithful forgetful functor $\ModS \rightarrow \ModR$. Let $\tau: R \rightarrow T$ be a homological epimorphism corresponding to $\mathcal{B}'$ via Theorem~\ref{T:epiclass}, and let $\mathcal{Y}$ be the nowhere dense admissible system corresponding to $\tau$ via Theorem~\ref{T:BSepi}. Since $\mathcal{B}' = \mathcal{B}$ if and only if the ring epimorphisms $\lambda$ and $\tau$ inhabit the same epiclass (Theorem~\ref{T:epiclass}), by Theorem~\ref{T:BSepi} it is enough to show that $\mathcal{Y} = \mathcal{X}$.  For each $[\pp,\qq] \in \mathcal{Y}$ there is a maximal ideal $\mm$ of $T$ such that $T_{\mm} \simeq R_{\qq}/\pp$. Since $T \in \mathcal{B}'$, also $T_{\mm} \in \mathcal{B}'$, and thus $R_{\qq}/\pp \in \mathcal{B}' = \mathcal{C} \cap \mathcal{D}$. Since $\mathcal{B}'$ is bireflective, this implies $\ModRqqpp \subseteq \mathcal{B}'$. Thus by Lemma~\ref{L:mc} $[\pp,\qq]$ has to be contained in some interval from $\mathcal{X}$, and so $\mathcal{Y}$ is a nested subsystem of $\mathcal{X}$. Now let $[\pp,\qq] \in \mathcal{X}$. Since $R_{\qq}/\pp \in \mathcal{B}'$, there is a homological ring epimorphism $\gamma: T \rightarrow R_{\qq}/\pp$. As $R_{\qq}/\pp$ is a local ring, there is a maximal ideal $\mm$ of $T$ such that $\gamma$ factorizes as $T \xrightarrow{\text{can}} T_{\mm} \rightarrow R_{\qq}/\pp$. Then $T_{\mm} \simeq R_{\qq'}/\pp'$ for some $[\pp',\qq'] \in \mathcal{Y}$. Because the latter factorization produces a ring epimorphism $R_{\qq'}/\pp' \rightarrow R_{\qq}/\pp$, the interval $[\pp,\qq] \in \mathcal{X}$ has to be included in $[\pp',\qq]' \in \mathcal{Y}$. But $\mathcal{Y}$ is a nested subsystem of $\mathcal{X}$, and thus $[\pp,\qq] = [\pp',\qq']$ and consequently, $\mathcal{X} = \mathcal{Y}$.
	\end{proof}
	\begin{lem}\label{L:bireflnested}
			Let $R$ be a valuation domain, and $\mathcal{X}_0, \mathcal{X}_1$ be two nowhere dense admissible systems in $\Spec(R)$ such that $\mathcal{X}_0$ is a nested subsystem of $\mathcal{X}_1$. Let $\mathcal{B}_0$ and $\mathcal{B}_1$ be the extension-closed bireflective subcategories corresponding to $\mathcal{X}_0$ and $\mathcal{X}_1$, respectively. Then $\mathcal{B}_0 \subseteq \mathcal{B}_1$.
	\end{lem}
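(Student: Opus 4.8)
The plan is to use the explicit description of the bireflective subcategories from Lemma~\ref{L:birefl}. Recall that Lemma~\ref{L:birefl} gives, for each $i \in \{0,1\}$, the identification
$$\mathcal{B}_i = \{M \in \ModR \mid K(\qq,\pp) \otimes_R M \text{ is exact for all }(\qq,\pp) \in \mathcal{G}(\mathcal{X}_i)\} = \mathcal{C}_i \cap \mathcal{D}_i,$$
where $\mathcal{C}_i = \mathcal{C}_{\mathcal{X}_i}$ is the cosilting class associated to $\mathcal{X}_i$ via Theorem~\ref{T01}, and $\mathcal{D}_i$ is the class of those $M$ with $F_{\qq}(M) \in \ModRqq$ for every gap $(\qq,\pp) \in \mathcal{G}(\mathcal{X}_i)$. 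So the statement reduces to showing $\mathcal{C}_0 \cap \mathcal{D}_0 \subseteq \mathcal{C}_1 \cap \mathcal{D}_1$.

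First I would handle the cosilting classes: since $\mathcal{X}_0$ is a nested subsystem of $\mathcal{X}_1$, the inclusion $\mathcal{C}_0 \subseteq \mathcal{C}_1$ is immediate from the description in Lemma~\ref{L01}. Indeed, if $0 \neq m \in M$ has $\Ann_R(m) \in \langle \chi \rangle$ for some $\chi \in \mathcal{X}_0$, and $\chi \subseteq \xi$ for some $\xi \in \mathcal{X}_1$, then $\pp_\xi \subseteq \pp_\chi \subseteq \Ann_R(m) \subseteq \Ann_R(m)^\# \subseteq \qq_\chi \subseteq \qq_\xi$, so $\Ann_R(m) \in \langle \xi \rangle$; hence $M \in \mathcal{C}_1$.

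The heart of the matter is the inclusion $\mathcal{C}_0 \cap \mathcal{D}_0 \subseteq \mathcal{D}_1$, and this is exactly the content of Lemma~\ref{L:contain} once one observes the following: since both $\mathcal{X}_0$ and $\mathcal{X}_1$ are nowhere dense, the constant filtrations $\mathbb{X}^{(i)} = (\mathcal{X}_i \mid n \in \mathbb{Z})$ are admissible filtrations, and $\mathbb{X}^{(0)}$ is ``nested into'' $\mathbb{X}^{(1)}$ in the degreewise sense (each $\mathcal{X}_0$ at degree $n$ is a nested subsystem of $\mathcal{X}_1$ at degree $n+1$). Alternatively — and more directly — I would argue as in the proof of Lemma~\ref{L:contain}: given $M \in \mathcal{C}_0 \cap \mathcal{D}_0$ and $[\pp,\qq] \in \mathcal{X}_1$, by Lemma~\ref{L:denscons} (or directly Lemma~\ref{L:denseideal}) there is either a decreasing sequence of gaps $((\qq_\alpha,\pp_\alpha))_{\alpha<\lambda}$ in $\mathcal{G}(\mathcal{X}_1)$ with $\bigcap_\alpha \qq_\alpha = \qq$, or (if $[\pp,\qq]$ lies in a maximal dense interval) a strictly decreasing sequence of intervals with the same intersection property; since $\mathcal{X}_0$ is a nested subsystem of $\mathcal{X}_1$, each such gap $(\qq_\alpha,\pp_\alpha)$ is contained in a gap $(\qq'_\alpha,\pp'_\alpha) \in \mathcal{G}(\mathcal{X}_0)$. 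Then $M \in \mathcal{C}_0$ forces $\Gamma_{\qq_\alpha}(M) = \Gamma_{\qq'_\alpha}(M) = \Soc_{\pp'_\alpha}(M)$ by Lemma~\ref{L:compute}, so $F_{\qq_\alpha}(M) = F_{\qq'_\alpha}(M)$, which lies in $\ModRqqalpha$ because $M \in \mathcal{D}_0$; passing to the direct limit $F_{\qq}(M) = \varinjlim_\alpha F_{\qq_\alpha}(M) \in \ModRqq$. This shows $M \in \mathcal{D}_1$.

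Combining the two inclusions gives $\mathcal{B}_0 = \mathcal{C}_0 \cap \mathcal{D}_0 \subseteq \mathcal{C}_1 \cap \mathcal{D}_1 = \mathcal{B}_1$, as desired. The main obstacle I anticipate is purely bookkeeping: making sure the dichotomy of Lemma~\ref{L:denscons} is invoked correctly for the system $\mathcal{X}_1$ and that in the second (dense) branch one still extracts gaps of $\mathcal{X}_0$ containing the relevant intervals — but this is precisely the mechanism already set up in Remark~\ref{R:gap} and used in Lemma~\ref{L:contain}, so no genuinely new idea is needed; the proof is essentially a repackaging of those lemmas in the degenerate (constant-filtration) case.
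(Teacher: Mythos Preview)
Your direct argument is correct and is essentially the same as the paper's, just unpackaged. The paper streamlines it by extending $(\mathcal{X}_0,\mathcal{X}_1)$ to a single admissible filtration with $\mathcal{X}_n = \mathcal{X}_0$ for $n<0$ and $\mathcal{X}_n = \mathcal{X}_1$ for $n>1$, then reading off
\[
\mathcal{B}_0 = \mathcal{C}_0 \cap \mathcal{D}_0 = \mathcal{V}_{-1} \subseteq \mathcal{V}_0 = \mathcal{C}_0 \cap \mathcal{D}_1 \subseteq \mathcal{V}_1 = \mathcal{C}_1 \cap \mathcal{D}_1 = \mathcal{B}_1
\]
from Proposition~\ref{P:intervalstocoaisles}; the inclusion $\mathcal{V}_{-1} \subseteq \mathcal{V}_0$ is exactly Lemma~\ref{L:contain} applied to this filtration, which is what your ``alternatively'' paragraph spells out by hand.

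Two minor points. First, your initial suggestion of invoking Lemma~\ref{L:contain} via the \emph{constant} filtrations $\mathbb{X}^{(i)}$ does not work: for a constant filtration the lemma only says $\mathcal{C}_i \cap \mathcal{D}_i \subseteq \mathcal{D}_i$, which is vacuous. You need a filtration that actually has $\mathcal{X}_0$ at one degree and $\mathcal{X}_1$ at the next --- precisely the paper's choice. Second, since $\mathcal{X}_1$ is assumed nowhere dense, the dense branch of Lemma~\ref{L:denscons} never arises, so that part of your argument can be dropped.
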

	\begin{proof}
			By Lemma~\ref{L:birefl}, we have for all $i=0,1$ that
			$$\mathcal{B}_i = \mathcal{C}_i \cap \mathcal{D}_i,$$
			where $\mathcal{C}_i$ is the cosilting class corresponding to $\mathcal{X}_i$, and $\mathcal{D}_i$ consists of those modules $M$ such that $F_{\qq}(M) \in \ModRqq$ for all gaps $(\qq,\pp) \in \mathcal{G}(\mathcal{X}_i)$ with $\qq \in \Spec(R)$. Since $\mathcal{X}_i$ is nowhere dense for $i=0,1$, the sequence $(\mathcal{X}_0,\mathcal{X}_1)$ can extended to an admissible filtration by setting $\mathcal{X}_n = \mathcal{X}_0$ for $n<0$ and $\mathcal{X}_n = \mathcal{X}_1$ for $n>1$. Let $\mathcal{V}$ be the definable coaisle corresponding to this admissible filtration by Theorem~\ref{T02}. Then we simply observe that 
$$\mathcal{B}_0 =\mathcal{C}_0 \cap \mathcal{D}_0 = \mathcal{V}_{-1} \subseteq \mathcal{V}_0 = \mathcal{C}_0 \cap \mathcal{D}_1 \subseteq \mathcal{V}_1 = \mathcal{C}_1 \cap \mathcal{D}_1 =\mathcal{B}_1,$$
which establishes the proof.
	\end{proof}
	\begin{thm}\label{T:homepi}
		Let $R$ be a valuation domain and $\mathcal{V}$ be a definable coaisle in $\Der(R)$. Then the two following conditions are equivalent:
			\begin{enumerate}
				\item[(i)] the admissible filtration $\mathbb{X}$ corresponding to $\mathcal{V}$ via Theorem~\ref{T02} is nowhere dense,
				\item[(ii)] $\mathcal{V}$ arises from a chain of homological ring epimorphisms as in Proposition~\ref{P:construction}.
			\end{enumerate}
	\end{thm}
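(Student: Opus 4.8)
The plan is to read off both implications from the bijection $\Xi,\Theta$ of Theorem~\ref{T02}, once one has identified the construction of Proposition~\ref{P:construction} with $\Xi$ over nowhere dense filtrations. The key computational input is the following claim: \emph{if $\mathcal{X}$ is a nowhere dense admissible system in $\Spec(R)$ and $\mathcal{B}\subseteq\ModR$ is the extension-closed bireflective subcategory associated to it via Theorem~\ref{T:epiclass} and Theorem~\ref{T:BSepi}, then $\Cogen(\mathcal{B})=\mathcal{C}_{\mathcal{X}}$}, where $\mathcal{C}_{\mathcal{X}}$ is the cosilting class of Theorem~\ref{T01}. I would prove this by Lemma~\ref{L:cyclic}, comparing cyclic modules: the inclusion $\mathcal{B}\subseteq\mathcal{C}_{\mathcal{X}}$ of Lemma~\ref{L:birefl} gives $\Cogen(\mathcal{B})\subseteq\mathcal{C}_{\mathcal{X}}$ since $\mathcal{C}_{\mathcal{X}}$ is closed under submodules and products; conversely, if $R/I\in\mathcal{C}_{\mathcal{X}}$ then $I\in\langle\pp,\qq\rangle$ for some $[\pp,\qq]\in\mathcal{X}$ by Lemma~\ref{L04}, so $I$ is an $R_{\qq}$-submodule of $R_{\qq}$ by Lemma~\ref{L23} (as $I^\#\subseteq\qq$), the canonical map $R/I\to R_{\qq}/I$ is a monomorphism (its kernel being $\Gamma_{\qq}(R/I)=0$), and $R_{\qq}/I$ is a module over the ring $R_{\qq}/\pp$ (as $\pp=\pp R_{\qq}\subseteq I$). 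Since $R_{\qq}/\pp$ is, up to epiclass, a localization of the ring $S$ with $\mathcal{B}\simeq\ModS$ (Theorem~\ref{T:BSepi}), it lies in $\mathcal{B}$, hence so does $R_{\qq}/I$ by bireflectivity, and therefore $R/I\in\Cogen(\mathcal{B})$.

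Building on this, let $\mathbb{X}=(\mathcal{X}_n)_{n\in\mathbb{Z}}$ be a nowhere dense admissible filtration (equivalently a nested sequence of nowhere dense admissible systems), and let $\mathcal{B}_n$ be the extension-closed bireflective subcategory corresponding to $\mathcal{X}_n$. By Lemma~\ref{L:bireflnested} we get $\mathcal{B}_n\subseteq\mathcal{B}_{n+1}$, so that $(\mathcal{B}_n)_{n\in\mathbb{Z}}$ is, up to epiclasses, a chain of homological ring epimorphisms. I claim the definable coaisle assigned to this chain by Proposition~\ref{P:construction} equals $\Xi(\mathbb{X})$. On cohomology it has components $\Cogen(\mathcal{B}_n)\cap\mathcal{B}_{n+1}$, while $\Xi(\mathbb{X})$ has components $\mathcal{C}_{\mathcal{X}_n}\cap\mathcal{D}_{n+1}$; these coincide because $\Cogen(\mathcal{B}_n)=\mathcal{C}_{\mathcal{X}_n}$ by the previous paragraph, $\mathcal{B}_{n+1}=\mathcal{C}_{\mathcal{X}_{n+1}}\cap\{M\mid H^1(K(\qq,\pp)\otimes_R M)=0\ \forall(\qq,\pp)\in\mathcal{G}(\mathcal{X}_{n+1})\}$ by Lemma~\ref{L:birefl} together with Lemma~\ref{L:Dn}, one has $\mathcal{C}_{\mathcal{X}_n}\subseteq\mathcal{C}_{\mathcal{X}_{n+1}}$ from Lemma~\ref{L01} and the nesting, and, since $\mathcal{H}(\mathcal{X}_{n+1})=\emptyset$, Lemma~\ref{L:tensordesc} shows that for $M\in\mathcal{C}_{\mathcal{X}_n}$ the condition $M\in\mathcal{D}_{n+1}$ is \emph{exactly} the vanishing of all $H^1(K(\qq,\pp)\otimes_R M)$ with $(\qq,\pp)\in\mathcal{G}(\mathcal{X}_{n+1})$.

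These two facts dispatch both implications. For $(i)\Rightarrow(ii)$: if $\mathbb{X}=\Theta(\mathcal{V})$ is nowhere dense, then the chain $(\mathcal{B}_n)_{n\in\mathbb{Z}}$ above produces, via Proposition~\ref{P:construction}, the coaisle $\Xi(\mathbb{X})=\Xi(\Theta(\mathcal{V}))=\mathcal{V}$ by Theorem~\ref{T02}, so $\mathcal{V}$ arises from a chain of homological ring epimorphisms. For $(ii)\Rightarrow(i)$: say $\mathcal{V}$ is the coaisle attached by Proposition~\ref{P:construction} to a chain $(\mathcal{B}_n)_{n\in\mathbb{Z}}$ of extension-closed bireflective subcategories, and let $\mathcal{Y}_n$ be the nowhere dense admissible system with $\mathcal{B}_n\leftrightarrow\mathcal{Y}_n$. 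I first check that $\mathcal{Y}_n$ is a nested subsystem of $\mathcal{Y}_{n+1}$: for $[\pp,\qq]\in\mathcal{Y}_n$ the ring $R_{\qq}/\pp$ lies in $\mathcal{B}_n\subseteq\mathcal{B}_{n+1}$ by Theorem~\ref{T:BSepi}, hence the epimorphism $R\to R_{\qq}/\pp$ factors through $R\to S_{n+1}$ and, by locality of $R_{\qq}/\pp$, through a localization $(S_{n+1})_{\nn}\cong R_{\qq'}/\pp'$ with $[\pp',\qq']\in\mathcal{Y}_{n+1}$, and a ring epimorphism $R_{\qq'}/\pp'\to R_{\qq}/\pp$ forces $\pp'\subseteq\pp$ and $\qq\subseteq\qq'$, i.e. $[\pp,\qq]\subseteq[\pp',\qq']$ (alternatively invoke Lemma~\ref{L:mc}(ii)). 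Thus $(\mathcal{Y}_n)_{n\in\mathbb{Z}}$ is a nowhere dense admissible filtration, and by the previous paragraph $\Xi((\mathcal{Y}_n)_{n\in\mathbb{Z}})$ has the same cohomological components as $\mathcal{V}$, whence $\Xi((\mathcal{Y}_n)_{n\in\mathbb{Z}})=\mathcal{V}$ by Proposition~\ref{P:moduletheoretic}; therefore $\Theta(\mathcal{V})=(\mathcal{Y}_n)_{n\in\mathbb{Z}}$ by Theorem~\ref{T02}, which is nowhere dense.

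The main obstacle is the identity $\Cogen(\mathcal{B})=\mathcal{C}_{\mathcal{X}}$ together with the bookkeeping needed to match the classes $\mathcal{D}_n$ with the bireflective subcategories $\mathcal{B}_n$ — in particular, reconciling the description of $\mathcal{D}_n$ via right endpoints of intervals with the description via gaps, which is exactly why the nowhere density of the $\mathcal{X}_n$ is needed — so that the output of Proposition~\ref{P:construction} coincides on the nose with $\Xi$. Once this is in place, both directions are formal consequences of the bijection of Theorem~\ref{T02}.
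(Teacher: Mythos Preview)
Your proof is correct, and the overall architecture matches the paper's: both directions reduce to identifying the coaisle produced by Proposition~\ref{P:construction} from the chain $(\mathcal{B}_n)$ with $\Xi(\mathbb{X})$, and then invoking the bijection of Theorem~\ref{T02}. The direction $(ii)\Rightarrow(i)$ is essentially identical to the paper's, including the ring-epimorphism factorisation through $(S_{n+1})_{\nn}$ to establish nesting of the $\mathcal{Y}_n$.

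The genuine difference is in how you carry out the identification for $(i)\Rightarrow(ii)$. The paper works at the level of \emph{intervals}: it lets $\mathbb{X}'=\Theta(\mathcal{V}')$ and shows $\mathcal{X}_n=\mathcal{X}'_n$ by checking, for each interval $[\pp,\qq]$, that the single test module $R_{\pp}/\qq$ lies in the appropriate $\mathcal{V}_n$ or $\mathcal{V}'_n$, appealing to Lemma~\ref{L:mc}(ii). You instead work at the level of \emph{module classes}: you prove the clean identity $\Cogen(\mathcal{B}_n)=\mathcal{C}_{\mathcal{X}_n}$ (via cyclics and Lemma~\ref{L:cyclic}) and then match $\mathcal{C}_{\mathcal{X}_n}\cap\mathcal{B}_{n+1}$ with $\mathcal{C}_{\mathcal{X}_n}\cap\mathcal{D}_{n+1}$ using Lemma~\ref{L:tensordesc} and the vanishing of $\mathcal{H}_{n+1}$. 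Your route yields the identity $\Cogen(\mathcal{B})=\mathcal{C}_{\mathcal{X}}$ as a standalone byproduct, which is pleasant and not stated explicitly in the paper; the paper's route is a bit more economical since it only tests the specific uniserials $R_{\pp}/\qq$ rather than computing the full cohomological components. One small point to make explicit in your write-up: invoking Lemma~\ref{L:cyclic} to compare $\Cogen(\mathcal{B})$ and $\mathcal{C}_{\mathcal{X}}$ requires knowing that $\Cogen(\mathcal{B})$ is itself a definable torsion-free class, which you should cite from the proof of Proposition~\ref{P:construction}.
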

	\begin{proof}
			Let us start with a definable coaisle $\mathcal{V}$ corresponding in the sense of Theorem~\ref{T02} to a nowhere dense admissible filtration $\mathbb{X}$. By Lemma~\ref{L:bireflnested}, the admissible filtration $\mathbb{X}$ induces a sequence of extension-closed bireflective subcategories
			\begin{equation}\label{E:chainbirefl}\cdots \subseteq \mathcal{B}_{n-1} \subseteq  \mathcal{B}_n \subseteq \mathcal{B}_{n+1} \subseteq \cdots,\end{equation}
			and thus, by the discussion in (\ref{SS:homological}), a chain of homological epimorphisms. Let
			$$\mathcal{V'} = \{X \in \Der(R) \mid H^n(X) \in \Cogen(\mathcal{B}_n) \cap \mathcal{B}_{n+1} ~\forall n \in \mathbb{Z}\}$$
			be the definable coaisle induced by this chain via Proposition~\ref{P:construction}. Let $\mathbb{X}'$ be the admissible filtration corresponding to $\mathcal{V}'$ by Theorem~\ref{T02}. Fix an integer $n \in \mathbb{Z}$. If $[\pp,\qq] \in \mathcal{X}_n$, then $R_{\qq}/\pp \in \mathcal{B}_n$, which implies that $R_{\pp}/\qq \in \mathcal{B}_n$, and thus $R_{\pp}/\qq \in \Cogen(\mathcal{B}_n) \cap \mathcal{B}_{n+1}$. This means that $[\pp,\qq]$ is contained in some interval from $\mathcal{X}'_{n}$ by Lemma~\ref{L:mc}(ii). On the other hand, let $[\pp,\qq] \in \mathcal{X}'_n$. Then $R_{\pp}/\qq \in \Cogen(\mathcal{B}_n)$. But by Lemma~\ref{L:birefl} and Proposition~\ref{P:tor}, $\mathcal{B}_n$ is contained in the cosilting class $\mathcal{C}_n$ corresponding to $\mathcal{X}_n$ via Theorem~\ref{T01}, and thus $R_{\pp}/\qq$ belongs to $\mathcal{C}_n$. But as $R_{\pp}/\qq$ is an $R_{\qq}/\pp$-module, it also belongs to $\mathcal{D}_n$ (as defined in \S 7, also see Proposition~\ref{P:sameint}), and whence to $\mathcal{V}_n$ by Lemma~\ref{L:contain}. This implies that $[\pp,\qq]$ is contained in some interval from $\mathcal{X}_n$ again by Lemma~\ref{L:mc}(ii). Using the disjoint property of admissible systems, we showed that $\mathbb{X} = \mathbb{X}'$, and thus $\mathcal{V} = \mathcal{V}'$ by Theorem~\ref{T02}. In particular, $\mathcal{V}$ is induced by a chain of homological epimorphisms.

			For the converse, let $\mathcal{V}$ be a definable coaisle arising from a sequence (\ref{E:chainbirefl}) of extension-closed bireflective subcategories. For each $n \in \mathbb{Z}$, let $\mathcal{X}_n$ be a nowhere dense admissible system corresponding to $\mathcal{B}_n$ via Theorem~\ref{T:BSepi}. First, we claim that the sequence $\mathbb{X} = (\mathcal{X}_n \mid n \in \mathbb{Z})$ is an admissible filtration. Since the admissible systems $\mathcal{X}_n$ are nowhere dense, it is enough to show that $\mathcal{X}_n$ is a nested subsystem of $\mathcal{X}_{n+1}$ for each $n \in \mathbb{Z}$. Let $\mu_n: S_{n+1} \rightarrow S_n$ be a homological epimorphism induced by the inclusion $\mathcal{B}_n \subseteq \mathcal{B}_{n+1}$. If $[\pp,\qq] \in \mathcal{X}_n$ then Theorem~\ref{T:BSepi} implies that there is a ring epimorphism $S_n \rightarrow R_{\qq}/\pp$, and therefore we have a ring epimorphism $\nu: S_{n+1} \rightarrow R_{\qq}/\pp$. Let $\oo = \nu^{-1}[\qq/\pp] \in \Spec(S_{n+1})$, and let $\nn \in \mSpec(S_{n+1})$ be any maximal ideal of $S_{n+1}$ containing $\oo$. Then we have a chain of ring epimorphism as follows:
			$$S_{n+1} \xrightarrow{\text{can}} (S_{n+1})_{\nn} \xrightarrow{\nu \otimes_R (S_{n+1})_{\nn}} R_{\qq}/\pp.$$
			By Theorem~\ref{T:BSepi}, $(S_{n+1})_{\nn}$ is isomorphic to $R_{\qq'}/\pp'$ for some interval $[\pp',\qq'] \in \mathcal{X}_{n+1}$. Then we have a ring epimorphism $R_{\qq'}/\pp' \rightarrow R_{\qq}/\pp$, which implies that $[\pp',\qq']$ contains $[\pp,\qq]$. We showed that $\mathbb{X}$ is an admissible filtration.

			Let $\mathcal{V}'$ be a definable coaisle corresponding to $\mathbb{X}$ via Theorem~\ref{T02}. Then $\mathcal{V}=\mathcal{V}'$ by the first part of the proof.
	\end{proof}
	\begin{remark}\label{R:dense}
		\begin{itemize}
				\item If $R$ is a valuation domain such that $\Spec(R)$ is countable, then each admissible system is nowhere dense. Indeed, if $\mathcal{X}$ is an admissible system on $\Spec(R)$, then $\mathcal{X}$ is countable as well. If there was a dense interval $\xi < \chi$ in $(\mathcal{X},\leq)$, then the restriction of the order to this interval would yield a non-trivial countable totally ordered set which is order-complete and dense. By a classical result of Cantor, any countable dense totally ordered set embeds into $(\mathbb{Q},\leq)$, a contradiction with the order-completeness. Therefore, for any valuation domain with a countable Zariski spectrum, any admissible filtration is nowhere dense, and thus by Theorem~\ref{T:homepi} any definable coaisle is induced by a chain of homological ring epimorphisms.
				\item On the other hand, \cite[Example 5.1 and Remark 5.2]{B2} and Example~\ref{EX0} provide examples of definable coaisles over a valuation domain \EMP{not} induced by a chain of homological ring epimorphisms.
		\end{itemize}
	\end{remark}
	\subsection{Compactly generated t-structures revisited}
	Let $R$ be a valuation domain. Proposition~\ref{P:cg} shows that the admissible filtrations $\mathbb{X}=(\mathcal{X}_n \mid n \in \mathbb{Z})$ corresponding to compactly generated t-structures are precisely those such that $\mathcal{X}_n$ is either empty or a singleton consisting of an interval of the form $[0,\qq_n]$ for some $\qq_n \in \Spec(R)$. On the other hand, such admissible systems are precisely those corresponding to flat ring epimorphisms via Theorem~\ref{T:BSepi}, since flat ring epimorphisms over $R$ coincide with the classical localizations by \cite[Proposition 5.4]{BS}. In this way, we obtain the following result:
	\begin{thm}\label{T:homepicomp}
		Let $R$ be a valuation domain and $\mathcal{V}$ be a definable coaisle in $\Der(R)$. Then the two following conditions are equivalent:
			\begin{enumerate}
				\item[(i)] the t-structure $(\mathcal{U},\mathcal{V})$ is compactly generated,
				\item[(ii)] $\mathcal{V}$ arises from a chain of flat ring epimorphisms as in Proposition~\ref{P:construction}.
			\end{enumerate}
	\end{thm}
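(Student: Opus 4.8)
The plan is to reduce everything to the two classifications already available: Proposition~\ref{P:cg}, which identifies the compactly generated t-structures with those whose admissible filtration has each member empty or a singleton $\{[0,\qq_n]\}$, and Theorem~\ref{T:homepi}, which identifies the coaisles induced by chains of homological ring epimorphisms with those having a nowhere dense admissible filtration. The key observation is that ``nowhere dense admissible system which is empty or a singleton of the form $\{[0,\qq]\}$'' is, via Theorem~\ref{T:BSepi}, exactly the admissible system attached to a \emph{flat} ring epimorphism over $R$.

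First I would fix the admissible filtration $\mathbb{X}=(\mathcal{X}_n\mid n\in\mathbb{Z})$ associated to $\mathcal{V}$ through Theorem~\ref{T02}. For the implication $(i)\Rightarrow(ii)$, Proposition~\ref{P:cg} gives that each $\mathcal{X}_n$ is either empty or of the form $\{[0,\qq_n]\}$; such systems have at most one element and hence admit no dense intervals, so $\mathbb{X}$ is nowhere dense. By Theorem~\ref{T:homepi}, $\mathcal{V}$ then arises via Proposition~\ref{P:construction} from a chain $\cdots\leftarrow S_{n-1}\leftarrow S_n\leftarrow\cdots$ of homological ring epimorphisms, and by the construction in the proof of Theorem~\ref{T:homepi} (together with Theorem~\ref{T:BSepi}) the map $\lambda_n\colon R\to S_n$ is the homological epimorphism attached to the admissible system $\mathcal{X}_n$. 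It then remains only to identify $S_n$: inspecting Theorem~\ref{T:BSepi}, if $\mathcal{X}_n=\emptyset$ then $S_n$ has no maximal ideal, so $S_n=0$; if $\mathcal{X}_n=\{[0,\qq_n]\}$ then the localization of $S_n$ at its unique maximal ideal is $R_{\qq_n}/0=R_{\qq_n}$, and since $R$ is local this forces $S_n\simeq R_{\qq_n}$ (with $S_n=R$ when $\qq_n=\mm$). In every case $\lambda_n$ is a classical localization of $R$, hence flat, so $\mathcal{V}$ arises from a chain of flat ring epimorphisms.

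For $(ii)\Rightarrow(i)$, suppose $\mathcal{V}$ is induced by a chain of homological ring epimorphisms $\lambda_n\colon R\to S_n$ with every $\lambda_n$ flat. By \cite[Proposition~5.4]{BS}, a flat ring epimorphism over the valuation domain $R$ is a classical localization, so each $S_n$ is, up to epiclass, one of $0$, $R$, or $R_{\qq_n}$ for some prime $\qq_n$. By Theorem~\ref{T:BSepi} the nowhere dense admissible system $\mathcal{X}_n$ attached to $\lambda_n$ is therefore $\emptyset$ or $\{[0,\qq_n]\}$, and in particular the chain is nowhere dense; hence by (the proof of) Theorem~\ref{T:homepi} the admissible filtration corresponding to $\mathcal{V}$ is precisely $(\mathcal{X}_n\mid n\in\mathbb{Z})$. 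Since this sequence satisfies condition (ii) of Proposition~\ref{P:cg}, the t-structure $(\mathcal{U},\mathcal{V})$ is compactly generated.

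The argument is essentially bookkeeping over previously established equivalences; the only slightly delicate point, and the one I would write out with care, is the explicit computation identifying the ring $S_n$ associated via Theorem~\ref{T:BSepi} to the admissible systems $\emptyset$ and $\{[0,\qq_n]\}$ --- that is, checking that these correspond exactly to the classical localizations of $R$, which is where \cite[Proposition~5.4]{BS} enters.
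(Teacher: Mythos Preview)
Your proposal is correct and follows essentially the same approach as the paper: both arguments combine Proposition~\ref{P:cg} (characterizing compactly generated t-structures via admissible filtrations of the form $\mathcal{X}_n=\emptyset$ or $\{[0,\qq_n]\}$), Theorem~\ref{T:BSepi}, and \cite[Proposition~5.4]{BS} (identifying flat ring epimorphisms over $R$ with classical localizations) to match these special admissible systems with flat epimorphisms. Your version is somewhat more explicit in routing through Theorem~\ref{T:homepi}, while the paper compresses the argument into the paragraph preceding the statement, but the substance is the same.
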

	\subsection{Non-degeneracy and unbounded cosilting objects}
	The final goal is to restrict Theorem~\ref{T02} to those definable coaisles, which belong to non-degenerate t-structures. In other words, to identify those definable coaisles, which are induced by a pure-injective cosilting object of $\Der(R)$ (see Corollary~\ref{C:rosie3}). We will show that the right part of the non-degeneracy condition can only be achieved if the coaisle is cohomologically bounded below. On the other hand, we will exhibit in Example~\ref{EX1} a definable coaisle for which the left part of the non-degeneracy condition is achieved non-trivially. In other words, any coaisle induced by a pure-injective cosilting complex over a valuation domain is cohomologically bounded from below, but there are such which are not co-intermediate. In particular, any pure-injective cosilting complex over a valuation domain is cohomologically bounded below, but there are pure-injective cosilting complexes which are \EMP{not} bounded cosilting complexes.
	\begin{lem}\label{L:BW}
		Let $R$ be a valuation domain and $\mathcal{V}$ be a definable coaisle Then $\bigcap_{n \in \mathbb{Z}} \mathcal{V}[n] = 0$ if and only if there is $l \in \mathbb{Z}$ such that $\mathcal{V} \subseteq \Der^{>l}$.
	\end{lem}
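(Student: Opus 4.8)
The plan is to translate both sides into statements about the increasing chain of definable subcategories $\mathcal{V}_n=\{H^n(X)\mid X\in\mathcal{V}\}$ of $\ModR$ provided by Theorem~\ref{T:cohomology} and Proposition~\ref{P:definablecorr}, and then to finish with a quasi-compactness argument in the Ziegler spectrum of $\ModR$.

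First I would unwind the cohomological meaning of the two conditions. Since $\mathcal{V}$ is determined on cohomology, one has $X\in\mathcal{V}[n]$ exactly when $H^k(X)\in\mathcal{V}_{k+n}$ for all $k$, hence $X\in\bigcap_{n\in\mathbb{Z}}\mathcal{V}[n]$ exactly when $H^k(X)\in\bigcap_{m\in\mathbb{Z}}\mathcal{V}_m$ for all $k$; therefore $\bigcap_n\mathcal{V}[n]=0$ if and only if the definable subcategory $\bigcap_m\mathcal{V}_m$ of $\ModR$ is zero (a nonzero $M$ in it gives the nonzero object $M[0]\in\bigcap_n\mathcal{V}[n]$, and if it is zero then every $X\in\bigcap_n\mathcal{V}[n]$ has vanishing cohomology, so $X=0$). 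On the other side $\mathcal{V}\subseteq\Der^{>l}$ means precisely $\mathcal{V}_k=0$ for all $k\le l$, and because the $\mathcal{V}_n$ increase this holds for some $l$ if and only if $\mathcal{V}_l=0$ for some $l$. So the lemma reduces to the module-theoretic equivalence: $\bigcap_m\mathcal{V}_m=0$ if and only if $\mathcal{V}_l=0$ for some $l\in\mathbb{Z}$.

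The implication ``$\Leftarrow$'' is immediate since $\bigcap_m\mathcal{V}_m\subseteq\mathcal{V}_l$. For ``$\Rightarrow$'' I would argue by contraposition: assuming $\mathcal{V}_n\neq 0$ for every $n$, each $\mathcal{V}_n\cap\Zgs(R)$ is a nonempty closed subset of the Ziegler spectrum $\Zgs(R)$ (nonempty by the Ziegler correspondence in $\ModR$, a nonzero definable subcategory always meeting $\Zgs(R)$; closed by definition of the Ziegler topology), and these closed sets increase with $n$. Thus $\{\mathcal{V}_{-k}\cap\Zgs(R)\}_{k\ge 0}$ is a descending chain of nonempty closed subsets; since $\Zgs(R)$ is quasi-compact (\cite[Theorem 5.1.23]{P}), such a chain has nonempty intersection — otherwise the complements would form an open cover of $\Zgs(R)$ with no finite subcover. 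Any indecomposable pure-injective lying in $\bigcap_{n}(\mathcal{V}_n\cap\Zgs(R))=\bigcap_{k}(\mathcal{V}_{-k}\cap\Zgs(R))$ is then a nonzero object of $\bigcap_m\mathcal{V}_m$, so $\bigcap_m\mathcal{V}_m\neq 0$, which is what was needed.

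The only non-formal ingredient is the quasi-compactness of the Ziegler spectrum, plus the elementary fact that a descending chain of nonempty closed sets in a quasi-compact space has nonempty intersection; everything else is bookkeeping with the cohomological description of definable subcategories, and the valuation hypothesis enters only through Theorem~\ref{T:cohomology}. I expect the main pitfall to be the temptation to exhibit an explicit witness in $\bigcap_m\mathcal{V}_m$ using the admissible filtration of Definition~\ref{D:phipsi}: the natural candidate, the residue field at the limit prime $\bigcap_n\bigcap\mathcal{K}_n$, need \emph{not} belong to all $\mathcal{V}_n$ when the minimal supports $\bigcap\mathcal{K}_n$ descend strictly, which is exactly why I would route the proof through compactness rather than through the classification. (Equivalently one could run the same compactness argument directly with the chain $\mathcal{V}[n]\cap\Zgs(\Der(R))$ inside $\Zgs(\Der(R))$; passing to $\ModR$ merely puts us in the setting where the needed quasi-compactness is standard.)
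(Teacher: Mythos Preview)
Your proof is correct and takes a genuinely different route from the paper. The paper argues by contradiction via the admissible filtration: assuming each $\mathcal{K}_n\neq\emptyset$, it picks primes $\pp_n\in\mathcal{K}_{-n}$, extracts a monotone subsequence by a Bolzano--Weierstrass argument (using that $\Spec(R)$ is totally ordered), and then invokes the completeness machinery of Lemma~\ref{L:nogaps} to show the limit prime lies in every $\mathcal{K}_n$, so $\kappa(\pp)\in\bigcap_n\mathcal{V}_n$. You instead reduce everything to the module level via Theorem~\ref{T:cohomology} and then use only quasi-compactness of $\Zgs(R)$ to conclude that the descending chain of nonempty closed sets $\mathcal{V}_{-k}\cap\Zgs(R)$ has nonempty intersection. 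Your argument is shorter, avoids Lemma~\ref{L:nogaps} (and hence the derivator lifting result \cite[Theorem~A]{SSV} behind it), and in fact works verbatim over any ring of weak global dimension at most one, not just valuation domains. The paper's approach, by contrast, produces an explicit witness $\kappa(\pp)$ in $\bigcap_n\mathcal{V}_n$ and ties the result into the classification. Your closing remark that the explicit-prime approach ``need not'' work is too pessimistic: the paper shows it does, precisely because $\mathcal{K}_n$ is closed under limits of chains by Lemma~\ref{L:nogaps}; but this does not affect the validity of your argument.
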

	\begin{proof}
			The ``if'' statement is trivial, thus we need to show just the ``only if'' implication. Let $\Theta(\mathcal{V}) = (\mathcal{X}_n \mid n \in \mathbb{Z})$ be the admissible filtration corresponding to $\mathcal{V}$. Since $\bigcap_{n \in \mathbb{Z}} \mathcal{V}[n] = 0$, for each prime $\pp \in \Spec(R)$ there is an integer $m_{\pp}$ such that $\kappa(\pp)[n] \not\in \mathcal{V}$ for any $n>m_{\pp}$. Therefore, if we let $\mathcal{K}_n = \{\pp \in \Spec(R) \mid \kappa(\pp)[-n] \in \mathcal{V}\}$ be the subset of $\Spec(R)$ used in Definition~\ref{D:phipsi} for each $n \in \mathbb{Z}$, we see that $\bigcap_{n \in \mathbb{Z}}\mathcal{K}_n = \emptyset$.

		It is enough to show that there is $l \in \mathbb{Z}$ such that $\mathcal{K}_l = \emptyset$. Indeed, then necessarily $\mathcal{X}_l = \emptyset$, and thus $\mathcal{V} \subseteq \Der^{>l}$ by Theorem~\ref{T02}. Towards contradiction, suppose that $\mathcal{K}_n \neq \emptyset$ for all $n \in \mathbb{Z}$, and choose for each $n > 0$ a prime ideal $\pp_n \in \mathcal{K}_{-n}$. 

		We claim that the sequence $(\pp_n \mid n > 0)$ contains an infinite monotone subsequence. This follows by adapting the classical Bolzano-Weierstrass Theorem from the theory of metric spaces to our situation. Indeed, let $B \subseteq \mathbb{Z}_{>0}$ be the set of all those positive integers $b$ such that $\pp_n \subsetneq \pp_b$ for all $n>b$. If $B$ is an infinite set, then the subsequence $(\pp_b \mid b \in B)$ is clearly strictly decreasing, and we are done. If otherwise $B$ is finite, let $b \in B$ be its maximal element. Define an increasing sequence $k_1,k_2,k_3,\ldots$ of positive integers by the following induction. Set $k_1 = b+1$. For each $m > 1$, we have by induction that $k_{m-1} \geq k_1 > b$, and thus $k_{m-1} \not\in B$. Therefore, there is $k_m > k_{m-1}$ such that $\pp_{k_{m-1}} \subseteq \pp_{k_m}$. In this way, we have defined an increasing subsequence $(\pp_{k_n} \mid n>0)$, establishing the claim.

			Let $\pp$ be the limit of the monotone subsequence of $(\pp_{k_n} \mid n > 0)$ obtained in the previous paragraph, that is, $\pp$ is either the union or the intersection of such sequence, depending on whether the subsequence is increasing or decreasing. By the proof of Lemma~\ref{L:nogaps}, and since $\{\pp_{k_m} \mid m \geq -n\} \subseteq \mathcal{K}_n$, we have $\pp \in \mathcal{K}_n$ for all $n<0$. Therefore, $\pp \in \bigcap_{n \in \mathbb{Z}} \mathcal{K}_n$, which is a contradiction.
	\end{proof}
	\begin{cor}\label{C:bbelow}
		Let $R$ be a valuation domain. Then any pure-injective cosilting object in $\Der(R)$ is cohomologically bounded below.
	\end{cor}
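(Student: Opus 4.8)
The plan is to derive the statement almost immediately from Lemma~\ref{L:BW} together with the characterization of pure-injective cosilting objects recorded in Corollary~\ref{C:rosie3}. Let $C$ be a pure-injective cosilting object in $\Der(R)$, and let $\mathbf{t} = (\Perp{\leq 0}C,\Perp{>0}C)$ be the t-structure it induces. The first step is to recall that any cosilting t-structure is non-degenerate (this was noted right after the definition of a cosilting object) and that, by Corollary~\ref{C:rosie3} (equivalently Theorem~\ref{T:rosie2}), the coaisle $\mathcal{V} := \Perp{>0}C$ is a definable subcategory of $\Der(R)$. In particular the right-hand non-degeneracy condition gives $\bigcap_{n \in \mathbb{Z}}\mathcal{V}[n] = 0$.

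The second step is to invoke Lemma~\ref{L:BW}, which applies precisely to definable coaisles over a valuation domain: from $\bigcap_{n \in \mathbb{Z}}\mathcal{V}[n] = 0$ we obtain an integer $l \in \mathbb{Z}$ such that $\mathcal{V} \subseteq \Der^{>l}$, i.e. $H^n(X) = 0$ for all $X \in \mathcal{V}$ and all $n \leq l$.

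The final step is to observe that $C$ itself lies in $\mathcal{V}$: indeed, among the basic consequences of the cosilting definition (cf.\ \cite[Proposition 4.3]{PV}) is that $C \in \Perp{>0}C = \mathcal{V}$. Combining this with $\mathcal{V} \subseteq \Der^{>l}$ yields $H^n(C) = 0$ for all $n \leq l$, which is exactly the assertion that $C$ is cohomologically bounded below.

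I do not expect any real obstacle here: once Lemma~\ref{L:BW} and Corollary~\ref{C:rosie3} are in place, the corollary is a two-line deduction, the only mild subtlety being to remember that a cosilting object always belongs to its own coaisle, which is what lets us transfer the boundedness of the whole coaisle to $C$.
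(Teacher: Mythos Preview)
Your proposal is correct and follows essentially the same argument as the paper: invoke Corollary~\ref{C:rosie3} (and Theorem~\ref{T:rosie2}) to see that the coaisle $\mathcal{V}=\Perp{>0}C$ is definable and the t-structure non-degenerate, apply Lemma~\ref{L:BW} to obtain $\mathcal{V}\subseteq\Der^{>l}$, and conclude using $C\in\mathcal{V}$. The paper's proof is exactly this two-line deduction.
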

	\begin{proof}
		Let $(\mathcal{U},\mathcal{V}) = (\Perp{\leq 0}C,\Perp{>0}C)$ be the t-structure in $\Der(R)$ induced by $C$. By Theorem~\ref{T:rosie2} and Corollary~\ref{C:rosie3}, $(\mathcal{U},\mathcal{V})$ is a non-degenerate t-structure such that $\mathcal{V}$ is definable. Therefore, there is $l \in \mathbb{Z}$ such that $\mathcal{V} \subseteq \Der^{>l}$ by Lemma~\ref{L:BW}. Since $C \in \mathcal{V}$, the claim follows.
	\end{proof}
	We conclude the paper with several examples of definable coaisles. Examples \ref{EX3} and \ref{EX2} illustrate that for the second part of the non-degeneracy condition, it is not enough to consider the ``support'' sets $\mathcal{K}_n$, and that it is also not enough to assume that the smallest admissible system containing $\mathcal{X}_n$ for all $n \in \mathbb{Z}$ is the maximal one, that is $\{[0,\mm]\}$. The promised Example~\ref{EX1} exhibits a t-structure induced by a non-bounded cosilting complex.
\begin{example}\label{EX3}
		Let $R$ be a valuation domain such that $\Spec(R) = \{0,\mm\}$ and such that $\mm = \mm^2$. Such a valuation domain can be constructed by the means of \cite[\S II, Theorem 3.8]{FS} with the value group chosen for example as $\mathbb{R}$. Consider the admissible filtration $\mathbb{X}$ defined as follows:
		$$
		\mathcal{X}_n = \begin{cases}
			\emptyset, & n<0 \\
			\{[0,0],[\mm,\mm]\}, & n \geq 0.
		\end{cases}
		$$
			Then $\mathbb{X}$ is nowhere dense and corresponds via Theorem~\ref{T:homepi} to a chain of homological epimorphism of the following form
		$$\cdots \leftarrow 0 \leftarrow 0 \leftarrow Q \times R/\mm \leftarrow Q \times R/\mm  \leftarrow Q \times R/\mm  \leftarrow \cdots$$
		We claim that the corresponding t-structure $(\mathcal{U},\mathcal{V})$ is \textit{not} non-degenerate. Note that clearly $\bigcap_{n \in \mathbb{Z}}\mathcal{V}[n]=0$. Set $\mathcal{L} = \bigcap_{n \in \mathbb{Z}}\mathcal{U}[n]$. Since 
		$$\mathcal{L} = \Perp{\mathbb{Z}}\mathcal{V} = \{X \in \Der(R) \mid \Hom{}_{\Der(R)}(X,V[i])=0 ~\forall V \in \mathcal{V}, i \in \mathbb{Z}\},$$ 
		we have that $\mathcal{L} = 0$ if and only if $(\Perp{\mathbb{Z}}\mathcal{V})\Perp{0} =  \Der(R)$. But clearly $\mathcal{V} \subseteq \Der(Q \times R/\mm)$, where the derived category of the homological epimorphism $R \rightarrow Q \times R/\mm$ is viewed as a full subcategory of $\Der(R)$. Since $\Der(Q \times R/\mm)$ is a coaisle of a stable t-structure in $\Der(R)$ (see e.g. \cite[5.9]{KrL}), we have $(\Perp{\mathbb{Z}}\mathcal{V})\Perp{0} \subseteq \Der(Q \times R/\mm)$, and hence $\mathcal{L} \neq 0$. Therefore, we have that $\bigcap_{n \in \mathbb{Z}}\mathcal{U}[n] \neq 0$, while $\bigcup_{n \in \mathbb{Z}} \mathcal{K}_n = \mathcal{K}_0 = \Spec(R)$.
\end{example}

	\begin{example}\label{EX1}
		Let $R$ be a valuation domain such that 
		$$\Spec(R) = \{0 = \qq{}_0 \subsetneq \qq{}_1 \subsetneq \qq{}_2 \subsetneq \cdots \subsetneq \qq{}_n \subsetneq \cdots \subseteq \mm\}.$$
			Such a valuation domain can be constructed again with the use of \cite[\S II, Theorem 3.8]{FS}, the value group can be chosen as $\mathbb{Z}^{(\omega)}$ with the lexicographic order, and since the maximal ideal $\mm$ is the union of a strictly increasing sequence of primes, it is necessarily idempotent, see Lemma~\ref{L:VD}(iv). Consider the admissible filtration $\mathbb{X}$ defined as follows:
		$$
		\mathcal{X}_n = \begin{cases}
			\emptyset, & n<0 \\
			\{[0,\qq_n],[\mm,\mm]\}, & n \geq 0.
		\end{cases}
		$$
			Then $\mathbb{X}$ is nowhere dense and corresponds via Theorem~\ref{T:homepi} to a chain of homological epimorphism of the following form
		$$\cdots \leftarrow 0 \leftarrow 0 \leftarrow Q \times R/\mm \leftarrow R_{\qq_1} \times R/\mm \leftarrow R_{\qq_2} \times R/\mm \leftarrow \cdots$$
			We claim that  the corresponding t-structure $(\mathcal{U},\mathcal{V})$ \textit{is} non-degenerate. Indeed, clearly $\bigcap_{n \in \mathbb{Z}}\mathcal{V}[n]=0$. Set $\mathcal{L}=\bigcap_{n \in \mathbb{Z}} \mathcal{U}[n]$ and let us show that $\mathcal{L} = 0$. Fix $L \in \mathcal{L}$ Since $\Der(R_{\qq_n})^{\geq n} \subseteq \mathcal{V}$, then $L \otimes_R R_{\qq_n} = 0$ in $\Der(R)$ for any $n \in \mathbb{Z}$. Therefore, $H^n(L)$ is annihilated by $\mm$ for all $n \in \mathbb{Z}$, and since $\mm$ is flat, this means that $L \otimes_R \mm = 0$ in $\Der(R)$. By \cite[5.9]{KrL}, there is a triangle
			$$L \otimes_R \mm \rightarrow L \rightarrow L \otimes_R^{\mathbf{L}} R/\mm \rightarrow L \otimes_R \mm[1],$$
			and therefore $L \simeq L \otimes_R^{\mathbf{L}} R/\mm  \in \Der(R/\mm)$. But since $R/\mm \in \mathcal{V}$, this implies that $L = 0$.

		We showed that $(\mathcal{U},\mathcal{V})$ is non-degenerate, but since $\mathcal{V}_n \subseteq \operatorname{Mod-(R_{\qq_{n+1}}\times R/\mm)}$ for all $n \geq 0$, $(\mathcal{U},\mathcal{V})$ is not co-intermediate. Therefore, $(\mathcal{U},\mathcal{V})$ is induced by a pure-injective cosilting complex which is not bounded.
	\end{example}
\begin{example}\label{EX2}
		Let $R$ be the same valuation domain as in Example~\ref{EX1}, and consider the admissible filtration $\mathbb{X}$ defined as follows:
		$$
		\mathcal{X}_n = \begin{cases}
			\emptyset, & n<0 \\
			\{[0,\qq_n]\}, & n \geq 0.
		\end{cases}
		$$
			Then $\mathbb{X}$ is nowhere dense and corresponds via Theorem~\ref{T:homepi} to a chain of homological epimorphism of the following form
		$$\cdots \leftarrow 0 \leftarrow 0 \leftarrow Q \leftarrow R_{\qq_1} \leftarrow R_{\qq_2} \leftarrow \cdots$$
		We claim that  the corresponding t-structure $(\mathcal{U},\mathcal{V})$ is \textit{not} non-degenerate. Set $\mathcal{L}=\bigcap_{n \in \mathbb{Z}} \mathcal{U}[n]$ and let us show that $\mathcal{L} = \Der(R/\mm) \neq 0$. By an argument similar to Example~\ref{EX1}, we see that $\mathcal{L} \subseteq \Der(R/\mm)$. Let $X \in \Der(R/\mm)$, and let us show that $\Hom_{\Der(R)}(X,\mathcal{V}) = 0$. Then $X$ is quasi-isomorphic to a split complex of $R/\mm$-modules, and therefore we can without loss of generality assume that $X$ is a stalk complex, say $X = R/\mm^{(\varkappa)}[-n]$ for some cardinal $\varkappa$. For any $V \in \mathcal{V}$ we have $\Hom_{\Der(R)}(X,V) \simeq \Hom_{\Der(R)}(X,\tau^{\leq n}V)$, where $\tau^{\leq n}V$ is the soft truncation of $V$ to degrees $\leq n$. Since $V \in \mathcal{V}$, we have that $\tau^{\leq n}V \in \Der(R_{\qq_{n+1}})$. But $X \otimes_R R_{\qq_{n+1}} = 0$, and thus $\Hom_{\Der(R)}(X,\tau^{\leq n}V) = 0$.

		Then $\mathbb{X}$ corresponds to a t-structure which is not non-degenerate, even though the smallest admissible system containing $\mathcal{X}_n$ as a nested subsystem for all $n \in \mathbb{Z}$ is clearly the maximal admissible system $\{[0,\mm]\}$.
\end{example}
\bibliographystyle{abbrv}

\end{document}